\documentclass{amsart}
\usepackage{amsmath,amssymb}
\makeatletter
\newenvironment{proofwithmanualqed}[1][\proofname]{\par
  \normalfont \topsep6\p@\@plus6\p@\relax
  \trivlist
  \item[\hskip\labelsep\itshape #1\@addpunct{.}]\ignorespaces
}{\endtrivlist\@endpefalse}
\makeatother
\usepackage{graphicx}
\usepackage{hyperref}
\usepackage{enumerate}
\usepackage{needspace}

\numberwithin{equation}{section}

\renewcommand{\emptyset}{\varnothing}

\newcommand\cv{``$\circ$''}

\renewcommand{\epsilon}{\varepsilon}
\newcommand{\Vol}{\operatorname{Vol}}
\newcommand{\carea}{c_{\mathit area}}

\newcommand{\cB}{{\mathcal B}}
\newcommand{\cC}{{\mathcal C}}
\newcommand{\cH}{{\mathcal H}}
\newcommand{\cQ}{{\mathcal Q}}
\newcommand{\calQ}{{\mathcal Q}}

\newcommand{\GG}{{\mathbb G}}
\newcommand{\C}{{\mathbb C}}

\newcommand{\N}{{\mathbb N}}

\newcommand{\CP}{{\mathbb C}\!\operatorname{P}^1}

\newlength{\halfbls}

\newtheorem{theorem}{Theorem}[section]
\newtheorem*{NNTheorem}{Theorem}

\newtheorem{proposition}[theorem]{Proposition}
\newtheorem{conjecture}[theorem]{Conjecture}
\newtheorem{problem}[theorem]{Problem}
\newtheorem{lemma}[theorem]{Lemma}
\newtheorem{corollary}[theorem]{Corollary}

\theoremstyle{definition}
\newtheorem{definition}[theorem]{Definition}
\newtheorem{remark}[theorem]{Remark}

\newtheorem{example}[theorem]{Example}

\begin{document}

\title[Asymptotics of Masur--Veech volume]
{Asymptotics of Masur--Veech volume of strata
of quadratic differentials with many simple poles}

% First author
\author{Kasra Rafi}
\address{
Department of Mathematics,
University of Toronto,
40 St. George Street,
Toronto, ON M5S 2E4,
Canada
}
\email{rafi@math.utoronto.ca}

% Second author
\author[A.~Zorich]{Anton Zorich}
\address{
Institut de Math\'ematiques de Jussieu --
Paris Rive Gauche,
Case 7012,
8 Place Aur\'elie Nemours,
75205 PARIS Cedex 13, France}
\email{anton.zorich@gmail.com}

\date{October 20, 2026}

\begin{abstract}
We provide large $n$ asymptotics of the Masur--Veech volumes for strata of quadratic differentials of the form $\cQ_g(n,d_1,\dots,d_k,-1^{n+D-4g+4})$, where all positive integers $d_i$ are fixed and $D=d_1+\dots+d_k$. Our formulae are particularly explicit in genus $1$. For the two series of non-varying strata $\cQ_1(n,-1^n)$ and $\cQ_1(n,1,-1^{n+1})$ we give simple closed formulae for the volumes for any $n$.
\end{abstract}

\maketitle

\tableofcontents

%%%%%%%%%%%%%%%%%%%%%%%%%%%%%%%%%%%%%%%%%%%%%%
\section{Introduction}
%--------------------------------------------------------------
\subsection{State of the art}

An ample knowledge of Masur--Veech volumes and of Siegel--Veech constants of strata of \textit{Abelian} differentials was achieved in recent years due to breakthrough advances by \mbox{A.~Aggarwal} and by D.~Chen, M.~Möller, A.~Sauvaget, D.~Zagier. These results allow one to compute efficiently exact values of volumes in small genera and also provide uniform large genus asymptotics for all connected components of all strata of Abelian differentials conjectured in~\cite{Eskin:Zorich} and proved in~\cite{Aggarwal:SVconstants}, \cite{Aggarwal:Abelian} and in a series of papers~\cite{Chen:Moeller:Zagier}, \cite{Chen:Moeller:Sauvaget:Zagier}, \cite{Sauvaget}.

Our current knowledge of Masur--Veech volumes and of Siegel--Veech constants of strata of \textit{meromorphic} quadratic differentials with at most simple poles is very limited compared to what is known for strata of Abelian differentials. It is, basically, restricted to the following three cases.

We have a closed formula for the volume of any stratum in genus $0$ conjectured by Kontsevich and proved  in~\cite{Athreya:Eskin:Zorich}.

We also know Masur--Veech volumes and Siegel--Veech constants for all low-dimensional strata. Namely, exact values of volumes of all strata of dimension at most $11$ are computed in~\cite[arXiv:1501.01611v4, Appendix~A]{Goujard:vol} using an algorithm of Eskin--Okounkov~\cite{Eskin:Okounkov}. Tables of Siegel--Veech constants for many low-dimensional strata and an algorithm which allows one to compute them are elaborated in~\cite{Goujard:carea}.

Finally, a series of papers~\cite{Delecroix:Goujard:Zograf:Zorich}, \cite{Chen:Moeller:Sauvaget}, \cite{Aggarwal:quadratic}, \cite{Kazarian}, \cite{DiYang:Zagier:Zhang} allowed one to compute Masur--Veech volumes for the principal strata of meromorphic quadratic differentials $\cQ_g(1^{4g-4+k},-1^k)$. For reasonably small $g$ (say, $g\le 2000$), the efficient recursions allow one to compute the volumes exactly. The regimes when $k$ is fixed and $g\to\infty$ and when $g$ is fixed and $k\to\infty$ are described by asymptotic formulae.

Despite promising further ongoing research on the Masur--Veech volumes of other strata of quadratic differentials, as in~\cite{Duryev:Goujard:Yakovlev} and in~\cite{Moeller:Prado}, the conjecture~\cite{Aggarwal:Delecroix:Goujard:Zograf:Zorich} on uniform large genus asymptotics of Masur--Veech volumes of all strata of holomorphic quadratic differentials is wide open. For the regime when the genus is fixed and the number of poles tends to infinity studied in the current paper, there was not even a conjectural answer with the exception of the principal stratum for which the answer was found in~\cite{Chen:Moeller:Sauvaget} and strengthened in~\cite{DiYang:Zagier:Zhang}.

In this paper we describe asymptotic values of Masur--Veech volumes and of the Siegel--Veech constants for a new large family of strata. Namely, we consider families of strata in a fixed genus having any collection of zeroes of fixed degrees and a zero of degree $n$ which tends to infinity. The remaining singularities are simple poles. For two series of \textit{non-varying} strata $\cQ_1(n,-1^n)$ and $\cQ_1(n,1,-1^{n+1})$ we obtain exact closed formulae for any $n$. For other strata of the form $\cQ_g(n,d_1,\dots,d_k,-1^{n-4g+4+D})$, where $D=d_1+\dots+d_k$, we obtain asymptotic formulae as $n\to+\infty$.

In some aspects, our approach is similar to that of~\cite{Athreya:Eskin:Zorich}: we start with a recursion based on the Eskin--Kontsevich--Zorich formula~\cite{Eskin:Kontsevich:Zorich} for the sum of Lyapunov exponents. In the current paper we combine it with Goujard’s formula~\cite{Goujard:carea} for the area Siegel--Veech constants, with Fougeron’s bound~\cite{Foujeron} for the Lyapunov exponents, and with the Athreya--Eskin--Zorich formula~\cite{Athreya:Eskin:Zorich} for volumes of strata in genus $0$ to derive a contracting recurrence for the normalized volumes. The major difference is that now we do not know either exact values of the Lyapunov exponents, or exact values of $\Vol\cQ_g(n,d_1,\dots,d_k,-1^{n+D-4g+4})$, for small $n$ to initiate the recurrence. Despite the lack of this important information, we manage to prove a simple asymptotic formula.

In some other aspects our recursion for the Masur--Veech volumes
of strata of quadratic differentials with many poles resembles Mirzakhani's recursion for Weil--Petersson volumes~\cite{Mirzakhani:volumes} and  the Dijkgraaf--Verlinde--Verlinde recursion for the intersection numbers~\cite{Verlinde:Verlinde}, \cite{Dijkgraaf},
and its asymptotic behavior studied by A.~Aggarwal~\cite{Aggarwal:quadratic}.

Our interest in volume asymptotics is motivated, in particular,
by the study of random flat surfaces of large complexity.

%--------------------------------------------------------------
\subsection{Statements of results}

Following~\cite{Athreya:Eskin:Zorich}, define the
function $v$ on integers $n$ greater than or equal to $-1$ by
\begin{equation}
\label{eq:v}
v(n):=\pi^{n+\frac{1}{2}}\cdot
\frac{\Gamma\!\left(\frac{n}{2}+1\right)}
     {\Gamma\!\left(\frac{n}{2}+\frac{3}{2}\right)}
=\cfrac{n!!}{(n+1)!!} \cdot \pi^n \cdot
\begin{cases}
\pi& \text{ when $n$ is odd}\\
2  & \text{ when $n$ is even}
\end{cases}
\end{equation}
for     $n=-1,0,1,2,3\dots$, where the     double    factorial
$n!!=n\cdot(n-2)\cdot\dots$  is the product of all
positive integers less than or equal to $n$ with the same parity as $n$. By convention we
set
$$
(-1)!! = 0!!=1 \ ,
$$
which implies that
\begin{equation}
\label{eq:v:for:small:n}
v(-1)=1\,,\quad
v(0)=2\,,\quad
v(1)=\frac{1}{2}\pi^2\,,\quad
v(2)=\frac{4}{3}\,\pi^2\,. % \quad
\end{equation}

\begin{theorem}
\label{th:n:1}
For any integer $n\ge 1$ the following relations are valid
\begin{align}
\label{eq:volume:n}
\Vol\cQ_1(n,-1^n)&=\frac{(n+2)(n+1)(n-1)}{3n^2}\cdot v(n)\,,\\
\label{eq:volume:n:1}
\Vol\cQ_1(n,1,-1^{n+1})&=\frac{(n+1)^2}{3n}\cdot v(n)\cdot v(1)\,.
\end{align}
\end{theorem}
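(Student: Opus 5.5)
The plan is to combine the Eskin--Kontsevich--Zorich formula for $\lambda_1^+ + \dots + \lambda_g^+$ with the fact that both families are \emph{non-varying}. For these strata the sum of Lyapunov exponents is known explicitly and does not depend on the Teichmüller curve or invariant suborbifold. In genus $1$ we have $\lambda_1^+ = \frac{1}{12}\sum_j \frac{d_j(d_j+4)}{d_j+2} + \frac{\pi^2}{3}c_{\mathit{area}}$. Because $\lambda_1^+$ is the only positive exponent and the strata are non-varying, it equals a computable number, and this pins down $c_{\mathit{area}}$ exactly.

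On the other side, Goujard's formula expresses $c_{\mathit{area}}$ of $\cQ_1(\nu)$ as a sum over the boundary configurations of cylinders, that is, over ways to degenerate by pinching a cylinder. For a genus~$1$ surface with a single zero of large order $n$ (plus possibly one simple zero) and many simple poles, every such configuration splits the surface in one of two ways. In the first kind, a nonseparating core curve is pinched, which produces a genus $0$ stratum. In the second kind, a separating curve is pinched, which produces a genus $0$ piece times a genus $1$ piece of lower complexity. Every genus $0$ volume that appears is given by the Athreya--Eskin--Zorich formula, which is a product of values $v(d_i)$.

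This yields the following identity. The known $c_{\mathit{area}}$ times $\Vol\cQ_1(n,-1^n)$ equals a finite combination of products of $v$-values together with terms involving $\Vol\cQ_1(m,-1^m)$ for smaller $m$ (and similarly with the extra simple zero). The resulting recursion is triangular in $n$, so the main steps are as follows:
\begin{enumerate}[(i)]
\item Enumerate the admissible cylinder configurations in Goujard's formula for these two strata, with their combinatorial weights. The counts come from choosing which simple poles go to which side.
\item Plug in the non-varying value of $\lambda_1^+$ to get an explicit recursion.
\item Verify by induction that the claimed closed forms satisfy it, starting from base cases $n=1$ (and $n=2$), for instance $\Vol\cQ_1(1,-1)=\pi^2/3$, which is consistent with $\frac{3\cdot2\cdot0}{3}=0$?
\end{enumerate}
Here one must be careful: at $n=1$ the factor $n-1$ vanishes, so for the first family the base case should be read with the relevant conventions, or the induction should start at $n=2$ using Goujard's tables. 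I would check small cases against the tables in~\cite{Goujard:vol} to fix the normalization.

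The main obstacle is step (iii), proving the induction identity. After substituting the ansatz, the sum over configurations becomes a sum of ratios of $v$'s and binomial coefficients, and one needs a closed evaluation of this sum. I would try to exploit the relation $v(a)v(b)$ versus $v(a+b+1)$ through Beta-function integrals, since $v(n)$ is a ratio of Gamma functions. This should turn the sums into integrals of $\cos$ or $\sin$ powers that telescope. Alternatively, I would look for a generating-function identity in $n$ that reduces the check to an ODE.
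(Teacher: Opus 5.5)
Your overall strategy matches the paper's. You combine the EKZ identity with the Chen--M\"oller value $\lambda_1^+=\frac{2}{n+2}$, evaluate $c_{area}$ by Goujard's formula, use the Athreya--Eskin--Zorich formula for genus-$0$ pieces, and check that the closed forms satisfy the resulting recursion. The EKZ coefficient is $\frac{1}{24}$, not $\frac{1}{12}$, and each simple pole contributes $-3$. Treat $\cQ_1(n,-1^n)$ first, since the recursion for $\cQ_1(n,1,-1^{n+1})$ involves $\Vol\cQ_1(n,-1^n)$ and $\Vol\cQ_1(n-2,-1^{n-2})$.

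Two parts of your plan are wrong as stated. The first is the base of the induction. $\cQ_1(1,-1)$ is empty, so its volume is $0$, which agrees with the formula; the number $\pi^2/3$ is $\Vol\cH(0)$. More seriously, the list of contributing configurations, and hence the recursion, is uniform in $n$ only for $n\ge 5$. One needs at least five poles so that every contributing graph has a $\ominus$-vertex (types ``a, b, c''), and $n>4g$ so that the zero of order $n$ is carried by a unique $\ominus$-vertex. For $n\le 4$ further graphs occur; for example, a type ``d'' graph with two cylinders realizes $\cQ_1(4,-1^4)$. So the induction cannot start at $n=1$ or $n=2$. Since the recursion steps by $2$, the values for $n\le 4$ in both parity classes must be obtained independently, which the paper does by comparing with Goujard's tables.

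The second is where you locate the difficulty. There is no binomial-weighted sum requiring Beta integrals or generating functions. For $n\ge5$ exactly three types of configurations contribute to $\cQ_1(n,-1^n)$:
\begin{itemize}
\item the figure-eight, with boundary stratum $\cQ_0(n-4,-1^n)$;
\item the pocket, with $\cQ_1(n-2,-1^{n-2})$;
\item the pocket-through-torus, with $\cH(0)\times\cQ_0(n-6,-1^{n-2})$.
\end{itemize}
Seven types contribute to $\cQ_1(n,1,-1^{n+1})$. Each contribution is an explicit rational function of $n$ times either a previous volume or a product of $v$-values and $\Vol\cH(0)$. Since $v(n-2)/v(n)=\frac{n+1}{\pi^2 n}$, after dividing by $n\,v(n)$ (resp.\ $n\,v(n)\,v(1)$) your step (iii) is a finite rational-function identity.

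The real content is your step (i), which you treat as bookkeeping. It consists of the classification of decorated ribbon graphs following Masur--Zorich and the exact Goujard constants. These include the factor $M=M_sM_c/M_t$ with $K(v)$ and $|\Gamma(v)|$ (e.g.\ $|\Gamma|=2$ for the torus vertex), thin versus thick cylinders, the $\frac{n-1}{2}$ splittings $n'+n''=n-2$ of the figure-eight, and the pole-labelling binomials. Your dichotomy (a nonseparating pinch gives genus $0$; a separating pinch gives genus $0$ times a smaller genus-$1$ stratum of the same family) misses the configurations with an Abelian torus component. These are the pocket-through-torus and the three-component dumbbell-through-torus with boundary $\cQ_0(-1^4)\times\cH(0)\times\cQ_0(n-6,-1^{n-2})$. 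Omitting them, or mis-counting a single factor of $2$, makes the closed forms fail the recursion.
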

Note that according to this formula we have $\Vol\cQ_1(1,-1)=0$.
There is no contradiction: the stratum $\cQ_1(1,-1)$ is empty.
Formula~\eqref{eq:volume:n} is proved in Section~\ref{ss:n:g:1};
Formula~\eqref{eq:volume:n:1} is proved in Section~\ref{ss:n:1:g:1}.

For all families of strata in genus $1$ for which all degrees
of zeroes except one are fixed we have the following asymptotics.
\begin{theorem}
\label{th:n:m1:dots:mk:lim}
For any nonnegative integers
$d_1,\dots, d_k$ one has
\begin{multline}
\label{eq:n:m1:dots:mk:lim}
\Vol\cQ_1(n,d_1,d_2,\dots,d_k,-1^{n+D})
\\=
\frac{n}{3}\cdot v(n)\cdot v(d_1)\cdots v(d_k)
\cdot\left(1+O_{1,d}\left(\frac{1}{n}\right)\right)
\\
=\frac{\sqrt{2n}}{3}\,\pi^{n+D+\frac{k+1}{2}}
\prod_{i=1}^{k}
\frac{\Gamma\!\left(\frac{d_i}{2}+1\right)}
     {\Gamma\!\left(\frac{d_i}{2}+\frac{3}{2}\right)}
\left(1+O_{1,d}\!\left(\frac1n\right)\right)
% =\frac{\sqrt{2}}{3}\cdot\pi^{n+\frac{1}{2}}\cdot n^{\frac{1}{2}}
% \cdot v(d_1)\cdots v(d_k)
% \cdot\left(1+O_{1,d}\left(\frac{1}{n}\right)\right)}
\quad\text{as}\quad n\to+\infty\,,
\end{multline}
where $D=d_1+\dots+d_k$.
\end{theorem}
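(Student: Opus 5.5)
We plan to follow the strategy outlined in the introduction: derive a recursion in $n$ from the Eskin--Kontsevich--Zorich formula, and show that it contracts toward the claimed asymptotics. Fix $d=(d_1,\dots,d_k)$ and write
$$
V_n:=\Vol\cQ_1(n,d_1,\dots,d_k,-1^{n+D}),
\qquad
\nu_n:=\frac{V_n}{\,n\,v(n)\,v(d_1)\cdots v(d_k)\,}.
$$
The goal is to show $\nu_n=\tfrac13+O(1/n)$.

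\textbf{Step 1: the EKZ identity.} For the stratum $\cQ_1(n,d_1,\dots,d_k,-1^{n+D})$ we write the EKZ formula
$$
\Lambda^+ = \kappa + \frac{\pi^2}{3}\,c_{area},
$$
where $\Lambda^+=\lambda_1^+$ is the single positive exponent in genus $1$ and $\kappa=\frac{1}{24}\sum_j \frac{d_j(d_j+4)}{d_j+2}$ runs over all singularities. Each simple pole contributes $-\frac18$, so $\kappa$ is explicit, linear in $n$ up to $O(1)$.

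We then apply Goujard's formula to express $c_{area}$ as a sum over degenerations, i.e.\ cylinder configurations, of products of volumes of boundary strata divided by $V_n$.

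\textbf{Step 2: classify the configurations.} In genus $1$, pinching a cylinder either separates off a genus-$0$ piece or leaves a genus-$0$ surface after cutting a nonseparating cylinder.

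\emph{Nonseparating cylinders.} These produce genus-$0$ strata whose volumes are known exactly from the Athreya--Eskin--Zorich formula, a product of $v$'s.

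\emph{Separating cylinders.} These split off a genus-$0$ part carrying some of the poles and zeroes, leaving a genus-$1$ part of the same shape with smaller $n'$ or with modified $d$'s.

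This yields a relation of the form
$$
\bigl(\Lambda^+-\kappa\bigr)V_n=\sum(\text{genus }0\text{ terms})+\sum_{n'<n}c_{n,n'}\,V_{n'}(\cdots),
$$
which we rewrite as a recursion for $\nu_n$.

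\textbf{Step 3: control $\Lambda^+$.} Since $\Lambda^+$ is unknown, we use Fougeron's bound, which gives $\Lambda^+$ bounded, or even $O(1/n)$-small relative to the other terms. Because $\kappa$ grows like $n/24\cdot(\ldots)$ with the poles contributing negatively, the main balance comes from comparing $\kappa$ against the $c_{area}$ sum. In the normalized variables, the genus-$0$ terms should provide the source term yielding the constant $\tfrac13$. Consistency with Theorem~\ref{th:n:1}, the case $k=0$ where $\nu_n=\frac{(n+2)(n+1)(n-1)}{3n^3}$, serves as a check.

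\textbf{Step 4: contraction (the main obstacle).} We must show that the coefficients of the $\nu_{n'}$ in the recursion sum to something strictly less than $1$, uniformly. We also need the tails, coming from configurations with $n'$ far from $n$, to be small. We would try to bound these using the asymptotics $v(m)\sim\pi^{m+1/2}\sqrt{2/m}$ and the binomial-type weights counting the poles that are split off. This should show that the map $\nu\mapsto$ RHS is a contraction with fixed point $\tfrac13+O(1/n)$.

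Since the initial values $\nu_n$ for small $n$ are unknown, we only need them to be finite. A contraction forgets its initial data geometrically, but the error must come out as $O(1/n)$ rather than merely $o(1)$. This requires an induction on $k$ and $D$, assuming the result for strata with smaller data, so that the modified-$d$ terms are themselves known asymptotically.

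Finally, the second form of the asymptotics in the statement follows from inserting $v(n)=\pi^{n+1/2}\Gamma(\frac n2+1)/\Gamma(\frac n2+\frac32)$, the ratio $\Gamma(\frac n2+1)/\Gamma(\frac n2+\frac32)=\sqrt{2/n}\,(1+O(1/n))$, and the definition of $v(d_i)$.
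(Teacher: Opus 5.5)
There is a genuine gap. The way you expect the constant $\frac13$ to arise is wrong, and the uniform contraction claimed in Step~4 does not exist. Once the configurations are sorted, the zero of degree $n$ is always carried by a unique $\ominus$-vertex whose boundary stratum has a zero of degree $n-\ell$, with $\ell$ bounded in terms of $d$. So there are no ``tails with $n'$ far from $n$''. The only contribution of order $n$ is the pocket with the degree-$n$ zero at its base. It gives $\nu_{n-2}$ with coefficient $1-\frac{c}{n}+O(n^{-2})$, where $c=D+k+\Delta$ and $\Delta=2\sum_j\frac{1}{d_j+2}$.

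This coefficient tends to $1$. Initial data are therefore forgotten only at the polynomial rate $n^{-c/2}$, not geometrically, and the $O(\frac1n)$ error needs $c>2$. For $d=\emptyset$ one has $c=0$ and the recursion reads $\nu_n=(1+O(n^{-2}))\nu_{n-2}+O(n^{-2})$. Its limit depends on all initial values and on the exact value of $\lambda_1^+$ at every $n$. Fougeron's bound leaves an undetermined $O(n^{-2})$ perturbation at each step, and these change the limit. So the minimal stratum is not a consistency ``check'': it is the indispensable base case, and your method cannot produce it. In the paper it comes from the exact recursion, using Chen--M\"oller's value $\lambda_1^+=\frac{2}{n+2}$ and exact small-$n$ volumes. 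Together these give the closed formula and hence $C_1=\frac13$.

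Second, the $\frac1n$ source term does not come from the genus-zero (non-separating cylinder) terms. After dividing by the EKZ factor $\approx n$, the figure-eight, parallelogram and ``through-torus'' configurations contribute only $O(n^{-2})$. The source comes from genus-one terms with smaller $D$:
\begin{itemize}
\item the pockets with $d_j$ at the base, contributing $(d_j+1)\,\nu(n;\dots,d_j-2,\dots)$;
\item the dumbbells pinching off a sphere in $\cQ_0(d_j-2,-1^{d_j+2})$, contributing $\frac{2}{d_j+2}\,\nu(n-2;\dots,\widehat{d_j},\dots)$.
\end{itemize}
By induction on $D$ these equal $\frac13+O(\frac1n)$. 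The identity $\sum_j(d_j+1)+\sum_j\frac{2}{d_j+2}=D+k+\Delta=c$ then makes the source exactly $\frac{c}{3n}$, matching the contraction rate, so the fixed point is $\frac13$. Finally $c\ge 1+1+\frac23>2$ for $d\neq\emptyset$ (after discarding zero entries), so the calculus lemma gives the $O(\frac1n)$ error.

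To close the gap you need three things:
\begin{enumerate}
\item the exact $\frac1n$ term of the pocket coefficient, obtained by combining the EKZ denominator $n+D-k-1+\Delta$, the factor $(n+1)\frac{n-2}{n}$ and the factorial ratio;
\item the dominance analysis, showing that every other configuration contributes $O(\frac1n)$ to the sum over configurations before division by the EKZ factor;
\item the closed formula for $\Vol\cQ_1(n,-1^n)$ as the base of the induction.
\end{enumerate}
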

\begin{proof}
Explicit expressions~\eqref{eq:volume:n} and~\eqref{eq:volume:n:1} from Theorem~\ref{th:n:1} imply~\eqref{eq:n:m1:dots:mk:lim}  for $d=\emptyset$ and $d=(1)$, respectively. Theorem~\ref{th:n:m1:dots:mk:lim} now follows from the general Theorem~\ref{th:n:D:large:zero:any:g} under the choice $C_1^{\mathit{even}}=C_1^{\mathit{odd}}=\frac{1}{3}$.
\end{proof}

\begin{theorem}
\label{th:n:D:large:zero:any:g}
For any genus $g\ge 0$ there exist strictly positive constants $C^{\mathit{even}}_g$
and $C^{\mathit{odd}}_g$ such that
for any collection
$(d_1,\dots, d_k)$ of nonnegative integers, one has
\begin{multline}
\label{eq:n:m1:dots:mk:lim:genus:g}
\Vol\cQ_g(n,d_1,d_2,\dots,d_k,-1^{n+D-4g+4})
\\=
C^{p(n)}_g\cdot n^g \cdot v(n)\cdot v(d_1)\cdots v(d_k)
\cdot\left(1+O_{g,d}\left(\frac{1}{n}\right)\right)
\\
=\sqrt{2}\,C^{p(n)}_g\,n^{g-\frac{1}{2}}\,\pi^{n+D+\frac{k+1}{2}}
\prod_{i=1}^{k}
\frac{\Gamma\!\left(\frac{d_i}{2}+1\right)}
     {\Gamma\!\left(\frac{d_i}{2}+\frac{3}{2}\right)}
\left(1+O_{g,d}\!\left(\frac1n\right)\right)
\quad\text{as}\quad n\to+\infty\,,
\end{multline}
where $p(n)$ is \textit{even} or \textit{odd} depending on whether
$n$ is even or odd.
\end{theorem}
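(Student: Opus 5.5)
The plan is to follow the strategy outlined in the introduction: derive a recursion from the Eskin--Kontsevich--Zorich formula and show that it contracts.

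\textbf{The recursion.} Fix $g$ and $d=(d_1,\dots,d_k)$. Apply the EKZ formula to the stratum $\cQ_g(n,d_1,\dots,d_k,-1^{N})$ with $N=n+D-4g+4$. The formula gives
\[
\Lambda^+ - \Lambda^- = \tfrac14\sum_j \frac{d_j(d_j+4)}{d_j+2} + \frac{\pi^2}{3}\,c_{area}.
\]
The left-hand side is bounded uniformly in $n$ by Fougeron's bound. The singularity term contains $\frac{n(n+4)}{4(n+2)}$ and the $-\frac{N}{4}\cdot\frac{3}{1}\cdot\frac{1}{3}$-type contributions from the $N$ poles, which are linear in $n$; their linear parts cancel up to $O(1)$. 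For $c_{area}$ I will use Goujard's formula, which writes it as a sum over degenerations along cylinders. Each degeneration is a boundary configuration. The surface splits into a component of genus $g'$ containing the zero of degree $n$ and the remaining components. The coefficients are explicit combinatorial factors times ratios $\Vol(\text{boundary})/\Vol(\text{stratum})$.

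\textbf{Sorting the terms.} Genus-$0$ pieces are known exactly by the Athreya--Eskin--Zorich formula, which is where the product $v(d_1)\cdots v(d_k)$ comes from. Configurations that pinch a handle produce terms with the same $g$ but lower $n$, or with lower genus. Rearranging gives an identity of the form
\[
\Vol\cQ_g(n,d,-1^N) = \sum(\text{explicit coefficients})\cdot\Vol(\text{strata with smaller }g\text{ or smaller }n\text{ or smaller }d).
\]
Next I normalize by setting $a_g(n,d)=\Vol\cQ_g(n,d,-1^N)/\bigl(n^g v(n)\prod v(d_i)\bigr)$. After normalization the recurrence becomes $a_g(n,d)=\sum_m \kappa_{n,m}\, a_g(m,\cdot)+ (\text{lower genus input})$. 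Here I expect $\sum_m\kappa_{n,m}=1-\frac{c}{n}+\dots$, so the map is contracting, and the lower-genus input is controlled by induction on $g$ starting from genus $0$, where AEZ gives $a_0\equiv 1$. The parity of $n$ enters through $v(n)$ via the factor $\pi$ versus $2$ in \eqref{eq:v}. Cylinder degenerations preserve the parity of the big zero, since they change $n$ by even amounts or split it compatibly, so even and odd $n$ form two separate chains, each with its own limit $C_g^{p(n)}$.

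\textbf{Convergence.} The inner induction is on $d$ in lexicographic order, reducing to $d=\emptyset$. The outer induction is on $g$. For each chain I will show that $a_g(n,d)$ is Cauchy. Bounding $|a(n)-a(n-2)|$ through the contracting recursion should give increments of size $O(1/n^2)$, which are summable, so a limit $C$ exists with $a=C(1+O(1/n))$. Positivity of $C$ follows because every term in the recurrence is nonnegative and at least one lower-genus or boundary term has a strictly positive limit. Independence of the limit from $d$ comes from showing that the $d$-dependence is absorbed exactly by $\prod v(d_i)$ at leading order.

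\textbf{Main obstacle.} The hardest step is the one flagged in the paper: we do not know the initial values, meaning exact volumes or exact Lyapunov exponents for small $n$. The argument therefore cannot start from data and must rely only on Fougeron's bound, which makes the $\Lambda$ term $O(1)$ against the $\Theta(n)$ main terms. The point to verify carefully is that this $O(1)$ error, divided by the linear-in-$n$ leading coefficient, contributes only $O(1/n)$ to the normalized recursion. Combined with the contraction, that yields the existence of the limits even though their values stay unknown, except in genus $1$, where Theorem~\ref{th:n:1} pins them down.
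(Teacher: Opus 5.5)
Your outline has the right ingredients: EKZ, Goujard, Fougeron, AEZ, normalized volumes, induction on $g$ and $D$, and separate parity chains. The quantitative core is wrong in two places, though, and without fixing them the argument does not give the theorem.

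\textbf{Error bookkeeping.} The linear parts of the singularity term do not cancel. For $\cQ_g(n,d,-1^{n+D-4g+4})$ one has $-\frac1{24}\sum_i\frac{m_i(m_i+4)}{m_i+2}=\frac1{12}\bigl(n+s+\frac{2}{n+2}\bigr)$ with $s=D-k+5-6g+\Delta$. (The relevant EKZ identity is the one for $\Lambda^+$ with coefficient $\frac1{24}$, not a formula for $\Lambda^+-\Lambda^-$.) This $\frac{n}{12}$ is exactly what balances the pocket term $(n+1)(1+O(\frac1n))W_g(n-2;d)$ inside $c_{area}$. The result is $W_g(n;d)=\frac{1}{n+s+\frac{2}{n+2}+12\Lambda^+}\bigl((n+1)(\cdots)W_g(n-2;d)+\dots\bigr)$.

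If you only know $\Lambda^+=O(1)$, as you plan, the coefficient of $W_g(n-2;d)$ acquires an unknown perturbation of relative order $\frac1n$. An $O(\frac1n)$ error per step is not summable: $\prod(1+\theta_m/m)$ with bounded unknown $\theta_m$ need not converge. In the non-minimal case it would also shift the fixed point to something like $\frac{c\,C}{c+12\Lambda^+}$. What is needed, and what Fougeron's bound \eqref{eq:Fougeron:bound} actually gives, is $\Lambda^+\le\frac{(3g-1)g}{n+2}=O(\frac1n)$. Then the perturbation is $O(n^{-2})$.

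\textbf{Structure of the recursion.} The recursion is not uniformly contracting, and the two exact identities behind the statement are missing from your plan.

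For $d=\emptyset$, the $\frac1n$ terms of $\frac{n+1}{n+5-6g}$, $\frac{(n-4g+4)(n-4g+3)}{(n-2g+2)(n-2g+1)}$ and $(1-\frac2n)^g$ cancel exactly. So $W_g(n)=(1+O(n^{-2}))W_g(n-2)+O(n^{-2})$, since the figure-eight and all other configurations enter only at order $O(n^{-2})$. There is no contraction and no lower-genus target. The limits exist only by summability (Lemma~\ref{lm:lemma:2:in:calculus}), depend on unknown initial data, and may differ by parity.

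For $d\neq\emptyset$, the coefficient is $1-\frac{D+k+\Delta}{n}+O(n^{-2})$. The non-negligible additive inputs are not lower-genus strata, since figure-eight and parallelogram terms are negligible. They are same-genus strata with smaller $D$: pockets based at $d_j$ with weight $d_j+1$, and genus-zero dumbbells with weight $\frac{2}{d_j+2}$. Establishing this needs the configuration classification plus the order-of-magnitude count showing that everything else is $O(\frac1n)$ in $\sum_{\cC}C_g^{\cC}$. The identity $\sum_j(d_j+1)+\sum_j\frac{2}{d_j+2}=D+k+\Delta$ matches the contraction rate exactly. That is what forces the limit to be $C_g^{p(n)}$ independently of $d$. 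Your sentence about the $d$-dependence being absorbed by $\prod v(d_i)$ restates the conclusion rather than supplying this mechanism.

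Your Cauchy argument is also circular in the contracting case. The increments are $\frac{c}{n}\bigl(r(n)-a(n-2)\bigr)+O(n^{-2})$, which is $O(n^{-2})$ only once you already know $a-r=O(\frac1n)$. The paper instead proves $|V_g(n;d)-1|\le K/n$ by induction on $n$ (Lemma~\ref{lm:lemma:in:calculus}), and that step uses $c=D+k+\Delta>2$.
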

For positive genus and positive fixed degrees, Theorem~\ref{th:n:D:large:zero:any:g}
follows from Theorem~\ref{th:n:m1:dots:mk:lim:for:renormalized}
and Definition~\eqref{eq:W:g:n}.
The latter theorem is proved in Sections~\ref{ss:Minimal:stratum:in:arbitrary:genus} and~\ref{ss:Generic:stratum:in:arbitrary:genus}.
The genus-zero case follows from~\eqref{eq:volume:in:genus:0}.
Stirling's formula applied to the Gamma-function expression
in~\eqref{eq:v} gives the last line of~\eqref{eq:n:m1:dots:mk:lim:genus:g}.

\begin{conjecture}
%\label{conj:even:equals:odd}
For any $g\ge 2$ one has $C^{\mathit{even}}_g = C^{\mathit{odd}}_g$.
\end{conjecture}

We state the conjecture only for $g\ge 2$ because
for $g=0$ it is proved in~\cite{Athreya:Eskin:Zorich}
and for $g=1$ it is proved in Theorem~\ref{th:n:1}. Namely
\begin{align*}
C^{\mathit{even}}_0 = C^{\mathit{odd}}_0 &= 2\pi^2\,,
\\
C^{\mathit{even}}_1 = C^{\mathit{odd}}_1 &= \frac{1}{3}\,.
\end{align*}

Note that by~\cite[Formula (8)]{Chen:Moeller:Sauvaget} one has
for $n\ge 2$:
\[
\Vol\cQ_1(1^n,-1^n)=\pi^{2n}\frac{n!}{3(2n-1)!}
\big((2n-3)!!+(2n-2)!!\big)\,,
\]
so
\begin{equation}
\label{eq:1:1:1:lim}
\lim_{n\to+\infty}
\frac{\Vol\cQ_1(1^n,-1^n)}
{\frac{\sqrt{\pi\,n}}{3}\cdot \big(v(1)\big)^n}
=1\,.
\end{equation}
More generally, \cite[Formula (7)]{Chen:Moeller:Sauvaget}
implies that
\begin{equation}
\label{eq:1:1:1:lim:genus:g}
\Vol\cQ_g(1^{4g-4+n},-1^n)\sim
\kappa_g n^{\frac{g}{2}} \big(v(1)\big)^n\,,
\end{equation}
for any fixed $g$ and for $n\to+\infty$,
see~\cite[(55)--(57)]{DiYang:Zagier:Zhang} for the value of $\kappa_g$
as a function of the genus.
This shows that the asymptotic Formulae~\eqref{eq:n:m1:dots:mk:lim}
and~\eqref{eq:n:m1:dots:mk:lim:genus:g}
are not applicable uniformly for all partitions.

\begin{problem}
Find a general asymptotic formula
interpolating between the answers as in~\eqref{eq:n:m1:dots:mk:lim} and~\eqref{eq:n:m1:dots:mk:lim:genus:g} for particularly short partitions $(N+4g-4-D,d_1,\dots,d_k)$ and the answers~\eqref{eq:1:1:1:lim}
and~\eqref{eq:1:1:1:lim:genus:g} for the longest possible partition $\underbrace{(1,1,\dots,1)}_{N+4g-4}$ of $N+4g-4$,
where $g$ is the genus and $N\to+\infty$ is the number of simple poles.
\end{problem}

%--------------------------------------------------------------------
\subsection{Acknowledgements}
The authors cordially thank Elise Goujard for the time which she spent on explaining the details of her work~\cite{Goujard:vol} to them and for the precious help with debugging exact formulae for $\carea(\calQ(n,k,-1^{n+k}))$ for small values of $k$ including Formulae~\eqref{eq:c:area:minimal} and~\eqref{eq:n:1:init}. The authors are grateful to Sonja~\v{C}uki\`c for the valuable help with graphics and pictures. Various parts of the project were performed during our stays at CIRM in Luminy, IMJ-PRG in Paris, MPIM in Bonn, Fields Institute in Toronto and at the Yau Center for Mathematics in Beijing. We are grateful to all these places for hospitality. Kasra Rafi is supported by NSERC Discovery grant RGPIN-05507.
Anton Zorich is supported by the ERC AdG 101141508 UniGeoDyM, ANR-23-CE40-0020-02 MOST grants and by IUF. We used ChatGPT to help with checking and streamlining of our calculations and for proofreading.

%#####################################################################
%#####################################################################
%#####################################################################

\section{Background results and recursion structure}

%--------------------------------------------------------------------
\subsection{Background results}

Our induction is based on a combination of the following results. The first Theorem corresponds to~\cite[Formula~(2.3)]{Eskin:Kontsevich:Zorich}.

\begin{NNTheorem}[A.~Eskin, M.~Kontsevich, A.~Zorich]
The top $g$ Lyapunov exponents $\lambda_1^+\ge\dots\ge\lambda_g^+$ of the bundle $H^+$ over any connected component $\cQ^c_g(m_1,\dots,m_p)$ of any stratum of meromorphic quadratic differentials with at most simple poles in genus $g$ along the Teichmüller geodesic flow satisfy the following relation:
\begin{equation}
\label{eq:sum:of:lambda:plus}
\lambda_1^+ + \dots +\lambda_g^+=
\frac{1}{24}\sum_{i=1}^p \frac{m_i(m_i+4)}{m_i+2}
+\frac{\pi^2}{3}\cdot c_{area}(\cQ^c_g(m_1,\dots,m_p))\,,
\end{equation}
where $c_{area}(\cQ^c_g(m_1,\dots,m_p))$ is the area Siegel--Veech
constant.
\end{NNTheorem}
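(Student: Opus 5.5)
We will not reprove this statement from scratch; it is~\cite[Formula~(2.3)]{Eskin:Kontsevich:Zorich}. We only sketch the argument as we would reconstruct it, in four steps.

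\textbf{Step 1: reduce to Abelian differentials.} Pass from $(S,q)\in\cQ^c_g(m_1,\dots,m_p)$ to the canonical double cover $\hat S\to S$, on which $q=\omega^2$ for an Abelian differential $\omega$. The cohomology $H^1(\hat S)$ splits into the invariant part $H^+\cong H^1(S)$ and the anti-invariant part $H^-$. The exponents $\lambda_i^+$ are the exponents of the Kontsevich--Zorich cocycle restricted to $H^+$.

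\textbf{Step 2: Kontsevich--Forni formula.} Apply it to the holomorphic subbundle of $H^+_{\C}$ spanned by the pullbacks of holomorphic $1$-forms on $S$. It expresses
$$
\lambda_1^+ + \dots +\lambda_g^+
$$
as the average over the normalized $SL(2,\mathbb R)$-invariant (Masur--Veech) measure of $\frac{1}{4}\Delta_{\mathit{Teich}}\log|\det \operatorname{Im}\Pi|$. Here $\Pi$ is the period matrix of $S$ and $\Delta_{\mathit{Teich}}$ is the hyperbolic Laplacian along Teichm\"uller discs. Equivalently, the integrand is the Laplacian of the logarithm of the Hodge norm of a holomorphic section of $\det H^{1,0}$.

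\textbf{Step 3: convert to a flat determinant.} Relate $\log\det\operatorname{Im}\Pi$ to the $\zeta$-regularized determinant of the Laplacian of the singular flat metric $|q|$. This uses two comparisons:
\begin{itemize}
\item the Belavin--Knizhnik/Quillen anomaly formula, which connects the hyperbolic determinant to the Hodge norm and a holomorphic factor that is harmonic along Teichm\"uller discs;
\item a Polyakov-type comparison between the flat and hyperbolic determinants.
\end{itemize}
The conformal factor $\log|q|$ is singular at the zeroes and poles. Each cone point of angle $(m_i+2)\pi$ contributes an explicit local term that depends only on $m_i$ and is harmonic along discs up to a constant multiple of the area normalization. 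Computing these local contributions of the cone singularities to the Laplacian should produce exactly $\frac{1}{24}\sum_i \frac{m_i(m_i+4)}{m_i+2}$.

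\textbf{Step 4: Stokes and the boundary term.} What remains is the average of $\Delta_{\mathit{Teich}}\log\det\Delta_{\mathit{flat}}$. Integrate it over a truncated stratum, cut off where the shortest saddle connection has length less than $\epsilon$, and apply Stokes' formula to turn it into a boundary integral. Near the boundary, the determinant degenerates only through thin flat cylinders. For a cylinder of height $h$ and circumference $w$, one has $\log\det\Delta_{\mathit{flat}}\approx -\frac{\pi}{3}\frac{h}{w}+O(1)$. Summing over cylinders and using the Eskin--Masur / Siegel--Veech quadratic asymptotics for counting cylinders weighted by area identifies the limit as $\epsilon\to0$ with $\frac{\pi^2}{3}c_{area}$.

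The main obstacle is Step 4, more precisely the analytic justification of its limit. One needs uniform control of $\log\det\Delta_{\mathit{flat}}$ near the boundary of the noncompact stratum: thin parts other than cylinders must contribute negligibly, and the error terms must be integrable. We would first try to establish this with a thick--thin decomposition and Mayer--Vietoris gluing estimates for determinants, together with the quadratic growth bounds for saddle connections.
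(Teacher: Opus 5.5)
Your proposal agrees with the paper: the paper does not prove this statement but quotes it as \cite[Formula~(2.3)]{Eskin:Kontsevich:Zorich}, and your outline is a faithful sketch of the Eskin--Kontsevich--Zorich argument. That outline consists of the Kontsevich--Forni formula for $H^+$, the comparison of $\det\operatorname{Im}\Pi$ with the flat determinant producing the local cone terms $\frac{1}{24}\frac{m_i(m_i+4)}{m_i+2}$, and Stokes on a truncated stratum, where the boundary term comes from thin cylinders via $\log\det\Delta_{\mathit{flat}}\approx-\frac{\pi}{3}\frac{h}{w}$ and the Siegel--Veech formula. The one hypothesis you leave implicit in Step~4 is regularity of the invariant measure, i.e.\ surfaces with two non-parallel thin cylinders must have measure $o(\epsilon^2)$; this is known for Masur--Veech measures, and it is what lets your thick--thin and gluing strategy go through.
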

Note that we usually separate simple poles from simple zeroes, whereas, in the notation $\cQ_g(m_1,\dots,m_p)$ they are listed together. In particular, unlike zeroes, simple poles make negative contributions  $\frac{(-1)(-1+4)}{(-1+2)}=-3$ to the above sum.

The left-hand side of equality~\eqref{eq:sum:of:lambda:plus}   is known explicitly for the two \textit{non-varying} strata $\cQ_1(n,-1^n)$ and $\cQ_1(n,1,-1^{n+1})$ in genus one. These are the two strata for which we get a closed formula for the volume building on the following result of D.~Chen and M.~Möller~\cite[Theorem~8.1]{Chen:Moeller}:
\begin{NNTheorem}[D.~Chen, M.~Möller]
The Lyapunov exponent $\lambda_1^+$ of the bundle $H^+$ over the strata $\cQ_1(n,-1^n)$ and $\cQ_1(n,1,-1^{n+1})$ along the Teichmüller geodesic flow satisfies the following relation:
\begin{align}
\label{eq:lambda1:plus:n}
\lambda_1^+\big(\cQ_1(n,-1^n)\big)
=\frac{2}{n+2}\,,\quad{for}\ n\ge 2\,,
\\
\label{eq:lambda1:plus:n:1}
\lambda_1^+\big(\cQ_1(n,1,-1^{n+1})\big)
=\frac{2}{n+2}\,,\quad{for}\ n\ge 1\,.
\end{align}
\end{NNTheorem}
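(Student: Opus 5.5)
We would follow the strategy of Chen and Möller for non-varying strata. First we prove the identity for every Teichmüller curve $C$ generated by a quadratic differential in $\cQ_1(n,-1^n)$ (respectively $\cQ_1(n,1,-1^{n+1})$), in particular for the arithmetic curves generated by square-tiled (pillowcase-tiled) surfaces. Then we pass to the whole stratum. Square-tiled surfaces equidistribute, and by the Eskin--Kontsevich--Zorich and Chen--Möller convergence arguments, Lyapunov exponents of the arithmetic Teichmüller curves, averaged appropriately, converge to those of the stratum. So it suffices that the value $\frac{2}{n+2}$ is the same for \emph{every} Teichmüller curve in the stratum.

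For a Teichmüller curve $C$ in genus one, the bundle $H^+$ has rank $g=1$ and is the Hodge bundle of the underlying elliptic curve pulled back to $C$. The Kontsevich--Forni formula then gives $\lambda_1^+ = \frac{2\deg(\lambda|_C)}{2g(C)-2+|\Delta|}$, where $\Delta$ is the set of cusps. Using the canonical double cover, the denominator can be rewritten as an intersection number with the relative dualizing sheaf twisted by the sections, i.e.\ as a combination of the $\psi$-classes of the marked singularities. Since the parametrized sections $p,p_1,\dots$ are disjoint away from the cusps, standard computations give $C\cdot\psi_i$ in terms of the orders $m_i$. As in Chen--Möller, the problem thus reduces to computing the slope $(C\cdot\delta)/(C\cdot\lambda)$ in $\overline{\mathcal M}_{1,N}$, or rather its image after forgetting the poles. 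On $\overline{\mathcal M}_{1,1}$ we have $\lambda=\delta_{\mathrm{irr}}/12$, so what remains is the contribution of the boundary divisors where marked points collide.

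The key step is to exhibit an effective divisor $E$ whose class is an explicit combination $a\lambda+\sum b_i\psi_i-\sum c_j\delta_j$, and to show that $C\cap E=\emptyset$. We take $E$ in the genus-one moduli space with the zero $p$ of order $n$ and, for the second stratum, the simple zero $q$. Natural candidates are a torsion-type or linear-equivalence condition: the locus where $n p$ (respectively $np+q$) relates to a special configuration of the remaining points, or a pullback of the divisor of points differing by $2$-torsion. The quadratic differential forces $\mathcal O(np-\sum p_i)$ (respectively $\mathcal O(np+q-\sum p_i)$) to be trivial, with no square root in the relevant sense since the differential is not a global square. We then check, including at the boundary where $C$ meets stable curves with collided poles, that a surface on $C$ can never satisfy the defining condition of $E$. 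This gives $C\cdot E=0$, hence one linear relation among $C\cdot\lambda$, $C\cdot\psi_i$ and $C\cdot\delta_j$, which combined with the $\psi$-computations pins down $\deg\lambda|_C$ relative to $\chi(C)$ and yields $\frac{2}{n+2}$.

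The main obstacle is this disjointness at the boundary. Degenerations of the surfaces along cusps of $C$ (cylinder pinching) must be shown to avoid the closure of $E$. We would handle this with the cylinder-decomposition description of cusps: each cusp corresponds to a stable curve whose components carry meromorphic quadratic differentials with prescribed residue conditions, and we would verify case by case that these limits cannot lie in $\overline{E}$. The restriction $n\ge2$ in the first formula, as opposed to $n\ge1$ in the second, reflects that $\cQ_1(1,-1)$ is empty.
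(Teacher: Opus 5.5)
The paper does not prove this statement; it quotes it from Chen--Möller. Your outer framework is sound. For a Teichmüller curve $C$ one has $\lambda_1^+=2\,C\cdot\lambda/\chi$, because $H^+$ is the Hodge bundle of the elliptic curve itself. One also has $C\cdot\psi_j=\chi/(d_j+2)$, and a value common to all Teichmüller curves passes to the stratum via square-tiled surfaces. But the decisive step is missing: you never specify $E$, and the candidates you suggest cannot do the job.

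\begin{itemize}
\item The locus $\{np\sim\sum p_i\}$ (resp.\ $\{np+q\sim\sum p_i\}$) contains all of $C$. It is precisely the image of the projectivized stratum, which is a divisor in $\mathcal M_{1,N}$, so $C$ is never disjoint from it.
\item A ``$2$-torsion'' locus has no reason to avoid the interior of $C$.
\item Any $E$ whose class involves boundary divisors $\delta_{0,S}$ that $C$ actually meets only trades the unknown $C\cdot\lambda$ for unknown numbers $C\cdot\delta_{0,S}$.
\item Similarly, $\lambda=\delta_{\mathrm{irr}}/12$ yields nothing on its own, since $C\cdot\delta_{\mathrm{irr}}$ is as unknown as $C\cdot\lambda$.
\end{itemize}

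The ``case by case'' boundary verification you defer is therefore the whole content. Your plan gives neither a mechanism for it nor a reason why exactly these two series should be non-varying.

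Closing the argument requires no auxiliary interior divisor. On $\overline{\mathcal M}_{1,N}$ one has $\psi_p=\lambda+\sum_{S\ni p,\,|S|\ge 2}\delta_{0,S}$, where $\delta_{0,S}$ is the divisor of curves with a rational tail carrying exactly the markings in $S$. Hence $C\cdot\lambda=\chi/(n+2)-\sum_{S\ni p}C\cdot\delta_{0,S}$, and it suffices to show that no cusp of $C$ lies on such a $\delta_{0,S}$. In your language, take $E=\sum_{S\ni p}\delta_{0,S}$, of class $\psi_p-\lambda$; disjointness from the interior of $C$ is automatic for it.

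At a cusp the nodes come from pinching cylinder core curves. So a separating node whose genus-zero side contains $p$ is a core curve $\gamma$ cutting off a disk $A\ni p$ with geodesic boundary. Gauss--Bonnet gives $\sum_{j\in A}d_j=-2$, so $A$ would contain at least $n+2$ simple poles, whereas the stratum has only $n$ (resp.\ $n+1$). Thus $C\cdot\lambda=\chi/(n+2)$ and $\lambda_1^+=2/(n+2)$.

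This pole count is exactly what breaks down once the remaining zeroes have total degree at least $2$ (e.g.\ for $\cQ_1(n,2,-1^{n+2})$). That is what singles out these two series.
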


For other strata of the form $\cQ_g(n,d_1,d_2,\dots,d_k,-1^{n+D-4g+4})$ the left-hand side of the equality~\eqref{eq:sum:of:lambda:plus} admits upper and lower bounds which are sufficient to elaborate the volume asymptotics. All Lyapunov exponents involved in~\eqref{eq:sum:of:lambda:plus} are by definition nonnegative, so $\lambda_1^+\ge\dots\ge\lambda_g^+\ge 0$ which provides a trivial lower bound $0$ for the sum in the left-hand side of the equality~\eqref{eq:sum:of:lambda:plus}. The upper bound for this sum is given by the following Theorem of Ch.~Fougeron~\cite{Foujeron}.

\begin{NNTheorem}[Ch.~Fougeron~\cite{Foujeron}]
Let $g$ and $n\ge d_1 \ge d_2 \ge \cdots\ge d_k$ be positive integers. Let $D=d_1+\dots+d_k$. If both of the following conditions hold, namely if $n+D\ge 4g-4$ and $n\ge 2g$, then for any connected component $Q^c(n,d_1,\dots, d_k,-1^{n+D-4g+4})$ of the stratum of meromorphic quadratic differentials in genus $g$, the following inequality holds:
\begin{equation}
\label{eq:Fougeron:bound}
\lambda_1^+ + \dots +\lambda_g^+
\le \frac{(3g-1)g}{n+2}\,.
\end{equation}
\end{NNTheorem}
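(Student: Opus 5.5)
We would prove the bound~\eqref{eq:Fougeron:bound} by following the algebro-geometric strategy of Chen--Möller for non-varying strata, replacing their exact computation by an inequality. First we reduce to Teichmüller curves, or more generally to $\mathrm{SL}(2,\mathbb R)$-invariant suborbifolds. By the Kontsevich--Forni formula, for any such invariant measure the sum $\lambda_1^++\dots+\lambda_g^+$ equals the normalized degree of the Hodge bundle $H^+$. Here $H^+$ is identified fibrewise with the holomorphic $1$-forms $H^{1,0}(C)$ on the underlying curve $C$, and the degree is normalized by the orbifold Euler characteristic, i.e.\ by the degree of the tautological line bundle spanned by $q$. On a Teichmüller curve this is a genuine degree computation. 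For the whole connected component $Q^c(n,d_1,\dots,d_k,-1^{n+D-4g+4})$ we would pass to the limit, using density of Teichmüller curves (or arguing with the same curvature identity along Teichmüller discs).

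The main construction is a filtration of $H^{1,0}$ by vanishing order at the distinguished zero $p$ of order $n$. Let $0\le a_1<a_2<\dots<a_g\le 2g-2$ be the vanishing sequence of holomorphic $1$-forms at $p$. This gives a flag of subbundles $F_i=\{\omega:\operatorname{ord}_p\omega\ge a_i\}$ whose graded pieces are line bundles $L_i$. For a section $\omega$ of $L_i$, the ratio $\omega^{\,n+2}/q^{\,a_i+1}$ is a local function whose leading coefficient at $p$ is nonvanishing, after taking appropriate local coordinates $z$ with $q=z^n\,dz^2$. This yields the key inequality
\begin{equation*}
\deg L_i\;\le\;\frac{2(a_i+1)}{n+2}\cdot\deg(\text{tautological bundle})\,,
\end{equation*}
up to boundary corrections. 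The hypothesis $n\ge 2g$ should be exactly what guarantees that these corrections have the right sign, since $n+2$ exceeds every $a_i+1$. The condition $n+D\ge 4g-4$ just ensures that the stratum is nonempty with nonnegatively many poles. A standard Harder--Narasimhan type argument then shows that the sum of Lyapunov exponents (the degree of $H^+$) is at most $\sum_i\deg L_i$, so that
\[
\lambda_1^++\dots+\lambda_g^+\le \frac{2}{n+2}\sum_{i=1}^g (a_i+1)\,.
\]

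It then remains to bound the vanishing sequence. The identity $\sum_{i=1}^g(6i-4)=g(3g-1)$ shows that we need $2(a_i+1)\le 6i-4$, i.e.\ $a_i\le 3i-3$. This is weaker than the Weierstrass-gap bound $a_i\le 2i-2$, which comes from Riemann--Roch: $h^0(K-jp)\ge g-j$. Since the target constant is not sharp, the slack of $i-1$ in each term can absorb the boundary and degeneration contributions mentioned above, which might require weakening the Riemann--Roch estimate.

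The main obstacle, we expect, is making the degree comparison for $L_i$ rigorous at the boundary of the Teichmüller curve (cusps). There the filtration may jump, $p$ may collide with other zeros in the stable limit, and the local normal form of $q$ degenerates. We would first try to control this using the multi-scale compactification, bounding the extra boundary terms by the contribution of vanishing cycles. Failing that, we would work directly with Forni's variational formula for the Hodge norm, $\Delta\log\|\omega_1\wedge\dots\wedge\omega_i\|$, on a basis adapted to the flag. In that version the boundary behavior is replaced by growth estimates of the second fundamental form $B(\omega,\omega)=\int \omega^2/q\cdot|q|$, where the high order $n$ of $q$ at $p$ makes $|B|$ small relative to $(a_i+1)/(n+2)$.
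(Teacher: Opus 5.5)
The paper gives no proof of this statement; it is quoted from Fougeron, so your proposal has to stand on its own. Your core idea is the right one: filter the Hodge bundle of a Teichm\"uller curve by vanishing order along the section $S$ of the zero of order $n$, as Yu--Zuo and Chen--M\"oller do. \textbf{The gap} is that the decisive inequality is only asserted ``up to boundary corrections''. Your mechanisms for controlling them are not arguments: $n\ge 2g$ fixing a sign, the slack between $a_i\le 2i-2$ and $a_i\le 3i-3$ absorbing them, or a fallback to multi-scale differentials or Forni's variational formula.

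The missing observation is that on a Teichm\"uller curve there are no such corrections.
\begin{itemize}
\item Cusps correspond to pinching the core curves of the cylinders in a periodic direction. So in the stable limit the zeros and poles of $q$ stay at pairwise distinct smooth points of the fibre.
\item The limit differential is nonzero on every component, since it has double poles with nonzero residues at the nodes.
\item Hence on the stable model $f\colon\mathcal X\to\overline C$ (after a finite base change) one has $\omega_{\mathcal X/\overline C}^{\otimes 2}\cong f^*\mathcal L_q\otimes\mathcal O(\sum_j m_jS_j)$ with no vertical part. Here $\mathcal L_q\cong\mathcal L_{\hat\omega}^{\otimes 2}$ has degree $\chi=2g(\overline C)-2+|\Delta|$.
\item Intersecting with $S$ and using adjunction gives exactly $-S^2=\chi/(n+2)$.
\item The sheaves $f_*\omega_{\mathcal X/\overline C}(-aS)$ are saturated. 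So the graded pieces $L_i$ are line bundles with nonzero maps $L_i\to\omega_{\mathcal X/\overline C}(-a_iS)|_S$, whence $\deg L_i\le (a_i+1)\chi/(n+2)$.
\item Additivity of degree (no Harder--Narasimhan argument is needed for the sum) and Kontsevich's formula $\lambda_1^++\dots+\lambda_g^+=2\deg f_*\omega_{\mathcal X/\overline C}/\chi$ then give $\lambda_1^++\dots+\lambda_g^+\le\frac{2}{n+2}\sum_i(a_i+1)\le\frac{2g^2}{n+2}$.
\end{itemize}
This is stronger than the stated bound and uses neither $n\ge 2g$ nor any slack. In genus $1$ it reproduces the Chen--M\"oller value $2/(n+2)$, which is a useful check on constants.

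Three smaller points.
\begin{enumerate}
\item \emph{Normalizations.} Your formulas are consistent only if ``tautological bundle'' means the one spanned by $\hat\omega$, of degree $\chi/2$. For the bundle spanned by $q$, which is what you wrote, the correct statements are $\lambda_1^++\dots+\lambda_g^+=2\deg H^+/\deg\mathcal L_q$ and $\deg L_i\le\frac{a_i+1}{n+2}\deg\mathcal L_q$. Combining your version of the key inequality with the correct Kontsevich formula would only give $4g^2/(n+2)$, which does not suffice. Also, $\omega^{n+2}/q^{a_i+1}$ is not a function. Only the ratio of leading terms at $p$ is a scalar, i.e.\ $L_i^{\otimes(n+2)}\hookrightarrow\mathcal L_q^{\otimes(a_i+1)}$ along $S$.
\item \emph{Weierstrass bound.} $a_i\le 2i-2$ does not follow from $h^0(K-jp)\ge g-j$, which only gives $a_i\ge i-1$. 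It is the Weierstrass gap theorem, proved via the semigroup of non-gaps or via Clifford's theorem.
\item \emph{Passing to the stratum.} Density of Teichm\"uller curves does not let you pass to the Masur--Veech measure, because Lyapunov exponents are not continuous in the measure in general. You need equidistribution (e.g.\ of pillowcase covers) combined with Forni's formula, which expresses the sum as the integral of a bounded continuous curvature function. Alternatively, use the Eskin--Kontsevich--Zorich approximation of the sum for a component by averages over arithmetic Teichm\"uller curves; this is how Chen--M\"oller pass from curves to strata.
\end{enumerate}
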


Consider now the right-hand side of the key equality~\eqref{eq:sum:of:lambda:plus}. The area Siegel--Veech constant is given by~\cite[Formula~(19)]{Goujard:carea}. It is computed as a sum over ``\textit{configurations of $\hat{h}$\hspace*{-1pt}omologous saddle connections}'' (or, equivalently, over admissible decorated ribbon graphs, or, equivalently, over the \textit{principal boundary classes} $\cC$).
\begin{NNTheorem}[E.~Goujard]
Let $\cC$ be an admissible configuration for a connected stratum $\cQ(\alpha)$ of quadratic differentials.
Let $q_1$ denote the number of thin cylinders, $q_2$ the number
of thick cylinders
for the configuration $\cC$.
Assume that the associated principal boundary stratum $\cQ(\alpha'(\cC))$ is nonempty, and that $q=q_1+q_2\geq 1$. Then the contribution of the configuration $\cC$ to the area Siegel--Veech constant has the following form:
\begin{multline}
\label{eq:carea}
c_{area}(\cC)=M\cdot\frac{(4q_1+q_2)}{2^{m+q+3}}
\\
\times\frac{
\prod_i\big((a_i-1)!\Vol\cQ(\alpha'_i)\big)\cdot
\prod_j\big(2^{b_j}\cdot (b_j-1)! \cdot \Vol\cH(\beta'_j)\big)
}
{(\dim_{\C} \cQ(\alpha)-1)!\Vol\cQ(\alpha)}\,,
\end{multline}
where
$M$ is an explicit combinatorial constant,
$a_i=\dim_{\C} \cQ(\alpha'_i)$, $b_j=\dim_{\C} \cH(\beta'_j)$,
and $m$ is the total number of irreducible components
$\cQ(\alpha'_i)$ and $\cH(\beta'_j)$ counted together.
\end{NNTheorem}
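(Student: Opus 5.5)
The statement is E.~Goujard's formula, which the paper quotes from~\cite{Goujard:carea}, so I would reprove it along the Eskin--Masur--Zorich route that underlies it. Start from the Siegel--Veech formula: the area Siegel--Veech constant is the coefficient of quadratic growth in the count of cylinders weighted by their area. By the Siegel--Veech transform, the contribution of a configuration $\cC$ can be written as
\[
c_{area}(\cC)=\lim_{\epsilon\to 0}\frac{1}{\pi\epsilon^2}\cdot
\frac{\Vol_{\cC,\mathit{area}}\big(\cQ_\epsilon(\alpha)\big)}{\Vol\cQ(\alpha)}\,.
\]
Here $\cQ_\epsilon(\alpha)$ is the set of surfaces of area at most $1/2$ (in the paper's normalization) having a collection of $\hat h$omologous saddle connections of type $\cC$ of length at most $\epsilon$. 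Each surface in this set is counted with weight equal to the total area of the cylinders bounded by these saddle connections, divided by the area of the surface. The whole task is to compute this thin-part volume to leading order in $\epsilon$.

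\emph{Step 1: local product structure near the principal boundary.} Cut each surface in $\cQ_\epsilon(\alpha)$ along the short $\hat h$omologous saddle connections and collapse the cylinders. This produces an element of the principal boundary stratum, namely a product of surfaces in the quadratic strata $\cQ(\alpha'_i)$ and the Abelian strata $\cH(\beta'_j)$. The extra parameters are the short holonomy vector $\gamma$, which lies in a disc of radius $\epsilon$, and, for each of the $q$ cylinders, a height and a twist. The map from the thin part to (boundary data, $\gamma$, heights, twists) is finite-to-one. I take its degree, together with the number of ways of reinserting figure-eight and creases and of choosing separatrices, as the definition of the combinatorial constant $M$. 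In period coordinates this map is locally linear, so the Masur--Veech measure splits as a product. The power $2^{b_j}$ and part of $2^{m+q+3}$ come from comparing the lattices: quadratic periods are measured on the orientation double cover, whereas the $\cH(\beta'_j)$ components use ordinary periods, and each cylinder's holonomy is $\gamma$ or $2\gamma$ depending on whether it is thin or thick.

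\emph{Step 2: integrating out the area constraint.} Each volume $\Vol\cQ(\alpha'_i)$ and $\Vol\cH(\beta'_j)$ is defined through the cone over the hyperboloid, so the measure of the set of surfaces of area at most $A$ in that stratum equals $A^{a_i}$ (respectively $A^{b_j}$) times a multiple of its volume. Integrate over the distribution of total area among the $m$ components and the $q$ cylinders, subject to total area at most $1/2$, with the cylinder area appearing as the weight. This is a Dirichlet-type integral. It yields $\prod(a_i-1)!\prod(b_j-1)!$ divided by $(\dim_\C\cQ(\alpha)-1)!$, because $\dim_\C\cQ(\alpha)=\sum a_i+\sum b_j+q$ up to the single $\gamma$ coordinate. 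The cylinder weight contributes the factor $4q_1+q_2$: a thin cylinder has circumference $2|\gamma|$ on the quadratic surface, so its area weight scales by $4$ relative to a thick one. After dividing by $\pi\epsilon^2$, the disc integral over $\gamma$ leaves only constants.

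\emph{Step 3, the main obstacle: justifying the limit.} One must show that surfaces carrying additional short saddle connections not in $\cC$, or several non-$\hat h$omologous short families, contribute $o(\epsilon^2)$. Only then does the product approximation give the exact leading term. For this I would invoke the Eskin--Masur quadratic upper bounds and the estimate that the measure of the set with two non-parallel short saddle connections is $O(\epsilon^4)$. Dominated convergence then handles the cylinder-height integral, whose unboundedness is controlled by the area cap. The remaining tedious point is tracking the normalization constants consistently with the conventions of~\cite{Athreya:Eskin:Zorich} and~\cite{Goujard:vol}. This amounts to checking the overall power $2^{m+q+3}$, which I would confirm against a known low-dimensional case such as $\cQ(1,-1^5)$.
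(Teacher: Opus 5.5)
The paper has no proof of this statement to compare with. It is quoted as Formula~(19) of Goujard's paper, and only her recipe $M=M_sM_c/M_t$ is recalled, in Section~\ref{ss:Formula:for:the:combinatorial:constant}. Your route is the right one: the Siegel--Veech transform, the product structure near the principal boundary, a Dirichlet integral over the distribution of area, and $O(\epsilon^4)$ bounds for non-parallel short saddle connections. This is the Eskin--Masur--Zorich route underlying Goujard's proof. However, the two computations that produce the shape of the formula are not correct as written.

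(i) Thin cylinders have circumference $|\gamma|$ (figure-eight, dumbbell), and thick ones have circumference $2|\gamma|$ (in the pocket the pole-to-pole saddle connection is traversed twice). Your Step~2 reverses this and contradicts your Step~1. Nor does the circumference change the area weight. The twist ranges over $[0,w)$, so $dt\,dh$ integrates to $d\,\mathrm{Area}(\mathrm{cyl})$ whatever $w$ is. The factor $4$ comes only from the counting threshold $w\le\epsilon$ in $N_{area}$, that is, from the radius $\epsilon$ versus $\epsilon/2$ of the disc for $\gamma$. Since the $q$ cylinders are counted separately with identical Dirichlet integrals, one gets $q_1+\frac14 q_2$. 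You reach $4q_1+q_2$ only because your two errors cancel; the correct mechanism combined with your thin/thick assignment would give $q_1+4q_2$.

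(ii) Your thin part is defined with area at most $1/2$, a fixed length bound $\epsilon$, and weight divided by $\mathrm{Area}(S)$. This is not the cone over the $\epsilon$-thin part of the hyperboloid. Used with the standard cone normalization, it gives $\dim_\C\cQ(\alpha)/(\dim_\C\cQ(\alpha)-1)$ times the right answer, a factor not of the stated form. The length condition must instead scale like $\sqrt{\mathrm{Area}(S)}$. Then the $\gamma$-disc contributes a factor proportional to $\mathrm{Area}(S)$, which cancels the $1/\mathrm{Area}(S)$ in the weight. What remains is a Dirichlet integral with exponents $a_i$ and $b_j$, exponent $2$ for the weighted cylinder, and exponent $1$ for each of the other $q-1$ cylinders. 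These exponents sum to $\dim_\C\cQ(\alpha)$, and the factor $\dim_\R$ relating cone measure to volume turns $(\dim_\C\cQ(\alpha))!$ into $(\dim_\C\cQ(\alpha)-1)!$. This cancellation, not ``the single $\gamma$ coordinate'', is where the factorial comes from.

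Defining $M$ as the degree of the surgery map plus ``the number of ways of reinserting\ldots'' only proves the formula with an undetermined $M$. The statement as quoted merely names $M$, but the paper uses its explicit value. This includes $K(v)$ built from sums of boundary orders: for instance $K(v_1)=n$ for pockets and dumbbells, which makes these contributions of order $n$ in Section~\ref{s:Dominant:and:negligible:contributions}. It also includes $M_c\in\{4^q,4^{q+1}\}$, $M_t$, and the symmetry factors $|\Gamma(v)|$ for tori and the hyperelliptic exceptions. Your sketch derives none of these. It also leaves open which constants go into $M$ and which into the explicit powers of $2$, so the exponent $m+q+3$ and the factors $2^{b_j}$ are asserted rather than derived. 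A check on a single stratum such as $\cQ(1,-1^5)$ cannot confirm a power of $2$ that depends on $m$ and $q$.
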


We denote singularity data of quadratic strata by symbols $\alpha'_i$ and their dimensions by $a_i$. We denote singularity data of Abelian strata by $\beta'_j$ and their dimensions by $b_j$. We warn the reader that E.~Goujard uses reverse notations in~\cite[Formula~(19)]{Goujard:carea}. In the current paper the symbol $b$ is mostly reserved for degrees of  endpoint singularities. We refer to the original paper~\cite{Goujard:carea} for area conventions and for a convention on normalization of the Masur--Veech volume. Recall that all the singularities are always assumed to be labeled.

\begin{remark}
\label{rm:connected:strata}
In genera $0$ and $1$ all strata are connected. It follows from the classification Theorem by E.~Lanneau~\cite{Lanneau} rectified by D.~Chen and M.~Möller~\cite{Chen:Moeller} that all strata having at least $3$ simple poles are connected. Dealing with strata of quadratic differentials in genera larger than one we will mostly assume that the stratum has at least $5$ simple poles. We shall always work with connected strata only.
\end{remark}

The remaining important background result is the following closed formula for the Masur--Veech volume of any stratum in genus $0$ conjectured by M.~Kontsevich and proved in~\cite[Theorem~1.1]{Athreya:Eskin:Zorich}.
\begin{NNTheorem}[M.~Kontsevich -- conjecture;
J.~Athreya, A.~Eskin, A.~Zorich -- proof]
The volume of any stratum $\cQ_0(m_1, \dots, m_p)$ of meromorphic quadratic differentials with at most simple poles on $\CP$ (i.e. $m_i\in \{-1\,;\,0\}\cup\N$ for $i=1,\dots,p$, and $\sum_{i=1}^p m_i = -4$) is equal to
\begin{equation}
\label{eq:volume:in:genus:0}
\Vol\cQ_0(m_1, \dots, m_p) =  2\pi^2\cdot \prod_{i=1}^p v(m_i)\,,
\end{equation}
where all the zeroes and poles are labeled.
\end{NNTheorem}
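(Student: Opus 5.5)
The plan is to prove~\eqref{eq:volume:in:genus:0} by induction on the complex dimension $p-2$ of the stratum $\cQ_0(m_1,\dots,m_p)$. The engine is the Eskin--Kontsevich--Zorich relation~\eqref{eq:sum:of:lambda:plus}, which becomes rigid in genus $0$: the bundle $H^+$ has rank $g=0$, so its left-hand side is an empty sum. This yields the exact relation
\[
c_{area}\big(\cQ_0(m_1,\dots,m_p)\big)=-\frac{1}{8\pi^2}\sum_{i=1}^p\frac{m_i(m_i+4)}{m_i+2}\,.
\]
Hence no Lyapunov exponent information is needed in genus $0$. This is the key simplification compared with the higher-genus situation treated in the rest of the paper.

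The second step is to write the same $c_{area}$ through Goujard's formula~\eqref{eq:carea}, specialized to genus $0$. On $\CP$, every configuration of $\hat{h}$omologous saddle connections bounding a cylinder has a simple form. It is a single cylinder ($q=1$) whose core curve separates the sphere into two discs, so the boundary stratum splits into two genus-$0$ quadratic strata. One contains a subset $I$ of the original singularities together with a new singularity of order $\sum_{i\in I}m_i+2$ obtained by collapsing the cylinder, and similarly for the complement $J$. There are no Abelian components $\cH(\beta'_j)$. I would enumerate these splittings $\{1,\dots,p\}=I\sqcup J$, subject to the degree constraints that make both pieces valid genus-$0$ strata. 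I would also determine the combinatorial constant $M$ and the thin/thick type of the cylinder for each splitting. Then
\[
\Vol\cQ_0(m)\cdot c_{area}(\cQ_0(m))=\sum_{I\sqcup J}(\text{explicit factor})\cdot\frac{(a_I-1)!\,(a_J-1)!}{(p-3)!}\,\Vol\cQ_0(\alpha'_I)\,\Vol\cQ_0(\alpha'_J)\,.
\]
Combined with the first step, this expresses $\Vol\cQ_0(m)$ through volumes of strictly lower-dimensional genus-$0$ strata. The prefactor $\sum m_i(m_i+4)/(m_i+2)$ is nonzero, since it equals $-3\cdot\#\{\text{poles}\}+\dots$ and is strictly negative in genus $0$.

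The third step is to check that the ansatz $2\pi^2\prod v(m_i)$ satisfies this recursion. The base case is $\Vol\cQ_0(-1^4)=2\pi^2$, which can be computed directly as the pillowcase stratum. Substituting the ansatz, the factors $v(m_i)$ for $i\in I$ and $i\in J$ reappear on both sides. The claim then reduces to an identity in which each splitting contributes $v(\sum_I m_i+2)\,v(\sum_J m_i+2)$ times combinatorial weights, to be summed against $\sum m_i(m_i+4)/(m_i+2)$. I expect this identity to be the main obstacle. My first attempt would be to use the Gamma-function form~\eqref{eq:v} of $v$. This turns $v(a)v(b)$ with $a+b$ fixed into Beta-integrals, $\Gamma(\tfrac a2+1)/\Gamma(\tfrac a2+\tfrac32)\propto\int_0^1 t^{a/2}(1-t)^{-1/2}\,dt$. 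The sum over splittings then factorizes into a product over singularities of one-variable generating functions, to be integrated. The desired equality should follow from integration by parts, with the term $m_i(m_i+4)/(m_i+2)$ arising as a derivative of $\log v$-type quantities. If this direct approach becomes unwieldy, I would instead first verify the identity for strata with only poles and one zero, and then induct on the number of zeroes using the multiplicativity in the $v(m_i)$.
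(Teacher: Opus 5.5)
Your overall strategy is the right one. It is the Athreya--Eskin--Zorich argument, which the paper quotes rather than reproves; the paper's own higher-genus recursion is modeled on it, and the $g=0$ cases of Propositions~\ref{prop:dumbbell:general:init} and~\ref{prop:pocket} are exactly the terms that argument needs. In genus $0$ the Lyapunov sum in \eqref{eq:sum:of:lambda:plus} is empty, so $c_{area}$ is explicit, and your sign argument is fine: the sum $\sum m_i(m_i+4)/(m_i+2)$ is in fact at most $-12$.

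The genuine gap is in your second step, where the list of genus-$0$ configurations and their boundary strata is wrong.

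First, you omit the \emph{pocket} configurations. In these, one side of the cylinder is a single saddle connection joining two simple poles, so the cylinder is folded in half there. Their principal boundary consists of one quadratic stratum ($m=1$): the zero $z$ at the base is replaced by one of order $m_z-2$, and the two poles disappear. The cylinder is thick ($q_1=0$, $q_2=1$, $M_c=4^{q+1}$, $M=16m_z$), and there are $\binom{P}{2}$ choices of the pair of poles. These terms are indispensable. A stratum with a single zero, such as $\cQ_0(n,-1^{n+4})$, admits no two-component splitting at all, so your right-hand side would be empty, whereas the EKZ relation gives $c_{area}=\frac{(n+3)(n+4)}{4\pi^2(n+2)}\neq0$.

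Second, the two-component configurations (the dumbbells: $m=2$, a thin cylinder, $M=4m_{z_0}m_{z_1}$) are not ``$I$ plus a new singularity of order $\sum_I m_i+2$''. Under that rule the two pieces could both be genus-$0$ strata only if $\sum_I m_i=\sum_J m_i=-3$, which is impossible. The correct description is as follows. Each side carries total degree exactly $-2$ and contains a genuine zero $z$ with $m_z\ge1$, namely the endpoint of the loop saddle connection bounding the cylinder. The boundary stratum of that side is obtained from its singularities by replacing $m_z$ with $m_z-2$. The poles are distributed in $\binom{P}{P_0}$ ways.

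Consequently, the identity you single out as the main obstacle (products $v(\cdot)\,v(\cdot)$ handled by Beta integrals) is not the one that arises, and with your boundary data it would be false. After inserting the ansatz, every $v$ cancels except the ratios $v(m_z-2)/v(m_z)=\frac{m_z+1}{\pi^2m_z}$, and what must be proved is a purely rational identity. Take zeros $d_1,\dots,d_k\ge1$ with $D=\sum_j d_j$ (hence $D+4$ poles), and for a set $S$ of zeros write $D_S=\sum_{j\in S}d_j$ and $k_S=|S|$. The identity reads
\[
\frac{(D+3)(D+4)}{4(D+k+1)}
+\frac{(D+4)!}{2\,(D+k+1)!}\sum_{\{S,S'\}}
\frac{(D_S+k_S)!\,(D_{S'}+k_{S'})!}{(D_S+2)!\,(D_{S'}+2)!}
=\frac32+\frac14\sum_{j=1}^k\Big(d_j-1+\frac{2}{d_j+2}\Big),
\]
where the sum runs over unordered splittings of the zeros into two nonempty sets. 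The first term is $\pi^2$ times the total pocket contribution, and the second is $\pi^2$ times the total dumbbell contribution. For $k=1$ and $d_1=n$, both sides equal $\frac{(n+3)(n+4)}{4(n+2)}$. This identity still has to be proved for all $k$, for instance by induction on $k$. Strata with marked points reduce to this case via \eqref{eq:vol:with:marked:point}.
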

In particular, since by convention $v(-1)=1$,
Formula~\eqref{eq:volume:in:genus:0} gives
\begin{equation}
\label{eq:Vol:minimal:g0}
  \Vol\cQ_0(n,-1^{n+4}) = 2\pi^2\cdot v(n)\,.
\end{equation}

\begin{remark}
%\label{rm:contribution:of:a:marked:point:to:volume}
Standard normalization conventions on the Masur--Veech volume element
imply that for any stratum of Abelian differentials in any genus,
marking an extra point does not change the volume, while for any
stratum of quadratic differentials in any genus,
marking an extra point doubles the volume:
\begin{equation}
\label{eq:vol:with:marked:point}
\begin{aligned}
\Vol\cH_g(n_1,\dots n_k,0)&=\Vol\cH(n_1,\dots, n_k)\,,\\
\Vol\cQ_g(m_1,\dots m_k,0)&=2\Vol\cQ(m_1,\dots, m_k)\,.
\end{aligned}
\end{equation}
\end{remark}

%----------------------------------------------------------------------------------
\subsection{Recurrence relation in terms of normalized volumes}

The symbol $d$ without indices will mostly denote
the collection $d=(d_1,\dots,d_k)$.
It would be convenient to use normalized Masur--Veech volumes defined as
\begin{equation}
\label{eq:W:g:n}
W_g(n;d)=
W_g(n;d_1,\dots,d_k):=
\frac{\Vol\cQ_g(n,d,-1^{n+D-4g+4})}
{n^g\cdot v(n)\cdot v(d_1)\cdots v(d_k)}\,,
\end{equation}
and normalized area Siegel--Veech constants defined as
\begin{equation}
\label{eq:C:g:n}
C_g^\cC(n;d)=
C_g^\cC(n;d_1,\dots,d_k):=4\pi^2\cdot
\frac{\Vol\cQ_g(n,d,-1^{n+D-4g+4})}
{n^g\cdot v(n)\cdot v(d_1)\cdots v(d_k)}
\cdot\carea(\cC)\,.
\end{equation}

Let
\begin{equation}
\label{eq:D:and:Delta}
D:=|d|:=d_1+\dots+d_k,
\qquad
\Delta:=2\sum_{j=1}^k\frac{1}{d_j+2}.
\end{equation}
and let
\begin{equation}
\label{eq:Lambda:plus}
\Lambda^+_g(n,d)=
\sum_{i=1}^g \lambda_i^+\bigl(\cQ_g(n,d_1,\dots,d_k,-1^{n+D-4g+4})\bigr)\,.
\end{equation}

The Proposition below establishes a recurrence relation for $W_g(n;d)$ in terms of volumes of \textit{simpler} strata, where ``simpler'' means ``smaller $n$, or smaller partition $d$, or smaller genus $g$, or combinations of those''. This relation is based on a combination of the Eskin--Kontsevich--Zorich Formula~\eqref{eq:sum:of:lambda:plus} for the sum of Lyapunov exponents $\Lambda_g^+(n,d)$, Fougeron's bound~\eqref{eq:Fougeron:bound} for the sum of the Lyapunov exponents and  of Goujard's Formula~\eqref{eq:carea} for the area Siegel--Veech constant $\carea$ appearing in Formula~\eqref{eq:sum:of:lambda:plus}. For the two non-varying strata in genus $1$ we use exact Formulae~\eqref{eq:lambda1:plus:n}--\eqref{eq:lambda1:plus:n:1} of Chen--Möller instead of Fougeron's bound for the Lyapunov exponent $\lambda_1^+$.

\begin{theorem}[Main recurrence relation]
\label{th:main:recurrence:relation}
For any fixed $g$ and $d=(d_1,\dots,d_k)$, let
$n\geq\max(1,4g-D-4)$ and assume that the stratum
$\cQ_g(n,d_1,\dots,d_k,-1^{n+D-4g+4})$
is nonempty and connected.
Then the following relations are valid, where the sum is
taken over all admissible configurations containing at least
one cylinder.
\begin{multline}
\label{eq:EKZ:in:W:g}
W_g(n;d)
=\frac{1}{
n+D-k+5-6g+\Delta
+\frac{2}{n+2}
+12 \Lambda^+_g(n,d)
}
\sum_{\cC} C_g^\cC(n;d)
\\=
\frac{1}{
n+D-k+5-6g+\Delta
+O_{g,d}\left(\frac1n\right)
}
\sum_{\cC} C_g^\cC(n;d)\,,
\end{multline}
where the second relation is asymptotic as $n\to+\infty$.
\end{theorem}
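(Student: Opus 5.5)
The plan is to rewrite the Eskin--Kontsevich--Zorich formula~\eqref{eq:sum:of:lambda:plus} for the stratum $\cQ_g(n,d_1,\dots,d_k,-1^{N})$, $N=n+D-4g+4$, solve it for $c_{area}$, and then multiply by the normalizing factor in~\eqref{eq:W:g:n}. The hypotheses that the stratum is nonempty and connected (see Remark~\ref{rm:connected:strata}) are what make~\eqref{eq:sum:of:lambda:plus} applicable. They also guarantee $\Vol\cQ_g(n,d,-1^N)\neq 0$, so $W_g(n;d)$ is well defined.

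\textbf{Step 1: the explicit term.} I would use the identity $\frac{m(m+4)}{m+2}=m+2-\frac{4}{m+2}$. It gives contribution $n+2-\frac{4}{n+2}$ from the zero of order $n$ and $d_j+2-\frac{4}{d_j+2}$ from each $d_j$. Each simple pole contributes $-3$, and there are $N$ of them. Summing these and using~\eqref{eq:D:and:Delta},
\[
\sum_i \frac{m_i(m_i+4)}{m_i+2}
=-2\Bigl(n+D-k+5-6g+\Delta+\tfrac{2}{n+2}\Bigr).
\]
Substituting into~\eqref{eq:sum:of:lambda:plus} and multiplying by $12$ yields
\[
4\pi^2\,c_{area}\bigl(\cQ_g(n,d,-1^N)\bigr)
= n+D-k+5-6g+\Delta+\tfrac{2}{n+2}+12\Lambda^+_g(n,d).
\]

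\textbf{Step 2: the configuration sum.} The area Siegel--Veech constant is the sum of the contributions $c_{area}(\cC)$ over admissible configurations. By Goujard's formula~\eqref{eq:carea}, configurations without cylinders ($q_1=q_2=0$) contribute zero because of the factor $4q_1+q_2$, so the sum may be restricted to configurations with at least one cylinder. Multiplying the identity of Step~1 by $\Vol\cQ_g(n,d,-1^N)/(n^g v(n)\prod v(d_j))$ gives
\[
W_g(n;d)\cdot\Bigl(n+D-k+5-6g+\Delta+\tfrac{2}{n+2}+12\Lambda^+_g\Bigr)=\sum_\cC C_g^\cC(n;d)
\]
by Definition~\eqref{eq:C:g:n}. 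Dividing gives the first line of~\eqref{eq:EKZ:in:W:g}. This division is legitimate because the bracket equals $4\pi^2 c_{area}>0$: $c_{area}$ is strictly positive, since every stratum has cylinders. Alternatively, for $n$ large, positivity is immediate from the asymptotics in Step~3.

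\textbf{Step 3: the asymptotic form.} I would bound $\Lambda^+_g(n,d)$ from both sides. The lower bound $\Lambda^+_g\ge 0$ holds because the exponents are nonnegative. For the upper bound, once $n\ge\max(2g,d_1)$ and $n+D\ge 4g-4$, which holds for all large $n$, Fougeron's bound~\eqref{eq:Fougeron:bound} gives $12\Lambda^+_g\le 12(3g-1)g/(n+2)$. For the two non-varying genus-one families one can instead use the exact values~\eqref{eq:lambda1:plus:n}--\eqref{eq:lambda1:plus:n:1}. Hence $\frac{2}{n+2}+12\Lambda^+_g=O_{g,d}(1/n)$, which gives the second line of~\eqref{eq:EKZ:in:W:g}.

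There is no serious obstacle. The only points needing care are the bookkeeping of the pole count $N$ in Step~1 and checking that Fougeron's hypotheses, in particular the ordering $n\ge d_1$, are met for large $n$.
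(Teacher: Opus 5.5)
Your proposal is correct and follows the paper's own proof: you evaluate the weighted sum $\sum_i \frac{m_i(m_i+4)}{m_i+2}$ explicitly, multiply the Eskin--Kontsevich--Zorich identity by the normalizing factor of~\eqref{eq:W:g:n}, and use $\Lambda^+_g\ge 0$ together with Fougeron's bound to get the $O_{g,d}(1/n)$ term. One minor remark: Goujard's formula~\eqref{eq:carea} is stated only under the assumption $q\ge 1$, so the vanishing of cylinder-free contributions should be justified directly from the definition of $c_{area}$ (which counts cylinders) rather than through the factor $4q_1+q_2$.
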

\begin{proofwithmanualqed}
For
\(
\cQ(m_1,\dots,m_p)
=
\cQ_g(n,d_1,\dots,d_k,-1^{n+D-4g+4}),
\)
we have
\(
p=n+D+k-4g+5.
\)
Therefore
\begin{multline}
\label{eq:minus:1:24:sum}
-\frac{1}{24}\sum_{i=1}^{n+D+k-4g+5}
\frac{m_i(m_i+4)}{m_i+2}
\\
=
\frac{1}{24}
\left(
3(n+D-4g+4)
-\frac{n(n+4)}{n+2}
-\sum_{j=1}^k\frac{d_j(d_j+4)}{d_j+2}
\right)
\\
=
\frac{1}{24}
\left(
3(n+D-4g+4)
-(n+2)
-\sum_{j=1}^k(d_j+2)
+\sum_{j=1}^k\frac{4}{d_j+2}
+\frac{4}{n+2}
\right)
\\
=
\frac{1}{12}
\left(
n+D-k+5-6g+\Delta+\frac{2}{n+2}
\right).
\end{multline}

By Fougeron's bound, for fixed $g$ and fixed $d=(d_1,\dots,d_k)$, we have
$\Lambda^+_g(n,d)=O_{g,d}\left(\frac{1}{n}\right)$ as $n\to+\infty$.
Thus,
\begin{multline*}
\sum_{i=1}^g
\lambda_i^+\bigl(\cQ_g(n,d_1,\dots,d_k,-1^{n+D-4g+4})\bigr)
-\frac{1}{24}\sum_{i=1}^{n+D+k-4g+5}
\frac{m_i(m_i+4)}{m_i+2}
\\
=
\frac{1}{12}
\left(
n+D-k+5-6g+\Delta
+\frac{2}{n+2}
+12 \Lambda^+_g(n,d)
\right)
\\
=
\frac{1}{12}
\left(
n+D-k+5-6g+\Delta
+O_{g,d}\left(\frac1n\right)
\right)\,.
\end{multline*}
Formula~\eqref{eq:sum:of:lambda:plus} therefore gives
\begin{multline*}
\sum_{\cC} C_g^\cC(n;d)
=\left(
n+D-k+5-6g+\Delta
+\frac{2}{n+2}
+12 \Lambda^+_g(n,d)
\right)W_g(n;d)
\\=\left(
n+D-k+5-6g+\Delta
+O_{g,d}\left(\frac1n\right)
\right)
W_g(n;d).\qed
\end{multline*}
\end{proofwithmanualqed}

Our Key Theorem~\ref{th:n:D:large:zero:any:g} takes the following
form in terms of the renormalized volumes.

\begin{theorem}
\label{th:n:m1:dots:mk:lim:for:renormalized}
For any $g\ge 1$ there exists a pair of strictly positive constants $C_g^{\mathit{even}}$ and $C_g^{\mathit{odd}}$ such that for any (possibly empty) collection $d$ of positive integers $d=(d_1,\dots,d_k)$, one has
\begin{equation}
\label{eq:W:any:genus:g:limits}
W_g(n;d)=
\begin{cases}
C_g^{\mathit{even}}+O_{g,d}\left(\frac{1}{n}\right)
        &\text{for even }n\,,
\\
C_g^{\mathit{odd}}+O_{g,d}\left(\frac{1}{n}\right)
        &\text{for odd }n\,.
\end{cases}
\end{equation}
\end{theorem}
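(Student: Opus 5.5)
I will prove Theorem~\ref{th:n:m1:dots:mk:lim:for:renormalized} by double induction: an outer induction on the genus $g$, and, for fixed $g$, an inner induction on the partition $d$ (first $d=\emptyset$, the minimal stratum, then adding one $d_j$ at a time). The engine is the Main recurrence relation~\eqref{eq:EKZ:in:W:g}. Its denominator is $n+O_{g,d}(1)$, so the task reduces to showing that $\sum_\cC C_g^\cC(n;d)$ equals $n\,W_g(n-?;d)$-type terms plus terms already controlled by the induction hypothesis.

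\textbf{Step 1: classify configurations and estimate their contributions.}
I will classify the admissible configurations $\cC$ in Goujard's formula~\eqref{eq:carea} for the stratum $\cQ_g(n,d,-1^{n+D-4g+4})$ according to how the large zero of order $n$ and the simple poles split between the boundary components.

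The contributions organize as follows, after dividing by $n^g v(n)\prod v(d_i)$.
\begin{enumerate}[(a)]
\item Configurations cutting off a genus-$0$ piece that carries the big zero, a batch of poles and possibly some $d_i$, glued to a genus-$g$ piece with a zero of order $n'<n$. By~\eqref{eq:volume:in:genus:0} the genus-$0$ factor is an explicit product of $v$'s.
\item Configurations in which the large zero stays in a surface of genus $g'<g$, or stays together with a shorter partition $d$.
\item Configurations producing Abelian components or several large pieces; these should be negligible.
\end{enumerate}

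Using the asymptotics $v(n)\sim\pi^{n}\sqrt{2\pi/n}$ and the ratio of factorials $(a_i-1)!/(\dim-1)!$, the sum over splittings $n=n_1+n_2$ in (a) becomes a convolution. Its terms decay geometrically away from the ends: only a bounded number of poles are cut off, or the cut piece is tiny.

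\textbf{Step 2: rewrite the recurrence as a contracting linear recursion.}
By the inner and outer induction hypotheses, terms of type (b) contribute $O(1)$ after renormalization; the factor $n^{g'}$ with $g'<g$ produces the decay. The only terms involving unknown $W_g(\cdot;d)$ are those of type (a) where the same genus and same $d$ survive, with $n$ replaced by $n-2j$ (parity is preserved, which explains the separate even and odd constants). I expect to get
\[
W_g(n;d)=\sum_{j\ge1}\alpha_j(n)\,W_g(n-2j;d)+\beta(n),
\]
where $\sum_j\alpha_j(n)=1-\frac{c}{n}+O(n^{-2})$ for some $c>0$, $\alpha_j(n)$ decays fast in $j$, and $\beta(n)=\frac{b}{n}+O(n^{-2})$, with $b$ built from lower-genus or smaller-$d$ limits. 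For $d\neq\emptyset$ I anticipate a cleaner structure: adding the $d_i$ should only multiply by $v(d_i)$ up to $O(1/n)$, giving $C_g^{p(n)}$ independent of $d$. The inner step would then compare $W_g(n;d)$ with $W_g(n;d\setminus d_j)$ through the configuration that isolates $d_j$ with two poles on a sphere.

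\textbf{Step 3 (main obstacle): convergence of the recursion without initial data.}
Because no initial values are available, I must show that the recursion forces convergence regardless of them. This will be done in three parts.
\begin{itemize}
\item[(i)] Prove a priori upper and lower bounds $0<W_g(n;d)\le A$. The lower bound follows from positivity of all terms together with $\Lambda^+\ge0$; the upper bound follows from Fougeron's bound~\eqref{eq:Fougeron:bound}.
\item[(ii)] Show that $n\,(W_g(n;d)-W_g(n-2;d))$ is bounded. Subtracting consecutive instances of the recursion should give a contraction of the form $\delta_n\le(1-\frac{c}{n})\max_{j}\delta_{n-2j}+O(n^{-2})$, hence $\delta_n=O(1/n)$.
\item[(iii)] Summing the differences, $W_g(n;d)$ converges along each parity class to a limit $C_g^{p}$, with error $O(1/n)$.
\end{itemize}
Strict positivity of the limits comes from $\beta(n)\ge b/n$ with $b>0$, since some configuration with a lower-genus piece always exists. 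The hardest part is controlling the tail $\sum_j\alpha_j$ uniformly enough for the contraction to close.
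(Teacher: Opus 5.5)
Your architecture matches the paper's: induction on $g$, then on $d$, turning the recurrence~\eqref{eq:EKZ:in:W:g} into a scalar recursion. The gap is that the recursion you anticipate is wrong at $d=\emptyset$, which is exactly where $C_g^{\mathit{even}}$ and $C_g^{\mathit{odd}}$ are produced.

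\textbf{The minimal stratum does not contract.} There is no convolution in $n$. By Lemma~\ref{lm:ominus:vertex:associated:to:zero:of:degree:n} the zero of order $n$ sits on a single $\ominus$-vertex. By Proposition~\ref{pr:boundary:stratum:is:simpler} the only configuration returning a volume with the same $(g,d)$ is the pocket with that zero at the base, and it shifts $n$ by exactly $2$. For the minimal stratum, divide the pocket coefficient $(n+1)\bigl(\frac{n-2}{n}\bigr)^g\frac{(n-4g+4)(n-4g+3)}{(n-2g+2)(n-2g+1)}$ by the denominator $n+5-6g+O(1/n)$. The $1/n$ terms cancel identically, because the normalization $n^g$ offsets the $+2g/n$ of the rational factor. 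Hence $A_g(n)=1+O(n^{-2})$. The other configurations contribute $O(1/n)$ to $\sum_\cC C_g^\cC$ (figure-eight) or $O(n^{-2})$ (all others), so the additive term is $O(n^{-2})$.

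In your notation $c=0$ and $b=0$. There is no contraction, and the limit is not a fixed point $b/c$: it depends on the unknown initial values. That is why two parity constants appear, and why their equality is only conjectured for $g\ge2$. Your Step~3, and your positivity argument via $b>0$, therefore fail there. What is needed is the summable-perturbation argument of Lemma~\ref{lm:lemma:2:in:calculus}:
\begin{itemize}
\item $\prod(1+\alpha_m)$ converges to some $P_\infty>0$ because $\sum|\alpha_m|<\infty$, and $\sum\beta_m$ converges;
\item hence $W_g(n)$ converges along each parity class, with differences $O(n^{-2})$;
\item positivity follows from $W_g>0$, $\beta_m\ge0$ (all Siegel--Veech contributions are nonnegative) and $P_\infty>0$.
\end{itemize}

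\textbf{Step~3(ii)--(iii) does not close even where a contraction exists.} The bound $W_g(n;d)-W_g(n-2;d)=O(1/n)$ is not summable, so it gives neither convergence nor an $O(1/n)$ error. Boundedness is also not available a priori from Fougeron's bound. That bound only makes $12\Lambda^+_g(n,d)$ an $O(1/n)$ perturbation of the denominator; boundedness comes out of the recursion. For $d\ne\emptyset$ you need two further ingredients.
\begin{itemize}
\item The direct estimate of Lemma~\ref{lm:lemma:in:calculus}, which requires a contraction rate $c>2$. For $0<c<2$ the error would only be $O(n^{-c/2})$.
\item To identify the limit as $C_g^{p(n)}$, an exact match between the rate and the additive weight. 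The pocket coefficient over the denominator is $1-\frac{D+k+\Delta}{n}+O(n^{-2})$. The only other $O(1)$ terms are pockets based at $d_j$, giving $(d_j+1)W_g(n;\dots,d_j-2,\dots)$, and dumbbells cutting off a sphere carrying $d_j$ and $d_j+2$ poles, giving $\frac{2}{d_j+2}W_g(n-2;d\setminus\{d_j\})$. By the inner induction these sum to $(D+k+\Delta)\,C_g^{p(n)}+O(1/n)$, and $c=D+k+\Delta\ge\frac83>2$.
\end{itemize}
Your proposal only anticipates this matching. It also attributes $b$, and positivity, partly to lower-genus pieces, which in fact enter the recursion only at order $O(n^{-2})$.
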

\begin{proof}
Fix any positive integer upper bounds $g_{max}, D_{max}$ for respectively $g$ and $D=d_1+\dots+d_k$. The set of pairs $(g,d)$ satisfying these bounds is finite. By the standard convention, $D=0$ when $d=\emptyset$.

In Proposition~\ref{prop:minimal:any:g} we prove by induction in $g$
that~\eqref{eq:W:any:genus:g:limits} is valid for $d=\emptyset$ and for all $g$ satisfying $g\le g_{max}$. The relation~\eqref{eq:W:in:genus:0} according to which $W_0(n)=2\pi^2$ for any $n$ serves as a base of induction.
For $g=1$ the statement also follows from~\eqref{eq:volume:n}
which we already proved, providing us with explicit values
$C^{\mathit{even}}_1 = C^{\mathit{odd}}_1 = \frac{1}{3}$.
We show in Proposition~\ref{prop:minimal:any:g} that for the minimal stratum
the sum in~\eqref{eq:EKZ:in:W:g} gets reduced to $W_g(n-2)$ with
an explicit multiplicative coefficient plus a controllable small error term,
so the recursion becomes particularly simple.

Increasing progressively the genus $g\le g_{max}$, we prove
in Proposition~\ref{prop:general:any:g} the general statement~\eqref{eq:W:any:genus:g:limits} for all non-minimal strata
in genus $g$ satisfying $D\le D_{max}$. We prove it by induction in $D$,
where the case $D=0$ serves us as a base of the induction.
At each step the sum in~\eqref{eq:EKZ:in:W:g} is expressed
in terms of $W_g(n-2;d)$, which is our recursion, and in terms of
$W_g(n;d')$, with $|d'|<D$, and $W_{g'}(n;d')$, with $g'<g$, $|d'|\le D$,
for which~\eqref{eq:W:any:genus:g:limits} has already been established.
This completes the proof of Theorem~\ref{th:n:m1:dots:mk:lim:for:renormalized}
conditional on Propositions~\ref{prop:minimal:any:g} and~\ref{prop:general:any:g}.
\end{proof}

Note that we are using error terms $O(\frac{1}{n})$ all the time. Thus,
it is important at any point to use in the induction only a finite number of pairs $(g,d)$ for which we assume the statement to be proved.

%----------------------------------------------------------------------------------
\subsection{Structure of the paper}

From now on we will always work with the key recurrence Formula~\eqref{eq:EKZ:in:W:g}. In Section~\ref{s:Configurations} we recall basic facts on coding configurations by decorated ribbon graphs as in~\cite{Masur:Zorich} and~\cite{Goujard:carea} and characterize configurations specific for strata having a zero of large order. In Section~\ref{s:from:graphs:to:principal:boundary} we prepare certain general relations for the renormalized volumes and recall Goujard's algorithm of evaluation of the combinatorial factor involved in her Formula~\eqref{eq:carea} for the area Siegel--Veech constant. In Sections~\ref{s:Contributions:of:graphs:a}--\ref{s:Contributions:of:graphs:c} we apply Goujard's general Formula~\eqref{eq:carea} to configurations selected in Section~\ref{s:Configurations} and express the answers in terms of renormalized variables $W_g(n;d)$ and $C_g^\cC(n;d)$. In Section~\ref{s:Dominant:and:negligible:contributions} we detect configurations carrying dominant contributions to the sum in~\eqref{eq:EKZ:in:W:g} and prove that the total contribution of remaining configurations is negligible for large values of $n$. In Section~\ref{s:Preparatory:lemmas} we state and prove several preparatory lemmas handling certain specific abstract recursions with controllable small error terms. Finally, in Section~\ref{s:Volume:asymptotics} we plug formulae from Sections~\ref{s:Contributions:of:graphs:a}--\ref{s:Dominant:and:negligible:contributions} for contributions of individual configurations and evaluate the sum over configurations found in Section~\ref{s:Configurations} thus deriving a much more explicit form of the key recurrence Formula~\eqref{eq:EKZ:in:W:g}. For the two non-varying series of strata this provides us with identities justifying Formulae~\eqref{eq:volume:n} and~\eqref{eq:volume:n:1} from Theorem~\ref{th:n:1}. For other strata, this explicit form of~\eqref{eq:EKZ:in:W:g} allows us to interpret the recursion as one of the abstract ones considered in Section~\ref{s:Preparatory:lemmas}. To make the paper self-contained we reproduce in Appendix~\ref{a:configurations} necessary information on combinatorial description of configurations of saddle connections elaborated originally in~\cite{Masur:Zorich}.

We admit that, formally speaking, computational Sections~\ref{s:Contributions:of:graphs:a}--\ref{s:Contributions:of:graphs:c} consist in plugging various combinatorial data one-by-one into Goujard's formula. However, each ``plugging in'' requires evaluation of numerous combinatorial constants, of discrete symmetries and such. A mistake in a factor of $2$ in a contribution of a single dominant configuration would make the abstract recursion of Lemma~\ref{lm:lemma:in:calculus} inapplicable. Thus, our calculations were thoroughly debugged (by hand, not by AI) using available exact values of volumes in small dimensions from~\cite{Goujard:vol} and experimental values of Lyapunov exponents. We expect that alternative calculations of Masur--Veech volumes (through quasimodularity, intersection theory, integrable hierarchies, etc) would emerge in the future as in the paper by D.-M.~Nguyen~\cite{Nguyen} providing an alternative computation of volumes in genus $0$ for strata with only odd-order zeroes. The close answers for contributions of various configurations derived in Sections~\ref{s:Contributions:of:graphs:a}--\ref{s:Contributions:of:graphs:c} have all chances to serve as reliable tests of intermediate results in most of these approaches.

% ###############################################################
% ###############################################################
% ###############################################################
\section{Configurations which contribute to specific strata}
\label{s:Configurations}

In this section we prepare lists of configurations which contribute to area Siegel--Veech constants of strata under consideration. We are interested only in strata having many poles. Among such strata we distinguish the minimal strata, especially the minimal strata $\cQ_1(n,-1^n)$ in genus $1$. We are also particularly interested in the series of non-varying strata $\cQ_1(n,1,-1^{n+1})$ in genus $1$.

%------------------------------------------------------------------
\subsection{Ambient stratum associated to a decorated ribbon graph}
\label{ss:Ambient:stratum:associated:to:graph}

We use encoding and classification of \textit{configurations of \^homologous saddle connections} given in~\cite{Masur:Zorich}.
To make the paper self-contained we outline the terminology and the results of~\cite{Masur:Zorich} in Appendix~\ref{a:configurations}.

\begin{definition}
\label{def:endpoint}
Having a configuration $\cC$ of saddle connections on a flat surface $S=(X,q)$ corresponding to a meromorphic quadratic differential $q$ with at most simple poles, we say that a conical singularity of $S$ (i.e. a zero or pole of $q$) is an \textit{endpoint of $\cC$} when it is an endpoint of at least one of the saddle connections in the configuration $\cC$. The \textit{interior} singularities are those zeroes and simple poles which do not intersect with any saddle connections of the configuration $\cC$. Thus, all zeroes and poles of $q$ are split into two disjoint subsets of \textit{endpoints} of $\cC$ and of \textit{interior} singularities.
\end{definition}

Configurations of \^homologous saddle connections are encoded by decorated ribbon graphs described in Appendix~\ref{a:configurations}. This encoding is axiomatized in Definition~\ref{def:configuration}. We recall now how to reconstruct an ambient stratum from a legal decorated ribbon graph $\GG$.

Following any boundary component $(\partial\GG)_i$ of the global ribbon graph $\GG$ we get a chain of edges $\gamma_{i,1}\to\dots\gamma_{i,t(i)}\to$ of the graph; the case of $t=1$ is not excluded. The decoration of the graph associates to every pair of consecutive edges a nonnegative integer number $k_{i,j}$ called an \textit{order of boundary singularity}. In the case $t=1$ the edge follows itself, but we still have an associated order of boundary singularity. Define an integer number $b_i$ as
\begin{equation}
\label{eq:degree:of:singularity}
b_i = -2+\sum_{j=1}^{t(i)} (k_{i,j}+1)
\end{equation}
where the sum is taken over all edges of the boundary component $(\partial\GG)_i$ of the global ribbon graph.

Each vertex of the decorated ribbon graph $\GG$ has one of three types: $\oplus$, $\ominus$, or $\circ$, (where $\circ$-vertices represent maximal cylinders filled with regular closed geodesics). The decoration of the graph $\GG$ associates to every $\oplus$- or $\ominus$-vertex $v_j$ a (possibly empty) set with multiplicities
\[
\{m_{j,1},\dots,m_{j,q(j)}\}
\]
consisting of integers satisfying $m_{j,l}\ge -1$.

A concatenated set with multiplicities
\begin{equation}
\label{eq:set:with:multiplicities}
\{b_1,\dots,b_r\}\sqcup\{m_{1,1},\dots,m_{1,q(1)}\}\sqcup\dots\sqcup\{m_{s,1},\dots,m_{s,q(s)}\}\,,
\end{equation}
where $r$ is the number of boundary components of $\partial\GG$ and
$s$ is the total number of $\oplus$ and $\ominus$-vertices of the graph,
is in bijection with the collection of singularities
$\{m_1,\dots,m_p\}$ of the ambient stratum
$\cQ(m_1,\dots,m_p)$.
Note that all singularities
$\{m_1,\dots,m_p\}$ of
$\cQ(m_1,\dots,m_p)$ are labeled. Our bijection endows
the set with multiplicities~\eqref{eq:set:with:multiplicities}
with these labels.

%------------------------------------------------------------------
\subsection{Boundary strata associated to a decorated ribbon graph}

Recall now how to read a principal boundary stratum $\cQ(\alpha'(\cC))$
involved in Goujard's Formula~\eqref{eq:carea} from a decorated ribbon graph $\GG(\cC)$ (see~\cite[Section 1.4]{Masur:Zorich} for details and examples).

To every $\ominus$-vertex $v_i$ (if any) we associate
a stratum of quadratic differentials $\cQ(\alpha_i)$; to every
$\oplus$-vertex $v_j$ (if any) we associate
a stratum of Abelian differentials $\cH(\beta_j)$; $\circ$-vertices are ignored.
In both cases the stratum is completely determined by a
collection of \textit{interior singularities} associated to
the vertex $v$, by a \textit{local ribbon graph} $\GG(v)$
around the vertex $v$ and by degrees of \textit{boundary singularities}
of this local ribbon graph. The global structure of the graph is irrelevant
for determining the boundary stratum associated to any given vertex $v$.

Let $v$ be a $\ominus$ or a $\oplus$-vertex of the graph. To each   \textit{connected}   component $\cB$ of  the \textit{local} ribbon graph $\GG(v)$ (which should not be confused with \textit{boundary} components
$(\partial\GG)_m$
of the \textit{global} ribbon graph $\GG(\cC)$ considered in Section~\ref{ss:Ambient:stratum:associated:to:graph}) we assign a number
\begin{equation}
\label{eq:Dji}
\delta=k_1+\dots+k_p-2,
\end{equation}
where $k_1, \dots, k_p$ are degrees of boundary singularities corresponding to the connected component $\cB$ of the local ribbon graph. It follows from Definition~\ref{def:configuration} that the    number $\delta$ is always  a nonnegative even integer whenever $v$ is a $\oplus$-vertex and an integer satisfying $\delta\ge -1$ whenever $v$ is a $\ominus$-vertex.

We assign to a $\ominus$-vertex $v_i$
a  stratum
\begin{equation}
\label{eq:Q:boundary:stratum}
\cQ(\alpha'_i)=\cQ(d_1, \dots, d_{s(i)}, \delta_1, \dots,  \delta_{r(i)})
\end{equation}
of meromorphic quadratic differentials with at most simple poles,
where  $d_1,  \dots,   d_{s(i)}$   are  the  orders  of  interior
singularities   of    $v_i$ and $r(i)$ is the number of
connected components of the local ribbon graph $\GG(v_i)$.

In Figure~\ref{fig:embedded:local:ribbon:graphs} classifying the local
ribbon graphs, the graphs are labeled by two integers separated by a point.
The first number is the valency of the vertex, while the second
number is the number of connected components $r(v_i)$ of the
local ribbon graph $\GG(v_i)$.

Condition   (5)   in Definition~\ref{def:configuration}   of    a   configuration   of \^homologous  saddle  connections  guarantees  that  the  sum  of entries of $\alpha'_i$ defined above equals $0$ modulo $4$, and is greater than or equal to $-4$, while condition             (6)             guarantees             that $\alpha'\not\in\big\{\emptyset,\{-1,1\},\{3,1\},\{4\}\big\}$, which implies that the stratum $\cQ(\alpha'_i)$ is nonempty. For this nonemptiness test, marked points represented by entries $0$ are omitted.

We assign to  a $\oplus$-vertex $v_j$  a stratum
\begin{equation}
\label{eq:H:boundary:stratum}
\cH(\beta'_j)=\cH\left(\frac{d_1}{2}, \dots, \frac{d_{s(j)}}{2}, \frac{\delta_1}{2}, \dots,  \frac{\delta_{r(j)}}{2}\right)
\end{equation}
of  holomorphic   Abelian   differentials,   where  $d_1,  \dots,
d_{s(j)}$ are the orders of interior singularities of $v_j$
and $r(j)$ is the number of connected components of the
local ribbon graph $\GG(v_j)$.

Conditions (4) and (5) in definition~\ref{def:configuration}
of a configuration of \^homologous saddle  connections imply that
the entries of  $\beta'_j$  are integers  and  that their sum  is
even, so the stratum $\cH(\beta'_j)$ is nonempty.

%------------------------------------------------------------------
\subsection{Configurations for strata with many poles}

\begin{lemma}
\label{lm:simple:pole:in:a:graph}
A simple pole $P$ of a quadratic differential serves as an endpoint of a configuration $\cC$ if and only if the associated boundary component of the global decorated ribbon graph $\GG(\cC)$ contains a single boundary singularity, and if, moreover, this single boundary singularity has degree $0$.
\end{lemma}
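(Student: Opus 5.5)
The proof compares two descriptions of the cone angle at a singularity: the intrinsic one, in which a singularity of order $m$ has cone angle $(m+2)\pi$ (so a simple pole has angle $\pi$), and the combinatorial one given by the decorated ribbon graph. By the encoding recalled in Section~\ref{ss:Ambient:stratum:associated:to:graph}, every endpoint singularity of $\cC$ corresponds bijectively to a boundary component $(\partial\GG)_i$ of the global ribbon graph. Its order is $b_i=-2+\sum_{j=1}^{t(i)}(k_{i,j}+1)$, as in~\eqref{eq:degree:of:singularity}. Interior singularities are instead the entries $m_{j,l}$ attached to $\oplus$/$\ominus$-vertices. So $P$ is an endpoint of $\cC$ exactly when it is the singularity $b_i$ of some boundary component $(\partial\GG)_i$, and we must show that $b_i=-1$ happens precisely in the stated situation.

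For the ``only if'' direction, suppose the simple pole $P$ equals the singularity of $(\partial\GG)_i$, so $b_i=-1$. Then
\[
\sum_{j=1}^{t(i)}(k_{i,j}+1)=1 .
\]
Each summand is a positive integer, since $k_{i,j}\ge 0$. Hence $t(i)=1$ and $k_{i,1}=0$: the boundary component has a single boundary singularity, of degree $0$.

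For the ``if'' direction, suppose a boundary component has a single boundary singularity of degree $0$. Then~\eqref{eq:degree:of:singularity} gives $b_i=-2+1=-1$. The corresponding endpoint singularity is therefore a simple pole, and it is an endpoint of the saddle connection represented by the unique edge on this boundary component.

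The main obstacle is to justify that the formula $b_i=-2+\sum_j(k_{i,j}+1)$ really computes the order of the endpoint singularity. This also requires checking that the endpoint is not accidentally a regular point, and that a single edge with $k=0$ is geometrically realizable, namely as a saddle connection from a zero to a simple pole, traversed on both sides and bounding a cone of total angle $\pi$. I would handle this with the angle count from Appendix~\ref{a:configurations}. Going around the endpoint, one meets the consecutive saddle connections of $(\partial\GG)_i$, and between consecutive ones there is a sector of angle $(k_{i,j}+1)\pi$. The total angle $(b_i+2)\pi$ therefore recovers~\eqref{eq:degree:of:singularity}. In the case $t(i)=1$ the one sector has angle $\pi$, which is exactly the local picture at a simple pole.

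Since this identification is part of the axiomatized encoding (Definition~\ref{def:configuration}) and the bijection with~\eqref{eq:set:with:multiplicities}, the lemma reduces to the elementary positivity argument above.
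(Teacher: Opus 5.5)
Your proof is correct and is essentially the paper's argument. The paper notes directly that, since the saddle connections of $\cC$ are parallel and the cone angle at $P$ is $\pi$, exactly one of them ends at $P$, so the boundary component is a loop with a single sector of angle $\pi=(k+1)\pi$, forcing $k=0$; this is your observation that each summand $(k_{i,j}+1)$ in \eqref{eq:degree:of:singularity} is at least $1$, and you spell out the converse that the paper leaves as ``analogous''. The realizability check you list as the main obstacle is not needed for the lemma, and the other endpoint of that saddle connection need not be a zero: in the pocket configuration it joins two simple poles.
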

\begin{proof}
Recall that all saddle connections in any configuration of \^homologous saddle connections are parallel. Thus, if a simple pole $P$ is an endpoint of $\cC$ in the sense of definition~\ref{def:endpoint}, then there is exactly one saddle connection $\gamma_i$ of the configuration $\cC$ having $P$ as the endpoint. Going at a small distance around $P$ we start from $\gamma_i$ and return to $\gamma_i$ without meeting any other saddle connections. So the associated boundary component is a loop with a single boundary singularity. The cone angle at the simple pole $P$ is $\pi=(k+1)\pi$, so the degree $k$ of this single boundary singularity is $0$. The proof of the reciprocal statement is completely analogous.
\end{proof}

\begin{corollary}
\label{cor:at:most:four:poles}
A configuration can have at most four endpoint poles. In particular,
an ambient stratum $\cQ$ corresponding to a graph containing no $\ominus$-vertices can have at most four simple poles.
\end{corollary}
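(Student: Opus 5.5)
By Lemma~\ref{lm:simple:pole:in:a:graph}, every endpoint pole of $\cC$ corresponds to a boundary component $(\partial\GG)_i$ of the global ribbon graph consisting of a single edge followed by itself, with the unique boundary singularity of degree $k_{i,1}=0$. So to bound the number of endpoint poles it suffices to bound the number of such ``loop'' boundary components, and the plan is to show that there are at most four of them.

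The key geometric input is the following. A boundary component containing a single edge $\gamma$ with angle $\pi$ means that $\gamma$ is a saddle connection joining the pole $P$ to some singularity, and going around $P$ we meet only $\gamma$. The saddle connections in a configuration are \^homologous, so they all have the same holonomy up to sign. A saddle connection ending at a pole in this way is represented in the double cover by a closed curve of twice the holonomy. I would use the structure theory of configurations from~\cite{Masur:Zorich}, recalled in Appendix~\ref{a:configurations}: the degree-zero single-edge boundary components are exactly the ``pockets'' (the $\ominus$-type ends where the local picture is a half-cylinder folded at a pole). Each pocket occurs at a vertex of valency one in the graph, and the classification of local ribbon graphs in Figure~\ref{fig:embedded:local:ribbon:graphs} together with the admissibility conditions in Definition~\ref{def:configuration} allows only graphs that are chains or cycles of $\circ$-, $\oplus$- and $\ominus$-vertices. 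A chain has two ends. I would do a case check on the list of admissible graph shapes, showing that in each shape at most two ends exist, and each end carries at most two single-edge degree-$0$ loops. This gives at most four endpoint poles in total.

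For the second assertion, take a graph with no $\ominus$-vertices. All non-$\circ$ vertices are then $\oplus$-vertices. By~\eqref{eq:H:boundary:stratum}, their interior singularities $d_l$ must be even, so they yield halves $d_l/2\ge 0$ of Abelian strata and never equal $-1$. Hence no simple pole can be interior. By~\eqref{eq:set:with:multiplicities}, every simple pole of the ambient stratum must therefore be a boundary singularity $b_i=-1$, that is, an endpoint. By the first part there are at most four of these.

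The main obstacle is the case analysis bounding the loop components by four. Rather than going through the full list in Figure~\ref{fig:embedded:local:ribbon:graphs}, I would first try a direct Gauss--Bonnet or Euler characteristic count on the surface cut along $\cC$, which is a union of cylinders and surfaces with boundary. The aim would be to show that each component adjacent to a single-edge pole boundary must be a folded cylinder, and that at most two such folded ends can occur on each side of the family of parallel saddle connections, again giving the bound of four.
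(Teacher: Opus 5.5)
The second part of your argument is fine and matches the paper. There are no interior poles at $\oplus$- or $\circ$-vertices, so every pole of the ambient stratum is a boundary component with $b_i=-1$, i.e. an endpoint.

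The gap is in the first part, the bound of four, because the structural facts your case check relies on are false.

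\begin{itemize}
\item Admissible graphs are not only chains and cycles. Types ``c'', ``d'', ``e'' in Figure~\ref{fig:graphs} carry loops attached at vertices of valency $3$ or $4$.
\item Endpoint poles never sit at a vertex of valency one. Such a vertex is a $\ominus$-vertex of local type $-1.1$, whose boundary is a single closed saddle connection from some singularity back to itself. The global boundary component at that singularity therefore picks up a boundary singularity from each side, so by Lemma~\ref{lm:simple:pole:in:a:graph} it is not a pole.
\item In the pocket, the two poles are the endpoints of the loop edge at the trivalent $\circ$-vertex. The folded half-cylinder is that $\circ$-vertex, not a $\ominus$-end.
\end{itemize}

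So a case check organized as ``two ends, each carrying at most two'' with ends meaning valency-one vertices looks in the wrong place, and as written it proves nothing.

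Your fallback rests on another false claim, namely that the component next to an endpoint pole must be a folded cylinder. In $\cQ_0(1,-1^5)$, a single saddle connection joining the simple zero to a pole is an admissible configuration of type ``b''. It has one $\ominus$-vertex of local type $-2.1$, boundary singularities $\{0,2\}$ and interior singularities $\{-1^4\}$. It has one endpoint pole and no cylinder.

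The missing idea is that endpoint poles live on \emph{loop edges} of $\Gamma(S,\gamma)$. Let $P$ be an endpoint pole and $\gamma$ the unique saddle connection of $\cC$ ending at $P$. The angle-$\pi$ half-disk at $P$ minus $\gamma$ is connected, so the same component $S_j$ lies on both sides of $\gamma$. Hence $\gamma$ is an edge from $v_j$ to itself.

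Distinct endpoint poles give distinct pairs (loop edge, endpoint of that edge), and each loop edge has two endpoints. The classification in Figure~\ref{fig:graphs} has at most two loop edges:
\begin{itemize}
\item two in types ``d'' and ``e'';
\item at most one in types ``b'' and ``c'';
\item none in type ``a''.
\end{itemize}
A $\circ$-vertex placed on a loop turns it into a $2$-cycle, so it does not create new loop edges. This gives at most $2\cdot 2=4$ endpoint poles, attained for instance by the type ``d'' graph in the proof of Lemma~\ref{lm:ominus:vertex:associated:to:zero:of:degree:n}. This is what the paper's one-line appeal to the classification amounts to, and with it your second part goes through unchanged.
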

\begin{proof}
It follows from the classification of decorated ribbon graphs~\ref{fig:graphs} and~\ref{fig:embedded:local:ribbon:graphs} that a global graph $\GG$ can have at most four boundary components as in Lemma~\ref{lm:simple:pole:in:a:graph}, which proves the first part of the statement. In the absence of $\ominus$-vertices, all simple poles are endpoint singularities of the configuration, which completes the proof.
\end{proof}

\begin{lemma}
\label{lm:abc:only}
A stable graph carrying a nonzero contribution to the area Siegel--Veech constant contains at least one $\circ$-vertex. If the associated stratum of quadratic differentials has at least five simple poles, then the corresponding graph contains at least one $\ominus$-vertex, and, hence, has one of the types ``a,b,c'' in notations of Figure~\ref{fig:graphs}.
\end{lemma}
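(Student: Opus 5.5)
The plan has two independent parts, matching the two sentences of the statement.

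\emph{First part: a nonzero contribution needs a $\circ$-vertex.} I will argue directly from Goujard's Formula~\eqref{eq:carea}. The contribution of a configuration $\cC$ carries the factor $(4q_1+q_2)$, where $q_1$ and $q_2$ are the numbers of thin and thick cylinders. The formula is stated under the hypothesis $q=q_1+q_2\ge 1$. Configurations without cylinders do not contribute to $\carea$ at all, since the area Siegel--Veech constant counts cylinders weighted by their area. In the ribbon-graph language of Appendix~\ref{a:configurations}, maximal cylinders are exactly the $\circ$-vertices. So $q\ge 1$ is equivalent to the graph $\GG(\cC)$ having at least one $\circ$-vertex, and a graph with no $\circ$-vertex contributes zero. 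This step is essentially a translation of definitions; I will cite Goujard's definition of $\carea(\cC)$ as the sum over cylinders and the correspondence between $\circ$-vertices and cylinders.

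\emph{Second part: at least five simple poles forces a $\ominus$-vertex.} I will use Corollary~\ref{cor:at:most:four:poles} by contraposition. Suppose the graph has no $\ominus$-vertex. Then every vertex is a $\oplus$- or $\circ$-vertex. Interior singularities of a $\oplus$-vertex correspond to entries of an Abelian stratum $\cH(\beta'_j)$ via $d_i/2$, so they are even and nonnegative; in particular none is a simple pole. Interior singularities are attached only to $\oplus$- and $\ominus$-vertices, never to $\circ$-vertices (cylinders are nonsingular in their interior). Hence every simple pole is an endpoint of $\cC$. By Lemma~\ref{lm:simple:pole:in:a:graph} and Corollary~\ref{cor:at:most:four:poles}, there are at most four such endpoint poles. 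This contradicts the assumption of at least five simple poles, so a $\ominus$-vertex exists.

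\emph{Reduction to types a, b, c.} Finally, I will inspect the classification list in Figure~\ref{fig:graphs}. Among the admissible global graphs, those containing both a $\circ$-vertex (by the first part) and a $\ominus$-vertex are exactly the ones labeled ``a'', ``b'', ``c''. This is a finite check against the figure.

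The main obstacle is only bookkeeping. One point is justifying that $\circ$-vertices carry no interior singularities. I will also confirm that the Masur--Zorich classification lists all graphs, including those with no cylinder, so the case check over Figure~\ref{fig:graphs} is exhaustive.
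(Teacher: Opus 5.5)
Your proposal is correct and follows the paper's proof. The first part is the definition of $\carea$: only configurations with a cylinder, i.e.\ a $\circ$-vertex, contribute. The second part is Corollary~\ref{cor:at:most:four:poles}, whose proof is exactly your observation that without $\ominus$-vertices every simple pole is an endpoint, and there are at most four endpoint poles.

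One small imprecision is in your last step. What you need from Figure~\ref{fig:graphs} is only that types ``d'' and ``e'' contain no $\ominus$-vertex, so any graph with a $\ominus$-vertex has type ``a'', ``b'' or ``c''. Your claim that graphs with both a $\circ$- and a $\ominus$-vertex are ``exactly'' types a, b, c is not quite right, since not every such graph has a $\circ$-vertex, but that direction is not needed.
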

\begin{proof}
The first part of the statement is trivial:
by definition of the \textit{area} Siegel--Veech constant,
any contributing configuration contains at least one cylinder.
The second part follows immediately from Corollary~\ref{cor:at:most:four:poles}.
\end{proof}

\begin{lemma}
\label{lm:ominus:vertex:associated:to:zero:of:degree:n}
If a zero of degree $n>4g-4$ is an interior singularity, then it is associated to a $\ominus$-vertex.

If a zero of degree $n>4g$ is an endpoint singularity, and the graph contains at least one $\circ$-vertex, then the boundary component $(\partial\GG)_m$ of the global ribbon graph $\GG$ associated to this zero has nontrivial intersection with the local ribbon graph of exactly one vertex of $\ominus$-type.

Both bounds are sharp.
\end{lemma}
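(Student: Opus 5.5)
Both parts rest on the fact that the genus of the surface bounds the degree of any zero living on a boundary stratum. The first part is a direct exclusion of $\oplus$-vertices and $\circ$-vertices. The second part uses the angle count around the boundary component together with degree bounds for the strata attached to the vertices it touches.

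\emph{First part.} An interior singularity belongs to a $\oplus$- or $\ominus$-vertex, since $\circ$-vertices carry no interior singularities. Suppose it belonged to a $\oplus$-vertex $v_j$. Then $n/2$ would be an entry of $\beta'_j$ in~\eqref{eq:H:boundary:stratum}. The surface of $\cH(\beta'_j)$ has genus $g_j\le g$, because the boundary surfaces are obtained by cutting $S$ along the configuration and gluing in discs or cylinders. Its zero degrees sum to $2g_j-2$, so $n/2\le 2g-2$, i.e.\ $n\le 4g-4$, a contradiction. Sharpness comes from $\cH(2g-2)$ sitting at a $\oplus$-vertex and giving a zero of degree $4g-4$. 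For this I would take a graph of type with one $\oplus$-vertex carrying an interior zero of degree $4g-4$, joined by a cylinder to a genus-zero $\ominus$-part.

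\emph{Second part.} Let the zero of degree $n$ be the endpoint singularity of boundary component $(\partial\GG)_m$. By~\eqref{eq:degree:of:singularity},
\[
n+2=\sum_j (k_{m,j}+1).
\]
Each boundary singularity $k_{m,j}$ lies on the local ribbon graph of some vertex. Boundary singularities adjacent to $\circ$-vertices (cylinder boundaries) carry angle $\pi$ only, i.e.\ $k=0$; their number is bounded by the valence, at most $2$ per cylinder.

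Consider first the total angle contributed at a $\oplus$-vertex $v$. The local connected component $\cB$ yields $\delta=\sum k-2$, and $\delta/2$ is a zero degree in $\cH(\beta')$. Hence $\delta/2\le 2g_v-2$.

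Next I would sum over all vertices other than a single $\ominus$-vertex. Using that the genera of the vertex surfaces plus the cycle rank of the graph add up to at most $g$, this bounds the sum of $k_{m,j}+1$ coming from those vertices by roughly $4g+2$. Suppose now that no $\ominus$-vertex touches $(\partial\GG)_m$. Then $n+2\le 4g+2$, contradicting $n>4g$; so at least one $\ominus$-vertex does.

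Suppose two $\ominus$-vertices $v_1,v_2$ both touch $(\partial\GG)_m$. The classification in Figure~\ref{fig:graphs} shows graphs with two $\ominus$-vertices joined through a $\circ$-vertex, and I expect to exploit their specific structure here. I would then use the fact that a zero of degree $n$ is split into boundary pieces whose contributions $\delta$ appear in strata on both sides. Each side has genus $g_i$ with $g_1+g_2\le g$, and must satisfy its own genus/degree constraints. I expect this case-by-case check against the finite list of graphs in Figures~\ref{fig:graphs} and~\ref{fig:embedded:local:ribbon:graphs} to be the main obstacle. In particular, poles allow $\delta$ on a $\ominus$-vertex to be arbitrarily large, so the argument must show that only one side can absorb the large degree. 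This should follow because the total angle $(n+2)\pi$ must be split, and the component on the other side is forced to be small by a finite check.

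\emph{Sharpness.} For each bound I would exhibit a configuration attaining equality. For the second bound, $n=4g$ can be realized with the degree split between a $\oplus$-vertex with $\cH(2g-2)$ and a cylinder.
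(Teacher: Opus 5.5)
Your first part is the same as the paper's argument: an interior zero at a $\oplus$-vertex gives an entry $n/2$ of an Abelian stratum of genus at most $g$, so $n\le 4g-4$. The second part has genuine gaps.

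\emph{The case where no $\ominus$-vertex meets $(\partial\GG)_m$.} Your key input, that the genera carried by the vertices plus the cycle rank of the graph add up to at most $g$, is false, because a pocket-like loop does not contribute to the genus.
\begin{itemize}
\item In Figure~\ref{fig:pocket} the $\ominus$-vertex carries the full genus $g$ although the graph has a loop (see also case III of Proposition~\ref{pr:list:of:configurations:in:minimal:stratum}).
\item This happens in exactly the situation you need. Take a stem $\ominus$--$\oplus$--$\circ$ ending in a pocket, with the $\oplus$-vertex of type $+2.2$ carrying genus $g$. The boundary component between the $\oplus$-vertex and the trivalent $\circ$-vertex meets no $\ominus$-vertex. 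In genus one this is the configuration with a zero of degree $2$ excluded in the proof of Proposition~\ref{prop:configurations:with:two:zeroes}.
\item You need the exact inequality $n+2\le 4g+2$, not ``roughly''.
\item A vertex-by-vertex count would also have to control how many boundary singularities lie on $(\partial\GG)_m$. A $\circ$-vertex has as many order-$0$ boundary singularities as its valency (up to $4$), not at most $2$.
\end{itemize}
The missing idea is the paper's replacement trick. Since no $\ominus$-vertex touches $(\partial\GG)_m$, you may replace the data at every $\ominus$-vertex by minimal genus-zero data:
\begin{itemize}
\item type $-1.1$: interior $\{-1^3\}$, boundary order $1$;
\item type $-2.1$: interior $\{-1^3\}$, boundary orders $0,1$;
\item type $-2.2$: interior $\{-1^2\}$, boundary orders $1,1$.
\end{itemize}
This keeps the zero of degree $n$ and does not increase the genus. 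The resulting stratum has at most $4+z$ simple poles, where $z$ is the number of its other zeroes of positive degree; each replaced vertex forces such a zero. Hence $4g-4\ge\sum m_i\ge n-4$. When there are no $\ominus$-vertices at all, this is Corollary~\ref{cor:at:most:four:poles}.

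\emph{Uniqueness.} Your mechanism, splitting the angle $(n+2)\pi$ and showing by a finite check that the other side is small, cannot work. Both candidates are $\ominus$-vertices whose boundary strata may contain arbitrarily many poles, so degree considerations never force either side to be small. This is exactly where the $\circ$-vertex hypothesis must enter. The actual reason is combinatorial:
\begin{itemize}
\item only graphs of type ``a'' have two $\ominus$-vertices;
\item any $\circ$-vertex of such a chain lies between them and has local type $\circ2.2$, whose two local components each consist of a single edge;
\item so every boundary component of $\GG$ reaching the cylinder turns back to the side it came from, and cannot meet both $\ominus$-vertices.
\end{itemize}

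\emph{Sharpness of the second bound.} Your example is not pinned down. With a genus-$g$ $\oplus$-vertex of type $+2.1$ and only one adjacent cylinder, the boundary component through it also passes through the vertex across its other edge. The paper flanks the $\oplus$-vertex by two cylinders, $\ominus$--$\circ$--$\oplus$--$\circ$--$\ominus$, with $k'+k''=4g-2$. This gives $n=(0+1)+(k'+1)+(0+1)+(k''+1)-2=4g$.
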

\begin{proof}
An interior singularity of an Abelian boundary stratum is necessarily even. If it has order $m$, the associated zero has order $\frac{m}{2}$, see~\eqref{eq:H:boundary:stratum}, and the genus $g_i$ of the associated boundary stratum satisfies $\frac{m}{2}\le 2g_i-2$. Since the genus of the boundary stratum cannot be larger than the genus of the stratum itself, we get a bound $m\le 4g-4$ for the degree of any interior singularity of any $\oplus$-vertex.

For $g\ge 2$, to verify that the bound in the first assertion is sharp consider a graph of type ``d'' (see Figure~\ref{fig:graphs}) with two $\circ$-vertices of valency $3$, a single $\oplus$-vertex of type $+2.1$ between them and with no other vertices (in particular, with no vertices on the loops). Let the $\oplus$-vertex have a single interior singularity of order $4g-4$ and two boundary singularities of orders $1$. We get a legal decorated ribbon graph corresponding to the ambient stratum $\cQ_g(4g-4,4,-1^4)$ in genus $g$.
This example also shows that the bound on the number of poles is sharp
in Lemma~\ref{lm:abc:only}.
For $g=1$, omit the interior singularity of order $0$; the same graph then gives a four-pole example in $\cQ_1(4,-1^4)$.

Suppose now that a zero of degree $n$ of the ambient stratum is an endpoint singularity of $\cC$. Graphs having more than one $\ominus$-vertex, namely two $\ominus$-vertices are of type ``a''. A graph $\GG$ of type ``a'' containing a $\circ$-vertex cannot have a boundary component $\partial\GG$ intersecting the local ribbon graphs of both $\ominus$-vertices. This implies that for any decorated ribbon graph $\GG$ (not only of type ``a''), the boundary component $(\partial\GG)_m$ from the second assertion of Lemma~\ref{lm:ominus:vertex:associated:to:zero:of:degree:n} has nontrivial intersection either with one $\ominus$-vertex, or with no $\ominus$-vertices at all. In order to prove the Lemma, we have to exclude the second case (and also prove that the bound $n>4g$ is sharp).

If the entire graph $\GG$ does not have $\ominus$-vertices, by Corollary~\ref{cor:at:most:four:poles} the ambient stratum has at most $4$ simple poles. Hence, the sum of orders of zeroes is at most $4g$, and hence the degree of each zero, in particular, $n$ is at most $4g$.

If all $\ominus$-vertices of the graph $\GG$ are of local type $-1.1$, then by assumption, they do not have intersection with the distinguished component $(\partial\GG)$. Replace each $\ominus$-vertex with a vertex carrying genus $0$, having three poles as interior singularities and no other interior singularities and having a single boundary singularity of degree $1$. We get a legal decorated ribbon graph $\tilde\GG$. The boundary component $(\partial\tilde\GG)_0$ of the global ribbon graph $\tilde\GG$ containing the boundary singularity of degree $1$ of the modified $\ominus$-vertex has at least one more boundary singularity, so the associated endpoint zero of the ambient stratum has degree at least $1$ and this zero is distinct from our distinguished zero $n$. Thus, if there was only one $\ominus$-vertex of local type $-1.1$, the ambient stratum has at most $5$ simple poles and at least one more zero of positive degree in addition to the distinguished zero of degree $n$. Since the sum of the degrees of all singularities is at most $4g-4$, we conclude that $n\le 4g$. If there were two $\ominus$-vertices of local type $-1.1$, the ambient stratum has exactly $6$ simple poles and at least two zeroes of positive degrees in addition to $n$. We again have $n\le 4g$.

If our graph has type ``b'' we have a single $\ominus$-vertex.
If it is of local type $-2.1$, replace it by a $-2.1$-vertex carrying
genus $0$, having $\{-1^3\}$ as interior singularities and $0$ and $1$
as boundary singularities. It follows from assumptions that these
boundary singularities belong to the same boundary component of the
global ribbon graph and hence this boundary component corresponds to a zero of positive degree, and this zero is distinct from our distinguished zero.
Thus, the ambient surface has exactly three poles and at least one zero in addition to the distinguished zero of degree $n$. Since the genus of the ambient stratum is at most $g$, we get $n+1-3\le 4g-4$ and hence $n\le 4g-2$.

Finally, if the
graph has type ``b'' and a single $\ominus$-vertex of local type $-2.2$,
we replace it by a $\ominus$-vertex of local type $-2.2$ carrying genus $0$,
and having two boundary singularities of degrees $1$, two interior singularities of degrees $-1^2$ and no other singularities. The same reasoning as above shows that in this case we have $n\le 4g-4$.
Lemma~\ref{lm:ominus:vertex:associated:to:zero:of:degree:n} is proved.

Consider a graph of type ``a'' having a single $\oplus$-vertex of local type $+2.1$ separated from the two $\ominus$-vertices by $\circ$-vertices. Let both $\ominus$-vertices carry genus $0$ and let the $\oplus$-vertex carry genus $g$. Let the $\oplus$-vertex have no interior singularities; denote by $k', k''$ the degrees of its boundary singularities. By~\eqref{eq:Dji} and~\eqref{eq:H:boundary:stratum} the  associated Abelian boundary stratum has a single zero of degree $(k'+k''-2)/2$, so $k'+k''=4g-2$. Hence, $n=-2+(0+1)+(k'+1)+(0+1)+(k''+1)=k'+k''+2=4g$ for such a graph. This proves that the bound cannot be loosened.
\end{proof}

\begin{definition}
\label{def:zero:carried:by:a:minus:vertex}
For a configuration containing a cylinder, we will say that a zero of order $n>4g$ is \textit{carried by a $\ominus$-vertex} $v$ if $v$ is the $\ominus$-vertex satisfying the
assertion of Lemma~\ref{lm:ominus:vertex:associated:to:zero:of:degree:n}.
\end{definition}
For configurations containing a cylinder, existence and uniqueness of such a
$\ominus$-vertex $v$ are guaranteed by Lemma~\ref{lm:ominus:vertex:associated:to:zero:of:degree:n}.

%------------------------------------------------------------------
\Needspace{6\baselineskip}
\subsection{Configurations appearing for minimal strata}
\begin{proposition}
\label{pr:list:of:configurations:in:minimal:stratum}
If a stable ribbon graph contributes to the area Siegel--Veech constant of the minimal stratum $\cQ_g(n,-1^{n-4g+4})$ with $g\ge 1$ and $n>4g$, then it has one of the following three types in the notation of Figures~\ref{fig:graphs} and~\ref{fig:embedded:local:ribbon:graphs}:

I. Graph of type ``b'' with a single $\circ$-vertex of local type $\circ2.2$, with a single $\ominus$-vertex of local type $-2.1$
and with at most $(g-1)$ $\oplus$-vertices, all of local type $+2.1$ (possibly with no $\oplus$-vertices).

II. Graph of type ``c'' with a single $\oplus$-vertex of valency $3$
of local type $+3.1$, a single $\circ$-vertex
of local type $\circ2.2$
located
at the loop of the graph, a single
$\ominus$-vertex of valency $1$, and with at most $(g-2)$
(possibly with none)
$\oplus$-vertices of local type $+2.1$.

III. Graph of type ``c'' with a single $\circ$-vertex of valency $3$
and with a single $\ominus$-vertex of valency $1$.
The loop of the graph does not have any vertices.
All other vertices (if any) are of local type $+2.1$; they are located on the stem between the $\ominus$-vertex and the single $\circ$-vertex.

The genus $g_0$ carried by the $\ominus$-vertex is nonnegative.
The genera $g_1,\dots, g_k$ carried by $\oplus$-vertices
are strictly positive. These genera satisfy the relation
\[
g_0+g_1+\dots+g_k=
\begin{cases}
g-1&\text{in cases I and II}\,,\\
g&\text{in case III}\,.
\end{cases}
\]
\end{proposition}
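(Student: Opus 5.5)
Our plan is to combine Lemma~\ref{lm:abc:only} with Lemma~\ref{lm:ominus:vertex:associated:to:zero:of:degree:n} and then run a singularity-counting argument vertex by vertex. The stratum $\cQ_g(n,-1^{n-4g+4})$ has $n-4g+4\ge 5$ simple poles because $n>4g$, so by Lemma~\ref{lm:abc:only} the graph contains a $\circ$-vertex and has type ``a'', ``b'' or ``c''. The zero of order $n$ is the only zero. We will show that it must be an endpoint singularity. If it were interior to some vertex, then all endpoints of the configuration would be simple poles. By Lemma~\ref{lm:simple:pole:in:a:graph}, each boundary component of $\partial\GG$ would then carry a single boundary singularity of degree $0$. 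The local ribbon graphs of Figure~\ref{fig:embedded:local:ribbon:graphs}, together with the nonnegativity constraints on $\delta$ at $\oplus$-vertices, rule this out.

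Hence the zero is an endpoint. By Lemma~\ref{lm:ominus:vertex:associated:to:zero:of:degree:n} it is carried by exactly one $\ominus$-vertex $v_0$. Every other boundary component must correspond to a simple pole, so it consists of a single boundary singularity of degree $0$. This is the constraint we will exploit case by case.

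\textbf{Type ``a''.} Both $\ominus$-vertices would have to be separated from the zero's boundary component. One of them would then have all its boundary components equal to poles, which forces its boundary stratum to be $\cQ(-1,\dots)$ with $\delta=-2$, or to be empty. We expect this to exclude type ``a''.

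\textbf{Type ``b'' and ``c''.} Here we examine the chain of vertices. An $\oplus$-vertex of local type $+2.1$ has two boundary singularities, and these lie on the two boundary components adjacent to it. Since only one boundary component can have total degree $\neq 0$ through several singularities (the zero), and every other component is a degree-$0$ loop, the zero's boundary component must run through all the $\oplus$-vertices. This forces the $\oplus$-vertices to lie on a single path and forbids a second $\circ$-vertex. The reason is that a second cylinder would create an extra boundary component with at least two boundary singularities, hence a zero of positive order distinct from $n$.

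Enumerating the possible positions of the unique $\circ$-vertex gives the three cases. In type ``b'' the cylinder sits opposite the $-2.1$ vertex; $\ominus$ of type $-2.2$ would separate two nonpole components and is excluded. In type ``c'' the cylinder lies either on the loop, with a $+3.1$ vertex at the junction, or at the junction itself with valency $3$. In the first of these, any vertex on the loop other than the cylinder is excluded, because it would produce a second boundary component containing several singularities.

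The genus relation will follow from additivity of genus over the vertices, where each $\circ$-vertex on a cycle of the graph adds $1$. Cases I and II have a cycle through the cylinder, giving $g-1$. In case III the loop has no vertices, so the cylinder's core curve is separating in the relevant sense; a careful Euler characteristic count of the glued surface gives total genus $g$. Positivity of the genera $g_i$ at $\oplus$-vertices comes from $\cH(\beta')$ being nonempty with $\delta/2\ge 0$ entries of total sum $2g_i-2\ge 0$, together with the requirement that the stratum not be $\cH(0)$ with a nonzero zero.

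The bounds on the number of $\oplus$-vertices, at most $g-1$ or $g-2$, follow from $g_i\ge 1$ and $g_0\ge 0$. In case II the $+3.1$ vertex has genus at least $1$, which accounts for the bound $g-2$.

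The main obstacle is the exhaustive exclusion step. It requires going through every local type in Figure~\ref{fig:embedded:local:ribbon:graphs}, and I would organize it by counting boundary components of $\partial\GG$ with more than one boundary singularity and showing that exactly one such component exists.
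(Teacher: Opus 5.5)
Your strategy is the paper's: in the minimal stratum exactly one boundary component of $\partial\GG$ may carry more than one boundary singularity (the zero of order $n$), and every other one is a single corner of order $0$ (a pole). The graphs of types ``a'', ``b'', ``c'' are then sorted by this criterion. Your type ``c'' analysis, however, puts the key exclusion in the wrong case.

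When the trivalent vertex is $+3.1$ and the cylinder sits on the loop, further $+2.1$ vertices on the loop are \emph{not} excluded. A $+2.1$ vertex has a single local component, so the boundary passes through it. Cutting the loop at the $\circ2.2$-vertex, which merely turns the boundary back, leaves a ribbon tree all of whose vertices have one local component, hence one boundary component. Concretely, take half-edges $s,a,b$ at the $+3.1$ vertex, $a'$ and $c$ at the cylinder, $c',b'$ at a $+2.1$ vertex on the loop, and the stem end at the $\ominus$-vertex. Face tracing gives a single cycle through all eight half-edges. The paper states explicitly that in case II there is no restriction on where the $+2.1$ vertices sit. Your claim that any such vertex ``would produce a second boundary component'' is therefore false.

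The argument you gave is the one needed in case III. There the proposition asserts that the loop is empty, and you only assume this in the genus paragraph. At the $\circ3.2$-vertex the stem half-edge is a separate local component, so boundary components on the loop side never meet the $\ominus$-vertex. By Lemma~\ref{lm:ominus:vertex:associated:to:zero:of:degree:n} they must all be poles. If any vertex is placed on the loop, no edge has both ends at the same vertex, so every boundary component has at least two corners. The loop side then produces a zero other than the one of order $n$, which is impossible. With an empty loop one gets exactly two single-corner, order-$0$ components, i.e.\ the two poles of the pocket.

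Two smaller repairs are needed. In type ``a'', only the $\ominus$-vertex not carrying the zero is cut off from it, not both. Its unique corner would then have order $0$, giving $\delta=-2<-1$, which does exclude type ``a''. Also, the exclusions of $+3.2a$, $+3.2b$, $+3.3$, and of a cylinder on the stem under a $\oplus$ junction, still have to be carried out with the same count.
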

\begin{figure}[htb]
  \centering
\includegraphics[scale=0.43]{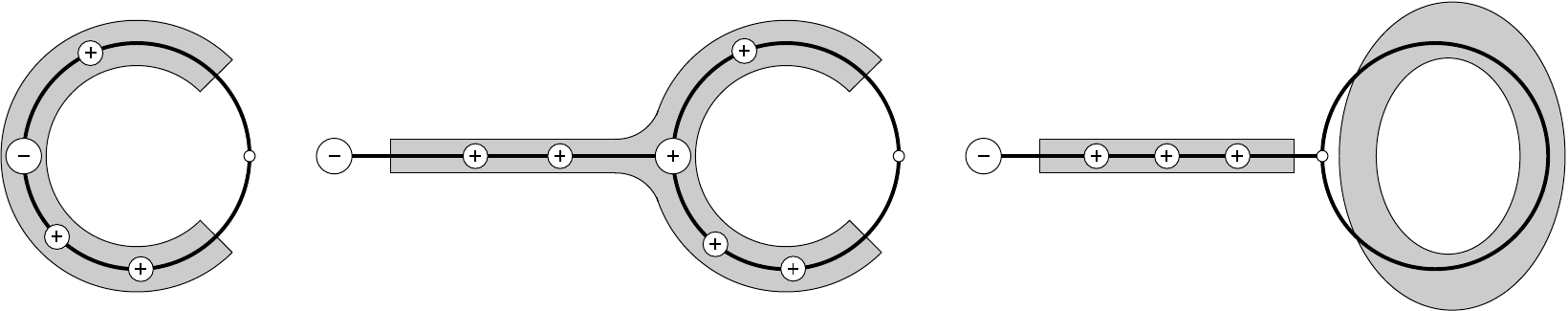}
\vspace*{10pt}
\begin{picture}(0,0)(350,-76)
\put(35,-88){$I.$}
\put(173,-88){$II.$}
\put(314,-88){$III.$}
\end{picture}
  \caption{Stable ribbon graphs contributing to the area Siegel--Veech constant of the minimal stratum. The number of $\oplus$-vertices
  of valency $2$ varies from $0$ to $g-1$ for the graph I,
  from $0$ to $g-2$ for the graph II and from $0$ to $g$ for the graph III.}
  \label{fig:minimal:genus:g}
\end{figure}
\begin{proof}
The surfaces in the stratum under consideration have
$n-4g+4$ poles. Since $n>4g$, it implies that they have at least $5$ poles and Lemma~\ref{lm:abc:only} is applicable: we can consider graphs of type ``a,b,c'' only, and only those of them which contain at least one $\circ$-vertex.

Global ribbon graphs of type ``a'' containing a $\circ$-vertex
have at least two boundary components
giving rise to at least two distinct zeroes
(see the picture $\circ2.2$ of the only possible local ribbon graph
adjacent to a $\circ$-vertex of valency $2$ in Figure~\ref{fig:embedded:local:ribbon:graphs}). Thus,
they do not appear in the minimal stratum.

For a graph of type ``b'' any $\circ$-vertex has valency $2$, and hence, by the classification of local ribbon graphs from Figure~\ref{fig:embedded:local:ribbon:graphs} is of $\circ2.2$-type. Since the ribbon graph should have a single connected component, it means that the $\ominus$-vertex  is of type $-2.1$ and all $\oplus$-vertices (if any) have type $+2.1$. The cylinder represented by the $\circ$-vertex contributes $1$ to the genus, which implies that $g_0+g_1+\dots+g_k=g-1$.

Any graph of type ``c'' has a single vertex of valency $3$.
Suppose that this vertex of valency $3$ is a $\oplus$-vertex.
The graph should also contain at least one $\circ$-vertex.
Such a $\circ$-vertex cannot be located
on the stem between the
$\ominus$-vertex (the only vertex of valency $1$) and the
vertex of valency $3$ since otherwise, the ribbon graph would contain
at least two boundary components giving rise to two distinct zeroes.
Thus, the $\circ$-vertex is located on the loop. This excludes
local ribbon graphs of types $+3.2a, +3.2b, +3.3$ around the
vertex of valency $3$ because in combination with a $\circ$-vertex
on the loop of such graphs, the
associated global ribbon
graphs get at least two boundary components giving rise to
two distinct zeroes.
Ribbon graphs of type ``c'' carrying a vertex
of type $+3.1$ are realizable in the minimal
stratum. To have a single boundary component they are forced to have a single $\circ$-vertex located on the loop
and possibly a certain number of $\oplus$-vertices all of types
$+2.1$. There is no restriction on location of such $\oplus$-vertices. The cylinder represented by the $\circ$-vertex
contributes $1$ to the genus, which implies that $g_0+g_1+\dots+g_k=g-1$, where, as always, the genus $g_0$ carried
by the $\ominus$-vertex is nonnegative, while genera
$g_1,\dots,g_k$ carried by $\oplus$-vertices (if any)
are strictly positive.

In the only remaining case the single vertex of valency $3$
of a graph of type ``c'' is a $\circ$-vertex.
By the classification of local ribbon graphs,
there is a unique local ribbon graph structure around a $\circ$-vertex of valency $3$,
denoted by
$\circ3.2$ in Figure~\ref{fig:embedded:local:ribbon:graphs}.
This local ribbon graph structure implies that
if such a graph  corresponds to a minimal
stratum, then the loop of the graph does not have
any vertices inside it, in which case the connected component of the
global ribbon graph corresponding to the annulus around the loop
has two boundary components both representing simple poles.
(Such a graph corresponds to a version of the
pocket configuration, see Figure~\ref{fig:pocket}.)
The edge between the single $\ominus$-vertex of valency $1$
and the single vertex of valency $3$ may contain $\oplus$-vertices (possibly none) all of $+2.1$-type. As before the genus $g_0$ carried by the single $\ominus$ vertex is nonnegative, while the
genera corresponding to $\oplus$-vertices are strictly positive.
The only cylinder in this configuration is ``pocket like'': one of its
boundary components is decomposed into two segments of equal lengths,
which are isometrically identified. Thus, this time the cylinder does not contribute to the genus and we have
$g_0+g_1+\dots+g_k=g$. The proof of Proposition~\ref{pr:list:of:configurations:in:minimal:stratum}
is completed.
\end{proof}

%------------------------------------------------------------------
\subsection{Configurations appearing for minimal strata in genus one}

\begin{figure}[htb]
  \centering
\hspace*{25pt}
\includegraphics[scale=0.5]{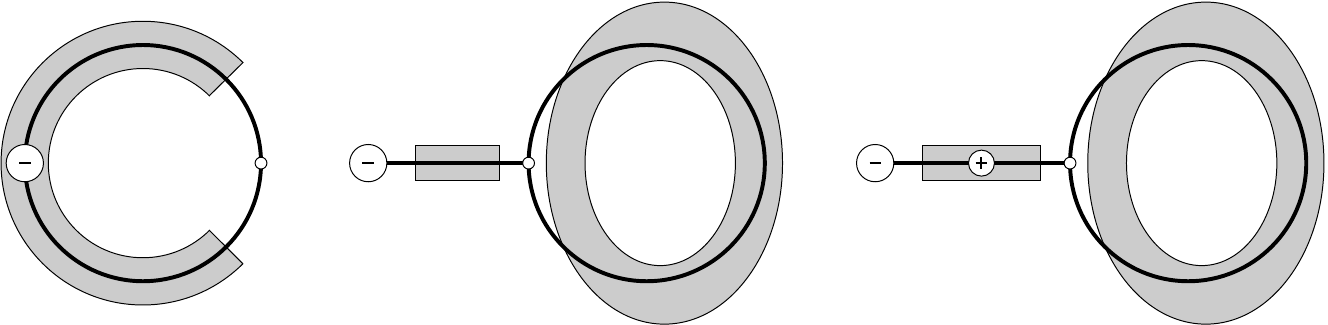}
\begin{picture}(0,0)(499,-87) % (350,-76)
\put(162,-40){$\scriptstyle g=0$}
\put(151,-68){$\scriptstyle \{-1^n\}$}
\put(227,-64){$\scriptstyle 0$}
\put(227,-36){$\scriptstyle 0$}
\put(167,-50.5){$\scriptstyle n'$}
\put(190,-50.5){$\scriptstyle n''$}
\put(205,-96){$I$}
\end{picture}
\begin{picture}(0,0)(435,-75) % (430,-70)
\put(190,-27){$\scriptstyle g=1$}
\put(185,-55){$\scriptstyle \{-1^{n-2}\}$}
\put(203,-42){$\scriptstyle n$}
\put(229.5,-42){$\scriptstyle 0$}
\put(242,-38){$\scriptstyle 0$}
\put(251,-38){$\scriptstyle 0$}
\put(260,-85){$II$}
\end{picture}
\begin{picture}(0,0)(308,-75)
\put(185,-27){$\scriptstyle g=0$}
\put(180,-55){$\scriptstyle \{-1^{n-2}\}$}
\put(185,-45){$\scriptstyle n-4$}
\put(213,-30.5){$\scriptstyle 1$}
\put(213,-46){$\scriptstyle 1$}
\put(229.5,-45){$\scriptstyle 0$}
\put(242,-38){$\scriptstyle 0$}
\put(251,-38){$\scriptstyle 0$}
\put(260,-85){$III$}
\end{picture}
\vspace*{10pt}
  \caption{Stable ribbon graphs contributing to the area Siegel--Veech constant $\carea(\cQ_1(n,-1^n))$}
  \label{fig:minimal:genus:1}
\end{figure}

\begin{corollary}
\label{cor:configurations:minimal:genus:1}
A stable ribbon graph contributes to $\carea(\cQ_1(n,-1^n))$,
with $n\ge 5$, if and only if it is one of the
three graphs represented in Figure~\ref{fig:minimal:genus:1}.
\end{corollary}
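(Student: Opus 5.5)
The plan is to specialize Proposition~\ref{pr:list:of:configurations:in:minimal:stratum} to $g=1$ and then check that each surviving type is actually realized. The hypothesis $n\ge 5$ gives $n>4g=4$, so the Proposition applies. Every genus $g_1,\dots,g_k$ carried by a $\oplus$-vertex is strictly positive. In types I and II these genera sum, together with $g_0\ge 0$, to $g-1=0$; in type III the sum is $g=1$.

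The case analysis is as follows.
\begin{enumerate}[(i)]
\item In type I there can be no $\oplus$-vertices, and $g_0=0$. We are left with a single $\circ2.2$ vertex joined to a genus-$0$ $\ominus$-vertex of type $-2.1$ that carries all $n$ poles as interior singularities. Its boundary singularities have degrees $n',n''$. The two boundary components through the cylinder give the zero of degree $n$, with $n=-2+(n'+1)+(n''+1)=n'+n''$.
\item Type II requires a $\oplus$-vertex of type $+3.1$, which has positive genus. This is impossible since the total genus available is $0$, so type II does not occur.
\item In type III, either $g_0=1$ with no $\oplus$-vertices, or $g_0=0$ with exactly one $+2.1$ vertex of genus $1$ on the stem. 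In both cases the loop carries two endpoint poles.
\end{enumerate}

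In the second subcase of (iii), the boundary singularities at the $+2.1$ vertex must have degrees $1,1$. This is forced because the Abelian boundary stratum in genus $1$ is $\cH(0)$, which gives $(k'+k''-2)/2=0$ by~\eqref{eq:Dji} and~\eqref{eq:H:boundary:stratum}. The $\ominus$-vertex then carries $n-2$ interior poles and a boundary singularity of degree $n-4$, and we check that the resulting boundary component gives degree $n$. In the first subcase of (iii) the $\ominus$-vertex carries the $n-2$ interior poles and a boundary singularity of degree $n$ directly. These are graphs II and III of Figure~\ref{fig:minimal:genus:1}, and type I above is graph I.

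For the converse, I would verify for each of the three graphs that the axioms of Definition~\ref{def:configuration} hold, using the same degree bookkeeping via~\eqref{eq:degree:of:singularity}, and that every boundary stratum is nonempty:
\begin{itemize}
\item graph I: $\cQ_0(-1^n,n'+n''-2)$, that is $\cQ_0(n-2,-1^n)$;
\item graph II: $\cQ_1(n-2,-1^{n-2})$;
\item graph III: $\cQ_0(n-6,-1^{n-2})$ together with $\cH(0)$.
\end{itemize}
The case of graph III is where $n\ge5$ matters: with $n=5$ the quadratic stratum is $\cQ_0(-1,-1^{3})$, which is nonempty. Each configuration has a cylinder, so each contributes a nonzero amount to $\carea$ through~\eqref{eq:carea}.

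The main obstacle is the type III bookkeeping. I must check that the pocket-like cylinder together with the $\circ3.2$ local structure produces exactly the boundary degrees shown, namely $0,0$ on the loop and the degrees at the stem, and that no further decorations are possible. In particular I need to exclude distributing the poles differently, for example placing some poles as endpoints elsewhere. For this I would use Lemma~\ref{lm:simple:pole:in:a:graph}: endpoint poles can only be the degree-$0$ singletons on the loop, so all other poles are interior and sit on the $\ominus$-vertex. I would also use Lemma~\ref{lm:ominus:vertex:associated:to:zero:of:degree:n} to confirm that the zero of degree $n$ is carried by the $\ominus$-vertex.
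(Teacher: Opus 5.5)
Your route is the paper's. Its proof of this corollary is exactly the specialization of Proposition~\ref{pr:list:of:configurations:in:minimal:stratum} to $g=1$ that you carry out in (i)--(iii), after which it reads off the three graphs; your converse check against Definition~\ref{def:configuration} is additional.

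\textbf{Error in the check for graph~I.} Since the minimal stratum has only one zero, the type~I global ribbon graph has a \emph{single} boundary component. That component runs through four boundary singularities: $n',n''$ at the $\ominus$-vertex, and the two degree-$0$ singularities coming from the two local components of the $\circ2.2$ vertex (one on each side of the cylinder). So \eqref{eq:degree:of:singularity} gives $n=-2+(n'+1)+(n''+1)+(0+1)+(0+1)=n'+n''+2$, not $n'+n''$. Hence $n'+n''=n-2$ and $\delta=n-4$. The boundary stratum is therefore $\cQ_0(n-4,-1^n)$, matching the second term in Lemma~\ref{lm:SV:minimal:g:1}. Your $\cQ_0(n-2,-1^n)$ has total order $-2$, so it is not a genus-zero stratum. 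With your decoration, condition~(5) of Definition~\ref{def:configuration} gives $(n-2)-n=-2\not\equiv 0 \pmod 4$, so the verification you describe would actually reject graph~I. With the corrected decoration the sum is exactly $-4$, and no exceptional pair of type $-2.1$ has $n\ge 5$ interior poles, so graph~I is admissible.

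\textbf{Smaller point.} In the second subcase of (iii), the relation $k'+k''=2$ alone does not force the degrees $1,1$. You also need that both boundary singularities of a $+2.1$ vertex are odd (Figure~\ref{fig:embedded:local:ribbon:graphs}), which rules out $(0,2)$ and $(2,0)$. The rest of your type~III bookkeeping is correct:
\begin{itemize}
\item in the torus case the zero has degree $-2+(n-3)+2+1+2=n$, with boundary strata $\cQ_0(n-6,-1^{n-2})$ and $\cH(0)$;
\item when $g_0=1$ it has degree $-2+(n+1)+1=n$, with boundary stratum $\cQ_1(n-2,-1^{n-2})$.
\end{itemize}
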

\begin{proof}
Proposition~\ref{pr:list:of:configurations:in:minimal:stratum}
limits the list of configurations contributing to
the area Siegel--Veech constant of the minimal stratum
to three types.

A graph of type I cannot have any $\oplus$-vertices in genus $1$.
It has a single $\ominus$-vertex of local type $-2.1$ and a single
$\circ$-vertex of local type $\circ2.2$. The corresponding
global ribbon graph is presented in the left picture in Figure~\ref{fig:minimal:genus:1}.

Graphs of type II in the classification of Proposition~\ref{pr:list:of:configurations:in:minimal:stratum}
are present
only starting from genus $2$ since the $\oplus$-vertex already
carries genus $g_1\ge 1$, and the cylinder
contributes $1$ to the genus.

It follows from the classification given in
Proposition~\ref{pr:list:of:configurations:in:minimal:stratum}
that in genus $1$ a graph of type III either has a $\ominus$-vertex
carrying genus $g_0=1$, in which case it has no $\oplus$-vertices at all, or has a $\ominus$-vertex
carrying genus $g_0=0$ and a single $\oplus$-vertex of type $+2.1$
carrying genus $g_1=1$. The corresponding
global ribbon graphs are presented in the middle and in the right pictures in Figure~\ref{fig:minimal:genus:1}.
\end{proof}

%-------------------------------------------------------------------
\subsection{%
  \texorpdfstring{%
    \for{toc}{Configurations appearing for strata $\mathcal Q_1(n,1,-1^{n+1})$}%
    \except{toc}{%
      \boldmath Configurations appearing for strata $\mathcal Q_1(n,1,-1^{n+1})$%
    }%
  }{Configurations appearing for strata Q1(n,1, with n+1 simple poles)}%
}

In the rest of this section we elaborate a complete list of decorated ribbon graphs corresponding to the stratum $\cQ_1(n,1,-1^{n+1})$. This list will be needed in proving Formula~\eqref{eq:volume:n:1} from Theorem~\ref{th:n:1}. The list is longer than for the minimal stratum and requires a tedious analysis of combinatorial data. We note that the closed formula~\eqref{eq:volume:n} for the volumes of the minimal strata in genus $1$ is already sufficient for the proof of Theorem~\ref{th:n:m1:dots:mk:lim}. Thus, a reader who is not particularly interested in all details of construction of a complete list of configurations specific for the strata $\cQ_1(n,1,-1^{n+1})$ might proceed directly to Section~\ref{s:from:graphs:to:principal:boundary}.

We start with a preparatory technical Lemma.

\begin{lemma}
\label{lm:no:oplus:vertices:of:valence:3}
No graph of type ``c'' contributing to
$\carea(\cQ_1(n,1,-1^{n+1}))$, where $n\geq5$,
contains a $\oplus$-vertex of valence $3$.
\end{lemma}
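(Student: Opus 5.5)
We argue by genus count, using the way genus is distributed among the vertices of a stable graph in the ambient genus $1$. In a graph of type ``c'' there is a single vertex of valency $3$. Suppose this vertex is a $\oplus$-vertex. Its Abelian boundary stratum $\cH(\beta'_j)$ is built as in~\eqref{eq:H:boundary:stratum}, so it has genus $g_1\ge 1$. By Lemma~\ref{lm:abc:only}, the graph contains at least one $\circ$-vertex, since there are $n+1\ge 6$ poles. We then split into cases according to where that $\circ$-vertex sits.

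\textbf{Case 1: a $\circ$-vertex lies on the loop.} Following the argument in the proof of Proposition~\ref{pr:list:of:configurations:in:minimal:stratum}, a cylinder on the loop of a type ``c'' graph contributes $1$ to the genus. Its two boundaries are glued to two different sides of the same $\oplus$-vertex, so cutting along its core curve does not disconnect the surface; equivalently, the loop carries a nontrivial handle. Then the total genus is at least $g_1+1\ge 2$, which contradicts $g=1$.

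\textbf{Case 2: all $\circ$-vertices lie on the stem.} Here the loop contains only $\oplus$-vertices (or nothing), and there is a $\circ$-vertex of valency $2$, of local type $\circ2.2$, between the $\ominus$-vertex and the $\oplus$-vertex of valency $3$. By the $\circ2.2$ local picture, this cylinder separates the global ribbon graph into boundary components on its two sides, so they correspond to two distinct endpoint zeroes. One of these is the zero of degree $n$, which by Lemma~\ref{lm:ominus:vertex:associated:to:zero:of:degree:n} is carried by the $\ominus$-vertex on the stem side; this uses $n\ge5>4g=4$. The other is the zero on the $\oplus$ side. The only other zero available is the simple zero of degree $1$, so that zero must be the one on the $\oplus$ side.

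The boundary components on the $\oplus$ side meet only $\oplus$-vertices and $\circ$-vertices. We compute the degree of the corresponding zero using~\eqref{eq:degree:of:singularity}. The boundary singularities of a $\oplus$-vertex of valency $3$ of genus $g_1\ge1$ satisfy the parity and sum constraints from~\eqref{eq:Dji}--\eqref{eq:H:boundary:stratum}: each connected component of the local ribbon graph has $\delta$ even, and the $\delta/2$ add up to $2g_1-2$ together with the interior zeroes. Counting the boundary singularities on the relevant boundary component shows that the resulting endpoint zero is either even, which is incompatible with degree $1$, or of total degree at least $2$. Alternatively, genus bookkeeping forces $g_1=1$ and all interior data trivial, and we then run through the local types $+3.1,+3.2a,+3.2b,+3.3$ of Figure~\ref{fig:embedded:local:ribbon:graphs} and check directly that none produces a boundary component of degree $1$ together with the required second boundary component.

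The main obstacle is Case 2: the local-type case analysis for the valency-$3$ $\oplus$-vertex combined with the stem cylinder. I would first try the parity argument. The key point is that boundary singularity degrees adjacent to $\oplus$-vertices pair into even $\delta$'s, and edges from a cylinder add $k=0$ contributions that change parity in a controlled way. This should show that every zero arising only from $\oplus$/$\circ$ boundary data is even, so the simple zero of degree $1$ cannot be realized there. If parity alone fails for some local type, such as $+3.2$ with a component on the loop, I fall back to explicit enumeration of the few legal decorations with $g_1=1$.
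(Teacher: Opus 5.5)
Your Case~1, and the reduction in Case~2 (the stem cylinder separates the two sides, the zero of degree $n$ lies on the $\ominus$ side, so every boundary component of $\partial\GG$ meeting $\GG(v_+)$ must be the simple zero or a simple pole), agree with the paper. You should also state that genus forces $v_+$ to be the only $\oplus$-vertex. Then the loop carries no vertices and the stem carries exactly one $\circ$-vertex.

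The gap is the decisive step: you never prove that the $\oplus$ side cannot produce a zero of degree exactly $1$. The claim ``either even, or of degree at least $2$'' is only asserted, and the parity mechanism you intend to use for it is false. The part of the surface on the $\oplus$ side of the cylinder does not have trivial holonomy, because the loop edge is glued to itself by a half-turn, so odd singularities do occur there. Concretely, take $+3.1$ with $g_1=1$. The corner between the two ends of the loop is a boundary component of $\partial\GG$ by itself and has order $0$, i.e.\ it is a simple pole. The cylinder's corner, together with the two remaining corners of $v_+$ (orders $k,k'$ with $k+k'=2$), gives a zero of degree $-2+1+(k+1)+(k'+1)=3$, which is odd. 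So parity breaks down exactly for $+3.1$, not for $+3.2$ as you guessed.

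Your fallback enumeration is precisely the paper's proof, but it is only announced. First use $g_1=1$ and condition~(5) of Definition~\ref{def:configuration} to show that $v_+$ has no interior singularities and all $\delta_i=0$, so the corners in each component of $\GG(v_+)$ sum to $2$. Then check that $+3.1$ gives a zero of order $3$, that $+3.2a$ and $+3.3$ give a zero of order $2$, and that $+3.2b$ forces genus $2$.

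Alternatively, a short count avoids the list of local types and supplies the missing content behind your ``counting'' sentence. The boundary component of $\partial\GG$ through the cylinder's corner on the $v_+$ side contains that corner (of order $0$) and $m\ge 1$ corners of $v_+$. It therefore has at least two corners and is not a pole, so it must be the simple zero. Consequently every other boundary component meeting $\GG(v_+)$ is a simple pole, which by Lemma~\ref{lm:simple:pole:in:a:graph} is a single corner of order $0$. Hence the entire corner weight of $v_+$ lies on this one component. That weight is $\sum_{i=1}^{r}(\delta_i+2)\ge 2r\ge 2$, where $r$ is the number of components of $\GG(v_+)$ and $\delta_i\ge 0$ by condition~(4). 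Formula~\eqref{eq:degree:of:singularity} then gives this component degree $-2+1+\sum_{j=1}^{m}(k_j+1)=m-1+\sum_i(\delta_i+2)\ge 2$, contradicting degree $1$.
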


\begin{proof}
Suppose the contrary. Each $\oplus$-vertex carries strictly positive genus which contributes to the genus of the ambient surface. Thus, in genus $1$ the $\oplus$-vertex is unique and carries genus $1$. Denote it by $v_+$; denote the unique $\ominus$-vertex by $v_-$.

Since the configuration contributes to the area Siegel--Veech
constant, the graph contains at least one $\circ$-vertex.
A cylinder on the loop contributes to the genus, so
there are no $\circ$-vertices on the loop.
By the classification of graphs, there is exactly one
$\circ$-vertex, and it lies on the stem between $v_+$ and $v_-$.

Condition~(5) of Definition~\ref{def:configuration} gives
\[
\sum_l d_l+\sum_i\delta_i=0.
\]
for the vertex $v_+$. Orders $d_l$ of all interior singularities of any $\oplus$-vertex are strictly positive, and all $\delta_i$ are nonnegative. Hence $v_+$ has no interior singularities and every $\delta_i$ is zero. By~\eqref{eq:Dji}, the sum of the orders of boundary singularities on each connected component of $\GG(v_+)$ is therefore equal to $2$.

The local ribbon graph of the cylinder is of type $\circ2.2$.
Its two local components separate the $v_+$-side from the
$v_-$-side of the global ribbon graph.
In particular, no boundary component of $\partial\GG$
meeting $\GG(v_+)$ also meets $\GG(v_-)$.
Since $n\geq5>4g$, Lemma
\ref{lm:ominus:vertex:associated:to:zero:of:degree:n}
implies that none of these boundary components represents
the zero of degree $n$.
Thus every boundary component meeting $\GG(v_+)$
represents either the zero of degree $1$ or a simple pole.

We now consider one-by-one the four possible local ribbon graphs of valency $3$. In each case the fact that $v_+$ carries genus $1$ combined with parity restrictions from Figure~\ref{fig:embedded:local:ribbon:graphs} uniquely determines all boundary singularities. For each type the condition that every boundary component meeting $\GG(v_+)$ represents either the zero of degree $1$ or a simple pole is not satisfied. Namely, a local ribbon graph of type $+3.1$ develops a zero of order $3$; local ribbon graphs of types $+3.2a$ and $+3.3$ develop a zero of order $2$; a local ribbon graph of type $+3.2b$ brings us to genus $2$.

This excludes all four possible local ribbon graphs of valency $3$ for
the $\oplus$-vertex, which
proves Lemma~\ref{lm:no:oplus:vertices:of:valence:3}.
\end{proof}

\begin{figure}[htb]
  \centering
\vspace*{10pt}
\includegraphics[scale=0.43]{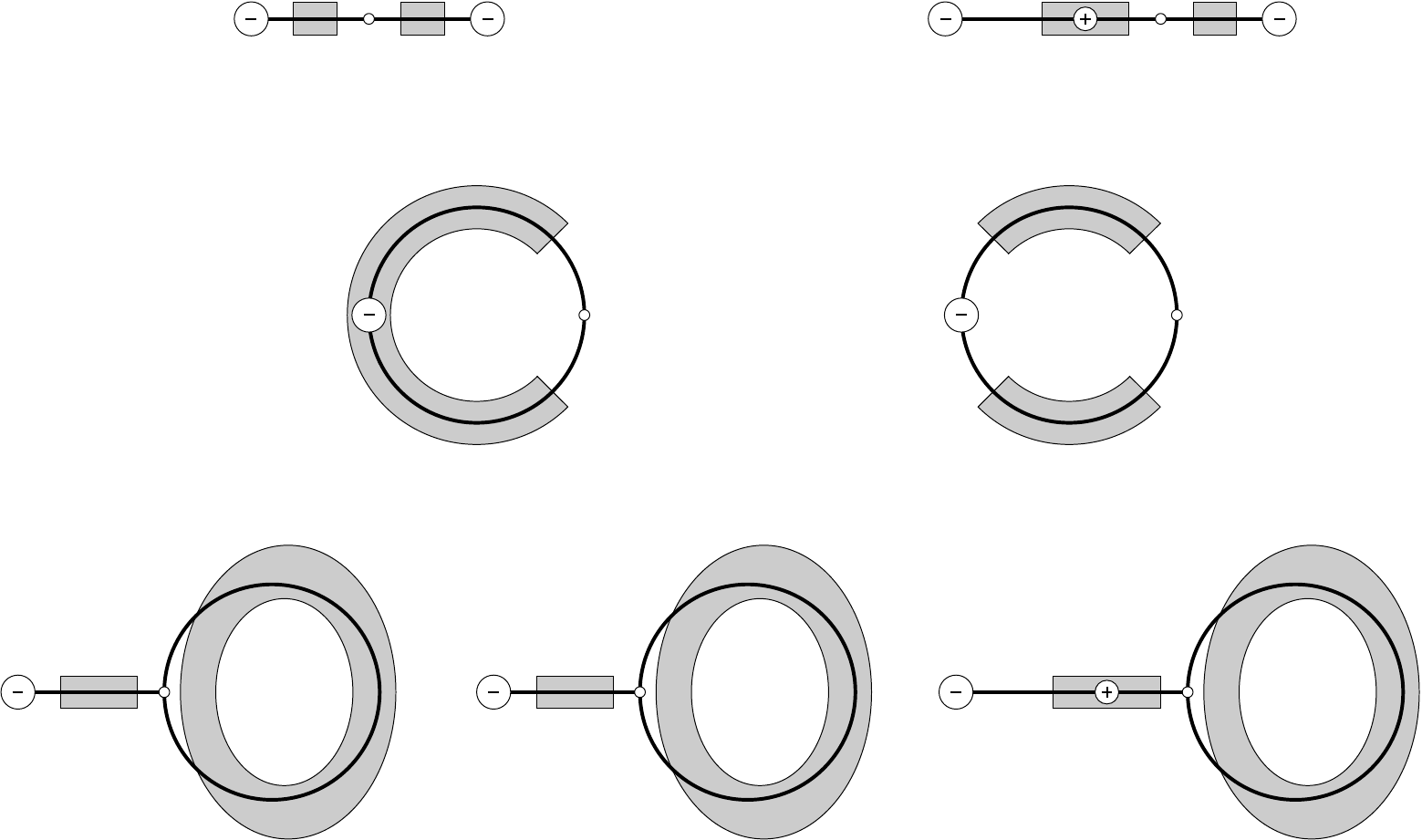}
\begin{picture}(0,0)(467,-217) % (430,-217)
\put(190,-20){$\scriptstyle g=0$}
\put(169,-32.5){$\scriptstyle \{-1^3\}$}
\put(203,-36.5){$\scriptstyle 1$}
\put(218,-36.5){$\scriptstyle 0$}
\put(227,-36.5){$\scriptstyle 0$}
\put(242,-36){$\scriptstyle n$}
\put(245,-20){$\scriptstyle g=1$}
\put(257,-32.5){$\scriptstyle \{-1^{n-2}\}$}
\put(223,-50){$i$}
\end{picture}
\begin{picture}(0,0)(300,-217)
\put(178,-20){$\scriptstyle g=0$}
\put(148,-32.5){$\scriptstyle \{-1^{n-2}\}$}
\put(192,-36.5){$\scriptstyle n-4$}
\put(227.5,-36.5){$\scriptstyle 0$}
\put(236.5,-36.5){$\scriptstyle 0$}
\put(252,-36.5){$\scriptstyle 1$}
\put(253.5,-20){$\scriptstyle g=0$}
\put(267,-32.5){$\scriptstyle \{-1^3\}$}
\put(231,-50){$ii$}
\end{picture}
\begin{picture}(0,0)(431,-167)
\put(227,-33){$\scriptstyle 0$}
\put(227,-67){$\scriptstyle 0$}
\put(162,-40){$\scriptstyle g=0$}
\put(168,-50){$\scriptstyle n'$}
\put(135,-60){$\scriptstyle \{1,-1^{n+1}\}$}
\put(189,-50){$\scriptstyle n''$}
\put(200,-87){$iii$}
\end{picture}
\begin{picture}(0,0)(300,-167)
\put(227,-33){$\scriptstyle 0$}
\put(227,-67){$\scriptstyle 0$}
\put(162,-40){$\scriptstyle g=0$}
\put(181,-33){$\scriptstyle 1$}
\put(144,-49.5){$\scriptstyle \{-1^{n+1}\}$}
\put(181,-67){$\scriptstyle n$}
\put(200,-87){$ iv$}
\end{picture}
\begin{picture}(0,0)(536.5,-65)
\put(193,-22){$\scriptstyle g=1$}
\put(190,-47){$\scriptstyle \{1,-1^{n-1}\}$}
\put(205.5,-37){$\scriptstyle n$}
\put(228.5,-37.5){$\scriptstyle 0$}
\put(238,-34){$\scriptstyle 0$}
\put(247,-34){$\scriptstyle 0$}
\put(260,-74){$v$}
\end{picture}
\begin{picture}(0,0)(431,-65) % (421,-65)
\put(192,-22){$\scriptstyle g=1$}
\put(185,-47){$\scriptstyle \{n,-1^{n-1}\}$}
\put(206,-38){$\scriptstyle 1$}
\put(228.5,-38){$\scriptstyle 0$}
\put(238,-34){$\scriptstyle 0$}
\put(246,-34){$\scriptstyle 0$}
\put(258,-74){$vi$}
\end{picture}
\begin{picture}(0,0)(315,-65) %
\put(185,-22){$\scriptstyle g=0$}
\put(175,-47){$\scriptstyle \{1,-1^{n-1}\}$}
\put(192.5,-37.5){$\scriptstyle n-4$}
\put(218.5,-26){$\scriptstyle 1$}
\put(218.5,-42){$\scriptstyle 1$}
\put(233,-37.5){$\scriptstyle 0$}
\put(243,-34){$\scriptstyle 0$}
\put(251,-34){$\scriptstyle 0$}
\put(260,-74){$vii$}
\end{picture}
\vspace*{10pt}
\caption{Stable ribbon graphs contributing to the area Siegel--Veech constant $\carea(\cQ_1(n,1,-1^{n+1}))$ with $n\ge 5$.}
\label{fig:graphs:n:1:genus:1}
\end{figure}

\begin{proposition}
\label{prop:configurations:with:two:zeroes}
A stable ribbon graph contributes to $\carea(\cQ_1(n,1,-1^{n+1}))$,
with $n\ge 5$, if and only if it is one of the
seven graphs represented in Figure~\ref{fig:graphs:n:1:genus:1}.
\end{proposition}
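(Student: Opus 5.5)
The plan is a case analysis on the graph types allowed by Lemma~\ref{lm:abc:only}. The stratum $\cQ_1(n,1,-1^{n+1})$ with $n\ge5$ has at least six simple poles, so any contributing graph has type ``a'', ``b'' or ``c'' and contains at least one $\circ$-vertex and at least one $\ominus$-vertex. In genus one each $\oplus$-vertex carries genus at least $1$, and a cylinder of local type $\circ2.2$ on a cycle of the graph also contributes $1$ to the genus. This caps the total number of genus-carrying ingredients at one, except for the pocket-type $\circ3.2$ cylinder, which contributes nothing to the genus. Since $n\ge5>4g=4$, Lemma~\ref{lm:ominus:vertex:associated:to:zero:of:degree:n} applies throughout: the zero of order $n$ is either interior to a $\ominus$-vertex or carried by a unique $\ominus$-vertex. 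The other singularities are a single simple zero and poles. For each type I will enumerate the admissible local ribbon graphs from Figure~\ref{fig:embedded:local:ribbon:graphs} and the boundary orders compatible with the condition that boundary components of $\partial\GG$ produce only the singularities $n$, $1$, and poles of degree $0$ (Lemma~\ref{lm:simple:pole:in:a:graph}).

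\emph{Type ``a''.} There are two $\ominus$-vertices and the $\circ$-vertices are of type $\circ2.2$, so there are two boundary components, one on each side of a cylinder. The only way to realize the singularity data is for one component to give the zero $n$ and the other the zero $1$. A cylinder separating two $\ominus$-vertices does not raise the genus, so there is room for either a genus-one $\oplus$-vertex or a genus-one $\ominus$-vertex. I expect this to yield graphs $i$ and $ii$ of Figure~\ref{fig:graphs:n:1:genus:1}. I will use the degree formula~\eqref{eq:degree:of:singularity} to pin the boundary orders ($1,0$ and $0,n$, respectively $n-4$ with the $\oplus$ orders), and I will check that the genus-$0$ vertex carrying the simple zero's side is forced to be $\cQ_0(1,-1^3)$-like.

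\emph{Type ``b''.} There is one $\ominus$-vertex of type $-2.1$ or $-2.2$ and one $\circ2.2$ cylinder, which uses up the genus, so there are no $\oplus$-vertices. With $-2.1$ the two boundary components are the two sides of the cylinder. The orders must be $n',n''$ on one side and $0,0$ on the other, giving a degree-$n$ zero, or a split in which the zero $1$ is also an endpoint. This should give graphs $iii$ and $iv$: in one the zero $1$ is interior, and in the other it is an endpoint alongside $n$. I will rule out $-2.2$ because it produces two boundary components each of which needs positive degree, together with a pole count mismatch.

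\emph{Type ``c''.} Lemma~\ref{lm:no:oplus:vertices:of:valence:3} excludes a $\oplus$-vertex of valency $3$. So, arguing as in Proposition~\ref{pr:list:of:configurations:in:minimal:stratum}, the trivalent vertex is the pocket cylinder $\circ3.2$ with an empty loop giving two poles. The stem carries at most one $\oplus$ vertex of type $+2.1$, and the genus is split as $g_0+g_1=1$. The new feature compared with Corollary~\ref{cor:configurations:minimal:genus:1} is where the simple zero sits: it is either interior to the $\ominus$-vertex or the boundary singularity of the stem. This yields $v$ and $vi$ for $g_0=1$ (zero $1$ interior, or zero $n$ interior with the zero $1$ as endpoint) and $vii$ for $g_0=0,g_1=1$. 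I still need to rule out the variant of $vii$ in which the zero $1$ is the endpoint: this leaves an interior zero of order $n$ on a genus-$0$ vertex whose boundary degrees cannot balance. I also need to rule out any $\circ2.2$ cylinders on the stem, since these create an extra boundary component that would require a third zero.

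The main obstacle is the exhaustive bookkeeping in type ``a'' and the $-2.2$ subcase. There, parity constraints (condition~(5) of Definition~\ref{def:configuration}) and the nonemptiness exclusions (condition~(6), e.g.\ $\alpha'\ne\{-1,1\}$) must be checked for every distribution of the zero $1$ and of the genus. For each candidate I will compute the boundary strata $\alpha'_i$ explicitly and discard those that are empty or violate the mod-$4$ condition. Finally, the converse direction, that all seven graphs are legal, is verified by exhibiting the decorations as in Figure~\ref{fig:graphs:n:1:genus:1} and checking Definition~\ref{def:configuration} directly.
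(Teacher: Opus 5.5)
Your case split into types ``a'', ``b'', ``c'' via Lemma~\ref{lm:abc:only}, with the genus budget and Lemma~\ref{lm:ominus:vertex:associated:to:zero:of:degree:n}, is the paper's approach. However, the type ``b'' step fails as planned, because you have interchanged the roles of the local types $-2.1$ and $-2.2$.

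When the $\ominus$-vertex has type $-2.1$, its local ribbon graph is one component containing both loop edges. Together with the two sides of the $\circ2.2$ cylinder, it produces a \emph{single} boundary component of $\partial\GG$, with four boundary singularities of orders $n',n'',0,0$. By~\eqref{eq:degree:of:singularity} this is one endpoint zero of degree $n'+n''+2\ge 2$. It must therefore be the zero of order $n$, and the simple zero is forced to be interior. This gives only the figure-eight graph~\emph{iii}; there is no ``split'' in which the zero $1$ is also an endpoint.

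The parallelogram graph~\emph{iv} comes precisely from local type $-2.2$. There the two components of the local ribbon graph lie on different boundary components of $\partial\GG$, with boundary singularities $(n,0)$ and $(1,0)$. These produce exactly the zeroes of orders $n$ and $1$. The boundary stratum $\cQ_0(n-2,-1,-1^{n+1})=\cQ_0(n-2,-1^{n+2})$ is admissible: its degrees sum to $-4$, it has at least six poles, and no exceptional pair applies. So your proposed exclusion of $-2.2$ (``two boundary components each needing positive degree, plus a pole count mismatch'') is false. Carried out honestly, it would delete graph~\emph{iv}; you reach seven graphs only by attributing~\emph{iv} to the wrong local type.

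There is a smaller gap in type ``c''. The emptiness of the loop does not transfer from Proposition~\ref{pr:list:of:configurations:in:minimal:stratum}, since that argument relied on only one zero being available, whereas here a second, simple zero is allowed. You must check each possible vertex on the loop separately:
\begin{itemize}
\item a $+2.1$ vertex of genus $1$, with boundary orders $1,1$, creates two zeroes of order $1$;
\item a $+2.2$ vertex forces genus at least $2$;
\item an extra $\circ$-vertex creates an endpoint of degree $-2+4\cdot 1=2$.
\end{itemize}
On the stem you also need to exclude a $+2.2$ vertex: its genus-one boundary orders $2,2$ give a zero of degree $2$. Finally, a $\circ2.2$ between the $\ominus$-vertex and a $+2.1$ vertex is excluded not because it needs ``a third zero''. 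The middle boundary component has orders $0,1,1,0$, hence a zero of degree $4$, which is incompatible with $n\ge 5$.
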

\begin{proof}
The surface again has at least $5$ poles, so Lemma~\ref{lm:abc:only} applies. The global ribbon graph contains at most two boundary components
not counting boundary components with a single boundary singularity
of degree $0$, see Lemma~\ref{lm:simple:pole:in:a:graph}.
The latter boundary components of the global ribbon graph do not appear in graphs of types ``a'' and  ``b''.

The local structure around a $\circ$-vertex of valency $2$
(see Figure~\ref{fig:embedded:local:ribbon:graphs})
implies that a global ribbon graph of type ``a'' containing
a $\circ$-vertex
has at least two boundary components, and each of them
represents a zero (and not a marked point or a pole).
To represent the stratum $\cQ(n,1,-1^{n+1})$ it should contain exactly two components representing zeroes of orders $n$ and $1$. The boundary component representing the zero of order $1$ should have exactly two boundary singularities of degrees $0$ and $1$, which implies that it corresponds to an edge joining the $\circ$-vertex and one of the $\ominus$-vertices.
This implies that the corresponding $\ominus$-vertex cannot carry genus $1$. Indeed, in genus $1$ this $\ominus$-vertex would have to have a
single interior singularity of degree $1$, which is forbidden
in the first line of the exceptional list in part 6 of Definition~\ref{def:configuration}. (In other terms, this would
give rise to a surface in the boundary stratum $\cQ(1,-1)$, which is empty.) Thus, this $\ominus$ vertex carries genus $0$, has
a single boundary singularity of order $1$, and three simple poles as interior singularities.

The graph can have either no $\oplus$-vertices, in which case the remaining $\ominus$-vertex carries genus $1$, or can have exactly one $\oplus$-vertex of type $+2.1$ carrying genus $1$ in which case the remaining $\ominus$-vertex carries genus $0$. The restrictions listed above leave us with two ribbon graphs of type ``a''
represented in Figure~\ref{fig:graphs:n:1:genus:1}.

Let us pass to graphs of type ``b'' containing a $\circ$-vertex.
A $\circ$-vertex of valency $2$ has local type $\circ2.2$, see Figure~\ref{fig:embedded:local:ribbon:graphs}. It contributes
to the genus, so in the case of $g=1$, the graph cannot have
$\oplus$-vertices (and, hence, other $\circ$-vertices),
and the single $\ominus$-vertex carries genus $0$.
If the $\ominus$-vertex has local type $-2.2$ in the notation
of Figure~\ref{fig:embedded:local:ribbon:graphs}, then the
global ribbon graph has two boundary components, and
the decoration is completely determined.
If the $\ominus$-vertex has local type $-2.1$, the
global ribbon graph has a single boundary component
with four boundary singularities. The degrees of
the boundary singularities adjacent to the vertex
of type $\circ2.2$
are equal to zero. Hence,
by Formula~\eqref{eq:degree:of:singularity} the associated zero
has degree at least $2$. Thus, in the case of this graph the zero of degree $1$ is necessarily an interior singularity. This exhausts the list of possible graphs of type ``b'' leaving us with the two graphs
presented in Figure~\ref{fig:graphs:n:1:genus:1}.

It remains to list the graphs of type ``c'' which contribute
to $\carea(\cQ(n,1,-1^{n+1}))$ with $n\ge 5$. By Lemma~\ref{lm:no:oplus:vertices:of:valence:3}
the unique vertex of valency $3$ of such a graph should be
a $\circ$ vertex, and hence, have the only possible local type
$\circ3.2$, see Figure~\ref{fig:embedded:local:ribbon:graphs}.

We start by proving that the loop of such a graph does not have
further vertices. First note that the presence of a $\oplus$-vertex at the loop excludes any other vertices at the loop. Indeed, a combination of a $\circ$-vertex and of a $\oplus$-vertex of the loop implies
that the genus $g\ge 2$. Suppose that there is a single $\oplus$-vertex on the loop. It should carry genus $1$. If it is of type $+2.1$
it has two boundary singularities of order $1$; such a vertex develops two zeroes of order $1$ for the ambient stratum.
 If the loop contains a single $\oplus$-vertex of local type $+2.2$,
then the associated surface has genus at least $2$.
The remaining a priori possibility for the loop to contain vertices is to have a $\circ$-vertex of valency $2$ as in Figure~\ref{fig:extra:graph:c:with:no:plus:vertices}. The orders of all four boundary singularities on the corresponding
boundary component of the global ribbon graph are zero, so Formula~\eqref{eq:degree:of:singularity} gives an endpoint singularity of order $-2+4\cdot(0+1)=2$, which is not allowed for $n\ge 5$.

We have proved that the loop does not contain any further vertices.
Hence, the global ribbon graph has a component represented by an annulus around the loop. Each of the two boundary components of the annulus has a single boundary singularity of degree $0$. These two components correspond to simple poles.

If a $\oplus$-vertex occurs on the stem, it carries genus $1$
and the corresponding local ribbon graph is of one of the types $+2.1$ or $+2.2$. Classification of graphs forbids presence of a $\circ$-vertex
on a stem next to a trivalent $\circ$-vertex, so the $\oplus$-vertex and the trivalent $\circ$-vertex are neighbors.
For type $+2.2$, the orders of two boundary singularities
of a $\oplus$-vertex carrying genus $1$ are $2,2$.
Thus, the boundary of the component of the global ribbon graph $\GG$ bounded between the $\oplus$-vertex and the $3$-valent $\circ$-vertex has exactly two boundary singularities
of orders $2$ and $0$, and hence represents a zero of degree $2$.
This is incompatible with the prescribed stratum for $n\ge 5$,
whether or not another cylinder occurs on the other side.
For a vertex of type $+2.1$ carrying genus $1$, the orders of
boundary singularities are $1,1$.
An additional cylinder between this vertex and the
$\ominus$-vertex would create a boundary component with four boundary singularities of
orders $1,1,0,0$, hence a zero of degree $4$.
Thus this additional cylinder is excluded as well.
The remaining graph with a single vertex of type $+2.1$ on the stem is the pocket-through-torus
configuration. Together with the two ordinary pockets, this
leaves precisely the three graphs $v, vi, vii$ of type ‘‘c’’.
\end{proof}

% ###################################################################
% ###################################################################
% ###################################################################
\section{Relations for renormalized volumes. Combinatorial constant}
\label{s:from:graphs:to:principal:boundary}

In sections~\ref{s:Contributions:of:graphs:a}, \ref{s:Contributions:of:graphs:b}, \ref{s:Contributions:of:graphs:c} we study in more detail contributions of graphs of types ``a'', ``b'', ``c''
to the area Siegel--Veech constant $\carea(\cQ_g(n,d,-1^{n+D-4g+4})$
detected in section~\ref{s:Configurations}.
Each time we apply first Goujard's formula~\eqref{eq:carea},
and then rewrite the answer in terms of
renormalized volumes $W_g(n;d)$ and renormalized Siegel--Veech constants defined in respectively~\eqref{eq:W:g:n} and~\eqref{eq:C:g:n}.
We do it twice, because normalization of $W_g(n;d)$ is needed for the
purposes of the current paper, while the standard normalization
might be useful for applications.

% ------------------------------------------------------------------------
\subsection{Some relations for renormalized volumes}

In this section we record normalization identities which will be
repeatedly used in sections~\ref{s:Contributions:of:graphs:a}--\ref{s:Contributions:of:graphs:c} when
passing to renormalized volumes. In particular we will repeatedly use the following identities which immediately follow from Equation~\eqref{eq:v} defining $v(n)$:
\begin{align}
\label{eq:v:n:minus:2}
\frac{v(n-2)}{v(n)}&=\frac{1}{\pi^2}\cdot\frac{n+1}{n}\,,
\\
\label{eq:v:n:minus:4}
\frac{v(n-4)}{v(n)}&=\frac{1}{\pi^4}\cdot\frac{(n+1)(n-1)}{n(n-2)}\,,
\\
\label{eq:v:n:minus:6}
\frac{v(n-6)}{v(n)}&=\frac{1}{\pi^6}\cdot\frac{(n+1)(n-1)(n-3)}{n(n-2)(n-4)}\,.
\end{align}

We will also use the following fact.
\begin{lemma}
The renormalized volume of any stratum in genus zero has the following constant value:
\begin{equation}
\label{eq:W:in:genus:0}
W_0(n;d)=2\pi^2\,.
\end{equation}
\end{lemma}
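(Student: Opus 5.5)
The claim follows by direct substitution of the Kontsevich formula~\eqref{eq:volume:in:genus:0} into Definition~\eqref{eq:W:g:n} with $g=0$. In genus zero the stratum is $\cQ_0(n,d_1,\dots,d_k,-1^{n+D+4})$. Its singularity data sum to $n+D-(n+D+4)=-4$, as required for $\CP$, and every entry lies in $\{-1,0\}\cup\N$. So the Athreya--Eskin--Zorich theorem applies and gives
\[
\Vol\cQ_0(n,d_1,\dots,d_k,-1^{n+D+4})
=2\pi^2\cdot v(n)\cdot v(d_1)\cdots v(d_k)\cdot v(-1)^{n+D+4}.
\]

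Next I use the convention $v(-1)=1$ from~\eqref{eq:v:for:small:n}, so the factor coming from the poles is $1$. Since $n^g=n^0=1$, the denominator in~\eqref{eq:W:g:n} is exactly $v(n)\,v(d_1)\cdots v(d_k)$. Dividing, all $v$-factors cancel and $W_0(n;d)=2\pi^2$. The special case $d=\emptyset$ is~\eqref{eq:Vol:minimal:g0}.

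The only point needing care is the range of validity, since there is no real analytic obstacle. The formula~\eqref{eq:volume:in:genus:0} is stated for labeled singularities, which matches the convention used throughout, and it holds for every genus-zero stratum. Genus-zero strata are connected, and entries $d_i=0$ (marked points) are permitted by the theorem. So no restriction on $n$ beyond $n\ge -1$ is needed, and $n\ge 0$ is the relevant range here, where $v(n)\neq 0$ and the quotient is well defined.
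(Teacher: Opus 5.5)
Your proposal is correct and is essentially the paper's argument: the paper also proves this by substituting the Athreya--Eskin--Zorich formula~\eqref{eq:volume:in:genus:0} into Definition~\eqref{eq:W:g:n} with $g=0$. Your extra checks (that the singularity data sum to $-4$, that $v(-1)=1$, and that $n^0=1$) simply spell out what the paper calls a trivial consequence.
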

\begin{proof}
This relation is a trivial consequence of the combination of
Definition~\eqref{eq:W:g:n} of $W_0$ and of the closed Formula~\eqref{eq:volume:in:genus:0}.
\end{proof}

We will now extend the definition of $W_g(n;d_1,\dots,d_k)$
allowing the partition $d$ to contain entries $0$ and $-1$.
Note that the strata $\cQ_g(n,d_1,\dots,d_k,-1^{n+D-4g+4})$ were originally defined for positive integers $d_i$. This definition can be naturally extended allowing the variables $d_j$ to take values $0$ and $-1$. When $d_j=0$ for some $j$, then by~\eqref{eq:v:for:small:n} we have $v(0)=2$, and hence Equation~\eqref{eq:vol:with:marked:point} implies that
\begin{equation}
\label{eq:W:of:0}
W_g(n;d_1,\dots,d_{j-1},0,d_{j+1},\dots,d_k):=
W_g(n;d_1,\dots,d_{j-1},d_{j+1},\dots,d_k)\,.
\end{equation}

When $d_j=-1$ for some $j$, then
\begin{multline*}
\cQ_g(n;d_1,\dots,d_{j-1},-1,d_{j+1},\dots,d_k,-1^{n+D-4g+4})
\\=
\cQ_g(n;d_1,\dots,d_{j-1},d_{j+1},\dots,d_k,-1^{n+\hat{D}-4g+4})\,,
\end{multline*}
where
\begin{align*}
       D  &= d_1+\dots+d_{j-1}+(-1)+d_{j+1}+\dots+d_k\,, \\
  \hat{D} &= d_1+\dots+d_{j-1}+d_{j+1}+\dots+d_k\,,\\
  \hat{D}& = D+1\,.
\end{align*}
Thus, it is natural to define
\begin{equation}
\label{eq:W:of:minus:1}
W_g(n;d_1,\dots,d_{j-1},-1,d_{j+1},\dots,d_k):=
W_g(n;d_1,\dots,d_{j-1},d_{j+1},\dots,d_k)\,.
\end{equation}

%------------------------------------------------------------------
\subsection{Combinatorial constant in Goujard's Formula}
\label{ss:Formula:for:the:combinatorial:constant}

\noindent\textbf{Thin and thick cylinders.}
Goujard's Formula~\eqref{eq:carea} uses notation $q_1$ for the
number of ``thin'' cylinders and $q_2$ for the
number of ``thick'' cylinders. For the purposes of the current paper,
it will be convenient to define ``thin'' and ``thick'' cylinders
in terms of the classification of the associated stable graphs
given in Figure~\ref{fig:graphs}.

\begin{lemma}[Thin and thick cylinders]
\label{lm:thin:and:thick:cylinders}
A cylinder involved in a configuration of \^homologous saddle connections
is called \textit{thin} if and only if the associated $\circ$-vertex satisfies one of the following conditions:\newline
-- it is located at a graph of one of the types ``a, b''; \newline
-- it has valency $2$ and is located at one of the loops of graphs of types ``c, d, e''.

All other cylinders are called \textit{thick}. In other words, a cylinder is called \textit{thick} if and only if the associated $\circ$-vertex satisfies one of the following conditions:\newline
-- it has valency $3$ or $4$; \newline
-- it is located between the unique $\ominus$-vertex of a graph of type ``c'' and the unique vertex of valency $3$ of this graph; \newline
-- it is located between the two vertices of valency $3$ of a graph of type ``d''.
\end{lemma}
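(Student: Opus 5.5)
The plan is to prove the lemma by translating Goujard's geometric definition of thin and thick cylinders into the language of stable graphs. Recall the definition from~\cite{Goujard:carea} (going back to Masur--Zorich). A cylinder of a configuration is \emph{thin} when both of its boundary components are made of saddle connections which are \^homologous to its waist curve with ratio $1$. In other words, both boundaries are formed by a single saddle connection of the configuration, or more generally the cylinder's boundaries each have length $\ell$ equal to the common length of the \^homologous saddle connections. It is \emph{thick} when at least one of its boundary components is made of saddle connections whose total length is $2\ell$. Equivalently, the waist curve has length twice that of the shortest saddle connection, as happens for a pocket or for a cylinder glued to a pair of \^homologous segments on one side. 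So the first step is to record, for each local ribbon graph around a $\circ$-vertex (types $\circ2.2$, $\circ3.2$, $\circ4.\ast$ of Figure~\ref{fig:embedded:local:ribbon:graphs}), the lengths of the boundary components of the cylinder in units of $|\gamma|$. Here $\gamma$ is any saddle connection of $\cC$; Appendix~\ref{a:configurations} lets us read these lengths off the local ribbon graph.

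Next I will use the rule, from the Masur--Zorich description, that the homology class of the waist curve of a cylinder is determined by how the graph is cut at the $\circ$-vertex. Cutting the stable graph at the edge through a valency-$2$ $\circ$-vertex either disconnects it or not. If cutting the cylinder's waist curve separates the surface into a piece whose double cover is connected, the waist curve is twice a saddle connection (thick). If both sides are attached via single saddle connections of length $\ell$, it is thin. I will then check each case of Figure~\ref{fig:graphs}. In graphs of types ``a'' and ``b'', every $\circ$-vertex has valency $2$ and lies on a path between two $\ominus$-vertices, or on the cycle through the single $\ominus$-vertex. Each side of the cylinder then borders a single saddle connection, giving length $\ell$: thin. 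A valency-$2$ $\circ$-vertex on a loop of graphs of types ``c, d, e'' similarly has both boundaries formed by one saddle connection each, so it is thin. A valency-$3$ or valency-$4$ $\circ$-vertex has local graph $\circ3.2$ or $\circ4.\ast$, where one boundary is composed of two saddle connections. Its circumference is therefore $2\ell$, and it is thick. This is the pocket-like cylinder in the proof of Proposition~\ref{pr:list:of:configurations:in:minimal:stratum}.

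The remaining cases are the $\circ$-vertices on the stem of a type ``c'' graph and on the bridge between the two trivalent vertices of a type ``d'' graph. Here I will argue via the separating edge. Removing the cylinder cuts the surface into two parts, one of which contains the loop(s), i.e.\ the valency-$3$ vertex with its loop. The waist curve is homologous to the boundary of that subsurface, which consists of two \^homologous saddle connections traversed consecutively, of total length $2\ell$. Hence the cylinder is thick. I expect this step to be the main obstacle, because the length of the waist curve is not visible from the valency of the vertex alone. It needs the global position in the graph, namely that across a loop the saddle connection appears twice on the boundary of the complementary component. I would settle it by comparing with Goujard's explicit pictures for the ``c'' and ``d'' configurations, and by checking that the resulting factor $(4q_1+q_2)$ in~\eqref{eq:carea} reproduces known small-genus values from~\cite{Goujard:vol}. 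The ``in other words'' part of the statement is then the complement of the first list, obtained by running through the graph types.
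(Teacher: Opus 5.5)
\textbf{There is a genuine gap.} Your starting point is sound: thin means the waist curve has the length $\ell$ of the shortest saddle connection of $\cC$, and thick means it has length $2\ell$. Your treatment of $\circ$-vertices of valency $3$ or $4$ is fine. The gap is in the valency-$2$ cases, which are the real content of the lemma.

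You repeatedly infer: ``each side of the cylinder borders a single saddle connection, so its length is $\ell$, so it is thin''. But every valency-$2$ $\circ$-vertex is bounded by a single saddle connection on each side. This includes those on the stem of a graph of type ``c'' and on the bridge of a graph of type ``d'', and those cylinders are thick. What decides the matter is the length of the saddle connections carried by the edge on which the vertex sits, and your argument never determines it.

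Your separation criterion does not supply this either. Take a $\circ$-vertex between the two $\ominus$-vertices of a graph of type ``a''. Cutting along its waist curve produces two pieces, each with nontrivial holonomy and hence with connected double cover. Your rule therefore declares this cylinder thick, whereas it is thin. A cylinder on the stem of a ``c'' graph gives exactly the same separation pattern, so no criterion of this kind can tell the two situations apart.

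Your last step also misidentifies the boundary. On the loop side, the piece adjacent to a stem cylinder is bounded by a single stem saddle connection, not by two saddle connections traversed consecutively. The factor $2$ arises at the trivalent vertex, where the loop edge occurs twice. Finally, comparing with pictures and with known volumes is a consistency check, not a proof.

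The missing ingredient is the length information carried by the edge labels $1$ and $2$ in the Masur--Zorich classification (Figure~\ref{fig:graphs}). Saddle connections on edges labeled $1$ have length $\ell$, and those on edges labeled $2$ have length $2\ell$. Label $1$ sits on all edges of graphs ``a'' and ``b'' and on the loops of ``c'', ``d'', ``e''. Label $2$ sits on the stem of ``c'' and the bridge of ``d''.

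The paper's proof simply takes Goujard's definition and reads it off this figure: a cylinder is thin exactly when its $\circ$-vertex has valency $2$ and lies on an edge labeled $1$. If you want to derive the labels rather than quote them, you need an actual argument. For instance, use holonomy: the boundary holonomies of a $\oplus$-component, or of a cylinder, sum to zero. This forces equal lengths across valency-$2$ $\oplus$- and $\circ$-vertices. It also forces the stem to be twice as long as the loop at a trivalent vertex carrying a loop.
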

\begin{proof}
By definition, a cylinder is ``thin'' if the corresponding $\circ$-vertex has valency $2$ and is located on an edge labeled by $1$ in Figure~\ref{fig:graphs}; otherwise it is ``thick''. The classification
of graphs in Figure~\ref{fig:graphs} implies that this definition
is equivalent to the statement of the Lemma.
\end{proof}

We now recall the algorithm from~\cite{Goujard:carea} of evaluation of the combinatorial constant $M$ present in Goujard's key Formula~\eqref{eq:carea} for the area Siegel--Veech constant.
The constant $M$ is defined in~\cite{Goujard:carea} as
$$
M=\frac{M_s M_c}{M_t}\,,
$$
where $M_c, M_t, M_s$ are defined by Equations (9), (12) and (20)
in~\cite{Goujard:carea}:
\begin{equation*}
%\tag{9}
M_c=\begin{cases}
4^q&\text{if $\cC$ is of type }``a''\,,\\
4^{q+1}&\text{otherwise}\,.
\end{cases}
\end{equation*}

Next, $M_t$ is defined as
\begin{equation*}
%\tag{12}
M_t=\prod_{i=q_1+1}^q o_{t_i}\,.
\end{equation*}
This constant is responsible for the thick cylinders having
a figure eight as at least one of the boundaries. We have
$$
o(t_i)=\begin{cases}
2 & \text{if at least one of the sides}\\
  & \text{of the cylinder is a figure eight\,,}\\
1& \text{otherwise}
\end{cases}
$$

Finally,
\begin{equation*}
%\tag{Gj(20)}
  M_s=\frac{K}{|\Gamma(\cC)|}\,,
\end{equation*}
where the numerator and the denominator are defined as follows.
For every vertex $v_i$
we compute $K(v_i)$ as the following product over all components
of the local ribbon graph adjacent to the vertex $v_i$.
$$
K(v_i)=
\prod_{\substack{components\\of\ the\ local\\ribbon\ graph}}\quad
\sum_{\substack{boundary\\singularities\\in\ the\\component}} k_i
\quad
\text{if $v_i$ is a $\ominus$-vertex}\,,
$$
and
$$
K(v_i)=
2\prod_{\substack{components\\of\ the\ local\\ribbon\ graph}}\quad
\sum_{\substack{boundary\\singularities\\in\ the\\component}} \frac{k_i}{2}
\quad
\text{if $v_i$ is a $\oplus$-vertex}\,.
$$
We define
$$
K=\prod_{i=1}^m K(v_i)\,.
$$
Finally, $|\Gamma(\cC)|$ is defined as
$$
|\Gamma(\cC)|=\prod_{i=1}^m |\Gamma(v_i)|\,,
$$
where
\begin{equation}
\label{eq:Gamma}
|\Gamma(v_i)|=\begin{cases}
                2, & \text{if $S(v_i)$ is a torus of trivial holonomy} \\
                2, & \text{if $S(v_i)$ is in one of }\cQ(2,-1,-1),\ \cQ(2,2),\ \cQ(2,1,1)\\
                   & \text{and the surgery applies to one or two fixed points}\\
                   & \text{of the hyperelliptic involution} \\
                1, & \mbox{otherwise}\,.
              \end{cases}
\end{equation}

% ###################################################################
% ###################################################################
% ###################################################################
\section{Contribution of configurations of type ``a''}
\label{s:Contributions:of:graphs:a}

% ---------------------------------------------------------------------
\subsection{Dumbbell configuration}
We start with the configuration encoded by the decorated ribbon graph as in the top picture in Figure~\ref{fig:dumbbell}. In genus $0$ this configuration was called a ``dumbbell'' in~\cite{Athreya:Eskin:Zorich} and the corresponding Siegel--Veech constant was computed. We apply Goujard's Formula~\eqref{eq:carea} in the case of arbitrary genus.
\medskip

\begin{figure}[htb]
\centering
\includegraphics[scale=0.8]{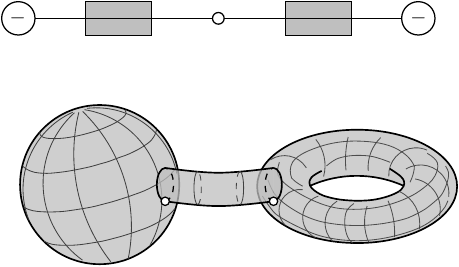}
\begin{picture}(0,0)(370,-150) % (370,-140)
\put(185,-45){$\scriptstyle g_0$} % (192,-45)
\put(211,-61){$\scriptstyle d_{i_s}$}
\put(250,-61){$\scriptstyle 0$}
\put(293,-61){$\scriptstyle 0$}
\put(326,-61){$\scriptstyle n$}
\put(115,-75){$\scriptstyle \{d_{i_1},\dots,d_{i_{s-1}},d_{i_{s+1}},\dots,d_{i_{k_0}},-1^{D_0-4g_0+2}\}$}
\put(355,-45){$\scriptstyle g_1$} % (348,-45)
\put(335,-75){$\scriptstyle \{d_{j_1},\dots,d_{j_{k_1}},-1^{n+D_1-4g_1+2}\}$}
\end{picture}
\caption{
\label{fig:dumbbell}
Dumbbell configuration. The bottom picture illustrates a particular case
when $g_0=0$ and $g_1=1$}
\end{figure}

We start our consideration with a dumbbell having a zero of degree $n$ at the base.

Given a stratum
\(
\cQ_g(n,d_1,\dots,d_k,-1^{n+D-4g+4})
\)
consider a partition of the set with multiplicities $\{d_1,\dots,d_k\}$
into two complementary subsets with multiplicities
\[
\{d_1,\dots,d_k\}=
\{d_{i_1},\dots,d_{i_{k_0}}\}\sqcup
\{d_{j_1},\dots,d_{j_{k_1}}\}\,,
\]
where
\[
\{1,\dots,k\}=
\{i_1,\dots,i_{k_0}\}\sqcup
\{j_1,\dots,j_{k_1}\}\,,
\]
and a partition of $g$ into a sum $g=g_0+g_1$ of nonnegative integers.
Let
\begin{align*}
  D_0 &= d_{i_1}+\cdots+d_{i_{k_0}}\,,
\\
  D_1 &= d_{j_1}+\cdots+d_{j_{k_1}}\,,
\end{align*}
which implies that $D_0+D_1=D$. We consider only those partitions which satisfy
\begin{align*}
\begin{cases}
D_0\ge 4g_0-2\,,\\
n+D_1\ge 4g_1-2\,.
\end{cases}
\end{align*}

\begin{proposition}
\label{prop:dumbbell:general:init}
Assume that the ambient stratum and both boundary strata displayed below are nonempty, that the boundary data are admissible, and that the conventions of Remark~\ref{rm:connected:strata} apply. Retain both symmetry factors $|\Gamma(v_0)|$ and $|\Gamma(v_1)|$ from~\eqref{eq:Gamma}.

The total contribution of dumbbell configurations
as in Figure~\ref{fig:dumbbell}
has the following form:
\begin{multline}
\label{eq:dumbbell:general:init}
c_{area}^{dumbbell,n,d_{i_s}}\big(\cQ_g(n,d_1,\dots,d_k,-1^{n+D-4g+4})\big)
=\frac{1}{|\Gamma(v_0)|\,|\Gamma(v_1)|}
\\
\times\frac{d_{i_s}\cdot n}{4}
\cdot
\frac{(D_0+k_0-2g_0-1)!}{(D_0-4g_0+2)!}
\cdot
\frac{(n+D_1+k_1-2g_1)!}{(n+D_1-4g_1+2)!}
\cdot
\frac{(n+D-4g+4)!}{(n+D+k-2g+2)!}
\\
\times
\Vol\cQ_{g_0}(d_{i_1},\dots,d_{i_{s-1}},d_{i_s}-2,d_{i_{s+1}},\dots,d_{i_{k_0}},-1^{D_0-4g_0+2})
\\
\times
\frac{\Vol\cQ_{g_1}(n-2,d_{j_1},\dots,d_{j_{k_1}},-1^{n+D_1-4g_1+2})}
{\Vol\cQ_g(n,d_1,\dots,d_k,-1^{n+D-4g+4})}\,.
\end{multline}
\end{proposition}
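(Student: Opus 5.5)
Apply Goujard's Formula~\eqref{eq:carea} to the single decorated ribbon graph of type ``a'' in Figure~\ref{fig:dumbbell}. Then sum over the labelings that produce the same unlabeled picture. The ingredients are determined in the following order.

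\emph{Graph data.} There is one $\circ$-vertex of local type $\circ2.2$, and it sits between two $\ominus$-vertices $v_0,v_1$, each of local type $-1.1$. By Lemma~\ref{lm:thin:and:thick:cylinders} the cylinder is thin, so $q=q_1=1$, $q_2=0$, $m=2$, and $4q_1+q_2=4$. The global ribbon graph has two boundary components. One carries the boundary singularities $d_{i_s}-1$ (on the $v_0$ side) and $0$; by~\eqref{eq:degree:of:singularity} it gives the endpoint zero of order $d_{i_s}$. The other carries $0$ and $n-1$ and gives the zero of order $n$. By~\eqref{eq:Dji} the boundary strata are $\cQ(\alpha'_0)$, in which the distinguished zero $d_{i_s}$ is replaced by $\delta=d_{i_s}-3$ (equivalently, $d_{i_s}-2$ with one fewer pole after the bookkeeping of~\eqref{eq:Q:boundary:stratum}), and $\cQ(\alpha'_1)$, in which $n$ is replaced by $n-2$. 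I will match the pole counts $D_0-4g_0+2$ and $n+D_1-4g_1+2$ against the sum $-4$ condition in genus $g_0$ and $g_1$; this also confirms that the displayed boundary strata are the right ones.

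\emph{Combinatorial constant $M=M_sM_c/M_t$.} For type ``a'' we have $M_c=4^q=4$. Since the cylinder is thin, $M_t=1$. Each $\ominus$-vertex has a local graph with a single component containing a single boundary singularity, so $K(v_0)=d_{i_s}-1$ and $K(v_1)=n-1$; the factor $|\Gamma(v_0)||\Gamma(v_1)|$ stays in the denominator as the statement requires. This gives $M=4(d_{i_s}-1)(n-1)/(|\Gamma(v_0)||\Gamma(v_1)|)$. The power of two is $2^{m+q+3}=2^6$.

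\emph{Dimensions and labelings.} Here $a_0=\dim\cQ(\alpha'_0)=2g_0+(k_0)+(D_0-4g_0+2)-2$ (number of singularities plus $2g_0-2$), and similarly for $a_1$ and for $\dim\cQ(\alpha)=2g+(n+D+k-4g+5)-2$. Substituting gives the factorial ratios $(a_0-1)!,(a_1-1)!,(\dim-1)!$. To obtain the factors $\frac{(\cdot)!}{(\text{number of poles})!}$ in~\eqref{eq:dumbbell:general:init}, I will account for labeled simple poles. There are $\binom{n+D-4g+4}{D_0-4g_0+2}$ ways to distribute the labeled poles between the two sides, and this cancels against pole labels in the volumes. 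Multiplying this binomial with the $(a_i-1)!$ and the $(\dim-1)!$ should reproduce the three factorial ratios.

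\emph{Main obstacle.} The main obstacle is the exact numerical prefactor: reconciling $(d_{i_s}-1)(n-1)\cdot 4\cdot 4/2^6$ with the claimed $d_{i_s}n/4$. I expect the discrepancy to come from the convention that the boundary stratum should be written with zero orders $d_{i_s}-2$ and $n-2$ plus one extra pole on each side, rather than with $d_{i_s}-3$ and $n-3$. With that convention, $K(v)$ equals the sum of boundary singularity degrees $d_{i_s}$ and $n$ (the cone-angle count), and the pole-count shift absorbs the remaining factorial $\pm1$. I would fix this by checking against the genus-$0$ dumbbell constant of~\cite{Athreya:Eskin:Zorich}, which is the case $g_0=g_1=0$. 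Matching there pins down all constants, and the general-genus formula then follows by the same bookkeeping.
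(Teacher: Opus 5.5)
Your overall route is the paper's: apply Goujard's formula~\eqref{eq:carea} to the type ``a'' graph with two $-1.1$ vertices separated by one thin $\circ2.2$ cylinder, compute $M$, and multiply by the binomial coefficient that distributes the labeled poles. However, the step that fixes the prefactor is wrong. The boundary singularities on the $\ominus$-sides have orders $d_{i_s}$ and $n$, not $d_{i_s}-1$ and $n-1$. The endpoint zero of order $d_{i_s}$ has cone angle $(d_{i_s}+2)\pi$. The cylinder side contributes $\pi$ (order $0$), which leaves $(d_{i_s}+1)\pi$ on the $v_0$ side, i.e.\ order $d_{i_s}$.

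Your own assignment in fact fails~\eqref{eq:degree:of:singularity}, since $-2+\bigl((d_{i_s}-1)+1\bigr)+(0+1)=d_{i_s}-1\neq d_{i_s}$. The slip propagates everywhere:
\begin{itemize}
\item you get $\delta=d_{i_s}-3$ and $n-3$ instead of $d_{i_s}-2$ and $n-2$;
\item you get $K(v_0)K(v_1)=(d_{i_s}-1)(n-1)$ instead of $d_{i_s}n$;
\item the boundary pole counts become $D_0-4g_0+1$ and $n+D_1-4g_1+1$, which sum to $n+D-4g+2$ rather than the ambient $n+D-4g+4$.
\end{itemize}
So the pole-count check you promise would actually fail. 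It is also inconsistent with the pole counts you then plug into the dimension formulas.

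The remedy you propose does not work. There is no convention under which a zero of order $d_{i_s}-3$ is \emph{equivalent} to one of order $d_{i_s}-2$ with an extra pole, and no leftover factorial $\pm1$ to absorb. Calibrating against the genus-$0$ dumbbell of~\cite{Athreya:Eskin:Zorich} would only test a special case; it does not prove the general constant.

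With the correct orders everything is direct. The values $\delta=d_{i_s}-2$ and $n-2$ give exactly the displayed boundary strata, and their pole numbers $D_0-4g_0+2$ and $n+D_1-4g_1+2$ follow from the degree-sum condition. Next, $M_c=4$ and $M_t=1$ give $M=4d_{i_s}n/\bigl(|\Gamma(v_0)|\,|\Gamma(v_1)|\bigr)$. Hence $M\cdot 4/2^{6}=d_{i_s}n/\bigl(4|\Gamma(v_0)|\,|\Gamma(v_1)|\bigr)$, with no correction needed. The dimensions are $a=n+D+k-2g+3$, $a_0=D_0+k_0-2g_0$ and $a_1=n+D_1+k_1-2g_1+1$. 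Combine them with the binomial $\binom{n+D-4g+4}{D_0-4g_0+2}$ to obtain~\eqref{eq:dumbbell:general:init}. Note that the binomial simply multiplies the fixed-labeling contribution; it does not cancel against pole labels in the volumes.
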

\begin{proof}
We consider a graph of type ``a'' with two $\ominus$-vertices
$v_0,v_1$ both
of types $-1.1$,
with a single $\circ2.2$ vertex between them and with no $\oplus$-vertices.
The boundary singularity at the vertex $v_0$ is $d_{i_s}$; the interior singularities are
\[
\{d_{i_1},\dots,d_{i_{s-1}},\widehat{d_{i_s}},
d_{i_{s+1}},\dots,d_{i_{k_0}},
-1^{D_0+2-4g_0}\}\,.
\]
The boundary singularity at the vertex $v_1$ is $n$; the interior singularities are
\[
\{d_{j_1},\dots,d_{j_{k_1}},-1^{n+D_1-4g_1+2}\}\,.
\]
The dimensions of the original stratum and of the two principal boundary strata
have the following form:
\begin{align*}
a&
= \dim_{\C}\cQ_g(n,d_1,\dots,d_k,-1^{n+D-4g+4})
= n+D+k-2g+3,
\\
a_0
&
= \dim_{\C}\cQ_{g_0}
\bigl(
d_{i_1},\dots,d_{i_{s-1}},d_{i_s}-2,
d_{i_{s+1}},\dots,d_{i_{k_0}},
-1^{D_0+2-4g_0}
\bigr)
= D_0+k_0-2g_0,
\\
a_1
&= \dim_{\C}\cQ_{g_1}
\bigl(
n-2,d_{j_1},\dots,d_{j_{k_1}},
-1^{n+D_1-4g_1+2}
\bigr)
= n+D_1+k_1-2g_1+1\,.
\end{align*}
For this configuration we have
\[
q_1=1\,,\qquad
q_2=0\,,\qquad
m=2\,,
\]
so Goujard's Formula~\eqref{eq:carea} gives:
\begin{multline*}
c_{area}^{\textit{dumbbell},n,d_{i_s};\textit{fixed poles}}
\big(\cQ_g(n,d_1,\dots,d_k,-1^{n+D-4g+4})\big)
 \\=
 M\cdot\frac{4\cdot 1+0}{2^{2+1+3}}\cdot
 \frac{(D_0+k_0-2g_0-1)!\cdot(n+D_1+k_1-2g_1)!}{(n+D+k-2g+2)!}
 \\
 \times
 \Vol\cQ_{g_0}(d_{i_1},\dots,d_{i_{s-1}},d_{i_s}-2,d_{i_{s+1}},\dots,d_{i_{k_0}},-1^{D_0-4g_0+2})
 \\
 \times
 \frac{\Vol\cQ_{g_1}(n-2,d_{j_1},\dots,d_{j_{k_1}},-1^{n+D_1-4g_1+2})}
 {\Vol\cQ_g(n,d_1,\dots,d_k,-1^{n+D-4g+4})}\,.
\end{multline*}

Let us compute now the constant $M$ for this configuration (see Section~\ref{ss:Formula:for:the:combinatorial:constant}). We have $m=2$. Since we have a diagram of type ``a'', we get
\[
M_c =4^1\,,\qquad M_t =1\,.
\]
We have $K(v_1)=n$. Its symmetry factor is given by~\eqref{eq:Gamma}; in particular, if the associated boundary stratum has at least three simple poles, then
\[
K(v_1)=(0+n)=n\,,\qquad |\Gamma(v_1)|=1\,,
\]
We have
\[
K(v_0)=(0+d_{i_s})=d_{i_s}\,.
\]
Finally, $|\Gamma(v_0)|$ is given by Formula~\eqref{eq:Gamma}.
In our language it translates as
\begin{equation}
\label{eq:Gamma:S:0}
|\Gamma(v_0)|
=\begin{cases}
2, & \text{if}\quad g_0=1,\quad k_0=1,\\
&\text{and}\quad (d_{i_1},\dots,d_{i_{k_0}})=(4)\,;\\
2, & \text{if}\quad g_0=2,\quad k_0=2,\quad d_{i_s}=4,\\
&\text{and}\quad  (d_{i_1},\dots,d_{i_{s-1}}, d_{i_{s+1}},\dots,d_{i_{k_0}})=(2)\,,\\
2, & \text{if}\quad g_0=2,\quad k_0=3,\quad d_{i_s}=4,\\
&\text{and}\quad  (d_{i_1},\dots,d_{i_{s-1}}, d_{i_{s+1}},\dots,d_{i_{k_0}})=(1,1)\,.\\
1, & \mbox{otherwise}\,.
\end{cases}
\end{equation}
Hence $M_S=\frac{K(v_0)}{|\Gamma(v_0)|}\cdot\frac{K(v_1)}{|\Gamma(v_1)|}
=\frac{d_{i_s}\cdot n}{|\Gamma(v_0)|\,|\Gamma(v_1)|}$
and thus
\begin{equation*}
%\label{eq:M:Aa}
M=\frac{M_S M_c}{M_t}
=4\cdot \frac{d_{i_s}\cdot n}{|\Gamma(v_0)|\,|\Gamma(v_1)|}\,.
\end{equation*}

There are
\begin{equation}
\label{eq:binomial}
\binom{n+D-4g+4}{D_0-4g_0+2}=\frac{(n+D-4g+4)!}{(D_0-4g_0+2)!(n+D_1-4g_1+2)!}
\end{equation}
ways to choose $D_0-4g_0+2$ simple poles which go to the surface of genus $g_0$ out of $n+D-4g+4$ initial poles,
so we get the contribution~\eqref{eq:dumbbell:general:init}
of all such dumbbell configurations.
The proof of Proposition~\ref{prop:dumbbell:general:init} is completed.
\end{proof}

% ------------------------------------------------------------------
\subsection{Dumbbell configurations carrying dominating contributions}

\begin{proposition}
\label{prop:distinguished:dumbbell}
Consider a special dumbbell configuration as in Figure~\ref{fig:dumbbell} with fixed $(g,d)$ and with an admissible decoration. Let $g_0=0$ and $d_{i_s}=d_i$. Assume, moreover, that the genus zero component has only simple poles (no zeroes) as interior singularities. The contribution of this specific dumbbell configuration has the following form in renormalized variables $W_g(n;d)$ and $C_g^\cC(n;d)$ defined in~\eqref{eq:W:g:n} and~\eqref{eq:C:g:n} as $n\to+\infty$:
\begin{multline}
\label{eq:dumbell:genus:g}
C_g^{dumbbell,n,d_i}(n;d_1,\dots,d_k)
\\
=
\frac{2}{d_i+2}\cdot
W_g(n-2;d_1,\dots,\widehat{d_i},\dots,d_k)
\left(1+O_{g,d}\left(\frac1n\right)\right)\,.
\end{multline}
\end{proposition}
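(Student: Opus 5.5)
The plan is to specialize the general dumbbell formula~\eqref{eq:dumbbell:general:init} of Proposition~\ref{prop:dumbbell:general:init} to the data of the statement, and then convert to the renormalized variables~\eqref{eq:W:g:n},~\eqref{eq:C:g:n}.

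\textbf{Step 1: specialize the dumbbell data.} The specialization is $g_0=0$, $g_1=g$, $k_0=1$, $d_{i_s}=d_i$, $D_0=d_i$, $k_1=k-1$ and $D_1=D-d_i$. The genus zero boundary stratum is then $\cQ_0(d_i-2,-1^{d_i+2})$. The genus $g$ boundary stratum is $\cQ_g(n-2,d_1,\dots,\widehat{d_i},\dots,d_k,-1^{n+D-d_i-4g+2})$, which is exactly the stratum whose normalized volume is $W_g(n-2;d_1,\dots,\widehat{d_i},\dots,d_k)$.

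\textbf{Step 2: symmetry factors.} Both factors equal $1$. For $|\Gamma(v_0)|$ this follows from~\eqref{eq:Gamma:S:0}, since $g_0=0$. For $|\Gamma(v_1)|$, the genus $g$ boundary stratum has $n+D-d_i-4g+2$ poles, so for large $n$ it is none of the exceptional strata in~\eqref{eq:Gamma}.

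\textbf{Step 3: factorial ratios.} For fixed $(g,d)$ we have
\[
\frac{(n+D-4g+4)!}{(n+D-d_i-4g+2)!}=n^{d_i+2}\Bigl(1+O_{g,d}\bigl(\tfrac1n\bigr)\Bigr),
\qquad
\frac{(n+D-d_i+k-1-2g)!}{(n+D+k-2g+2)!}=n^{-(d_i+3)}\Bigl(1+O_{g,d}\bigl(\tfrac1n\bigr)\Bigr).
\]
Together with the explicit factor $n$ in~\eqref{eq:dumbbell:general:init}, the $n$-dependent combinatorial part is therefore $1+O(1/n)$. What remains is the constant $\frac{d_i}{4}\cdot\frac{d_i!}{(d_i+2)!}=\frac{d_i}{4(d_i+1)(d_i+2)}$.

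\textbf{Step 4: genus zero volume.} By~\eqref{eq:volume:in:genus:0},
\[
\Vol\cQ_0(d_i-2,-1^{d_i+2})=2\pi^2 v(d_i-2).
\]
This includes the case $d_i=1$, where $v(-1)=1$.

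\textbf{Step 5: pass to renormalized variables.} Multiply $\carea$ by $4\pi^2\Vol\cQ_g(n,d,\dots)/(n^g v(n)\prod_j v(d_j))$; the ambient volume cancels. Write the genus $g$ boundary volume as
\[
W_g(n-2;\hat d)\,(n-2)^g\,v(n-2)\prod_{j\ne i}v(d_j).
\]
Then use $(n-2)^g/n^g=1+O(1/n)$ together with two instances of~\eqref{eq:v:n:minus:2}:
\[
\frac{v(n-2)}{v(n)}=\frac{n+1}{\pi^2 n}=\frac{1}{\pi^2}\Bigl(1+O\bigl(\tfrac1n\bigr)\Bigr),
\qquad
\frac{v(d_i-2)}{v(d_i)}=\frac{d_i+1}{\pi^2 d_i}.
\]
For $d_i=1$ the second identity is checked directly: $v(-1)/v(1)=2/\pi^2$.

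\textbf{Step 6: collect constants.} The product is
\[
4\pi^2\cdot\frac{d_i}{4(d_i+1)(d_i+2)}\cdot 2\pi^2\cdot\frac{d_i+1}{\pi^2 d_i}\cdot\frac1{\pi^2}=\frac{2}{d_i+2},
\]
which gives~\eqref{eq:dumbell:genus:g}.

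The main obstacle is bookkeeping rather than analysis. One must check that the binomial choice of poles is already included in~\eqref{eq:dumbbell:general:init} and is not counted again. One must also confirm that the symmetry factors are trivial, and that the $d_i=1$ edge case (boundary zero of order $-1$) is consistent with the extended convention~\eqref{eq:W:of:minus:1}. Finally, the $O_{g,d}(1/n)$ error is uniform because only finitely many fixed parameters enter.
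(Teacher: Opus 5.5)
Your proposal is correct and follows the paper's proof essentially step for step: you specialize \eqref{eq:dumbbell:general:init} to $g_0=0$, $k_0=1$, use \eqref{eq:volume:in:genus:0} and \eqref{eq:v:n:minus:2} for both $v(d_i-2)/v(d_i)$ and $v(n-2)/v(n)$, take both symmetry factors equal to $1$, and observe that the factorial ratios contribute $\frac{1}{n}\bigl(1+O_{g,d}(\frac1n)\bigr)$, which cancels the explicit factor $n$. The differences are cosmetic: you pair the factorials differently from the paper, and the convention \eqref{eq:W:of:minus:1} is not actually needed, since $d_i$ is deleted entirely from the genus-$g$ side and the entry $-1$ appears only in the genus-zero volume, where $v(-1)=1$ handles it directly.
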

\begin{proof}
Since $n\to+\infty$, the $\ominus$-vertex $v_1$ containing the zero of degree $n-2$ has at least three simple poles, so by~\eqref{eq:Gamma} $|\Gamma(v_1)|=1$. We have $g_0=0$, so by~\eqref{eq:Gamma:S:0} we have $|\Gamma(v_0)|=1$.
We have $i=i_s$. Thus
\[
D_0=d_i,
\qquad
D_1=D-d_i,
\qquad
g_1=g,
\qquad
k_1=k-1.
\]
The first factorial ratio becomes
\[
\frac{(D_0+k_0-2g_0-1)!}{(D_0-4g_0+2)!}
=
\frac{d_i!}{(d_i+2)!}
=
\frac{1}{(d_i+1)(d_i+2)}.
\]
The genus-zero component is
$\cQ_0(d_i-2,-1^{d_i+2})$,
so by Formula~\eqref{eq:Vol:minimal:g0} we get
\[
\Vol\cQ_0(d_i-2,-1^{d_i+2})
=
2\pi^2\cdot v(d_i-2).
\]
Using
\[
\frac{v(d_i-2)}{v(d_i)}
=
\frac{1}{\pi^2}\cdot
\frac{d_i+1}{d_i},
\]
we get
\[
\Vol\cQ_0(d_i-2,-1^{d_i+2})
=
2\cdot\frac{d_i+1}{d_i}\cdot v(d_i).
\]
Hence
\[
\frac{d_i n}{4}
\cdot
\frac{1}{(d_i+1)(d_i+2)}
\cdot
\Vol\cQ_0(d_i-2,-1^{d_i+2})
=
\frac{n\cdot v(d_i)}{2(d_i+2)}.
\]

Therefore, in the case $k_0=1$ and $g_0=0$, we obtain
\begin{multline*}
c_{area}^{dumbbell,n,d_i}
\big(\cQ_g(n,d_1,\dots,d_k,-1^{n+D-4g+4})\big)
\\
=
\frac{n\cdot v(d_i)}{2(d_i+2)}
\cdot
\frac{(n+D-d_i+k-1-2g)!}
{(n+D-d_i-4g+2)!}
\cdot
\frac{(n+D-4g+4)!}
{(n+D+k-2g+2)!}
\\
\times
\frac{
\Vol\cQ_g(n-2,d_1,\dots,\widehat{d_i},\dots,d_k,
-1^{n+D-d_i-4g+2})
}
{
\Vol\cQ_g(n,d_1,\dots,d_k,-1^{n+D-4g+4})
}.
\end{multline*}

Passing to the renormalized contribution
$C_g^{dumbbell,n,d_i}$
defined in~\eqref{eq:C:g:n} we get
\begin{multline*}
C_g^{dumbbell,n,d_i}(n;d_1,\dots,d_k)
\\
=
\frac{2(n+1)}{d_i+2}
\left(\frac{n-2}{n}\right)^g
\cdot
\frac{(n+D-d_i+k-1-2g)!}
{(n+D-d_i-4g+2)!}
\cdot
\frac{(n+D-4g+4)!}
{(n+D+k-2g+2)!}
\\
\times
W_g(n-2;d_1,\dots,\widehat{d_i},\dots,d_k).
\end{multline*}
Indeed,
\[
W_g(n-2;d_1,\dots,\widehat{d_i},\dots,d_k)
=
\frac{\Vol\cQ_g(n-2,d_1,\dots,\widehat{d_i},\dots,d_k,-1^{n+D-d_i-4g+2})}
{(n-2)^g\cdot v(n-2)\cdot \prod_{j\ne i}v(d_j)}\,.
\]
Therefore
\begin{multline*}
\frac{\Vol\cQ_g(n-2,d_1,\dots,\widehat{d_i},\dots,d_k,-1^{n+D-d_i-4g+2})}
{n^g\cdot v(n)\cdot \prod_{j\ne i}v(d_j)}
\\
=
\left(\frac{n-2}{n}\right)^g
\frac{v(n-2)}{v(n)}\cdot
W_g(n-2;d_1,\dots,\widehat{d_i},\dots,d_k).
\end{multline*}
Using one more time~\eqref{eq:v:n:minus:2} for
\(
\frac{v(n-2)}{v(n)}
=
\frac{1}{\pi^2}\cdot
\frac{n+1}{n}
\)
we obtain the factor
\(
\frac{2(n+1)}{d_i+2}
\left(\frac{n-2}{n}\right)^g
\)
in the renormalized expression above.

Finally, the product of the two factorial ratios satisfies
\[
\frac{(n+D-d_i+k-1-2g)!}
{(n+D-d_i-4g+2)!}
\cdot
\frac{(n+D-4g+4)!}
{(n+D+k-2g+2)!}
=
\frac{1}{n}
\left(1+O_{g,d}\left(\frac1n\right)\right).
\]
Proposition~\ref{prop:distinguished:dumbbell} is proved.
\end{proof}

%-------------------------------------------------------------------
\subsection{%
  \texorpdfstring{%
    \for{toc}{Contributions of graphs of type ``a'' in genus $1$}%
    \except{toc}{%
      \boldmath Contributions of graphs of type ``a'' in genus $1$%
    }%
  }{Contributions of graphs of type a in genus 1}%
}

Throughout this subsection, we assume $n\ge 5$.

\begin{lemma}
\label{lm:a:graphs:contributions:Q:n:1}
Contributions of the configurations corresponding to graphs
of types \textit{i} and \textit{ii} in Figure~\ref{fig:graphs:n:1:genus:1}
to the area Siegel--Veech constant of the strata $\cQ_1(n,1,-1^{n+1})$
have the following form in renormalized variables:
\begin{equation}
\label{eq:a:graphs:contributions:Q:n:1}
\begin{aligned}
&C_1^{dumbbell,n,1}(n;1)
=\frac{2(n+1)(n-2)}{3n(n+2)}\cdot W_1(n-2)\,,
\\
&C_1^{\textit{dumbbell-through-torus},n,1}(n;1)
=\frac{8(n+1)(n-1)}{9n^2(n-2)^2(n+2)}\,.
\end{aligned}
\end{equation}
\end{lemma}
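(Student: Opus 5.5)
The plan is to specialize Goujard's Formula~\eqref{eq:carea} to the two graphs \textit{i} and \textit{ii} of Figure~\ref{fig:graphs:n:1:genus:1}. Throughout, the ambient stratum is $\cQ_1(n,1,-1^{n+1})$, so $g=1$, $k=1$, $d=(1)$ and $D=1$. Its dimension is $a=n+3$, so the denominator $(a-1)!$ is $(n+2)!$. After computing each contribution, I would multiply by $4\pi^2$ and divide by $n\,v(n)\,v(1)$, as prescribed by~\eqref{eq:C:g:n}.

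\textbf{Graph \textit{i}.} This graph is exactly the dumbbell of Proposition~\ref{prop:dumbbell:general:init} with the following data: $g_0=0$, $k_0=1$, $d_{i_s}=1$, $D_0=1$ on one side, and $g_1=1$, $k_1=0$, $D_1=0$ on the other. The three factorial ratios then become
\[
\frac{1!}{3!}=\frac16,\qquad \frac{(n-2)!}{(n-2)!}=1,\qquad \frac{(n+1)!}{(n+2)!}=\frac{1}{n+2}.
\]
The genus-zero boundary stratum is $\cQ_0(-1,-1^3)=\cQ_0(-1^4)$. By~\eqref{eq:volume:in:genus:0} its volume is $2\pi^2$.

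Next I would check the symmetry factors. One has $|\Gamma(v_0)|=1$ by~\eqref{eq:Gamma:S:0}, because the pillowcase is not among the exceptional cases. One also has $|\Gamma(v_1)|=1$, because $\cQ_1(n-2,-1^{n-2})$ has at least three poles when $n\ge5$.

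Then I would write $\Vol\cQ_1(n-2,-1^{n-2})=(n-2)\,v(n-2)\,W_1(n-2)$ and substitute $v(n-2)/v(n)=\frac{n+1}{\pi^2 n}$ from~\eqref{eq:v:n:minus:2} and $v(1)=\pi^2/2$. All powers of $\pi$ should cancel, leaving
\[
4\pi^2\cdot\frac n4\cdot\frac16\cdot\frac1{n+2}\cdot 2\pi^2\cdot\frac{(n-2)(n+1)}{\pi^2 n\cdot n}\cdot\frac{2}{\pi^2}\cdot n\,W_1(n-2).
\]
This simplifies to $\frac{2(n+1)(n-2)}{3n(n+2)}W_1(n-2)$, which is the first line of~\eqref{eq:a:graphs:contributions:Q:n:1}.

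\textbf{Graph \textit{ii}.} This graph is not covered by Proposition~\ref{prop:dumbbell:general:init}, since it has an extra $\oplus$-vertex. I would therefore apply~\eqref{eq:carea} directly with $m=3$, $q_1=1$ and $q_2=0$. The prefactor is then $\frac{4}{2^{7}}$, and by the rules of Section~\ref{ss:Formula:for:the:combinatorial:constant} one has $M_c=4$ and $M_t=1$.

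The three boundary pieces are as follows.
\begin{enumerate}[(1)]
\item The pillowcase $\cQ_0(-1^4)$, with $a_0=2$ and $K=1$.
\item The genus-zero stratum $\cQ_0(n-6,-1^{n-2})$. Here the $\ominus$-vertex has local type $-1.1$ with boundary singularity $n-4$, so $\delta=n-6$. This gives $a_1=n-3$ and $K=n-4$.
\item The torus $\cH(0)$ at the $+2.1$ vertex, whose boundary singularities are $1,1$. This gives $b=2$, the factor $2^2\cdot1!\cdot\Vol\cH(0)$, $K=2\cdot\frac{1+1}{2}=2$ and $|\Gamma|=2$.
\end{enumerate}
The pole distribution contributes a binomial factor $\binom{n+1}{3}$, counting the ways to send three poles to the pillowcase. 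The genus-zero volumes come from~\eqref{eq:volume:in:genus:0}, namely $2\pi^2$ and $2\pi^2 v(n-6)$. I would then use~\eqref{eq:v:n:minus:6} for $v(n-6)/v(n)$ and the normalization of $\Vol\cH(0)$ from~\cite{Goujard:carea}.

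No $W$ should survive: every boundary piece is of genus $0$ or is the torus, so the result is an explicit rational function of $n$. The factorials $(n-4)!\,(n+1)!/\bigl((n-2)!\,(n+2)!\bigr)$, combined with $(n-4)\cdot\frac{(n+1)(n-1)(n-3)}{n(n-2)(n-4)}$, should collapse to $\frac{8(n+1)(n-1)}{9n^2(n-2)^2(n+2)}$.

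\textbf{Main obstacle.} I expect the difficulty to lie in the constant bookkeeping for graph \textit{ii}. This includes the exact value of $\Vol\cH(0)$ in Goujard's normalization, the torus symmetry factor $|\Gamma|=2$, the factor $2^{b}(b-1)!$, and the correct orders of the boundary singularities. A single factor of $2$ is at stake in each of these. I would pin them down first against the corresponding genus-zero pocket and dumbbell computations of~\cite{Athreya:Eskin:Zorich}. I would then cross-check the final sum against exact volumes for small $n$ from~\cite{Goujard:vol}, together with~\eqref{eq:lambda1:plus:n:1}.
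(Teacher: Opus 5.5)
Your proposal is correct and follows the paper's proof. Graph \textit{i} is the specialization of~\eqref{eq:dumbbell:general:init}, which gives the prefactor $\frac{n}{24(n+2)}$. Graph \textit{ii} is Goujard's formula applied directly with $M=4(n-4)$ and the pole-distribution factor $\binom{n+1}{3}$, which yields~\eqref{eq:n:1:ii}. Both are then renormalized using~\eqref{eq:v:n:minus:2} and~\eqref{eq:v:n:minus:6}.

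Two small fixes are needed. First, your displayed product for graph \textit{i} has a stray trailing factor $n$; as written it evaluates to $\frac{2(n+1)(n-2)}{3(n+2)}W_1(n-2)$, not the claimed expression. Second, the value that closes graph \textit{ii} is $\Vol\cH(0)=\frac{\pi^2}{3}$, as the paper uses; this is exactly what makes all powers of $\pi$ cancel and produces the constant $\frac{8}{9}$.
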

\begin{proof}
By~\eqref{eq:dumbbell:general:init} we have
\begin{multline}
\label{eq:n:1:i}
c_{area}^{dumbbell,n,1}\big(\cQ_1(n,1,-1^{n+1})\big)
\\=
\frac{1}{24}
\cdot\frac{n}{(n+2)}
\cdot
\frac{\Vol\cQ_0(-1^4)\cdot\Vol\cQ_1(n-2,-1^{n-2})}
{\Vol\cQ_1(n,1,-1^{n+1})}
\,.
\end{multline}
From~\eqref{eq:W:g:n} we have
\[
\Vol\cQ_1(n-2,-1^{n-2})=(n-2)\cdot v(n-2)\cdot W_1(n-2)\,.
\]
Applying relation~\eqref{eq:v:n:minus:2} for the ratio $\frac{v(n-2)}{v(n)}$
and using the value $v(1)=\frac{\pi^2}{2}$ given in~\eqref{eq:v:for:small:n}, we get
\begin{multline*}
\frac{\Vol\cQ_0(-1^4)\cdot\Vol\cQ_1(n-2,-1^{n-2})}{n\cdot v(n)\cdot v(1)}
\\
=\frac{2\pi^2}{v(1)}
\cdot\frac{n-2}{n}
\cdot\frac{v(n-2)}{v(n)}
\cdot W_1(n-2)
=\frac{4(n+1)(n-2)}{\pi^2n^2}\cdot W_1(n-2).
\end{multline*}
Thus,
\begin{multline*}
%\label{eq:contribution:i}
C_1^{dumbbell,n,1}(n;1)
=
\frac{4\pi^2}{n\cdot v(n)\cdot v(1)}
\cdot\frac{n}{24(n+2)}
\\
\times\Vol\cQ_0(-1^4)\Vol\cQ_1(n-2,-1^{n-2})
=\frac{2(n+1)(n-2)}{3n(n+2)}\cdot W_1(n-2)\,.
\end{multline*}
The first equality in~\eqref{eq:a:graphs:contributions:Q:n:1} is proved.

In analogy to the graph \textit{i}, Goujard's Formula~\eqref{eq:carea} applied to the configuration represented by the graph \textit{ii} gives
\begin{multline}
\label{eq:n:1:ii}
c_{area}^{\textit{dumbbell-through-torus},n,1}\big(\cQ_1(n,1,-1^{n+1})\big)
\\=
\frac{n-4}{12(n-3)(n-2)(n+2)}
\cdot
\frac{\Vol\cQ_0(-1^4)\cdot\Vol\cH(0)\cdot\Vol\cQ_0(n-6,-1^{n-2})}
{\Vol\cQ_1(n,1,-1^{n+1})}
\,.
\end{multline}
Applying relation~\eqref{eq:v:n:minus:6} for the ratio $\frac{v(n-6)}{v(n)}$,
using the closed Formula~\eqref{eq:Vol:minimal:g0} for the volumes
of the minimal strata in genus $0$, and using the value $\Vol\cH(0)=\frac{\pi^2}{3}$, we get
\begin{multline*}
\frac{\Vol\cQ_0(-1^4)\cdot\Vol\cH(0)\cdot\Vol\cQ_0(n-6,-1^{n-2})}
{n\cdot v(n)\cdot v(1)}
\\
=\frac{4\pi^6}{3n\cdot v(1)}
\cdot\frac{v(n-6)}{v(n)}
=\frac{8(n+1)(n-1)(n-3)}
{3\pi^2n^2(n-2)(n-4)}\,.
\end{multline*}
Thus,
\begin{multline*}
%\label{eq:contribution:ii}
C_1^{\textit{dumbbell-through-torus},n,1}(n;1)
\\=
\frac{4\pi^2}{n\cdot v(n)\cdot v(1)}
\cdot\frac{n-4}{12(n-3)(n-2)(n+2)}
\\
\times
\Vol\cQ_0(-1^4)\cdot\Vol\cH(0)\cdot\Vol\cQ_0(n-6,-1^{n-2})
=\frac{8(n+1)(n-1)}{9n^2(n-2)^2(n+2)}\,.
\end{multline*}
Lemma~\ref{lm:a:graphs:contributions:Q:n:1} is proved.
\end{proof}

% ###################################################################
% ###################################################################
% ###################################################################
\section{Contributions of graphs of type ``b''}
\label{s:Contributions:of:graphs:b}

%--------------------------------------------------------------------
\subsection{Surface with a handle}

As in the previous section we start with the simplest graphs of type ``b''
contributing to the area Siegel--Veech constant, namely with those
having a single $\ominus$-vertex and a single $\circ$-vertex,
see Figure~\ref{fig:Ba:and:Bb}.
The $\ominus$-vertex has valency $2$, so it has one of the local types
$-2.1$ or $-2.2$ in terms of the classification of Figure~\ref{fig:embedded:local:ribbon:graphs}.
Analogous configurations exist for Abelian differentials and were studied in~\cite{Eskin:Masur:Zorich} where they were obtained by ``figure-eight'' and ``parallelogram'' constructions respectively and where similar Siegel--Veech constants were computed. We apply Goujard's Formula~\eqref{eq:carea} in the case of arbitrary genus.

\begin{figure}[htb]
\centering
\includegraphics[scale=0.4]{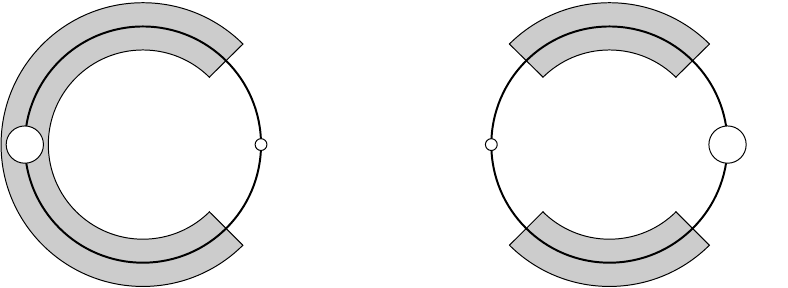}
\vspace*{5pt}
\begin{picture}(0,0)(332,-76)%(350,-76)
\put(95,-60){$\scriptstyle \{d_1\dots,d_k,-1^{n+D-4g+4}\}$}
%\put(160,-40){$\scriptstyle g-1$}
\put(225,-65){$\scriptstyle 0$}
\put(225,-35){$\scriptstyle 0$}
\put(167,-50.5){$\scriptstyle n'$}
\put(178.5,-50.5){$\scriptstyle -$}
\put(190,-50.5){$\scriptstyle n''$}
\put(175,-88){\small ``Figure-eight''}
\end{picture}
\begin{picture}(0,0)(250,-76)
\put(234,-60){$\scriptstyle\{d_1,\dots,\widehat{d_j},\dots,d_k,-1^{n+D-4g+4}\}$}
%\put(232,-42){$\scriptstyle g-1$}
\put(184,-65){$\scriptstyle 0$}
\put(184,-35){$\scriptstyle 0$}
\put(228.75,-50.5){$\scriptstyle -$}
\put(228.75,-65){$\scriptstyle n$}
\put(229,-35){$\scriptstyle d_j$}
\put(180,-88){\small ``Parallelogram''}
\end{picture}
\vspace*{5pt}
\caption{A surface of genus $g-1$ with an attached handle.}
\label{fig:Ba:and:Bb}
\end{figure}

Geometrically, the left graph in Figure~\ref{fig:Ba:and:Bb} represents the following configuration. Take a surface $S'$ in the stratum $\cQ_{g-1}(n-4,d,-1^{n+D-4g+4})$ and break the zero of degree $n-4$ into two zeroes joined by a saddle connection $\gamma$. (One of the newborn zeroes is, actually, allowed to be a marked point or even a pole.) Make a slit along $\gamma$ and join the endpoints. We get a surface with boundary in the shape of a figure-eight. Pasting a cylinder to this boundary we get a surface $S$ in the stratum $\cQ_g(n,d,-1^{n+D-4g+4})$.

Geometrically, the right graph in Figure~\ref{fig:Ba:and:Bb} represents the following configuration. Take a surface $S'$ in the stratum
\[
\cQ_{g-1}(n-2,d_1,\dots,d_{j-1},d_j-2,d_{j+1},\dots,d_k,-1^{n+D-4g+4})\,.
\]
Consider a saddle connection $\gamma$ joining the zeroes of degrees $n-2$ and $d_j-2$
and consider a small parallel translate $\gamma'$ of this saddle connection in a transverse direction. Remove the parallelogram spanned by $\gamma$ and $\gamma'$ and identify the pair of opposite sides represented by $\gamma$ and $\gamma'$
of the resulting hole of parallelogram shape.
The other pair of sides of the original parallelogram hole thus gives rise
to two distinct boundary components of the resulting surface.
Pasting a cylinder to this pair of boundary components we get a surface
$S$ in the stratum $\cQ_g(n,d,-1^{n+D-4g+4})$.

%------------------------------------------------------------------------------
\subsection{Figure-eight configuration}

We start with the case when the
$\ominus$-vertex has local type $-2.1$ as in the left picture
in Figure~\ref{fig:Ba:and:Bb} and when in addition, the zero of degree $n$ is \textit{not} an interior singularity.

\begin{proposition}
\label{prop:figure:eight:general:init}
The total contribution of admissible ``figure-eight'' configurations represented by the graphs as in the left picture in Figure~\ref{fig:Ba:and:Bb}
with $g\geq1$, $n\ge 3$, and $n+D-4g+4\ge 3$,
has the following form:
\begin{multline}
\label{eq:carea:figure:eight}
c_{area}^{\textit{figure-eight},n}\big(\cQ_g(n,d_1,\dots,d_k,-1^{n+D-4g+4})\big)
\\=
\frac{(n-2)(n-1)}{(n+D+k-2g+1)(n+D+k-2g+2)}
\cdot\frac{\Vol\cQ_{g-1}(n-4,d,-1^{n+D-4g+4})}{\Vol\cQ_g(n,d,-1^{n+D-4g+4})}\,,
\end{multline}
\end{proposition}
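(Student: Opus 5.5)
The plan is to apply Goujard's Formula~\eqref{eq:carea} directly to the left graph of Figure~\ref{fig:Ba:and:Bb}, as in the proof of Proposition~\ref{prop:dumbbell:general:init}. Here we have a graph of type ``b'' with one $\ominus$-vertex $v$ of local type $-2.1$ and one $\circ$-vertex of local type $\circ2.2$. By Lemma~\ref{lm:thin:and:thick:cylinders} the cylinder is thin, so $q_1=1$, $q_2=0$, $q=1$, $m=1$. The numerical prefactor is therefore $M\cdot\frac{4}{2^{5}}=\frac{M}{8}$.

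\textbf{Decorations and boundary stratum.} The global ribbon graph has a single boundary component carrying four boundary singularities. The two adjacent to the cylinder have degree $0$; the two at $v$ have degrees $n',n''\ge 0$. Formula~\eqref{eq:degree:of:singularity} gives $-2+(n'+1)+(n''+1)+1+1=n$, i.e.\ $n'+n''=n-2$, so this boundary component is exactly the zero of degree $n$. Since the local ribbon graph $\GG(v)$ is connected, \eqref{eq:Dji} gives $\delta=n'+n''-2=n-4$. The boundary stratum is then $\cQ_{g-1}(n-4,d,-1^{n+D-4g+4})$, with all of $d$ and all poles interior. A quadratic stratum of genus $h$ with $p$ labeled singularities has dimension $2h-2+p$. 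Hence
\[
a=n+D+k-2g+3,\qquad a_1=n+D+k-2g+1 .
\]
The factorial ratio $\frac{(a_1-1)!}{(a-1)!}$ is then $\frac{1}{(n+D+k-2g+1)(n+D+k-2g+2)}$, which is the denominator in~\eqref{eq:carea:figure:eight}. No binomial factor for distributing poles arises, since all poles go to the single component.

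\textbf{Combinatorial constant.} Using Section~\ref{ss:Formula:for:the:combinatorial:constant}:
\begin{itemize}
\item $M_c=4^{q+1}=16$, since the graph is not of type ``a'';
\item $M_t=1$, since there are no thick cylinders;
\item $K(v)=n'+n''=n-2$, a single component of $\GG(v)$;
\item $|\Gamma(v)|=1$, because the hypothesis $n+D-4g+4\ge 3$ puts at least three simple poles on the boundary surface, excluding every exceptional case in~\eqref{eq:Gamma}.
\end{itemize}
So each decoration contributes $\frac{M}{8}=2(n-2)$ times the factorial ratio and the volume ratio.

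\textbf{Summing over decorations.} It remains to count the admissible pairs $(n',n'')$ with $n'+n''=n-2$, $n',n''\ge 0$; there are $n-1$ ordered pairs. This is the step I expect to be the main obstacle, namely the factor $2$. The target formula equals $\frac{n-1}{2}\cdot 2(n-2)$, so I must show that the cyclic (rotational) symmetry of the figure-eight boundary component identifies the decorations $(n',n'')$ and $(n'',n')$. Rotating the cylinder by a half-turn exchanges the two lobes of the figure eight, so ordered pairs overcount by exactly $2$. I would check this against the convention for labeled singularities in~\cite{Goujard:carea} and verify it on a low-dimensional example, say $\cQ_1(n,-1^n)$ with small $n$ against~\cite{Goujard:vol}. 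That would account for the factor $\frac12$, including the symmetric decoration $n'=n''$, where the count becomes $\frac12$ of a configuration with automorphism.

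Multiplying gives $(n-2)(n-1)$ times the factorial ratio and $\Vol\cQ_{g-1}(n-4,d,-1^{n+D-4g+4})/\Vol\cQ_g(n,d,-1^{n+D-4g+4})$, which is~\eqref{eq:carea:figure:eight}.
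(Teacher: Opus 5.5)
Your proposal is correct and follows the paper's proof essentially step by step. The shared ingredients are the boundary stratum $\cQ_{g-1}(n-4,d,-1^{n+D-4g+4})$, the dimensions $a,a_1$, the values $q_1=1$, $q_2=0$, $m=1$, the constant $M=2^4(n-2)$ with $|\Gamma(v)|=1$ from the three-pole hypothesis, and the factor $\frac{n-1}{2}$. The paper justifies that last factor exactly as you do: the order-two symmetry of the decorated ribbon graph identifies $(n',n'')$ with $(n'',n')$, and the self-symmetric decoration $n'=n''$ is counted with weight $\frac12$, so the step you flagged needs no further justification.
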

\begin{proof}
The assumptions of the Proposition imply that the principal boundary stratum has at least three simple poles. By Remark~\ref{rm:connected:strata} this implies connectedness of both the boundary stratum and the ambient stratum. By~\eqref{eq:Gamma} this also implies that $|\Gamma(v_1)|=1$.

The graph has a single $\ominus$-vertex; the associated local ribbon graph is of type $-2.1$ and has a single connected component with two boundary singularities of nonnegative degrees $n',n''$ satisfying $n'+n''=n-2$. The
global ribbon graph has a single connected component with four boundary singularities of degrees $n',n'', 0,0$.
Formula~\eqref{eq:Dji} gives degree
\[
\delta=n'+n''-2=n-4
\]
for the resulting singularity of the boundary stratum.
Together with interior singularities
$\{d_1\dots,d_k,-1^{n+D-4g+4}\}$ this gives us the boundary stratum
\[
\cQ(\alpha')=\cQ_{g-1}(n-4,d,-1^{n+D-4g+4})\,.
\]
We have
\[
\begin{array}{lll}
a&= \dim_{\C}\cQ_g(n,d_1,\dots,d_k,-1^{n+D-4g+4})&= n+D+k-2g+3\,,
\\
a_1&= \dim_{\C}\cQ_{g-1}(n-4,d_1,\dots,d_k,-1^{n+D-4g+4})&= n+D+k-2g+1\,,
\end{array}
\]
For this configuration we have a single cylinder, and it is thin, see Lemma~\ref{lm:thin:and:thick:cylinders}. We get the following values for the number $q_1$ of thin cylinders and the number $q_2$ of thick cylinders and for the number $m$ of $\oplus$ and $\ominus$-vertices counted together:
\[
q_1=1\,,\qquad
q_2=0\,,\qquad
m=1\,.
\]
Hence~\eqref{eq:carea} gives the following value for a fixed pair $n', n''$:
\begin{multline}
\label{eq:figure:eight:before:weight}
c_{area}^{\textit{figure-eight},n,\textit{fixed }n',n''}\big(\cQ_g(n,d_1,\dots,d_k,-1^{n+D-4g+4})\big)
\\=
M\cdot\frac{4\cdot 1+0}{2^{1+1+3}}\cdot
\frac{(n+D+k-2g)!}{(n+D+k-2g+2)!}
\cdot\frac{\Vol\cQ_{g-1}(n-4,d,-1^{n+D-4g+4})}{\Vol\cQ_g(n,d,-1^{n+D-4g+4})}\,.
\end{multline}

The degrees $n', n''$ of boundary singularities form part of the decoration of the global ribbon graph. However, by Goujard's Formula~\eqref{eq:carea}, for a given $n$ all partitions  $n'+n''=n-2$ contribute the same amount to the area Siegel--Veech constant. There are $n-1$ ordered partitions of $n-2$ into a sum of nonnegative integers. However, the global ribbon graphs with $(n', n'')$ and $(n'', n')$ are symmetric, so they represent the same configuration. Thus, taking a sum over all ordered partitions of $n-2$ we will count the contribution of each configuration twice. We get a factor
\begin{equation}
\label{eq:combinatorial:constant:figure:eight}
\frac{n-1}{2}\,.
\end{equation}

\begin{remark}
When $n$ is even, the global ribbon graph with $n'=n''=(n-2)/2$ has symmetry of order $2$ and the contribution of such a pair is not counted twice
compared to graphs with $n'\neq n''$. The weight $\frac{1}{2}$ with which we count this contribution is the inverse of the order of the symmetry group of the corresponding decorated ribbon graph.
\end{remark}

For this configuration we have
$$
  M_c =4^{1+1}\,,\qquad\text{and}\qquad
  M_t =1\,,
$$
see Section~\ref{ss:Formula:for:the:combinatorial:constant}, and
\[
K(v_1) =n'+n''=n-2\,,\qquad
  |\Gamma(v_1)|=1\,.
\]
Hence $M_s=\frac{K(v_1)}{|\Gamma(v_1)|}=n-2$ and we get
\[
M=\frac{M_s M_c}{M_t}
=\frac{(n-2)\cdot 2^4}{1}\,.
\]
Plugging this value of $M$ into~\eqref{eq:figure:eight:before:weight}
and multiplying the result by the combinatorial factor~\eqref{eq:combinatorial:constant:figure:eight}
we get
the following contribution of
all ``figure-eight'' configurations as in the left picture
in Figure~\ref{fig:Ba:and:Bb}:
\begin{multline*}
c_{area}^{\textit{figure-eight},n}\big(\cQ_g(n,d_1,\dots,d_k,-1^{n+D-4g+4})\big)
=\frac{n-1}{2}\cdot \big((n-2)\cdot 2^4\big)
\\
\times
\frac{2^{-3}}{(n+D+k-2g+1)(n+D+k-2g+2)}
\cdot\frac{\Vol\cQ_{g-1}(n-4,d,-1^{n+D-4g+4})}{\Vol\cQ_g(n,d,-1^{n+D-4g+4})}\,.
\end{multline*}
Proposition~\ref{prop:figure:eight:general:init} is proved.
\end{proof}

\begin{corollary}
\label{cor:figure:eight:general:renormalized}
Under the hypotheses of Proposition~\ref{prop:figure:eight:general:init}
with an extra condition $n\ge 5$,
the contribution of the ``figure-eight'' configurations represented by the graphs as in the left picture in Figure~\ref{fig:Ba:and:Bb} has the following form in renormalized variables:
\begin{multline}
\label{eq:figure:eight:genus:g:renormalized}
C_g^{\textit{figure-eight},n}(n;d_1,\dots,d_k)
\\=
\frac{4}{\pi^2}
\left(\frac{n-4}{n}\right)^{g-1}
\frac{(n+1)(n-1)^2}
{n^2(n+D+k-2g+1)(n+D+k-2g+2)}
W_{g-1}(n-4;d)\,.
\end{multline}
\end{corollary}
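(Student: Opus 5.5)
The plan is a direct substitution. Take Formula~\eqref{eq:carea:figure:eight} from Proposition~\ref{prop:figure:eight:general:init} and feed it into the definition~\eqref{eq:C:g:n} of the renormalized Siegel--Veech constant. Then rewrite the boundary volume through the renormalized volume $W_{g-1}$ using~\eqref{eq:W:g:n}. Finally, simplify the ratio of $v$-values with~\eqref{eq:v:n:minus:4}. No new geometric input is needed; the hypotheses of Proposition~\ref{prop:figure:eight:general:init} already guarantee connectedness and $|\Gamma(v_1)|=1$, so the formula for $c_{area}$ is available as stated.

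Concretely, write $A=(n+D+k-2g+1)(n+D+k-2g+2)$. By~\eqref{eq:C:g:n}, the factor $\Vol\cQ_g(n,d,-1^{n+D-4g+4})$ in the numerator of the definition cancels the same volume in the denominator of~\eqref{eq:carea:figure:eight}. This leaves
\[
C_g^{\textit{figure-eight},n}(n;d)=4\pi^2\cdot\frac{(n-2)(n-1)}{A}\cdot
\frac{\Vol\cQ_{g-1}(n-4,d,-1^{n+D-4g+4})}{n^g\, v(n)\, v(d_1)\cdots v(d_k)}\,.
\]

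Next, apply~\eqref{eq:W:g:n} in genus $g-1$ with $n$ replaced by $n-4$. The number of poles matches, since $(n-4)+D-4(g-1)+4=n+D-4g+4$. This gives
\[
\Vol\cQ_{g-1}(n-4,d,-1^{n+D-4g+4})=(n-4)^{g-1}\, v(n-4)\, v(d_1)\cdots v(d_k)\, W_{g-1}(n-4;d).
\]
The factors $v(d_j)$ cancel. Write $n^g=n^{g-1}\cdot n$ to produce the factor $\left(\frac{n-4}{n}\right)^{g-1}$, and substitute $\frac{v(n-4)}{v(n)}=\frac{(n+1)(n-1)}{\pi^4 n(n-2)}$ from~\eqref{eq:v:n:minus:4}. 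The factor $(n-2)$ cancels and the powers of $\pi$ combine to $4/\pi^2$. The two factors of $n$ in the denominator give $n^2$, and the two factors $(n-1)$ give $(n-1)^2$. This yields exactly~\eqref{eq:figure:eight:genus:g:renormalized}.

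The only point requiring care is the range of validity, and this is where the extra condition $n\ge 5$ enters. It ensures that $n-4\ge 1$, so the boundary stratum has a genuine zero of positive degree $n-4$. It also ensures that the normalization $(n-4)^{g-1}v(n-4)$ in~\eqref{eq:W:g:n} is nonzero, so that $W_{g-1}(n-4;d)$ is well defined. In the edge case $g=1$ the boundary stratum is in genus $0$, the factor $\left(\frac{n-4}{n}\right)^0=1$, and one may additionally invoke~\eqref{eq:W:in:genus:0} to replace $W_0(n-4;d)$ by $2\pi^2$. Beyond this bookkeeping, I expect no obstacle: the statement is a purely algebraic rewriting.
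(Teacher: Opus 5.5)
Your proposal is correct and follows the paper's own proof essentially step by step: substitute the formula for $c_{area}^{\textit{figure-eight},n}$ into the definition of $C_g^{\cC}$, rewrite the boundary volume through $W_{g-1}(n-4;d)$, and apply the ratio $v(n-4)/v(n)$ so that the factor $n-2$ cancels. Your bookkeeping is also right, including the powers of $\pi$, the resulting $n^2$ and $(n-1)^2$, and the pole count.
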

\begin{proof}
By Definition~\eqref{eq:W:g:n} of the renormalized volume,
\[
W_{g-1}(n-4;d)
=
\frac{\Vol\cQ_{g-1}(n-4,d_1,\dots,d_k,-1^{n+D-4g+4})}
{(n-4)^{g-1}\cdot v(n-4)\cdot v(d_1)\cdots v(d_k)}\,,
\]
where $d=(d_1,\dots,d_k)$ and $D=d_1+\cdots+d_k$.
Consequently,
\begin{multline*}
\frac{\Vol\cQ_{g-1}(n-4,d,-1^{n+D-4g+4})}
{n^g\cdot v(n)\prod_{j=1}^k v(d_j)}
=
\frac{(n-4)^{g-1}}{n^g}
\cdot
\frac{v(n-4)}{v(n)}
\cdot
W_{g-1}(n-4;d)
\\
=
\frac{1}{\pi^4}
\left(\frac{n-4}{n}\right)^{g-1}
\frac{(n+1)(n-1)}{n^2(n-2)}
W_{g-1}(n-4;d)
\,,
\end{multline*}
where we used~\eqref{eq:v:n:minus:4} for
$\frac{v(n-4)}{v(n)}$.

Applying Definition~\eqref{eq:C:g:n}  of the renormalized area Siegel--Veech constant, substituting the ratio in the left-hand side of the above relation by the expression in the right-hand side, and cancelling out the common factor $n-2$ in the numerator and in the denominator of the resulting expression, we obtain
\begin{multline*}
C_g^{\textit{figure-eight},n}(n;d)
=
4\pi^2\cdot W_g(n;d)\cdot
c_{area}^{\textit{figure-eight},n}
\bigl(\cQ_g(n,d,-1^{n+D-4g+4})\bigr)
\\
=
4\pi^2
\frac{(n-2)(n-1)}{(n+D+k-2g+1)(n+D+k-2g+2)}
\frac{\Vol\cQ_{g-1}(n-4,d,-1^{n+D-4g+4})}
{n^g\cdot v(n)\prod_{j=1}^k v(d_j)}
\\=
\frac{4}{\pi^2}
\left(\frac{n-4}{n}\right)^{g-1}
\frac{(n+1)(n-1)^2}
{n^2(n+D+k-2g+1)(n+D+k-2g+2)}
W_{g-1}(n-4;d)\,.
\end{multline*}
which completes the proof of Corollary~\ref{cor:figure:eight:general:renormalized}.
\end{proof}

%------------------------------------------------------------------------------
\subsection{Parallelogram configuration}

We now treat the case when the
$\ominus$-vertex has local type $-2.2$ as in the right picture
in Figure~\ref{fig:Ba:and:Bb} and when in addition, the zero of degree $n$ is \textit{not} an interior singularity.

\begin{proposition}
\label{prop:parallelogram:general:init}
Assume that all of $g,n,d_j$ are strictly positive and that $n+D-4g+4\ge 3$.
The contribution of ``parallelogram'' configurations
represented by the graphs as
in the right picture in Figure~\ref{fig:Ba:and:Bb}
with admissible decoration has the following form:
\begin{multline}
\label{eq:carea:parallelogram}
c_{area}^{\textit{parallelogram},n,d_j}\big(\cQ_g(n,d,-1^{n+D-4g+4})\big)
\\
=\frac{2d_j\cdot n}{(n+D+k-2g+1)(n+D+k-2g+2)}
\\
\times\frac{\Vol\cQ_{g-1}(n-2,d_1,\dots,d_{j-1},d_j-2,d_{j+1},\dots,d_k,-1^{n+D-4g+4})}
{\Vol\cQ_g(n,d,-1^{n+D-4g+4})}\,.
\end{multline}
\end{proposition}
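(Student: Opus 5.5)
We will follow the same scheme as in the proof of Proposition~\ref{prop:figure:eight:general:init}: read off the boundary stratum from the decorated ribbon graph, compute the dimensions, the numbers $q_1,q_2,m$ and the combinatorial constant $M=M_sM_c/M_t$, and then plug everything into Goujard's Formula~\eqref{eq:carea}.

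\emph{Boundary stratum.} The graph is of type ``b'' with a single $\ominus$-vertex $v_1$ of local type $-2.2$ and a single $\circ2.2$-vertex. The local ribbon graph of $v_1$ has two connected components, each carrying one boundary singularity. The global ribbon graph has two boundary components. Each of them has two boundary singularities: one of degree $0$ coming from the cylinder, and one coming from $v_1$. Since the zero of degree $n$ and the zero of degree $d_j$ are the endpoints, Formula~\eqref{eq:degree:of:singularity} forces these $v_1$-degrees to be exactly $n$ and $d_j$. The decoration is therefore uniquely determined, there is no summation over splittings, and no symmetry factor of $\tfrac12$ arises, because the two components are distinguished by the labels of the zeroes. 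By~\eqref{eq:Dji} the two components give $\delta_1=n-2$ and $\delta_2=d_j-2$. Together with the interior singularities $\{d_1,\dots,\widehat{d_j},\dots,d_k,-1^{n+D-4g+4}\}$ this yields the boundary stratum $\cQ_{g-1}(n-2,d_1,\dots,d_j-2,\dots,d_k,-1^{n+D-4g+4})$. Its genus is $g-1$ because the thin cylinder adds a handle.

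\emph{Dimensions and invariants.} The ambient stratum has $a=n+D+k-2g+3$. The boundary stratum has the same number of singularities, total degree $D+n-4$, and genus $g-1$, so $a_1=a-2=n+D+k-2g+1$. The cylinder is thin by Lemma~\ref{lm:thin:and:thick:cylinders}, so $q_1=1$, $q_2=0$, $m=1$. The factor in~\eqref{eq:carea} is then $\frac{4}{2^{5}}\cdot\frac{(a_1-1)!}{(a-1)!}=\frac{1}{8}\cdot\frac{1}{(n+D+k-2g+1)(n+D+k-2g+2)}$.

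\emph{Constant $M$.} We have $M_c=4^{2}$ since the graph is not of type ``a'', and $M_t=1$ since the cylinder is thin. The vertex contributes $K(v_1)=n\cdot d_j$, a product over the two local components. Since the boundary stratum has at least $3$ simple poles, \eqref{eq:Gamma} gives $|\Gamma(v_1)|=1$. Hence $M=16\,n\,d_j$. Multiplying, we get $16nd_j/8=2nd_j$, which times the factorial ratio and the volume ratio is exactly~\eqref{eq:carea:parallelogram}. The poles all remain interior to the single $\ominus$-vertex, so there is no binomial factor.

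\emph{Main obstacle.} The only delicate point is the bookkeeping of symmetries and multiplicities. We must check three things: that $|\Gamma(v_1)|=1$ in all cases allowed by the hypotheses $n+D-4g+4\ge 3$; that the assignment of $n$ and $d_j$ to the two boundary components admits no extra automorphism (it cannot, since the zeroes are labeled); and that $d_j-2\ge -1$ is admissible. In the last check, the cases $d_j=1,2$ give a pole or a marked point, which is handled by the conventions~\eqref{eq:W:of:0}--\eqref{eq:W:of:minus:1} and the nonemptiness condition~(6). Connectedness of both strata follows from Remark~\ref{rm:connected:strata}.
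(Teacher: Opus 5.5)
Your proposal is correct and follows the same route as the paper's proof. Like the paper, you reuse the figure-eight computation with the same $a$, $a_1$, $q_1=1$, $q_2=0$, $m=1$, $M_c=4^2$, $M_t=1$; the only changes are the boundary stratum, the absence of any summation over splittings, and $M_s=K(v_1)/|\Gamma(v_1)|=n\,d_j$, where $|\Gamma(v_1)|=1$ because the boundary stratum has at least three poles. Your extra checks on the admissibility of $d_j-2\in\{-1,0\}$ and on the absence of automorphisms are consistent with the paper's outline and somewhat more explicit than it.
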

\begin{proof}
The proof is analogous to the proof of Proposition~\ref{prop:figure:eight:general:init}.
We outline only the differences. This time we have
\[
\cQ(\alpha')=\cQ_{g-1}(n-2,d_1,\dots,d_{j-1},d_j-2,d_{j+1},\dots,d_k,-1^{n+D-4g+4})\,,
\]
We have the same values of dimensions $a$ and $a_1$ as in the proof of
Proposition~\ref{prop:figure:eight:general:init}, the same numbers $q_1$
and $q_2$ of
thin and thick cylinders and the same number $m$ of $\oplus$ and $\ominus$ vertices counted together, so Goujard's Formula~\eqref{eq:carea}
gives the contribution analogous to~\eqref{eq:figure:eight:before:weight}
with the principal boundary stratum changed.
This time there are no varying parameters similar to
$n'$ and $n''$ in the figure-eight case. The constants $M_c$ and $M_t$
do not change.

The constant $M_s$ changes. Now the local ribbon graph
has two distinct connected components; each component has a single boundary singularity. The orders of boundary singularities
are $n$ and $d_j$, so $K(v)=d_j\cdot n$. Since the boundary stratum
has at least $3$ simple poles, we have $|\Gamma(v_1)|=1$. Hence $M_s=\frac{K(v_1)}{|\Gamma(v_1)|}=d_j\cdot n$ and we get
\[
M=\frac{M_s M_c}{M_t}
=\frac{d_j\cdot n\cdot 2^4}{1}\,.
\]
Collecting, we get~\eqref{eq:carea:parallelogram}.
Proposition~\ref{prop:parallelogram:general:init} is proved.
\end{proof}

\begin{corollary}
%\label{cor:parallelogram:general:renormalized}
Assume the hypotheses of Proposition~\ref{prop:parallelogram:general:init} and $n\ge 3$. The contribution of the ``parallelogram'' configuration represented by the graph as in the right picture in Figure~\ref{fig:Ba:and:Bb} has the following form in renormalized variables:
\begin{multline}
\label{eq:parallelogram:genus:g:renormalized}
C_g^{\textit{parallelogram},n,d_j}(n;d)
\\
=
\frac{8}{\pi^2}
\left(\frac{n-2}{n}\right)^{g-1}
\frac{(n+1)(d_j+1)}
{n(n+D+k-2g+1)(n+D+k-2g+2)}
\\ \times
W_{g-1}(n-2;d_1,\dots,d_{j-1},d_j-2,d_{j+1},\dots,d_k)\,,
\end{multline}
where for $d_j=1,2$ the quantity $W_{g-1}(n-2;d_1,\dots,d_{j-1},d_j-2,d_{j+1},\dots,d_k)$ is defined in~\eqref{eq:W:of:minus:1} and~\eqref{eq:W:of:0} respectively.
\end{corollary}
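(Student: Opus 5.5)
The plan is to mirror the proof of Corollary~\ref{cor:figure:eight:general:renormalized}: start from Formula~\eqref{eq:carea:parallelogram} of Proposition~\ref{prop:parallelogram:general:init}, multiply by the normalizing factor
\[
4\pi^2\cdot\frac{\Vol\cQ_g(n,d,-1^{n+D-4g+4})}{n^g\, v(n)\prod_{i=1}^k v(d_i)}
\]
from Definition~\eqref{eq:C:g:n}, and rewrite the boundary volume in terms of $W_{g-1}$. The ambient volume cancels, leaving
\[
C_g^{\textit{parallelogram},n,d_j}(n;d)=\frac{8\pi^2 d_j n}{(n+D+k-2g+1)(n+D+k-2g+2)}\cdot
\frac{\Vol\cQ_{g-1}(n-2,\dots,d_j-2,\dots,-1^{n+D-4g+4})}{n^g\, v(n)\prod_i v(d_i)}.
\]

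Next I will express the boundary volume through Definition~\eqref{eq:W:g:n} applied in genus $g-1$ with top zero $n-2$ and partition $d'=(d_1,\dots,d_j-2,\dots,d_k)$. First I check that the pole count matches: $|d'|=D-2$, so the number of poles is $(n-2)+(D-2)-4(g-1)+4=n+D-4g+4$, as required. This gives
\[
\Vol=(n-2)^{g-1}\,v(n-2)\,v(d_j-2)\prod_{i\ne j}v(d_i)\cdot W_{g-1}(n-2;d').
\]
The product over $i\ne j$ cancels. Then I will apply~\eqref{eq:v:n:minus:2} twice: once as $v(n-2)/v(n)=\frac{n+1}{\pi^2 n}$, and once with $n$ replaced by $d_j$, giving $v(d_j-2)/v(d_j)=\frac{d_j+1}{\pi^2 d_j}$. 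Collecting factors, $8\pi^2\cdot\pi^{-4}=8/\pi^2$, the $d_j$ cancels against $d_j+1$ over $d_j$, and $n\cdot(n-2)^{g-1}/n^g\cdot(n+1)/n=\left(\frac{n-2}{n}\right)^{g-1}\frac{n+1}{n}$. This is exactly~\eqref{eq:parallelogram:genus:g:renormalized}.

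The only delicate step is the small cases $d_j=1,2$, where $d_j-2\in\{-1,0\}$. Identity~\eqref{eq:v:n:minus:2} at $d_j=1$ reads $v(-1)/v(1)=1\cdot 2/\pi^2=\frac{2}{\pi^2}$, which is consistent with $v(-1)=1$ and $v(1)=\pi^2/2$. At $d_j=2$ it reads $v(0)/v(2)=2\cdot\frac{3}{4\pi^2}=\frac{3}{2\pi^2}$, which is also correct. I then need to confirm that writing the boundary volume via the extended $W_{g-1}$ of~\eqref{eq:W:of:minus:1} and~\eqref{eq:W:of:0} is legitimate.

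For $d_j=2$, the entry $0$ is a marked point. By~\eqref{eq:vol:with:marked:point} it doubles the volume, matching $v(0)=2$ in the normalization, so~\eqref{eq:W:of:0} applies verbatim.

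For $d_j=1$, the entry $-1$ is an extra simple pole. The stratum with $d'$ containing $-1$ is literally the stratum with that entry removed and one more pole, with $v(-1)=1$, so the normalization agrees with~\eqref{eq:W:of:minus:1}. One bookkeeping point remains here: labeling of this pole versus the others. Goujard's formula treats it as the distinguished boundary singularity, so no extra binomial factor should appear; this is consistent with~\eqref{eq:carea:parallelogram} having none. I expect this labeling check to be the main (minor) obstacle.
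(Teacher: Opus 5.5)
Your proposal is correct and follows the paper's proof essentially step by step. You multiply~\eqref{eq:carea:parallelogram} by $4\pi^2 W_g(n;d)$ as in~\eqref{eq:C:g:n}, rewrite the boundary volume through $W_{g-1}(n-2;d_1,\dots,d_j-2,\dots,d_k)$, and apply~\eqref{eq:v:n:minus:2} to both $v(n-2)/v(n)$ and $v(d_j-2)/v(d_j)$.

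Your extra checks, namely the pole count and the cases $d_j=1,2$, are consistent with~\eqref{eq:W:of:minus:1} and~\eqref{eq:W:of:0}. The labeling issue you flag concerns Proposition~\ref{prop:parallelogram:general:init} itself; this corollary is pure algebraic rewriting and does not need it.
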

\begin{proof}
Denote
\[
\hat d^{(j)}
:=
(d_1,\dots,d_{j-1},d_j-2,d_{j+1},\dots,d_k).
\]
By Definition~\eqref{eq:W:g:n} of the renormalized volume, we have
\[
W_{g-1}(n-2;\hat d^{(j)})
=
\frac{\Vol\cQ_{g-1}\big(n-2,\hat d^{(j)},-1^{n+D-4g+4}\big)}
{(n-2)^{g-1}\cdot v(n-2)\cdot v(d_j-2)\cdot
\prod_{\substack{1\leq i\leq k\\ i\neq j}}v(d_i)}\,.
\]
Consequently,
\begin{multline*}
\frac{\Vol\cQ_{g-1}\big(n-2,\hat d^{(j)},-1^{n+D-4g+4}\big)}
{n^g\cdot  v(n)\cdot \prod_{i=1}^k v(d_i)}
\\
=
\frac{(n-2)^{g-1}}{n^g}
\cdot \frac{v(n-2)}{v(n)}
\cdot \frac{v(d_j-2)}{v(d_j)}
\cdot W_{g-1}(n-2;\hat d^{(j)})\,.
\end{multline*}
Applying~\eqref{eq:v:n:minus:2} to both ratios of $v$ gives
\[
\frac{v(n-2)}{v(n)}
=
\frac{1}{\pi^2}\frac{n+1}{n}\,,
\qquad
\frac{v(d_j-2)}{v(d_j)}
=
\frac{1}{\pi^2}\frac{d_j+1}{d_j}.
\]
Hence
\begin{multline}
\label{eq:parallelogram:boundary:volume:ratio}
\frac{
\Vol\cQ_{g-1}(n-2,\hat d^{(j)},-1^{n+D-4g+4})
}{
n^g\cdot v(n)\cdot \prod_{i=1}^k v(d_i)
}
\\
=
\frac{1}{\pi^4}
\left(\frac{n-2}{n}\right)^{g-1}
\frac{(n+1)(d_j+1)}{n^2\cdot d_j}
\cdot W_{g-1}(n-2;\hat d^{(j)})\,.
\end{multline}

By Definition~\eqref{eq:C:g:n} of the renormalized area
Siegel--Veech constant and by
Formula~\eqref{eq:carea:parallelogram},
for the Siegel--Veech constant before the normalization,
we obtain
\begin{multline*}
C_g^{\textit{parallelogram},n,d_j}(n;d)
=
4\pi^2\cdot W_g(n;d)\cdot
c_{area}^{\textit{parallelogram},n,d_j}
\bigl(\cQ_g(n,d,-1^{n+D-4g+4})\bigr)
\\
=
4\pi^2
\frac{2d_j n}
{(n+D+k-2g+1)(n+D+k-2g+2)}
\\
\times
\frac{
\Vol\cQ_{g-1}(n-2,\hat d^{(j)},-1^{n+D-4g+4})
}{
n^g\cdot v(n)\cdot \prod_{i=1}^k v(d_i)
}.
\end{multline*}
Substituting~\eqref{eq:parallelogram:boundary:volume:ratio},
cancelling $d_j$, and cancelling one factor of $n$, we get~\eqref{eq:parallelogram:genus:g:renormalized}.
\end{proof}

% ------------------------------------------------------------------
\subsection{\texorpdfstring{Contributions of graphs of type ``b'' in genus $1$.}{Contributions of graphs of type b in genus 1.}}Throughout this subsection, we assume $n\ge 5$.
\begin{corollary}
The contributions of the figure-eight configurations to the renormalized
Siegel--Veech constants of the strata $\Vol\cQ_1(n,-1^n)$
and $\Vol\cQ_1(n,1,-1^{n+1})$ have the following form:
\begin{equation}
\label{eq:figure:eight:g:1}
\begin{aligned}
&C_1^{\textit{figure-eight},n}(n)=\frac{8(n+1)(n-1)}{n^3}\,,
\\
&C_1^{\textit{figure-eight},n}(n;1)=
\frac{8(n-1)^2}{n^2(n+2)}\,.
\end{aligned}
\end{equation}
\end{corollary}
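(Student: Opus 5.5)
The plan is to specialize Corollary~\ref{cor:figure:eight:general:renormalized} to $g=1$ and evaluate the genus-zero renormalized volume it contains. Formula~\eqref{eq:figure:eight:genus:g:renormalized} with $g=1$ makes the factor $\left(\frac{n-4}{n}\right)^{g-1}$ equal to $1$. The remaining volume $W_0(n-4;d)$ equals $2\pi^2$ by~\eqref{eq:W:in:genus:0}. Hence in both cases
\[
C_1^{\textit{figure-eight},n}(n;d)=\frac{8(n+1)(n-1)^2}{n^2(n+D+k-1)(n+D+k)}\,.
\]
Before using this, I will check that the hypotheses of Proposition~\ref{prop:figure:eight:general:init} and of the Corollary hold. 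We have $g=1\ge1$ and $n\ge5$. The number of poles is $n+D-4g+4=n+D\ge5\ge3$. So the boundary stratum $\cQ_0(n-4,d,-1^{n+D})$ is nonempty, and the symmetry factor $|\Gamma(v_1)|=1$ applies.

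For the minimal stratum $\cQ_1(n,-1^n)$ we take $d=\emptyset$, so $D=k=0$. The denominator is $(n-1)n$. One factor $(n-1)$ cancels and the expression becomes $\frac{8(n+1)(n-1)}{n^3}$, which is the first line of~\eqref{eq:figure:eight:g:1}.

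For $\cQ_1(n,1,-1^{n+1})$ we take $d=(1)$, so $D=k=1$. The denominator is $(n+1)(n+2)$. This time $(n+1)$ cancels and the expression becomes $\frac{8(n-1)^2}{n^2(n+2)}$, which is the second line of~\eqref{eq:figure:eight:g:1}.

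There is no real obstacle here. The only point needing care is that the renormalization $W_0(n-4;d)$ in genus $0$ carries the factor $n^0=1$ and the correct $v$-factors, namely $v(n-4)$ and $v(d_j)$. These match the normalization used in the Corollary, so~\eqref{eq:W:in:genus:0} applies verbatim, including for $d=(1)$.
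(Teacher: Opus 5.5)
Your proposal is correct and follows the paper's own proof: specialize \eqref{eq:figure:eight:genus:g:renormalized} to $g=1$, substitute $W_0(n-4;d)=2\pi^2$ from \eqref{eq:W:in:genus:0}, and cancel the common factor $(n-1)$ for $d=\emptyset$, respectively $(n+1)$ for $d=(1)$. Your added check of the hypotheses and of the genus-zero normalization is accurate but not needed beyond what the paper states.
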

\begin{proof}
Apply~\eqref{eq:figure:eight:genus:g:renormalized} with $g=1$ for these two specific values of $d$ and use~\eqref{eq:W:in:genus:0} for the explicit constant value $2\pi^2$ of $W_0$.
\end{proof}

\begin{corollary}
The contribution of the parallelogram configurations to the renormalized
Siegel--Veech constant of the stratum $\Vol\cQ_1(n,1,-1^{n+1})$ has the following form:
\begin{equation}
\label{eq:parallelogram:g:1}
C_1^{\textit{parallelogram},n}(n;1)=
\frac{32}{n(n+2)}\,.
\end{equation}
\end{corollary}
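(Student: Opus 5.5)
This is a direct specialization of the general parallelogram formula~\eqref{eq:parallelogram:genus:g:renormalized}. We take $g=1$ and $d=(1)$, so $k=1$, $D=1$, and the only index is $j=1$ with $d_j=1$.

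First we evaluate the prefactors. Since $g-1=0$, the factor $\left(\frac{n-2}{n}\right)^{g-1}$ equals $1$. The two linear factors in the denominator become
\[
n+D+k-2g+1=n+1,\qquad n+D+k-2g+2=n+2.
\]
Next we identify the boundary renormalized volume. It is $W_0(n-2;\,d_1-2)=W_0(n-2;-1)$. By convention~\eqref{eq:W:of:minus:1}, this equals $W_0(n-2)$, and by~\eqref{eq:W:in:genus:0} it equals $2\pi^2$. Geometrically, the boundary stratum is $\cQ_0(n-2,-1,-1^{n+1})=\cQ_0(n-2,-1^{n+2})$, which is consistent with~\eqref{eq:Vol:minimal:g0}.

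Substituting everything, with $(d_j+1)=2$, gives
\[
C_1^{\textit{parallelogram},n}(n;1)=\frac{8}{\pi^2}\cdot\frac{(n+1)\cdot 2}{n(n+1)(n+2)}\cdot 2\pi^2=\frac{32}{n(n+2)}.
\]

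The only point needing care is that the hypotheses of Proposition~\ref{prop:parallelogram:general:init} hold and that the convention for $d_j-2=-1$ is legitimate. Here $g,n,d_1$ are all positive, and $n+D-4g+4=n+1\ge3$. Moreover, the stratum of the $-2.2$ vertex with boundary singularities $n$ and $d_1=1$ is nonempty genus-zero data. Once these checks are done, the corollary follows by direct substitution.
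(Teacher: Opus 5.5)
Your proposal is correct and is exactly the paper's argument: specialize the general renormalized parallelogram formula to $g=1$, $d=(1)$, reduce the boundary term via $W_0(n-2;-1)=W_0(n-2)=2\pi^2$, and simplify. Your hypothesis checks and the final arithmetic giving $\frac{32}{n(n+2)}$ are accurate.
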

\begin{proof}
Apply~\eqref{eq:parallelogram:genus:g:renormalized} with $g=1$ and $d=\{1\}$ and use~\eqref{eq:W:in:genus:0} for the explicit constant value $2\pi^2$ of $W_0$.
\end{proof}

% ###############################################################
% ###############################################################
% ###############################################################
\section{Contributions of graphs of type ``c''}
\label{s:Contributions:of:graphs:c}

% ---------------------------------------------------------------------
\subsection{Pocket (ear) configuration}
\label{ss:pocket:configuration}

We start with the configuration encoded by the decorated ribbon graph as in the left picture in Figure~\ref{fig:pocket}. It consists of a flat cylinder having the distinguished zero of degree $n$ at the base and a saddle connection joining a pair of simple poles on the other side of the cylinder. This configuration was called a ``pocket'' in~\cite{Athreya:Eskin:Zorich}; it is sometimes also called an ``ear''.
In the case of genus $0$ the corresponding Siegel--Veech constant was computed in~\cite{Athreya:Eskin:Zorich}. We apply Goujard's Formula~\eqref{eq:carea} in the case of arbitrary genus.

\begin{figure}[htb]
\centering
\includegraphics[scale=0.9]{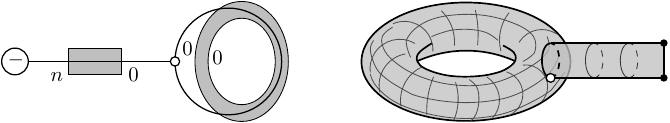}
\begin{picture}(0,0)(350,-76)
\put(30,-72){$\{d,-1^{n+D-4g+2}\}$}
\end{picture}
\caption{Pocket (ear) configuration.
The picture on the right illustrates a particular case
when $g=1$}
\label{fig:pocket}
\end{figure}

\begin{proposition}
\label{prop:pocket}
Assume that $n\ge 1$ and that $n+D-4g\ge 1$. The contribution of the ``pocket'' (``ear'') configuration represented by the graph in Figure~\ref{fig:pocket} with admissible decoration has the following form:
\begin{multline}
\label{eq:pocket}
c_{area}^{pocket,n}\big(\cQ_g(n,d_1,\dots,d_k,-1^{n+D-4g+4})\big)
\\
=\frac{n}{4}
\cdot\frac{(n+D-4g+4)(n+D-4g+3)}{(n+D+k-2g+2)(n+D+k-2g+1)}
\\
\times
\frac{\Vol\cQ_g(n-2,d_1,\dots,d_k,-1^{n+D-4g+2})}{\Vol\cQ_g(n,d_1,\dots,d_k,-1^{n+D-4g+4})}\,.
\end{multline}
\end{proposition}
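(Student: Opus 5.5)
The plan is to apply Goujard's Formula~\eqref{eq:carea} directly to the graph of type ``c'' in Figure~\ref{fig:pocket}, in the same way as in the proofs of Propositions~\ref{prop:figure:eight:general:init} and~\ref{prop:dumbbell:general:init}. The graph has one $\ominus$-vertex $v_1$ of valency $1$ and local type $-1.1$. It is joined by the stem to a trivalent $\circ$-vertex of local type $\circ3.2$, and the loop at that vertex carries no further vertices. The loop produces an annulus whose two boundary components each have a single boundary singularity of degree $0$. By Lemma~\ref{lm:simple:pole:in:a:graph} these are two simple poles. The remaining boundary component carries the boundary singularity of $v_1$ together with a degree-$0$ singularity from the cylinder. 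Formula~\eqref{eq:degree:of:singularity} then forces the boundary singularity of $v_1$ to have degree $n$. By Formula~\eqref{eq:Dji}, $\delta=n-2$. Consequently the principal boundary stratum is $\cQ_g(n-2,d,-1^{n+D-4g+2})$. It has the same genus, because the pocket cylinder does not contribute to the genus (as observed in the proof of Proposition~\ref{pr:list:of:configurations:in:minimal:stratum}). The hypothesis $n+D-4g\ge1$ guarantees at least three poles in it, so it is nonempty and connected (Remark~\ref{rm:connected:strata}), and $|\Gamma(v_1)|=1$.

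Next we record the parameters. We have $a=n+D+k-2g+3$ and $a_1=n+D+k-2g+1$, so the factorial ratio is $(a_1-1)!/(a-1)!=1/\big((n+D+k-2g+1)(n+D+k-2g+2)\big)$. The only cylinder is thick by Lemma~\ref{lm:thin:and:thick:cylinders}, since its vertex has valency $3$. Hence $q_1=0$, $q_2=1$, $m=1$, and the prefactor is $(4q_1+q_2)/2^{m+q+3}=1/2^5$. For the constant $M$ we have $M_c=4^{q+1}=16$, since the graph is not of type ``a''. We also have $M_t=o(t_1)$. I expect $o(t_1)=2$ here: the side of the pocket cylinder formed by two segments identified through the saddle connection joining the two poles should count as a figure eight in Goujard's convention. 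Finally $K(v_1)=n$ and $|\Gamma(v_1)|=1$, so $M_s=n$, giving $M=16n/2=8n$. Altogether Goujard's formula gives $8n\cdot 2^{-5}=n/4$ times the factorial ratio times the volume ratio, for each fixed choice of the two endpoint poles.

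It remains to count the configurations. The two simple poles forming the ends of the pocket saddle connection are chosen among the $n+D-4g+4$ labeled poles of the ambient stratum. The configuration is symmetric under exchanging the two boundaries of the annulus, so the pair is unordered. Correcting for this, together with the labeling of the remaining poles on the boundary stratum, should give the factor $(n+D-4g+4)(n+D-4g+3)$, in the same way as the binomial~\eqref{eq:binomial} in the dumbbell case. Multiplying everything yields~\eqref{eq:pocket}.

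The main obstacle is the bookkeeping of factors of $2$. Three points must combine correctly: the value of $M_t$ (whether $o(t_1)=2$); whether the pole pair is counted as ordered or unordered; and whether a residual symmetry of the decorated graph introduces an extra factor $\tfrac12$. I would fix these by checking the outcome against the genus-$0$ pocket constant computed in~\cite{Athreya:Eskin:Zorich}, which must reproduce the same formula with $g=0$ and $W_0=2\pi^2$.
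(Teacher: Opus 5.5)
Your setup matches the paper's proof in every structural parameter: the boundary stratum $\cQ_g(n-2,d,-1^{n+D-4g+2})$, $a$, $a_1$, $q_1=0$, $q_2=1$, $m=1$, $M_c=16$, $K(v_1)=n$ and $|\Gamma(v_1)|=1$ are all right. However, the factor-of-$2$ bookkeeping you flag as the main obstacle is wrong in two places. You reach~\eqref{eq:pocket} only because the two errors cancel.

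First, $o(t_1)=1$, not $2$. The upper side of the pocket cylinder is the single saddle connection joining the two poles, traversed twice, so in the surface it closes up to a segment between two distinct poles rather than a figure eight. The lower side is a simple closed saddle connection at the zero of degree $n$, also not a figure eight. Hence $M_t=1$ and $M=M_sM_c/M_t=16n$, as in the paper. For a fixed labeled pair of poles, Goujard's formula then gives $\frac n2$, not $\frac n4$, times the factorial ratio and the volume ratio.

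Second, your count contradicts your own (correct) observation that the pair of endpoint poles is unordered. The number of choices is $\binom{n+D-4g+4}{2}=\frac{(n+D-4g+4)(n+D-4g+3)}{2}$. The ``labeling of the remaining poles'' contributes no further factor: there is a single $\ominus$-vertex, so the remaining $n+D-4g+2$ poles correspond bijectively to the labeled poles of the boundary stratum. There is no analogue of the binomial~\eqref{eq:binomial}, which distributes poles between the two $\ominus$-vertices of a dumbbell. Your $M=8n$ combined with the unordered count would give only half of~\eqref{eq:pocket}. The correct combination is $\frac n2\cdot\frac{(n+D-4g+4)(n+D-4g+3)}{2}$.

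Calibrating against the genus-zero pocket constant of~\cite{Athreya:Eskin:Zorich} only sees the product of these two factors. It therefore could not detect that each of your values is off by $2$, and it does not replace a derivation. The factors should be read off directly from the definition of $o(t_i)$ and from the labeling convention, as above.
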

\begin{proof}
The graph does not have any $\oplus$-vertices and has a single $\ominus$-vertex; the associated local ribbon graph is of type $-1.1$ and has a single connected component with a single conical singularity of degree $n$, so Formula~\eqref{eq:Dji} gives $\delta=n-2$. Merging it with the interior singularities $\{d,-1^{n+D-4g+2}\}$ we get the principal boundary stratum
\[
\cQ(\alpha')=\cQ_g(n-2,d,-1^{n+D-4g+2})
\]
We have
\begin{align*}
a&= \dim_{\C}\cQ_g(n,d_1,\dots,d_k,-1^{n+D-4g+4})= n+D+k-2g+3\,,
\\
a_1&= \dim_{\C}\cQ_g(n-2,d_1,\dots,d_k,-1^{n+D-4g+2})= n+D+k-2g+1\,,
\end{align*}
For this configuration we have a single cylinder, and it is thick, see Lemma~\ref{lm:thin:and:thick:cylinders}. We get the following values for the number $q_1$ of thin cylinders and the number $q_2$ of thick cylinders and for the number $m$ of $\oplus$ and $\ominus$-vertices counted together:
\[
q_1=0\,,\qquad
q_2=1\,,\qquad
m=1\,,
\]
so for a fixed labeled pair of poles~\eqref{eq:carea} gives:
\begin{multline}
\label{eq:Ca:before:weight}
c_{area}^{pocket,n,\textit{fixed poles}}\big(\cQ_g(n,d_1,\dots,d_k,-1^{n+D-4g+4})\big)
\\=
M\cdot\frac{4\cdot 0+1}{2^{1+1+3}}\cdot
\frac{(n+D+k-2g)!}{(n+D+k-2g+2)!}
\cdot\frac{\Vol\cQ(n-2,d,-1^{n+D-4g+2})}{\Vol\cQ(n,d,-1^{n+D-4g+4})}
\\=
M\cdot \frac{2^{-5}}{(n+D+k-2g+1)(n+D+k-2g+2)}
\cdot\frac{\Vol\cQ(n-2,d,-1^{n+D-4g+2})}{\Vol\cQ(n,d,-1^{n+D-4g+4})}\,.
\end{multline}

Note that in Goujard's formula~\eqref{eq:carea} all the zeroes and poles involved in the configuration are labeled. Thus, there are $\binom{n+D-4g+4}{2}$ ways to choose two distinguished simple poles located on top of the ear out of $n+D-4g+4$ poles. Hence, the number of all pocket configurations with a zero of degree $n$ at the base is
\begin{equation}
\label{eq:combinatorics:Ca}
\frac{(n+D-4g+4)(n+D-4g+3)}{2}\,.
\end{equation}

Let us compute now the constant $M$ for this configuration (see Section~\ref{ss:Formula:for:the:combinatorial:constant} for definitions). We have $m=1$ and
\[
M_c =4^{1+1}\,,\qquad M_t =1\,.
\]
The condition $n+D-4g\ge 1$ implies that the principal boundary stratum
has at least three simple poles, so
\[
K(v_1)=(0+n)=n\,,\qquad |\Gamma(v_1)|=1\,,
\]
so $M_s=\frac{K(v_1)}{|\Gamma(v_1)|}=n$
and thus
\begin{equation}
\label{eq:M:Ca}
M=\frac{M_s M_c}{M_t}
=\frac{n\cdot 2^4}{1}\,.
\end{equation}

Combining~\eqref{eq:Ca:before:weight}, \eqref{eq:combinatorics:Ca} and~\eqref{eq:M:Ca} we get~\eqref{eq:pocket}. Proposition~\ref{prop:pocket} is proved.
\end{proof}

\begin{corollary}
\label{cor:pocket:renormalized}
Assume the hypotheses of Proposition~\ref{prop:pocket} and assume now that $n\ge 3$. The contribution of the ``pocket''
(``ear'') configuration represented by the graph in Figure~\ref{fig:pocket} has the following form in renormalized variables:
\begin{multline}
\label{eq:Ca:total:renormalized:g}
C_g^{pocket,n}(n;d)
\\=
(n+1)\cdot
\left(\frac{n-2}{n}\right)^g\cdot
\frac{(n+D-4g+4)(n+D-4g+3)}{(n+D+k-2g+2)(n+D+k-2g+1)}
W_g(n-2;d)\,.
\end{multline}
\end{corollary}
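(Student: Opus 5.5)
This is a direct renormalization of Proposition~\ref{prop:pocket}, following the same pattern as the proofs of Corollary~\ref{cor:figure:eight:general:renormalized} and Proposition~\ref{prop:distinguished:dumbbell}. We start from Definition~\eqref{eq:C:g:n}:
\[
C_g^{pocket,n}(n;d)=4\pi^2\cdot\frac{\Vol\cQ_g(n,d,-1^{n+D-4g+4})}{n^g\, v(n)\prod_j v(d_j)}\cdot c_{area}^{pocket,n}\,.
\]
Into this we substitute~\eqref{eq:pocket}. The volume of the ambient stratum cancels, and we are left with
\[
4\pi^2\cdot\frac{n}{4}\cdot\frac{(n+D-4g+4)(n+D-4g+3)}{(n+D+k-2g+2)(n+D+k-2g+1)}\cdot\frac{\Vol\cQ_g(n-2,d,-1^{n+D-4g+2})}{n^g\, v(n)\prod_j v(d_j)}\,.
\]

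Next we express the boundary volume through the renormalized volume. The boundary stratum $\cQ_g(n-2,d,-1^{n+D-4g+2})$ is exactly the stratum indexed by $W_g(n-2;d)$ in~\eqref{eq:W:g:n}, since $(n-2)+D-4g+4=n+D-4g+2$. Hence
\[
\Vol\cQ_g(n-2,d,-1^{n+D-4g+2})=(n-2)^g\, v(n-2)\prod_j v(d_j)\cdot W_g(n-2;d)\,.
\]
The hypothesis $n\ge 3$ guarantees $n-2\ge 1$, so this normalization is legitimate. Dividing by $n^g v(n)\prod_j v(d_j)$ gives $\left(\frac{n-2}{n}\right)^g\frac{v(n-2)}{v(n)}\,W_g(n-2;d)$. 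By~\eqref{eq:v:n:minus:2}, $\frac{v(n-2)}{v(n)}=\frac{1}{\pi^2}\cdot\frac{n+1}{n}$.

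Collecting the factors, $4\pi^2\cdot\frac{n}{4}\cdot\frac{1}{\pi^2}\cdot\frac{n+1}{n}=n+1$. Together with the retained ratio of pole counts to dimensions and the factor $\left(\frac{n-2}{n}\right)^g$, this is exactly~\eqref{eq:Ca:total:renormalized:g}.

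There is no real obstacle here. The only point that needs checking is that the hypotheses of Proposition~\ref{prop:pocket} still hold, so that $|\Gamma(v_1)|=1$ and the boundary stratum is connected; this is already contained in the assumption $n+D-4g\ge1$.
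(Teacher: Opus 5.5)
Your proposal is correct and follows the same route as the paper. You substitute \eqref{eq:pocket} into Definition~\eqref{eq:C:g:n}, rewrite the boundary volume through \eqref{eq:W:g:n} and \eqref{eq:v:n:minus:2}, and collect $4\pi^2\cdot\frac{n}{4}\cdot\frac{1}{\pi^2}\cdot\frac{n+1}{n}=n+1$; the paper's proof is a terser version of this computation.
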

\begin{proof}
Applying~\eqref{eq:v:n:minus:2} for the ratio $\frac{v(n-2)}{v(n)}$
and applying the definition~\eqref{eq:W:g:n} of the renormalized volumes $W_g(n;d)$ we get
\begin{multline*}
\frac{\Vol\cQ_g(n-2,d,-1^{n+D-4g+2})}
{n^g\cdot v(n)\cdot v(d_1)\cdots v(d_k)}
\\
=\frac{\Vol\cQ_g(n-2,d,-1^{n+D-4g+2})}
{(n-2)^g\cdot v(n-2)\cdot v(d_1)\cdots v(d_k)}
\cdot\left(\frac{n-2}{n}\right)^g\cdot\frac{v(n-2)}{v(n)}
\\
=\frac{1}{\pi^2}\cdot
\left(\frac{n-2}{n}\right)^g\cdot\frac{n+1}{n}\cdot W_g(n-2;d)
\,,
\end{multline*}
which allows us to rewrite the contribution of the pocket (ear) configuration to the sum in~\eqref{eq:EKZ:in:W:g} as~\eqref{eq:Ca:total:renormalized:g}.
\end{proof}

\begin{corollary}
\label{cor:pocket:d}
Assume the hypotheses of Proposition~\ref{prop:pocket} and assume that $d_j\geq1$, $n+D-4g+2\ge 0$. The contribution of the ``pocket'' (``ear'') configuration represented by the graph as in Figure~\ref{fig:pocket} but having $n$ as an interior singularity and $d_j$ as a boundary singularity has the following form:
\begin{multline}
\label{eq:pocket:d}
c_{area}^{pocket,d_j}\big(\cQ_g(n,d_1,\dots,d_k,-1^{n+D-4g+4})\big)
\\
=\frac{d_j}{4}
\cdot\frac{(n+D-4g+4)(n+D-4g+3)}{(n+D+k-2g+2)(n+D+k-2g+1)}
\\
\times
\frac{\Vol\cQ_g(n,d_1,\dots,d_{j-1},d_j-2,d_{j+1},\dots,d_k,-1^{n+D-4g+2})}
{\Vol\cQ_g(n,d_1,\dots,d_k,-1^{n+D-4g+4})}\,.
\end{multline}
\end{corollary}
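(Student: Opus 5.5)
The plan is to repeat the proof of Proposition~\ref{prop:pocket} with the roles of the two zeroes exchanged: $d_j$ now sits at the base of the pocket and $n$ is an interior singularity. Concretely, the graph is again of type ``c'', with a single $\ominus$-vertex of local type $-1.1$ and a single thick $\circ$-vertex of valency $3$. The loop carries the two simple poles, which are joined by the saddle connection on top of the ear.

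The local ribbon graph at the $\ominus$-vertex has one boundary singularity, now of degree $d_j$. Formula~\eqref{eq:Dji} then gives $\delta=d_j-2$. Merging this with the interior singularities $\{n,d_1,\dots,\widehat{d_j},\dots,d_k,-1^{n+D-4g+2}\}$ produces the principal boundary stratum
\[
\cQ_g(n,d_1,\dots,d_{j-1},d_j-2,d_{j+1},\dots,d_k,-1^{n+D-4g+2}).
\]
When $d_j=1$ or $d_j=2$, the entry $d_j-2$ is a pole or a marked point, which is fine. Its dimension is still $a_1=a-2$, exactly as in the $n$-based pocket, because the total number of singularities drops by one pole pair's worth. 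So the factorial ratio in Goujard's Formula~\eqref{eq:carea} is once more $\frac{1}{(n+D+k-2g+1)(n+D+k-2g+2)}$. As before, $q_1=0$, $q_2=1$ and $m=1$, which gives the prefactor $\frac{1}{2^5}$.

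Next comes the constant $M$. We have $M_c=4^{2}$ and $M_t=1$, and the equality $M_t=1$ holds for the same reason as in Proposition~\ref{prop:pocket}. Since $K(v_1)=0+d_j=d_j$, we get $M_s=d_j/|\Gamma(v_1)|$. I need $|\Gamma(v_1)|=1$. This is the step that requires care: the boundary stratum must not be one of the exceptional strata in~\eqref{eq:Gamma}. The hypothesis $n+D-4g+2\ge 0$ says exactly that the boundary stratum has a nonnegative number of poles, so the configuration is admissible. The boundary stratum also contains the zero of degree $n$. For the range of interest, where $n$ is large ($n>4g$, say), it contains many poles, so it is neither a torus nor $\cQ(2,-1,-1)$, $\cQ(2,2)$ or $\cQ(2,1,1)$. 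I would state this as part of the admissibility assumptions, as the paper does, and check the small cases against the list in~\eqref{eq:Gamma}. This gives $M=16d_j$.

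Finally, the number of ways to choose the unordered pair of labeled poles on top of the ear is $\binom{n+D-4g+4}{2}$, just as in~\eqref{eq:combinatorics:Ca}. Multiplying, we get
\[
16d_j\cdot 2^{-5}\cdot\frac{(n+D-4g+4)(n+D-4g+3)}{2}=\frac{d_j}{4}(n+D-4g+4)(n+D-4g+3),
\]
times the factorial ratio and the volume ratio. This is precisely~\eqref{eq:pocket:d}. The only genuine obstacle is the symmetry-factor check described above; everything else is a direct transcription of the proof of Proposition~\ref{prop:pocket}.
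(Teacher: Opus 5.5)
Your proposal is correct and follows the same route as the paper, which just notes that the proof of Proposition~\ref{prop:pocket} carries over with $n$ and $d_j$ interchanged: $K(v_1)$ becomes $d_j$, and the dimensions and pole count are unchanged because they involve $n$ and $D$ only through $n+D$. The step you hedge on needs neither a large-$n$ assumption nor a case check, since the hypothesis $n+D-4g\ge 1$ of Proposition~\ref{prop:pocket} already gives the boundary stratum at least three simple poles, so $|\Gamma(v_1)|=1$ exactly as in that proof.
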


\begin{remark}
\label{rm:entry:minus:1}
In the particular case when $d_j=1$, we get $d_j-2=-1$. In this case we use the following notational convention
\begin{multline*}
\cQ_g(n,d_1,\dots,d_{j-1},-1,d_{j+1},\dots,d_k,-1^{n+D-4g+2})
\\ :=
\cQ_g(n,d_1,\dots,d_{j-1},d_{j+1},\dots,d_k,-1^{n+D-4g+3})
\end{multline*}
In the particular case when $d_j=2$, we get $d_j-2=0$. We keep the resulting marked point in
$\cQ_g(n,d_1,\dots,d_{j-1},0,d_{j+1},\dots,d_k,-1^{n+D-4g+2})$
as part of the singularity data. Recall that it affects the value of the
volume before normalization and also affects the dimension of the boundary stratum.
\end{remark}

\begin{proof}[Proof of Corollary~\ref{cor:pocket:d}]
The proof is parallel to the proof of Proposition~\ref{prop:pocket}.
The only difference with
the previous configuration is notational: the boundary singularity
is now denoted by $d_j$ and the interior singularities by
\[
\{n,d_1,\dots,d_{j-1},\widehat{d_j},d_{j+1},\dots,d_k,-1^{n+D-4g+2}\},
\]
so $n$ and $d_j$ are interchanged. This interchange does not affect
the sum $n+D$, so we get~\eqref{eq:pocket:d}.
\end{proof}

\begin{corollary}
\label{cor:pocket:d:renormalized}
Assume the hypotheses of Corollary~\ref{cor:pocket:d}. The contribution of the ``pocket'' (``ear'') configuration represented by the graph as in Figure~\ref{fig:pocket} but having $n$ as an interior singularity and $d_j$ as a boundary singularity has the following form in renormalized variables as $n\to+\infty$:
\begin{multline}
\label{eq:Ca:total:renormalized:dj:g}
C_g^{pocket,d_j}(n;d)
=
(d_j+1)
\cdot\frac{(n+D-4g+4)(n+D-4g+3)}{(n+D+k-2g+2)(n+D+k-2g+1)}
\\
\times
W_g(n;d_1,\dots,d_{j-1},d_j-2,d_{j+1},\dots,d_k)
\\
=(d_j+1)
\cdot
W_g(n;d_1,\dots,d_{j-1},d_j-2,d_{j+1},\dots,d_k)
\cdot\left(1+O_{g,d}\left(\frac1n\right)\right)\,.
\end{multline}
\end{corollary}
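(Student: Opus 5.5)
The plan is to start from the unnormalized formula~\eqref{eq:pocket:d} of Corollary~\ref{cor:pocket:d} and convert it to the variables of~\eqref{eq:W:g:n} and~\eqref{eq:C:g:n}, just as Corollary~\ref{cor:pocket:renormalized} was derived from Proposition~\ref{prop:pocket}. By Definition~\eqref{eq:C:g:n},
\[
C_g^{pocket,d_j}(n;d)=4\pi^2\cdot\frac{\Vol\cQ_g(n,d,-1^{n+D-4g+4})}{n^g\, v(n)\prod_i v(d_i)}\cdot c_{area}^{pocket,d_j}\,,
\]
and the ambient volume in the numerator cancels the denominator of~\eqref{eq:pocket:d}. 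We are left with $4\pi^2\cdot\frac{d_j}{4}$ times the factorial ratio, times
\[
\frac{\Vol\cQ_g(n,\hat d^{(j)},-1^{n+D-4g+2})}{n^g\, v(n)\prod_i v(d_i)}\,,
\qquad
\hat d^{(j)}=(d_1,\dots,d_{j-1},d_j-2,d_{j+1},\dots,d_k)\,.
\]

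The key step is that the large zero keeps its degree $n$, so the factors $n^g v(n)$ match those in the definition of $W_g(n;\hat d^{(j)})$ exactly. The only mismatch is $v(d_j-2)/v(d_j)=\frac{1}{\pi^2}\frac{d_j+1}{d_j}$, by~\eqref{eq:v:n:minus:2} applied with $d_j$ in place of $n$; this identity follows from~\eqref{eq:v} for every $d_j\ge1$. Hence the ratio equals $\frac{1}{\pi^2}\frac{d_j+1}{d_j}W_g(n;\hat d^{(j)})$, and multiplying by $4\pi^2\cdot\frac{d_j}{4}$ gives the coefficient $(d_j+1)$ together with the unchanged factorial ratio. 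This is the first line of~\eqref{eq:Ca:total:renormalized:dj:g}.

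The only delicate point is the degenerate values $d_j=1,2$. For these I will check that the normalization is consistent with the conventions~\eqref{eq:W:of:minus:1} and~\eqref{eq:W:of:0} and with Remark~\ref{rm:entry:minus:1}.
\begin{itemize}
\item For $d_j=1$: the extra $-1$ is absorbed into the poles, so the pole count $n+D-4g+2$ of the boundary stratum matches the count $n+(D-2)-4g+4$ for the data $\hat d^{(j)}$. Since $v(-1)=1$, the definitions agree.
\item For $d_j=2$: the marked point is kept, the factor $v(0)=2$ appears in the normalization, and this is exactly the convention~\eqref{eq:W:of:0} together with the doubling rule~\eqref{eq:vol:with:marked:point}.
\end{itemize}
In both cases the pole count of the boundary stratum agrees with the one dictated by the total degree $D-2$ of $\hat d^{(j)}$, which is what the definition of $W_g$ requires.

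Finally, for the asymptotic form, fix $g$ and $d$. The ratio
\[
\frac{(n+D-4g+4)(n+D-4g+3)}{(n+D+k-2g+2)(n+D+k-2g+1)}
\]
is a quotient of two monic quadratics in $n$, so it equals $1+O_{g,d}(1/n)$. This gives the second line of~\eqref{eq:Ca:total:renormalized:dj:g}. No obstacle beyond this bookkeeping is expected.
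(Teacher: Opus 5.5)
Your proposal is correct and follows essentially the same route as the paper. You cancel the ambient volume, observe that the large zero keeps degree $n$ so that the only mismatch is $v(d_j-2)/v(d_j)=\frac{1}{\pi^2}\frac{d_j+1}{d_j}$, read off the coefficient $(d_j+1)$, and note that the quotient of monic quadratics is $1+O_{g,d}(1/n)$. Your explicit check of the degenerate cases $d_j=1,2$ against the conventions \eqref{eq:W:of:minus:1} and \eqref{eq:W:of:0} is a useful addition that the paper only delegates to Remarks~\ref{rm:entry:minus:1} and~\ref{rm:entry:minus:1:in:W}.
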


\begin{remark}
\label{rm:entry:minus:1:in:W}
In the particular cases when $d_j=1$, or when $d_j=2$, we get $d_j-2=-1$
or $d_j-2=0$ respectively. Recall that by~\eqref{eq:W:of:0}
and by~\eqref{eq:W:of:minus:1} we have
\[
\begin{aligned}
W_g(n;d_1,\dots,d_{j-1},-1,d_{j+1},\dots,d_k)=
W_g(n;d_1,\dots,d_{j-1},d_{j+1},\dots,d_k)\,,
\\
W_g(n;d_1,\dots,d_{j-1},0,d_{j+1},\dots,d_k)=
W_g(n;d_1,\dots,d_{j-1},d_{j+1},\dots,d_k)\,.
\end{aligned}
\]
The conventions for the resulting principal boundary strata in these two cases are described in Remark~\ref{rm:entry:minus:1}.
\end{remark}

\begin{proof}[Proof of Corollary~\ref{cor:pocket:d:renormalized}]
The volume normalization is not symmetric with respect to the interchange
of $n$ and $d_j$. Now we have
\begin{multline*}
\frac{\Vol\cQ_g(n,d_1,\dots,d_{j-1},d_j-2,d_{j+1},\dots,d_k,-1^{n+D-4g+2})}
{n^g\cdot v(n)\cdot v(d_1)\cdots v(d_k)}
\\=
\frac{\Vol\cQ_g(n,d_1,\dots,d_{j-1},d_j-2,d_{j+1},\dots,d_k,-1^{n+D-4g+2})}
{n^g\cdot v(n)\cdot v(d_1)\cdots v(d_{j-1})\cdot v(d_{j}-2)\cdot v(d_{j+1})\cdots v(d_k)}
\cdot\frac{v(d_j-2)}{v(d_j)}
\\
=\frac{1}{\pi^2}\cdot\frac{d_j+1}{d_j}\cdot W_g(n;d_1,\dots,d_{j-1},d_j-2,d_{j+1},\dots,d_k)
\,,
\end{multline*}
which allows us to rewrite the contribution of the pocket (ear) configuration to $\carea$ in renormalized variables as~\eqref{eq:Ca:total:renormalized:dj:g}.
\end{proof}

Note that the factor in front of the variable $W$ in~\eqref{eq:Ca:total:renormalized:g} is of order $O(n)$ while the analogous factor in~\eqref{eq:Ca:total:renormalized:dj:g} is of order $O(1)$.

% -------------------------------------------------------------------
\subsection{Contribution of graphs of type ``c'' in genus one.}Throughout this subsection, we assume $n\ge 5$.

We collect now contributions of configurations corresponding to graphs of type ``c'' to the area Siegel--Veech constants of strata $\cQ_1(n,-1^n)$ and $\cQ_1(n,1,-1^{n+1})$. We start with ``pocket'' configurations.

\begin{lemma}
%\label{lm:pocket:renormalized:g:1}
The contributions of the ``pocket'' configurations represented by the graphs
\textit{II} in Figure~\ref{fig:minimal:genus:1} and graphs \textit{v} and \textit{vi} in Figure~\ref{fig:graphs:n:1:genus:1}
to the area Siegel--Veech constants of the strata $\cQ_1(n,-1^n)$ and $\cQ_1(n,1,-1^{n+1})$
have the following forms in renormalized variables:
\begin{align}
\label{eq:pocket:n:g:1:renormalized}
&C_1^{\textit{pocket}}(n)
=\frac{(n+1)(n-2)}{n}\cdot W_1(n-2)\,,
\\
\label{eq:pocket:at:n:n:1:g:1:renormalized}
&C_1^{\textit{pocket,n}}(n;1)
=\frac{(n+1)(n-2)}{n+2}\cdot W_1(n-2;1)\,,
\\
\label{eq:pocket:at:1:n:1:g:1:renormalized}
&C_1^{\textit{pocket,1}}(n;1)
=\frac{2n}{n+2}\cdot W_1(n)\,.
\end{align}
\end{lemma}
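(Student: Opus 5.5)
The plan is to specialize the general pocket formulas of Section~\ref{ss:pocket:configuration} to $g=1$, after checking that graphs \textit{II}, \textit{v} and \textit{vi} are exactly the pocket configurations treated there. The one genuinely non-mechanical step is this identification, together with the boundary bookkeeping for \textit{vi}; everything after it is rational simplification.

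First I will read off the decorations. Graph \textit{II} in Figure~\ref{fig:minimal:genus:1} and graph \textit{v} in Figure~\ref{fig:graphs:n:1:genus:1} each have a single $\ominus$-vertex of local type $-1.1$ carrying genus $1$. Its boundary singularity is the zero of degree $n$, and the loop carries two simple poles. So both are the pocket of Proposition~\ref{prop:pocket}, with $d=\emptyset$ and $d=(1)$ respectively. Graph \textit{vi} has the same shape, but its boundary singularity has degree $1$ and the zero of degree $n$ is interior. This is precisely the situation of Corollary~\ref{cor:pocket:d} with $d_j=1$.

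The hypotheses are easy to verify for $n\ge5$ and $g=1$. We have $n+D-4g=n-4\ge1$, so each principal boundary stratum has at least three simple poles. Hence $|\Gamma(v_1)|=1$ and the strata are connected by Remark~\ref{rm:connected:strata}. For \textit{vi}, the boundary stratum $\cQ_1(n,-1,-1^{n-1})$ is interpreted via Remark~\ref{rm:entry:minus:1} as $\cQ_1(n,-1^n)$. This makes the stratum nonempty and compatible with the genus count.

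Next I substitute into the renormalized formulas. For \eqref{eq:pocket:n:g:1:renormalized}, put $g=1$, $D=k=0$ in \eqref{eq:Ca:total:renormalized:g}. The pole factor becomes $\frac{n(n-1)}{n(n-1)}=1$, and $(n+1)\frac{n-2}{n}$ remains. For \eqref{eq:pocket:at:n:n:1:g:1:renormalized}, put $D=k=1$. The pole factor becomes $\frac{(n+1)n}{(n+2)(n+1)}=\frac{n}{n+2}$, which multiplied by $(n+1)\frac{n-2}{n}$ gives $\frac{(n+1)(n-2)}{n+2}$.

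For \eqref{eq:pocket:at:1:n:1:g:1:renormalized}, I use the exact first line of \eqref{eq:Ca:total:renormalized:dj:g}, not the asymptotic one. With $d_j=1$ and $D=k=1$ it gives $2\cdot\frac{n}{n+2}\,W_1(n;-1)$. The convention \eqref{eq:W:of:minus:1} then yields $W_1(n;-1)=W_1(n)$.
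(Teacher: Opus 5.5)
Your proposal is correct and follows the paper's proof: the paper also gets the first two identities by setting $g=1$ with $D=k=0$, respectively $D=k=d_1=1$, in \eqref{eq:Ca:total:renormalized:g}, and the third from the exact first line of \eqref{eq:Ca:total:renormalized:dj:g} with $g=D=k=j=d_j=1$ together with the convention $W_1(n;-1)=W_1(n)$. One harmless slip: $n+D-4g=n-4$ holds only for $d=\emptyset$, while for $d=(1)$ it equals $n-3$, which is still $\ge 1$ for $n\ge 5$.
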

\begin{proof}
Equation~\eqref{eq:pocket:n:g:1:renormalized} is obtained as a particular case of~\eqref{eq:Ca:total:renormalized:g}, where we let $g=1$ and $D=k=0$ and $d=\emptyset$.
Equation~\eqref{eq:pocket:at:n:n:1:g:1:renormalized} is obtained as a particular case of~\eqref{eq:Ca:total:renormalized:g}, where we let
\[
g=k=D=d_1=1,\qquad d=(1)\,.
\]
Finally, Equation~\eqref{eq:pocket:at:1:n:1:g:1:renormalized}
is obtained as a particular case of the first equality in~\eqref{eq:Ca:total:renormalized:dj:g}, where we let $g=D=k=j=d_j=1$,
and where we use convention~\eqref{eq:W:of:minus:1}
according to which $W_1(n;-1)=W_1(n)$.
\end{proof}

We compute now contributions of configurations represented by  the graph \textit{III} in Figure~\ref{fig:minimal:genus:1} and by the graph \textit{vii} in Figure~\ref{fig:graphs:n:1:genus:1}. These contributions will be needed in computation of volumes of strata $\cQ_1(n,-1^n)$ and $\cQ_1(n,1,-1^{n+1})$ respectively. We perform a detailed computation for the stratum $\cQ_1(n,-1^n)$. The computation for the stratum $\cQ_1(n,1,-1^{n+1})$ is completely analogous.

\begin{proposition}
\label{prop:pocket:through:torus}
For $n\ge 5$
the contributions of the ``pocket-through-torus'' configurations represented by the graphs
\textit{III} in Figure~\ref{fig:minimal:genus:1} and \textit{vii} in Figure~\ref{fig:graphs:n:1:genus:1}
to the area Siegel--Veech constants of the strata $\cQ_1(n,-1^n)$ and $\cQ_1(n,1,-1^{n+1})$ respectively
have the following forms:
\begin{align}
\label{eq:pocket:through:torus:n:g:1}
\carea^{\textit{pocket-through-torus}}&\big(\cQ_1(n,-1^n)\big)
\\
\notag
&=\frac{n-4}{2(n-2)(n-3)}
\cdot\frac{\Vol\cH(0)\cdot\Vol\cQ_0(n-6,-1^{n-2})}{\Vol\cQ_1(n,-1^n)}\,.
\\
\label{eq:pocket:through:torus:n:1:g:1}
\carea^{\textit{pocket-through-torus}}&
\big(\cQ_1(n,1,-1^{n+1})\big)
\\
\notag
&=\frac{n-4}{2(n+2)(n-1)}
\cdot\frac{\Vol\cH(0)\cdot\Vol\cQ_0(n-6,1,-1^{n-1})}{\Vol\cQ_1(n,1,-1^{n+1})}\,.
\end{align}
\end{proposition}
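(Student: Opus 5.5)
The plan is to apply Goujard's Formula~\eqref{eq:carea} directly to the decorated ribbon graphs \textit{III} of Figure~\ref{fig:minimal:genus:1} and \textit{vii} of Figure~\ref{fig:graphs:n:1:genus:1}, in the same way as in Propositions~\ref{prop:dumbbell:general:init} and~\ref{prop:pocket}.

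\textbf{Graph \textit{III}.} The first step is to read off the principal boundary strata.
\begin{itemize}
\item The $\ominus$-vertex has genus $0$, interior singularities $\{-1^{n-2}\}$ and a single boundary singularity of order $n-4$. By~\eqref{eq:Dji} it gives $\delta=n-6$, so its boundary stratum is $\cQ_0(n-6,-1^{n-2})$.
\item The $\oplus$-vertex of type $+2.1$ carries genus $1$ and has boundary singularities $1,1$. It gives $\delta=0$, so by~\eqref{eq:H:boundary:stratum} its boundary stratum is $\cH(0)$.
\item The trivalent $\circ$-vertex is a pocket whose top carries two poles joined by a saddle connection.
\end{itemize}
As a consistency check, \eqref{eq:degree:of:singularity} applied to the boundary component with singularities $n-4,1,1,0$ returns degree $n$.

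The dimensions are $a=n+1$, $a_0=n-3$ and $b=2$. There is one cylinder, and it is thick by Lemma~\ref{lm:thin:and:thick:cylinders}, so $q_1=0$, $q_2=1$ and $m=2$. The combinatorial constant should then be as follows.
\begin{itemize}
\item $M_c=4^{2}$.
\item $K(v_-)=n-4$.
\item $K(v_+)=2\cdot\frac{1+1}{2}=2$.
\item $|\Gamma(v_+)|=2$, since the $\oplus$-vertex is a torus of trivial holonomy, and $|\Gamma(v_-)|=1$, since it has many poles.
\item Hence $M_s=n-4$.
\end{itemize}
Finally, we multiply by $\binom{n}{2}$ for the choice of the two labeled poles sitting on top of the pocket.

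Collecting the factors, the claimed formula~\eqref{eq:pocket:through:torus:n:g:1} requires $M_t=1$:
\[
\frac{16(n-4)}{M_t}\cdot\frac{1}{2^{6}}\cdot\frac{(n-4)!\cdot 2^2\cdot 1!}{n!}\cdot\frac{n(n-1)}{2}
=\frac{n-4}{2M_t(n-2)(n-3)}\,.
\]

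\textbf{Graph \textit{vii}.} The computation is identical except for three changes. The $\ominus$-vertex now has interior singularities $\{1,-1^{n-1}\}$, so its boundary stratum is $\cQ_0(n-6,1,-1^{n-1})$ of dimension $n-1$. The ambient dimension is $a=n+3$. The pole count becomes $\binom{n+1}{2}$. The factor $\frac{(n-2)!}{(n+2)!}\cdot\frac{(n+1)n}{2}$ then yields exactly $\frac{1}{2(n+2)(n-1)}$, multiplied by the same $n-4$ coming from $M_s$. This gives~\eqref{eq:pocket:through:torus:n:1:g:1}.

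\textbf{Main obstacle.} The delicate step is justifying the symmetry and figure-eight constants, namely $o(t)=1$ for this thick cylinder and $|\Gamma(v_+)|=2$. I would argue the first as follows. The pocket's top boundary consists of two segments joined at the poles. Its bottom boundary is a single closed saddle connection from the zero of degree $n$, rather than a figure eight, because the $\oplus$-vertex lies on the stem and not on the loop. I would also check that the decoration $(1,1)$ at the $+2.1$ vertex admits no additional symmetric counterpart that would change the count. I would verify these normalizations against the small-dimensional tables of~\cite{Goujard:vol}.
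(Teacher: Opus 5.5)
Your proposal is correct and is essentially the paper's own computation. Both use Goujard's formula with $q_1=0$, $q_2=1$, $m=2$ and $M=M_sM_c/M_t=(n-4)\cdot 4^2$, using $K(v_+)/|\Gamma(v_+)|=2/2$ and $M_t=1$. This is multiplied by $\binom{n}{2}$, respectively $\binom{n+1}{2}$, for the labels of the two poles on top of the pocket, and only the dimensions change for graph \textit{vii}.

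The paper simply asserts $M_t=1$ without comment. Your observation that neither boundary of the pocket cylinder is a figure eight is therefore a useful justification of that step. Present it as the derivation of $M_t=1$, rather than saying the claimed formula "requires" $M_t=1$, which reads as circular.
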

\begin{proof}
This time we have
\begin{align*}
a&=\dim_{\C}\cQ_1(n,-1^n)=n+1\,,\\
a_1&=\dim_{\C}\cQ_0(n-6,-1^{n-2})=n-3\,,\\
b_1&=\dim_{\C}\cH(0)=2\,,\\
\end{align*}
and
\[
q_1=0\,,\qquad
q_2=1\,,\qquad
m=2\,,
\]
so~\eqref{eq:carea} gives:
\begin{multline}
c_{area}^{\textit{pocket-through-torus; fixed poles}}\big(\cQ_1(n,-1^n)\big)
\\=
M\cdot\frac{4\cdot 0+1}{2^{2+1+3}}\cdot
\frac{\bigg((2-1)!\cdot 2^2\cdot\Vol\cH(0)\bigg)
\bigg(((n-3)-1)! \Vol\cQ_0(n-6,-1^{n-2})\bigg)}
{((n+1)-1)!\Vol\cQ_1(n,-1^n)}
\\=
\label{eq:Cb:before:weight:minimal:g:1}
M\cdot \frac{2^{-4}}{n(n-1)(n-2)(n-3)}
\cdot\frac{\Vol\cH(0)\cdot\Vol\cQ_0(n-6,-1^{n-2})}{\Vol\cQ_1(n,-1^n)}\,.
\end{multline}
There are $\binom{n}{2}$ ways to choose two simple poles
out of $n$, so the combinatorial factor here is
\begin{equation}
\label{eq:combinatorics:Cb:minimal:g:1}
\frac{n(n-1)}{2}\,.
\end{equation}
For this configuration we have $q=q_1+q_2=1$ and the graph is not of type ``a'', so we have
\[
M_c=4^{1+1}\,,\qquad
M_t=1
\]
(see Section~\ref{ss:Formula:for:the:combinatorial:constant}).
We also have
\[
  K(v_1) =n-4\,,\qquad
  |\Gamma(v_1)|=1
\]
and
\[
  K(v_2) =2\cdot\left(\frac{1}{2}+\frac{1}{2}\right) = 2\,,\qquad
  |\Gamma(v_2)|=2\,,
\]
where $v_1$ represents the stratum $\cQ_0(n-6,-1^{n-2})$ and $v_2$ represents the stratum $\cH(0)$, so
\[
M_s=\frac{n-4}{1}\cdot\frac{2}{2}=n-4\,,
\]
and
\begin{equation}
\label{eq:M:Cb:minimal:g:1}
M=\frac{M_s M_c}{M_t}
=\frac{(n-4)\cdot 2^4}{1}=2^4\cdot(n-4)\,.
\end{equation}
Combining~\eqref{eq:Cb:before:weight:minimal:g:1}, \eqref{eq:combinatorics:Cb:minimal:g:1}
and~\eqref{eq:M:Cb:minimal:g:1} we get:
\begin{multline*}
c_{area}^{\textit{pocket-through-torus}}\big(\cQ_1(n,-1^n)\big)
= \big(2^4\cdot (n-4)\big)
\\ \times
\left(\frac{n(n-1)}{2}\right)\cdot
\frac{2^{-4}}{n(n-1)(n-2)(n-3)}
\cdot\frac{\Vol\cH(0)\cdot\Vol\cQ_0(n-6,-1^{n-2})}{\Vol\cQ_1(n,-1^n)}\,,
\end{multline*}
which proves~\eqref{eq:pocket:through:torus:n:g:1}.

Evaluation of contribution of the configuration \textit{vii} in Figure~\ref{fig:graphs:n:1:genus:1} is completely analogous. There is a change in dimensions: now we have $a=n+3$ and $a_1=n-1$, which changes the ratio $\frac{(a_1-1)!}{(a-1)!}$. There is one more pole now, so the number of choices of a pair of poles becomes $\frac{(n+1)n}{2}$.
The combinatorial constant $M$ remains unchanged. We get
\begin{multline*}
c_{area}^{\textit{pocket-through-torus}}\big(\cQ_1(n,1,-1^{n+1})\big)
= \big(2^4\cdot (n-4)\big)
\\ \times
\left(\frac{(n+1)n}{2}\right)\cdot
\frac{2^{-4}}{(n+2)(n+1)n(n-1)}
\cdot\frac{\Vol\cH(0)\cdot\Vol\cQ_0(n-6,1,-1^{n-1})}{\Vol\cQ_1(n,1,-1^{n+1})}\,,
\end{multline*}
which proves~\eqref{eq:pocket:through:torus:n:1:g:1}
and thus completes the proof of Proposition~\ref{prop:pocket:through:torus}.
\end{proof}

\begin{corollary}
%\label{cor:pocket:through:torus:renormalized}
  %
For $n\ge 5$,
the contributions of the ``pocket-through-torus'' configurations represented by the graphs
\textit{III} in Figure~\ref{fig:minimal:genus:1} and \textit{vii} in Figure~\ref{fig:graphs:n:1:genus:1}
to the area Siegel--Veech constants of the strata $\cQ_1(n,-1^n)$ and $\cQ_1(n,1,-1^{n+1})$ respectively
have the following forms in renormalized variables:
\begin{align}
\label{eq:pocket:through:torus:n:g:1:renormalized}
&C_1^{\textit{pocket-through-torus}}(n)
=\frac{4(n+1)(n-1)}{3n^2(n-2)^2}\,,
\\
\label{eq:pocket:through:torus:n:1:g:1:renormalized}
&C_1^{\textit{pocket-through-torus}}(n;1)
=\frac{4(n+1)(n-3)}{3n^2(n+2)(n-2)}\,.
\end{align}
\end{corollary}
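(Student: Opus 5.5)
The plan is a direct substitution into Proposition~\ref{prop:pocket:through:torus}. Both identities follow from Definition~\eqref{eq:C:g:n} with $g=1$, the genus-zero volume formula, and the ratio~\eqref{eq:v:n:minus:6}. In genus $1$ the normalizing denominator is $n\cdot v(n)$ for the minimal stratum and $n\cdot v(n)\cdot v(1)$ for $\cQ_1(n,1,-1^{n+1})$. Since $C_1^{\cC}=4\pi^2\cdot\frac{\Vol}{n\,v(n)\cdots}\cdot\carea(\cC)$, the factor $\Vol\cQ_1(\dots)$ in the denominator of~\eqref{eq:pocket:through:torus:n:g:1} and~\eqref{eq:pocket:through:torus:n:1:g:1} cancels. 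We are left with explicit boundary volumes divided by $n\,v(n)$ (times $v(1)$ in the second case).

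For~\eqref{eq:pocket:through:torus:n:g:1:renormalized}, I would take $\Vol\cH(0)=\frac{\pi^2}{3}$ (as already used in Lemma~\ref{lm:a:graphs:contributions:Q:n:1}) and $\Vol\cQ_0(n-6,-1^{n-2})=2\pi^2 v(n-6)$ from~\eqref{eq:Vol:minimal:g0}. This gives
\[
C_1^{\textit{pocket-through-torus}}(n)=4\pi^2\cdot\frac{n-4}{2(n-2)(n-3)}\cdot\frac{\pi^2}{3}\cdot 2\pi^2\cdot\frac{1}{n}\cdot\frac{v(n-6)}{v(n)}\,.
\]
Substituting $\frac{v(n-6)}{v(n)}=\frac{(n+1)(n-1)(n-3)}{\pi^6 n(n-2)(n-4)}$ makes all powers of $\pi$ cancel, leaving the constant $\frac{8}{3}$. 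The factors $n-4$ and $n-3$ also cancel, which yields $\frac{4(n+1)(n-1)}{3n^2(n-2)^2}$.

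For~\eqref{eq:pocket:through:torus:n:1:g:1:renormalized}, the only new ingredient is the general genus-zero formula~\eqref{eq:volume:in:genus:0}. It gives $\Vol\cQ_0(n-6,1,-1^{n-1})=2\pi^2 v(n-6)v(1)$, and the factor $v(1)$ cancels against the $v(1)$ in the normalization. The same arithmetic, now with the prefactor $\frac{n-4}{2(n+2)(n-1)}$, cancels $n-4$ and $n-1$ and leaves $\frac{4(n+1)(n-3)}{3n^2(n+2)(n-2)}$.

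There is no real obstacle. The only points requiring care are that the marked degrees $n-6\ge -1$ lie in the range where $v$ and~\eqref{eq:v:n:minus:6} apply, which holds for $n\ge 5$ (for $n=5$, $v(-1)=1$ and the formula is still valid). One should also keep the $v(1)$ normalization consistent in the second case.
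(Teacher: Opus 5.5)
Your proposal is correct and follows essentially the same route as the paper's proof. Both cancel $\Vol\cQ_1$ through Definition~\eqref{eq:C:g:n}, insert $\Vol\cH(0)=\frac{\pi^2}{3}$ and the genus-zero volume formula (with $v(1)$ cancelling in the second case), and finish with~\eqref{eq:v:n:minus:6}; your arithmetic checks out, since the $\frac{8}{3}$ combines with the $\frac12$ of the prefactor to give the $\frac43$.
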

\begin{proof}
Using Formula~\eqref{eq:Vol:minimal:g0} for $\Vol\cQ_0(n-6,-1^{n-2})$ and the value $\Vol\cH(0)=\frac{\pi^2}{3}$, which can be found, for example, in~\cite{Eskin:Okounkov:Abelian} and in~\cite{Zorich:square:tiled} and applying Formula~\eqref{eq:v:n:minus:6} for the ratio $\frac{v(n-6)}{v(n)}$ we get
\[\frac{\Vol\cQ_0(n-6,-1^{n-2})\cdot\Vol\cH(0)}{n\cdot v(n)}
=\frac{2\pi^4}{3n}\frac{v(n-6)}{v(n)}
=\frac{2}{3\pi^2}
\frac{(n+1)(n-1)(n-3)}{n^2(n-2)(n-4)}\,.
\]
Therefore, passing to the renormalized variable
\begin{multline*}
C_1^{\textit{pocket-through-torus}}(n)
\\=
\frac{4\pi^2}{n\cdot v(n)}\cdot\Vol(\cQ_1(n,-1^n))\cdot
\carea^{\textit{pocket-through-torus}}\big(\cQ_1(n,-1^n)\big)
\end{multline*}
and using~\eqref{eq:pocket:through:torus:n:g:1} for
$\carea^{\textit{pocket-through-torus}}\big(\cQ_1(n,-1^n)\big)$
we get
\begin{multline*}
C_1^{\textit{pocket-through-torus}}(n)
=
\frac{4\pi^2}{n\cdot v(n)}
\\
\times\frac{n-4}{2(n-2)(n-3)}
\Vol\cQ_0(n-6,-1^{n-2})\cdot\Vol\cH(0)
=
\frac{4(n+1)(n-1)}{3n^2(n-2)^2}\,,
\end{multline*}
which corresponds to~\eqref{eq:pocket:through:torus:n:g:1:renormalized}.

Similarly to the case of the minimal stratum we have
\[
\frac{\Vol\cH(0)\cdot\Vol\cQ_0(n-6,1,-1^{n-1})}
{n\cdot v(n)\cdot v(1)}
=\frac{2\pi^4}{3n}
\frac{v(n-6)}{v(n)}
=\frac{2}{3\pi^2}
\frac{(n+1)(n-1)(n-3)}{n^2(n-2)(n-4)}\,.
\]
Thus we get the following normalized contribution
\begin{multline*}
C_1^{\textit{pocket-through-torus}}(n;1)
=\frac{4\pi^2}{n\cdot v(n)\cdot v(1)}
\\
\times
\frac{n-4}{2(n+2)(n-1)}
\cdot\Vol\cH(0)\cdot\Vol\cQ_0(n-6,1,-1^{n-1})
=\frac{4(n+1)(n-3)}{3n^2(n+2)(n-2)}\,,
\end{multline*}
which corresponds to~\eqref{eq:pocket:through:torus:n:1:g:1:renormalized}.
\end{proof}

% \begin{figure}[htb]
% \centering
% \includegraphics[scale=0.8, angle=90]{pocket_through_torus.pdf}
%    %
% \caption{
% \label{fig:pocket:through:torus}
% ``Pocket-through-torus'' configuration}
% \end{figure}

\begin{remark}
Geometry of the ``pocket-through-torus'' configuration can be seen as follows. In the pocket (ear) configuration considered above we considered a surface $S'$ in the principal boundary stratum, broke the zero of degree $n-2$ (respectively of degree $d_j-2$) into a zero of degree $n-1$ (respectively of degree $d_j-1$) and a simple pole, slit the saddle connection between the newborn zero and the simple pole and pasted a pocket to the resulting hole. Now we start with a sphere $S'$ in the stratum $\cQ_0(n-6,-1^{n-2})$ (respectively in the stratum $\cQ_0(n-6,1,-1^{n-1})$), and create a hole at the zero of degree $n-6$. However, instead of pasting a cylinder directly to the hole, we do it through an auxiliary flat torus. We first slit the torus along a geodesic segment and bring the two endpoints of the slit together thus creating a boundary in the shape of a figure eight. We glue one of the loops of this figure eight to the sphere and we paste a pocket (ear) to the other loop.
\end{remark}

% ###############################################################
% ###############################################################
% ###############################################################
\section{Dominant and negligible contributions}
\label{s:Dominant:and:negligible:contributions}

%--------------------------------------------------------------
\subsection{Order of magnitude of contribution of an individual configuration}

Fix $(g,d)$ and take $n$ sufficiently large that $n>4g$, $n>d_j$ for every $j$, so that Lemma~\ref{lm:ominus:vertex:associated:to:zero:of:degree:n} applies. Consider a configuration $\cC_{\mathit{fixed}}$. We assume that all singularities, in particular, all endpoint singularities of the ambient stratum are labeled. Denote by $v_1$ the $\ominus$-vertex
carrying the unique zero of degree $n$, see Definition~\ref{def:zero:carried:by:a:minus:vertex}. Denote by $g_1$ the genus carried by this vertex. Let $n-\ell$ be the largest order among all zeroes of the corresponding boundary stratum $\cQ_{g_1}(\alpha_1)$; denote the collection of the remaining zeroes and marked points by $d'$.
We fix the configuration allowing only the parameter $n$ to grow.
In particular, the distinguished vertex $v_1$, the genus $g_1$,
the value of $\ell$ and the collection $d'$ are fixed. The simple poles and their distribution among all vertices except $v_1$ are also fixed.
The number of poles in $\alpha_1$ grows to compensate for the growing value of $n$
while the genus $g_1$ remains fixed.

Let
\[
a=\dim_{\C}\cQ_g(n,d,-1^{n+D-4g+4})
\]
be the dimension of the ambient stratum, and let $a_1=\dim_{\C}\cQ_{g_1}(\alpha_1)$ be the dimension of the principal boundary stratum corresponding to the distinguished $\ominus$-vertex $v_1$. Let $a_\mathit{rest}$ be the sum of dimensions carried by all other vertices, no matter $\oplus$ or $\ominus$. Let $q\geq 1$ be the total number of cylinders.

\begin{lemma}
%\label{lm:order:of:magnitude:of:cC}
Fix $g\geq1$ and a collection $d=(d_1,\dots,d_k)$ of positive integers, allowing $k=0$ and $d=\emptyset$. One has
\begin{equation}
\label{eq:order:of:C}
C_{g}^{\cC_{\mathit{fixed}}}(n;d)
=O_{g,d}\bigl(n^{g_1-g-q-a_\textit{rest}-\iota}\bigr)
\cdot W_{g_1}(n-\ell;d'),
\end{equation}
where $\iota=1$ if $n$ is an interior singularity and $\iota=0$ otherwise.
\end{lemma}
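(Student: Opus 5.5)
We apply Goujard's Formula~\eqref{eq:carea} directly to $\cC_{\mathit{fixed}}$ and pass to renormalized variables as in Sections~\ref{s:Contributions:of:graphs:a}--\ref{s:Contributions:of:graphs:c}. The only quantities that grow with $n$ are the following: the dimension $a$ of the ambient stratum; the dimension $a_1$ of the boundary stratum $\cQ_{g_1}(\alpha_1)$ carried by $v_1$; the volume $\Vol\cQ_{g_1}(\alpha_1)$; the factor $K(v_1)$ inside $M$; and the normalization $n^g v(n)$. Everything else is $O_{g,d}(1)$. This includes the data at the other vertices, the factors $2^{m+q+3}$, $4q_1+q_2$, $M_c$, $M_t$, $|\Gamma|$, and the volumes of the remaining boundary strata. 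It also includes $K(v)$ for $v\neq v_1$, since by Lemma~\ref{lm:ominus:vertex:associated:to:zero:of:degree:n} only $v_1$ meets the boundary component of the zero of degree $n$.

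The first step is dimension counting. From the formulae already used, $a=n+D+k-2g+3$ and $a_1=n-\ell+D'+k'-2g_1+3$, with $D'$ and $k'$ fixed. Hence $a-a_1=\ell+2(g-g_1)+O(1)$ is a fixed integer. More precisely, $\dim$ is additive up to the cylinders: $a-1=(a_1-1)+a_{\mathit{rest}}+q$, possibly up to a fixed shift. I would establish this additivity from the standard principal boundary fact that the stratum locally fibres over the product of boundary strata times cylinder parameters. Then
\[
\frac{(a_1-1)!}{(a-1)!}=\Theta\bigl(n^{-(a-a_1)}\bigr)=\Theta\bigl(n^{-a_{\mathit{rest}}-q}\bigr),
\]
where the $\Theta$ is exact up to $1+O(1/n)$.

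The second step handles the volume and $K(v_1)$. Write $\Vol\cQ_{g_1}(\alpha_1)=(n-\ell)^{g_1}v(n-\ell)\prod v(d')\,W_{g_1}(n-\ell;d')$. Using~\eqref{eq:v:n:minus:2} iteratively gives $v(n-\ell)/v(n)=O(\pi^{-\ell})$, and this also works for odd $\ell$ by the Gamma-function form~\eqref{eq:v}. So after dividing by $n^gv(n)\prod v(d_i)$ we get a factor $O(n^{g_1-g})\,W_{g_1}(n-\ell;d')$. The combinatorial factor $K(v_1)$ is the sum of boundary singularity orders on one component; when $n$ is an endpoint this is $O(n)$, and when $n$ is interior it is $O(1)$.

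The third step counts labelings. The labelled poles and their distribution off $v_1$ are fixed, but the choice of which poles go to the fixed positions contributes a polynomial factor. If $p$ poles are placed outside $v_1$, this factor is $\binom{n+O(1)}{p}=O(n^{p})$.

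The main obstacle is the bookkeeping that combines these powers to give exactly $g_1-g-q-a_{\mathit{rest}}-\iota$. The pole count $p$ and the $K(v_1)\sim n$ factor must be absorbed by the exact value of $a-a_1$. Concretely, I would verify $a-a_1=a_{\mathit{rest}}+q+p+(1-\iota)+\iota$ by computing both dimensions from singularity data: each pole moved off $v_1$ raises $a-a_1$ by one, and an endpoint $n$ versus an interior $n$ shifts $\ell$ appropriately. I would check this identity against the dumbbell, pocket, figure-eight and pocket-through-torus computations already done. For example, the pocket gives $n\cdot n^2\cdot n^{-2}\cdot n^{-1}$ from $v$, which is $O(n^{0})=n^{g_1-g-q-0}\cdot n$ with $q=1$. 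If this identity is shaky in general, the fallback is to prove it by induction on the number of vertices using the surgeries encoded by the graph types ``a,b,c''.
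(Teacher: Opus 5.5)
\textbf{There is a genuine gap.} Your overall strategy is the paper's: plug $\cC_{\mathit{fixed}}$ into Goujard's formula and isolate the $n$-dependent factors $K(v_1)$, $(a_1-1)!/(a-1)!$ and $\Vol\cQ_{g_1}(\alpha_1)/(n^g v(n)\prod v(d_i))$. But the exponent bookkeeping, which is the whole content of the lemma, does not close.

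\emph{The labeling factor does not belong here.} $C_g^{\cC_{\mathit{fixed}}}(n;d)$ is the contribution of a single configuration. All singularities are labeled, and the poles together with their distribution among the vertices are fixed. So there is no factor $\binom{n+O(1)}{p}$ in it. Those choices are exactly the weights $E(\GG)$, $I(\GG)$ (and $P(\GG)$). The paper multiplies these onto~\eqref{eq:order:of:C} only afterwards, in Propositions~\ref{prop:negligible:a}--\ref{prop:negligible:c}.

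\emph{The dimension identity is misstated.} The identity you need is
\[
a=a_1+a_{\mathit{rest}}+q+1 .
\]
Each cylinder contributes one parameter, and the extra $1$ is the common holonomy (period) of the $\hat{h}$omologous saddle connections. Your Step~1 version $a-1=(a_1-1)+a_{\mathit{rest}}+q$ drops this $1$. That makes your estimate of $(a_1-1)!/(a-1)!$ too large by a factor of $n$.

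Your proposed repair $a-a_1=a_{\mathit{rest}}+q+p+1$ is false whenever $p>0$. The correct identity does not involve $p$: a pole moved off $v_1$ to another vertex raises $a-a_1$ by one, but it raises $a_{\mathit{rest}}$ by one as well. Two examples:
\begin{itemize}
\item For the pocket one has $a-a_1=2$, not $4$.
\item For the dumbbell with $g_0=0$ and no interior zeroes at $v_0$ one has $a-a_1=d_i+3$, not $2d_i+5$.
\end{itemize}

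\emph{Your pocket sanity check contains a spurious factor $n^{-1}$.} By~\eqref{eq:v:n:minus:2}, $v(n-2)/v(n)=\frac{n+1}{\pi^2 n}$ is bounded. The total over all labelings of the pocket is $\Theta(n)\,W_g(n-2;d)$, as in~\eqref{eq:Ca:total:renormalized:g}. Dividing by the $\binom{n+D-4g+4}{2}$ choices of endpoint poles, a single labeled pocket contributes $\Theta(n^{-1})\,W_g(n-2;d)$. This matches the lemma's exponent $g_1-g-q-a_{\mathit{rest}}-\iota=-1$.

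\emph{The fix.} Drop the labeling factor and use $a=a_1+a_{\mathit{rest}}+q+1$, which follows from a direct count of independent period coordinates. The three $n$-dependent pieces then give
\[
n^{1-\iota}\cdot n^{-(a_{\mathit{rest}}+q+1)}\cdot n^{g_1-g}\cdot W_{g_1}(n-\ell;d'),
\]
which is exactly~\eqref{eq:order:of:C}. No induction on the number of vertices is needed.
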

\begin{proof}
Combining~\eqref{eq:carea} and~\eqref{eq:C:g:n}, cancelling out $\Vol\cQ_g(n,d,-1^{n+D-4g+4})$ present in the numerator and in the denominator, and rearranging the order of factors we get
\begin{multline*}
C_{g}^{\cC_{\mathit{fixed}}}(n;d)=
\frac{\Vol\cQ_{g_1}(\alpha_1)}
{n^g\cdot v(n)\cdot v(d_1)\cdots v(d_k)}
\cdot \frac{(a_1-1)!}{(a-1)!}
\\
\times 4\pi^2
\cdot M\cdot\frac{(4q_1+q_2)}{2^{m+q+3}}
\cdot
\prod_{\substack{i\\i\neq 1}}\big((a_i-1)!\Vol\cQ(\alpha'_i)\big)\cdot
\prod_j\big(2^{b_j}\cdot (b_j-1)! \cdot \Vol\cH(\beta'_j)\big)
\,.
\end{multline*}
The only term in the second line which might depend on $n$
is the combinatorial constant $M$. We have $M_c=O_q(1)$ and $M_t=O(1)$.
We also have $K(v_i)=O_{g,d}(1)$ for $i\neq 1$ and $\Gamma(v_i)=O(1)$
for all $i$. Finally,
\[
K(v_1)=
\begin{cases}
O_{g,d}(1)
&\text{when $n$ is an interior singularity}\,,
\\
O_{g,d}(n)
&\text{when $n$ is an endpoint singularity}\,.
\end{cases}
\]
Thus
\[
M=
\begin{cases}
O_{g,d}(1)
&\text{when $n$ is an interior singularity}\,,
\\
O_{g,d}(n)
&\text{when $n$ is an endpoint singularity}\,.
\end{cases}
\]
The product of all remaining terms in the second line is of order $O(1)$.

A direct count of independent period coordinates in the ambient stratum gives
\[
 a=a_1+a_\textit{rest}+q+1\,.
\]
Thus
\[
\frac{(a_1-1)!}{(a-1)!}=O(n^{a_1-a})=O(n^{-(a_\textit{rest}+q+1)})\,.
\]

Lemma~\ref{lm:ominus:vertex:associated:to:zero:of:degree:n} implies that the stratum $\cQ_{g_1}(\alpha_1)$ has a single zero of order $n-\ell$,
where $\ell$ does not depend on $n$. The number and the orders of other
zeroes (including marked points but excluding simple poles) do not depend on $n$. We have
\[
\frac{v(n-\ell)}{v(n)}=O_{g,d}(1)\,.
\]
Recall that we denote by $d'=(d'_1,\dots,d'_{k'})$ orders of other zeroes of
the stratum $\cQ_{g_1}(\alpha_1)$. The definition of the normalized volume gives
\begin{multline*}
\frac{\Vol\cQ_{g_1}(\alpha_1)}
{n^g\cdot v(n)\cdot v(d_1)\cdots v(d_k)}
\\
=\frac{(n-\ell)^{g_1}}{n^g}
\cdot\frac{v(n-\ell)}{v(n)}
\cdot\frac{v(d'_1)\cdots v(d'_{k'})}{v(d_1)\cdots v(d_k)}
\cdot W_{g_1}(n-\ell;d')
\\=O_{g,d}(n^{g_1-g})\cdot W_{g_1}(n-\ell;d')\,,
\end{multline*}
since the ratio of the remaining $v$-factors is bounded.
Collecting the above estimates we get~\eqref{eq:order:of:C}.
\end{proof}

%--------------------------------------------------------------
\subsection{Grouping configurations with identical contributions}

Recall that given a configuration $\cC$ of saddle connections on a flat surface $S=(X,q)$, any zero or simple pole of $q$ is either an \textit{endpoint} of $\cC$, or is invisible to $\cC$ and is located somewhere inside $S$, in which case it is called an \textit{interior singularity}, see Definition~\ref{def:endpoint}. By definition, a saddle connection cannot have conical singularities inside it.

For fixed $(g,d)$ and all sufficiently large $n$, the decorated ribbon graphs can be organized into finitely many parameterized families with uniformly bounded fixed data. Namely, we absorb the growth in the number of decorated terms into the weights described below.
\smallskip

\noindent\textbf{Labeling endpoint poles.}
The ambient stratum $\cQ(n,d,-1^{n+D-4g+4})$ has a large number of simple poles, which are all labeled. When a configuration of \^homologous saddle connections
contains poles as endpoint singularities, the labels of these distinguished poles form part of the decoration of the associated ribbon graph.
Clearly, configurations $\cC$ which differ just by the choice of
these labels carry the same contributions to the sum in~\eqref{eq:EKZ:in:W:g}.
We will consider all these configurations as one, counted with weight
\begin{equation}
\label{eq:weight:for:numbering:endpoint:poles}
E(\GG)=\binom{n+D-4g+4}{e(\GG)}\,,
\end{equation}
where $e(\GG(\cC))$ is the number of poles involved in $\cC$ as endpoint singularities. We have $e(\GG)=0$ and $E(\GG)=\binom{n+D-4g+4}{0}=1$ if all poles are interior singularities. It follows from the classification of decorated ribbon graphs~\ref{fig:graphs} and \ref{fig:embedded:local:ribbon:graphs} that in the presence of at least one $\circ$-vertex, the number $e(\GG)$ of endpoint poles can take only the following values:
\begin{equation}
\label{eq:values:of:e}
e(\GG)\in
\begin{cases}
\{0\},&\text{for graphs of types ``a'' and ``b'';}
\\
\{0, 1, 2\}, &\text{for graphs of types ``c''}\,.
\end{cases}
\end{equation}
For example for a pocket (ear) configuration we have $e(\GG)=2$
and we recognize $E(\GG)$ in~\eqref{eq:combinatorics:Ca} and~\eqref{eq:combinatorics:Cb:minimal:g:1}.
\smallskip

\noindent\textbf{Labeling interior poles.}
Recall that graphs of type ``a'' have exactly two $\ominus$-vertices,
graphs of types ``b'' and ``c'' have exactly one $\ominus$-vertex, and we do not consider other graphs.
Note that $\oplus$-vertices cannot have poles as interior singularities.
Thus, when a graph $\GG(\cC)$ has two $\ominus$-vertices (i.e. when it has type ``a'' and not ``b'' or ``c''), for each choice of $e(\cC)$ labels for the endpoint poles, the remaining $n+D-4g+4-e(\cC)$ poles are interior singularities distributed between the two $\ominus$-vertices. By Lemma~\ref{lm:ominus:vertex:associated:to:zero:of:degree:n} only one of these two $\ominus$ vertices has $n$ as an interior singularity, or has a boundary singularity depending on $n$. Denote this vertex by $v_1$. Orders of all interior and boundary singularities of the remaining $\ominus$-vertex, which we will denote by $v_0$, do not depend on $n$. The number $p_0$ of simple poles serving as interior singularities of $v_0$ satisfies a relation imposed by the decoration of the connected component of $\GG$ containing the vertex $v_0$,
and thus, can be reconstructed from the corresponding ribbon graph structure,
degrees of boundary singularities and degrees of interior zeroes.
Hence, $p_0$ does not depend on $n$.
Clearly, contributions of configurations which differ only by the choice of labels of these $p_0(\cC)$ poles are the same, so we will consider all such configurations as one configuration counted with weight
\begin{equation}
\label{eq:weight:for:partitionning:interior:poles}
I(\GG)=\binom{n+D-4g+4-e(\cC)}{p_0}\,.
\end{equation}
We have
\begin{equation}
\label{eq:no:I:for:b:c}
I(\GG)=1\quad\text{for graphs of types ``b'' and ``c''.}
\end{equation}

We recognize $I(\GG)$ in~\eqref{eq:binomial} computed in the particular case of a dumbbell configuration.
\smallskip

\noindent\textbf{Splitting an endpoint singularity of large order.}
By Lemma~\ref{lm:ominus:vertex:associated:to:zero:of:degree:n} we can have only one vertex with orders of
boundary singularities depending on $n$, and, moreover, such a vertex,
if it is present, is of $\ominus$-type. In the case when this vertex
is of local type $-2.1$, we have two boundary singularities
of nonnegative orders $k',k''$ which satisfy $k'+k''=n-\ell+2$, so there are $n-\ell+3$ ordered pairs. Indeed, the definition of $\ell$ in the preceding section and~\eqref{eq:Dji} give $n-\ell=k'+k''-2$.
Contributions of configurations corresponding to different partitions
$k'+k''=n-\ell+2$, where $k'\neq k''$ are identical. We group together all the decorated ribbon graphs which differ only by this partition. We define the  resulting weight as
\begin{equation}
\label{eq:weight:for:n:prime:n:double:prime}
P(\GG)=
\begin{cases}
\frac{n-\ell+3}{|Aut((\partial\GG)_1)|}
      &\text{if the zero of degree $n$ is an endpoint singularity}\\
      &\text{carried by a $\ominus$-vertex of $-2.1$ type}\\
%       &\text{};
% \\
1&\text{otherwise},
\end{cases}
\end{equation}
Here $|Aut((\partial\GG)_1)|$ is the order of the symmetry group
of the associated boundary component of the global decorated ribbon graph.
It is equal to $1$ when ordered pairs $(n',n'')$
and $(n'',n')$ with $n'\neq n''$ give rise to distinct
decorated ribbon graphs and is equal to $2$ when
they give rise to isomorphic decorated ribbon graphs.
When the zero of degree $n$ is an endpoint singularity
carried by a $\ominus$-vertex $v_1$ of $-2.1$ type, and when
in addition, $v_1$ has a $\circ$-vertex as a direct neighbor, we have
$|Aut((\partial\GG)_1)|=2$.

We have
\begin{equation}
\label{eq:no:P:for:a}
P(\GG)=1\quad\text{for graphs of types ``a'' and ``c''.}
\end{equation}

We recognize $P(\GG)$ in~\eqref{eq:combinatorial:constant:figure:eight} evaluated for a figure-eight configuration. In this particular case we
have $\ell=4$ and $|Aut((\partial\GG)_1)|=2$.
\smallskip

\begin{lemma}
\label{lm:finite:sum}
For any configuration $\cC$ the weights have the following growth rates as $n\to+\infty$:
\begin{equation}
\label{eq:orders:of:magnitude:of:weight}
\begin{aligned}
E(\cC)&=O\bigl(n^{e(\cC)}\bigr),\\
I(\cC)&=O\bigl(n^{p_0(\cC)}\bigr),\\
P(\cC)&=\begin{cases}
O(n),&\text{if $n$ is an endpoint at a $-2.1$ vertex},\\
1,&\text{otherwise}.
\end{cases}
\end{aligned}
\end{equation}
\end{lemma}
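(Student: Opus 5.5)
The plan is to treat the three weights separately, reading each directly off its definition. The only input needed is that the integers $e(\cC)$, $p_0(\cC)$, $\ell$ and the automorphism order $|Aut((\partial\GG)_1)|$ do not depend on $n$ once $(g,d)$ and the configuration are fixed. That independence is essentially already established in the discussion preceding the Lemma.

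For $E$, we use~\eqref{eq:weight:for:numbering:endpoint:poles}. Set $N:=n+D-4g+4$ and $e:=e(\cC)$, so that $E=\binom{N}{e}\le N^{e}/e!$. By~\eqref{eq:values:of:e}, $e\in\{0,1,2\}$ is bounded and independent of $n$. Since $N=n+O_{g,d}(1)$, we get $E=O(n^{e(\cC)})$.

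For $I$, we use~\eqref{eq:weight:for:partitionning:interior:poles}: $I=\binom{N-e}{p_0}\le N^{p_0}/p_0!$. The key point is that $p_0$ is independent of $n$. The vertex $v_0$ is, by Lemma~\ref{lm:ominus:vertex:associated:to:zero:of:degree:n}, the $\ominus$-vertex not carrying the zero of degree $n$. Its genus, its interior zeroes and the degrees of its boundary singularities are therefore fixed data. Condition~(5) of Definition~\ref{def:configuration}, that the boundary stratum $\cQ(\alpha'_0)$ has total degree $4g_0-4$, then determines the number $p_0$ of interior poles of $v_0$. Hence $I=O(n^{p_0})$. For graphs of types ``b'' and ``c'' we have $I=1$ by~\eqref{eq:no:I:for:b:c}, which is consistent with $p_0=0$.

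For $P$, we use~\eqref{eq:weight:for:n:prime:n:double:prime}. If the zero of degree $n$ is an endpoint singularity carried by a $-2.1$ vertex, then $P=(n-\ell+3)/|Aut((\partial\GG)_1)|$. Here $\ell$ is fixed by the configuration and $|Aut|\in\{1,2\}$, so $P=O(n)$. In all other cases $P=1$ by definition.

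The only point needing care is the justification that $p_0$ and $\ell$ are genuinely $n$-independent, so that the binomial exponents stay bounded. This uses the uniqueness of the $\ominus$-vertex carrying $n$ (Lemma~\ref{lm:ominus:vertex:associated:to:zero:of:degree:n} together with Definition~\ref{def:zero:carried:by:a:minus:vertex}), which needs $n>4g$. We therefore assume $n$ is sufficiently large, as is done throughout this section.
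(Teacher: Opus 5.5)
Your proposal is correct and follows the paper's argument: the paper likewise reads the three bounds directly off the explicit formulae \eqref{eq:weight:for:numbering:endpoint:poles}, \eqref{eq:weight:for:partitionning:interior:poles} and \eqref{eq:weight:for:n:prime:n:double:prime}, using Lemma~\ref{lm:ominus:vertex:associated:to:zero:of:degree:n} to see that only $v_1$ carries $n$-dependent data, so that $e$, $p_0$ and $\ell$ are fixed. The paper's proof also notes that, after grouping, the number of distinct families of decorated graphs is bounded in terms of $(g,d)$; the statement does not claim this, but it is what later lets the per-family estimates be summed.
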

\begin{proof}
Recall that $\oplus$-vertices carry strictly positive genus, and the local ribbon graph structures around these vertices, and orders of their boundary singularities and interior singularities are bounded in terms of $g,d$.

By Lemma~\ref{lm:ominus:vertex:associated:to:zero:of:degree:n} exactly one $\ominus$-vertex has $n$ as an interior singularity, or has a boundary singularity depending on $n$. Denote this $\ominus$-vertex by $v_1$. This is the only vertex for which the number of interior poles depends on $n$.

If the graph is of type ``a'', the remaining $\ominus$-vertex is of local type $-1.1$. The number of choices of the corresponding boundary singularity and of interior singularities is bounded in terms of $g,d$.

Thus, the number of ways to choose a ribbon graph structure and decorations of all vertices except the vertex $v_1$ and except labels for the endpoint poles (if any) is bounded. After the grouping described above,
we do not label the endpoint poles (if any) anymore and
the number of ways to choose $\GG(v_1)$ and its decorations
becomes bounded in terms of $g,d$.
Relation~\eqref{eq:orders:of:magnitude:of:weight} is a trivial
corollary of explicit formulae~\eqref{eq:weight:for:numbering:endpoint:poles},
\eqref{eq:weight:for:partitionning:interior:poles},
\eqref{eq:weight:for:n:prime:n:double:prime}.
This completes the proof of Lemma~\ref{lm:finite:sum}.
\end{proof}

%--------------------------------------------------------------
\subsection{Configurations carrying dominant and negligible contributions}
%\label{ss:Configurations:carrying:dominant:and:negligible:contributions}

We start with the following Proposition, which will be used as the main technical tool in deriving
a convenient recursive formula from~\eqref{eq:EKZ:in:W:g}.

Without loss of generality, we may assume that the positive integers $n,d_1,\dots,d_k$ in the statement of Theorem~\ref{th:n:D:large:zero:any:g} are ordered in the decreasing order:
\[
n\ge d_1\ge d_2\ge\dots\ge d_k\,.
\]

\begin{proposition}
\label{pr:boundary:stratum:is:simpler}
Consider a stratum $\cQ_g(n,d_1,\dots,d_k,-1^{n+D-4g+4})$,
with $n>4g$,
and a configuration $\cC$ of \^homologous saddle connections contributing to the area Siegel--Veech constant of this stratum. Denote by $v_1$ the distinguished $\ominus$-vertex $v_1$ carrying the zero of degree $n$ in the sense of Lemma~\ref{lm:ominus:vertex:associated:to:zero:of:degree:n}.
The  principal boundary stratum
\[
\cQ'(\alpha_1)
=\cQ_{g'}(n',d'_1,\dots,d'_{k'},-1^{n'+D'-4g'+4})
\]
associated to the vertex $v_1$ has the following property.
Assuming that
\[
n'\ge d'_1\ge \dots\ge d'_{k'}\ge 0\,,
\]
we have
\begin{equation}
\label{eq:boundary:strata:data}
n'\le n,\qquad
g'\le g,\qquad k'\le k,\qquad d'_i \le d_i\quad\text{for}\ i=1,\dots,k'\,,
\end{equation}
and, moreover, at least one of the inequalities is strict.

For fixed $(g,d)$ and all sufficiently large $n$, the only configuration for which all the following equalities
\begin{equation}
\label{eq:all:equalities}
g'= g,\qquad k'= k,\qquad d'_i = d_i\quad\text{for}\quad i=1,\dots,k'
\end{equation}
simultaneously take place is the ``pocket'' configuration
with a zero of degree $n$ at the base, presented in Figure~\ref{fig:pocket} and studied in Section~\ref{ss:pocket:configuration}. For this configuration
we have $n'=n-2$.
\end{proposition}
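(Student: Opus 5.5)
Two things have to be established: the monotonicity relations \eqref{eq:boundary:strata:data} with at least one of them strict, and the identification of the pocket as the only configuration for which all of \eqref{eq:all:equalities} hold. I will read all the data of $\cQ(\alpha_1)$ off the local structure at $v_1$ and compare them with the ambient stratum, then argue by cases on how $n$ enters $v_1$.

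For the monotonicity, recall from \eqref{eq:Q:boundary:stratum} that the singularities of $\cQ(\alpha_1)$ are of two kinds: interior singularities of $v_1$, which form a sub-multiset of the ambient singularities, and one entry $\delta$ for each connected component of the local ribbon graph $\GG(v_1)$. By \eqref{eq:Dji} and \eqref{eq:degree:of:singularity}, each $\delta$ is at most the degree of the ambient zero on the corresponding boundary component minus $2$. Since $v_1$ is the vertex carrying $n$ in the sense of Definition~\ref{def:zero:carried:by:a:minus:vertex}, two cases occur.
\begin{enumerate}[(i)]
\item $n$ is interior to $v_1$; then $n'=n$.
\item $n$ is an endpoint; then $n'$ is a $\delta$-entry with $n'\le n-2$.
\end{enumerate}
The other $\delta$-entries come from boundary components carrying some $d_j$ (giving $d_j-2$ or less) or poles (giving $\le -1$). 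They therefore get absorbed into the pole count, or into entries $\le d_j-2$, or into marked points. An injection of the positive entries of $\alpha_1$ into $\{n,d_1,\dots,d_k\}$, each entry bounded by its image, then gives $k'\le k$ and, after sorting, $d'_i\le d_i$ by a standard rearrangement argument. The bound $g'\le g$ holds because genus is additive over vertices, and every $\circ$-vertex either adds $1$ or is a pocket.

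For strictness and uniqueness, suppose all of \eqref{eq:all:equalities} hold. Using $g'=g$, the genera of the other vertices must vanish and no cylinder may add genus. This forbids $\oplus$-vertices and handle cylinders, so only $\ominus$ and thick pocket-type $\circ$-vertices remain. Next, $k'=k$ with $d'_i=d_i$ forces every $d_j$ to be interior to $v_1$. Otherwise some $d_j$ is replaced by an entry at most $d_j-2$, and because the sorted sum of positive entries decreases, a strict inequality appears. I then go through Lemma~\ref{lm:abc:only}'s types.
\begin{itemize}
\item Type ``a'': the second $\ominus$-vertex $v_0$ needs some boundary singularity of positive degree, or its stratum would be empty or not have the right degree sum. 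Since all $d_j$ sit at $v_1$ and $g_0=0$, $v_0$ must take poles only together with an endpoint zero, which can only be $n$; this contradicts Lemma~\ref{lm:ominus:vertex:associated:to:zero:of:degree:n} because $n$ belongs to $v_1$.
\item Type ``b'': the thin $\circ2.2$ cylinder adds genus, giving $g'\le g-1$.
\item Type ``c'': the classification in Propositions~\ref{pr:list:of:configurations:in:minimal:stratum}-style shows that a valency-$3$ $\oplus$-vertex is excluded by genus. That leaves a trivalent $\circ$-vertex, with the loop empty by genus (its vertices would be $\oplus$ or thin cylinders). The stem can contain no $\oplus$-vertices, so the graph is exactly the pocket. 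Its base boundary singularity is then $n$, since the $d_j$ are interior, and $n'=n-2$ follows from Proposition~\ref{prop:pocket}.
\end{itemize}
In every non-pocket case some inequality in \eqref{eq:boundary:strata:data} is strict. In case (i) with all equalities we reached a contradiction, so $n$ is an endpoint and $n'<n$, which also settles strictness for the pocket itself.

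The hardest step is the type ``a'' and stem-vertex exclusions. There I must verify carefully, through the admissibility conditions (5)–(6) of Definition~\ref{def:configuration}, that a genus-zero $\ominus$-vertex containing only poles and degree-$0$ boundary singularities cannot occur. The check I would use first is that a $-1.1$ vertex with boundary singularity $0$ gives $\delta=-2$, which is illegal. I expect this to reduce to that short check.
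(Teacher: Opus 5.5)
Your overall plan is the paper's. You read $\alpha_1$ off the local structure at $v_1$, compare it entry by entry with $(n,d_1,\dots,d_k)$, and then use $g'=g$ to cut the type analysis ``a'', ``b'', ``c'' down to the pocket. The uniqueness half is sound, modulo the comparison it inherits from the first half.

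\textbf{The gap.} It is in the first half, in your sentence claiming that \eqref{eq:Dji} and \eqref{eq:degree:of:singularity} give: each $\delta$ is at most the degree of the ambient zero on ``the corresponding boundary component'' minus $2$. Those formulas give this only if two structural facts hold.
\begin{enumerate}[(a)]
\item All boundary singularities of a connected component of $\GG(v_1)$ lie on one and the same boundary component of the global ribbon graph.
\item Different components of $\GG(v_1)$ lie on different global boundary components.
\end{enumerate}
Together these are what make your map into $\{n,d_1,\dots,d_k\}$ well defined and injective. Neither is automatic. Take a single saddle connection joining zeroes of orders $m_1,m_2$: its unique local component, of type $-2.1$, has its two boundary singularities on two different global components. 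It yields $\delta=m_1+m_2$, which exceeds both $m_1$ and $m_2$. This is exactly the paper's remark, right after the statement, that the cylinder hypothesis is essential.

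Nothing in your first half uses the presence of a cylinder at this point. Your case (ii) bound $n'\le n-2$ rests on the same unproved fact.

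\textbf{How the paper closes it.} It splits on the local type of $v_1$.
\begin{enumerate}[(i)]
\item For $-1.1$ there is a single boundary singularity $l$, and the ambient endpoint has degree $b_m\ge l$.
\item A $\ominus$-vertex of valency $2$ occurs only in graphs of type ``b'', whose $\circ$-vertex is of type $\circ2.2$. For a $-2.1$ vertex this forces both boundary singularities $l',l''$ onto one global component with at least two further boundary singularities, so $b_m\ge l'+l''+2$.
\item For a $-2.2$ vertex the same $\circ2.2$ structure forces the two boundary singularities onto two distinct global components.
\end{enumerate}

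\textbf{Smaller points.}
\begin{enumerate}[(i)]
\item Your claim that every $\circ$-vertex ``either adds $1$ or is a pocket'' is false for type ``a''. So is its consequence that only pocket-type cylinders survive $g'=g$: the dumbbell cylinder is thin and adds no genus. This is harmless, because you treat type ``a'' separately. The bound $g'\le g$ holds simply because the boundary surface comes from a subsurface by collapsing boundary components.
\item In type ``c'', a $\circ$-vertex on the stem is excluded not by genus but by the Masur--Zorich rule. That rule forbids a $\circ$-vertex on a separating edge adjacent to a trivalent $\circ$-vertex.
\item Since $k'$ counts marked points, your injection must cover the nonnegative entries of $\alpha_1$, not only the positive ones.
\item Once (a) and (b) are in place, strictness is immediate. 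The vertex $v_1$ always has a boundary singularity, and its ambient endpoint is replaced by a strictly smaller entry. So you need not route strictness through the uniqueness analysis. Part of that analysis was run only for large $n$, whereas the first assertion is claimed for all $n>4g$.
\end{enumerate}
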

Note that the assumption that a configuration contains a cylinder is essential. For example, a configuration consisting of a single saddle connection joining two zeroes of degrees $m_1$ and $m_2$ gives rise to a principal boundary stratum with a single singularity of degree $m_1+m_2$ replacing the pair of original singularities.

Note that we assume that all $d_j$ are positive for the ambient stratum, while we allow $d'_{j'}$ to be equal to zero (and thus represent marked points). When the degree $\delta$ computed by~\eqref{eq:Dji} is equal to $-1$, the resulting simple pole is registered among $n'+D'-4g'+4$ simple
poles of the boundary stratum, see Remarks~\ref{rm:entry:minus:1}
and~\ref{rm:entry:minus:1:in:W}.

\begin{proof}
Surfaces from boundary strata
are obtained from subsurfaces of the original surface $S$ by contracting boundary components. Such a surgery can only reduce the genus but not increase it, which justifies $g'\le g$.

Let us control the degrees and the number of zeroes of the boundary strata. Denote by $m'_1,\dots,m'_s$ interior singularities (including poles) associated to a $\ominus$-vertex $v_1$. By definition they form a strict subset of the set of singularities of the ambient stratum.

Suppose that the vertex $v_1$ has local type $-1.1$, see Figure~\ref{fig:embedded:local:ribbon:graphs}. Let $l$ be the degree of the corresponding single boundary singularity. The local ribbon graph $\GG(v_1)$
of a vertex of type $-1.1$
intersects with a single connected component of the boundary $\partial\GG$ of the global ribbon graph $\GG$. This
connected component $(\partial\GG)_m$ of $\partial\GG$ contains at least one other boundary singularity. Orders of all boundary singularities are nonnegative by definition. Thus, the boundary component $(\partial\GG)_m$ corresponds to an endpoint singularity of the original stratum $\cQ_g(n,d,-1^{n+D-4g+4})$ of degree $b_m$ satisfying $b_m\ge -2+(l+1)+(0+1)=l$, see~\eqref{eq:degree:of:singularity}. Since the vertex $v_1$ has local type $-1.1$ and, hence, the local ribbon graph is connected, the principal boundary stratum associated to $v_1$ has the form $\cQ_{g'}(\alpha'_1)=\cQ_{g'}(m'_1, \dots, m'_s, l-2)$ by Formulae~\eqref{eq:Dji} and~\eqref{eq:Q:boundary:stratum}. Note that the singularities $m'_1, \dots, m'_s$ are interior singularities of the ambient stratum. The remaining singularity has degree $l-2$ strictly smaller than the degree $b_m$ of the endpoint singularity of the ambient stratum considered above. Thus, suppressing entries $-1$
in the list $(m'_1, \dots, m'_s, l-2)$ and ordering the remaining
elements in the decreasing order, we get a list $(n',d'_1,\dots,d'_{k'})$
satisfying~\eqref{eq:boundary:strata:data},
and at least one of the inequalities is strict.
For $\ominus$-vertices of local type $-1.1$ the first part of the statement of the Proposition is justified.

Suppose that the vertex $v_1$ has local type $-2.1$, see Figure~\ref{fig:embedded:local:ribbon:graphs}, and that the boundary singularities are $l'$ and $l''$. Since a $-2.1$-vertex can be located only at a graph of type ``b'' (see Figure~\ref{fig:graphs}) and since the graph contains at least one $\circ$-vertex, this $\circ$-vertex has type $\circ2.2$, which implies that both boundary singularities of $v_1$ belong to the same boundary component $(\partial\GG)_m$ of the global ribbon graph $\GG$, and, moreover, that this boundary component contains at least two more boundary singularities. Hence, by Formula~\eqref{eq:degree:of:singularity} the degree $b_m$ of the associated endpoint zero of the ambient stratum is at least $l'+l''+2$. Since the local ribbon graph $\GG(v_1)$
corresponding to a vertex of local type $-2.1$
is connected,  the principal boundary stratum associated to $v_1$ has the form $\cQ_{g'}(\alpha'_1)=\cQ_{g'}(m'_1, \dots, m'_s, l'+l''-2)$ by Formulae~\eqref{eq:Dji} and~\eqref{eq:Q:boundary:stratum}. Thus, for $\ominus$-vertices of local type $-2.1$ the statement of the Proposition is also justified.

Consider now the remaining case when the vertex $v_1$ has local type $-2.2$, see Figure~\ref{fig:embedded:local:ribbon:graphs}, and the boundary singularities are $l'$ and $l''$. As in the previous case, a $-2.2$-vertex can be located only at a graph of type ``b'' (see Figure~\ref{fig:graphs}). Since the graph contains at least one $\circ$-vertex, this $\circ$-vertex has type $\circ2.2$, which this time implies that the two boundary singularities of $v_1$ \textit{do not} belong to the same boundary component of the global ribbon graph $\GG$. Each of these two boundary components contains at least one more boundary singularity. Hence, by Formula~\eqref{eq:degree:of:singularity} the degrees of the associated distinct endpoint zeroes of the ambient stratum are at least $l'$ and $l''$. This time the local ribbon graph $\GG(v_1)$ has two connected components. The boundary stratum associated to $v_1$ has the form $\cQ_{g'}(\alpha'_1)=\cQ_{g'}(m'_1, \dots, m'_s, l'-2,l''-2)$ by Formulae~\eqref{eq:Dji} and~\eqref{eq:Q:boundary:stratum}.
This completes the proof of the first assertion of Proposition~\ref{pr:boundary:stratum:is:simpler}.

% To justify the second assertion, note that for sufficiently large $n$
% (depending on $g$ and $d$), we have $n'>m'_1$ for the distinguished $\ominus$-vertex $v_1$. !!!!!!!!

For a graph of type ``a'', a $\ominus$-vertex $v_1$ carries genus $g'=g$
only if there are no $\oplus$-vertices and the other $\ominus$-vertex $v_0$ carries genus $g_0=0$. This implies
that our graph of type ``a'' represents
a ``dumbbell'' configuration, as in Figure~\ref{fig:dumbbell}.
We have seen that for a dumbbell configuration we have $k'<k$,
so graphs of type ``a'' cannot satisfy~\eqref{eq:all:equalities}.

When a graph of type ``b'' contains a $\circ$-vertex, the genus $g'$
carried by its single $\ominus$-vertex satisfies $g'<g$, so graphs of type ``b'' cannot satisfy~\eqref{eq:all:equalities}.

When a graph of type ``c'' contains at least one $\oplus$-vertex, the genus $g'$ carried by its single $\ominus$ vertex satisfies $g'<g$. It follows from
the classification of decorated ribbon graphs, see Figures~\ref{fig:graphs} and~\ref{fig:embedded:local:ribbon:graphs}, that there are only two types of graphs of type ``c'' which do not have $\oplus$-vertices, namely
a pocket as in Figure~\ref{fig:pocket} and the two-cylinder graph in Figure~\ref{fig:extra:graph:c:with:no:plus:vertices}. The
genus $g'$ of the only $\ominus$-vertex
of the two-cylinder graph in Figure~\ref{fig:extra:graph:c:with:no:plus:vertices}
satisfies $g'=g-1<g$, so
this graph does not fit~\eqref{eq:all:equalities}.

If the pocket has one of the zeroes $d_j$, where $j\in\{1,\dots,k\}$ at the base of the cylinder, then we have seen in Corollaries~\ref{cor:pocket:d} and~\ref{cor:pocket:d:renormalized} that
\[
D'=
\begin{cases}
D-1,&d_j=1,\\
D-2,&d_j\geq2.
\end{cases}
\]
(The case $d_j=1$ is special, because a simple zero gets transformed into a simple pole after a surgery, and this simple pole is recorded among other simple poles, see Remark~\ref{rm:entry:minus:1}.) Thus, no such pockets can satisfy~\eqref{eq:all:equalities}. If the pocket has the zero of degree $n$ at the base of the cylinder, then we have seen in Proposition~\ref{prop:pocket} that for such a pocket all conditions~\eqref{eq:all:equalities} are satisfied and that, moreover, $n'=n-2$. The proof of Proposition~\ref{pr:boundary:stratum:is:simpler} is completed.
\end{proof}

All estimates in the remaining part of this section are
given for fixed $(g,d)$ and for sufficiently large $n$.

\begin{proposition}
\label{prop:negligible:a}
Assuming that all normalized volumes $W_{g'}(n'; d')$ are bounded for every boundary datum satisfying~\eqref{eq:boundary:strata:data} with $g'<g$, or with $g'=g$ and $D'<D$, the total contribution
to the sum in~\eqref{eq:EKZ:in:W:g}
of graphs of type ``a''
distinct from the ones considered in Proposition~\ref{prop:distinguished:dumbbell}
is of order $O_{g,d}\left(\frac{1}{n}\right)$.
\end{proposition}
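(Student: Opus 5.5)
Every graph of type ``a'' contributing to the sum in~\eqref{eq:EKZ:in:W:g} has two $\ominus$-vertices $v_0,v_1$ of local type $-1.1$, at least one $\circ$-vertex, and possibly some $\oplus$-vertices of local type $+2.1$. By Lemma~\ref{lm:ominus:vertex:associated:to:zero:of:degree:n}, $v_1$ carries the zero of degree $n$ and $v_0$ carries only data bounded in terms of $(g,d)$. I would group configurations as in the preceding subsection: for type ``a'' we have $e(\GG)=0$ and $P(\GG)=1$ by~\eqref{eq:values:of:e} and~\eqref{eq:no:P:for:a}. The only weight that grows with $n$ is $I(\GG)=O(n^{p_0})$, where $p_0$ is the number of poles interior to $v_0$. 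By Lemma~\ref{lm:finite:sum}, the resulting families are finitely many, uniformly in $n$. It therefore suffices to show that each family other than the distinguished dumbbell of Proposition~\ref{prop:distinguished:dumbbell} contributes $O_{g,d}(1/n)$.

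For a single family I would combine the weight with Lemma~\eqref{eq:order:of:C}. For type ``a'' the zero of degree $n$ is an endpoint of $v_1$, so $\iota=0$. This gives a contribution of order $n^{p_0}\cdot n^{g_1-g-q-a_{\mathit{rest}}}\cdot W_{g_1}(n-\ell;d')$. By Proposition~\ref{pr:boundary:stratum:is:simpler}, the datum $(g_1,d')$ is strictly simpler than $(g,d)$. Since the dumbbell is the only type ``a'' graph with $g_1=g$, here either $g_1<g$, or $g_1=g$ and $D'<D$. So $W_{g_1}(n-\ell;d')=O(1)$ by hypothesis. It remains to verify $p_0+g_1-g-q-a_{\mathit{rest}}\le -1$, with equality allowed only for the distinguished dumbbell.

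For this inequality I would use $a_{\mathit{rest}}\ge a_0=\dim_\C\cQ_{g_0}(\alpha_0)=2g_0-2+|\alpha_0|$. Here $|\alpha_0|$ is the number of singularities at $v_0$, namely the $p_0$ poles, the interior zeroes of $v_0$, and the single boundary singularity. This gives $a_0\ge 2g_0-1+p_0+k_0$, where $k_0$ counts the interior zeroes at $v_0$. Each $\oplus$-vertex of genus $g_j\ge1$ contributes $b_j\ge 2g_j\ge g_j+1$. The genus bookkeeping is $g=g_0+g_1+\sum g_j+(\text{number of thin cylinders})-(\text{number of }\oplus\text{ or }\ominus\text{ vertices})+1$. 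For a chain graph of type ``a'' this reduces to $g=g_0+g_1+\sum g_j$, since the cylinders do not add genus. Substituting, the exponent is at most $-q-g_0-1-k_0-(\#\oplus)+(\text{small corrections})$. It equals $-1$ exactly when $q=1$, $g_0=0$, $k_0=0$ and there are no $\oplus$-vertices, which is precisely the special dumbbell of Proposition~\ref{prop:distinguished:dumbbell}. Every other family then has exponent $\le -2$, which already gives $O(1/n)$ per family with room to spare.

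The main obstacle is the careful dimension and genus count for $v_0$. In particular I need $a=a_1+a_{\mathit{rest}}+q+1$ to hold consistently, and I must handle degenerate cases: $v_0$ with a marked point or a pole as its boundary singularity, and $g_0=0$ with $k_0\ge1$, where the gain must come from $k_0$ rather than from $q$. I would check the estimate against the explicit dumbbell formula~\eqref{eq:dumbbell:general:init}. That formula shows directly that the weight contributes $n^{p_0}$, with $p_0=D_0-4g_0+2$, while the factorial ratio contributes $n^{-(D_0+k_0-2g_0+1)}$ (accounting for the $n$ in $M$). The net exponent is then $1-k_0-2g_0$ relative to the normalization, and it equals $0$ only for $g_0=0$, $k_0=1$ (the one zero being the boundary singularity $d_{i_s}$). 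For general type ``a'' graphs, I would treat each additional cylinder and each $\oplus$-vertex as costing a further factor $n^{-1}$, as in Lemma~\eqref{eq:order:of:C}.
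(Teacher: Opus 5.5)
Your framework is the paper's: $E=P=1$, $I(\GG)=O(n^{p_0})$, the estimate \eqref{eq:order:of:C}, and $a_{\mathit{rest}}=a_0+b$ with $a_0=2g_0-1+p_0+k_0$. However, the exponent you extract from it is wrong by one, and the ``small corrections'' hide exactly that discrepancy. Using $g_1-g=-g_0-\sum_j g_j$, the exact exponent is
\[
p_0+g_1-g-q-a_{\mathit{rest}}-\iota
=1-q-k_0-3g_0-\sum_j\,(g_j+b_j)-\iota ,
\]
where $k_0$ is the number of interior zeroes of $v_0$ and the $b_j$ are the dimensions of the Abelian strata.

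This exponent is $\le 0$, with equality if and only if $\iota=0$, $q=1$, $k_0=g_0=0$ and there are no $\oplus$-vertices. So the distinguished dumbbell has exponent $0$, not $-1$; it is of order one by Proposition~\ref{prop:distinguished:dumbbell}. All other families have exponent $\le -1$, which gives exactly $O(1/n)$ with no room to spare.

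Your assertion that every non-distinguished family has exponent $\le -2$ is false. A dumbbell with $g_0=0$ whose vertex $v_0$ carries one interior zero besides its poles has exponent exactly $-1$. Your own check against \eqref{eq:dumbbell:general:init} in the last paragraph already shows this: it gives $1-k_0$ for $g_0=0$, i.e.\ $0$ for the distinguished dumbbell and $-1$ for this one. So it contradicts your main count rather than confirming it.

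Second, it is not true that in a graph of type ``a'' the zero of degree $n$ is always an endpoint. The vertex $v_1$ may have some $d_j$ as its boundary singularity and carry $n$ as an interior singularity. This requires $k\ge 2$, with $v_0$ ending at another $d_i$.

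With the corrected count this case matters. Take such a dumbbell with $g_0=0$, only poles on $v_0$, and $q=1$. Then $p_0=d_i+2$ and $a_{\mathit{rest}}=d_i+1$, so $p_0+g_1-g-q-a_{\mathit{rest}}=0$. It is $O(1/n)$ only because $K(v_1)=d_j$ rather than $n$, that is, because $\iota=1$. With your claim $\iota=0$ you could not discard it, even though it is not the distinguished configuration.

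So you must treat the cases ``$n$ an endpoint'' and ``$n$ interior'' separately, as the paper does. With both corrections your argument becomes exactly the paper's proof.
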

\begin{proof}
By combination of~\eqref{eq:weight:for:numbering:endpoint:poles} and~\eqref{eq:values:of:e} we have $E(\GG)=1$ for graphs of type ``a''. By~\eqref{eq:no:P:for:a} we have $P(\GG)=1$ for graphs of type ``a''.
By Lemma~\ref{lm:ominus:vertex:associated:to:zero:of:degree:n} exactly one $\ominus$-vertex has $n$ as an interior singularity, or has a boundary singularity depending on $n$. Denote this $\ominus$-vertex by $v_1$
and the other $\ominus$-vertex by $v_0$.

Denote by $g_0$ the genus carried by $v_0$.
Denote by $p_0$ the number of simple poles serving as interior singularities of $v_0$ and by $z_0$ the number of zeroes
serving as interior singularities of $v_0$. Recall that $v_0$ has boundary type
$-1.1$, so it has a single boundary singularity. Hence, the boundary stratum associated to $v_0$ has $p_0+z_0+1$ singularities (counting a marked point if the associated endpoint singularity is a marked point), and hence
\[
 a_0=p_0+z_0+1+2g_0-2\,.
\]
Denote by $b$ the sum of dimensions of Abelian boundary strata, i.e. the sum
of dimensions of strata represented by $\oplus$-vertices. By definition of
$a_\textit{rest}$ involved in the estimate~\eqref{eq:order:of:C}, we have
\begin{equation}
\label{eq:a:rest}
a_\textit{rest}=a_0+b=(p_0+z_0+1+2g_0-2)+b\,.
\end{equation}
Multiplying the contribution given by~\eqref{eq:order:of:C}
by the only remaining nontrivial weight factor $I(\GG)$ given by~\eqref{eq:weight:for:partitionning:interior:poles}
and using~\eqref{eq:orders:of:magnitude:of:weight} for its order of magnitude,
we get
\[
I(\GG)\cdot C_{g}^{\cC_{\mathit{fixed}}}(n;d)
=O(n^{\delta(\cC)})\,.
\]
where
\[
\delta(\cC)=
\begin{cases}
p_0+g_1-(q+1+a_\textit{rest}+g)
&\text{when $n$ is an interior singularity}\,,
\\
p_0+g_1-(q+a_\textit{rest}+g)
&\text{when $n$ is an endpoint singularity}\,.
\end{cases}
\]
Substituting~\eqref{eq:a:rest} we get
\begin{multline}
\label{eq:conditions:on:a}
p_0+g_1-(q+a_\textit{rest}+g)
=p_0+g_1-(q+(p_0+z_0+1+2g_0-2+b)+g)
\\=
1+g_1-q-z_0-2g_0-b-g\,.
\end{multline}
Note that the graph has at least one $\circ$-vertex, so $q\ge 1$. Note also that $g_1\le g$. Thus, the integer number~\eqref{eq:conditions:on:a} is not negative if and only if all the following conditions are satisfied:
\[
\begin{aligned}
\begin{cases}
g_1&=g\,,
\\
q&=1\,,
\\
z_0&=0\,,
\\
g_0&=0\,,
\\
b&=0\,.
\end{cases}
\end{aligned}
\]
in which case it is equal to zero. Note that~\eqref{eq:conditions:on:a} represents $\delta(\cC)$ only when the zero of degree $n$ is an endpoint singularity; otherwise $\delta(\cC)$ is smaller by $1$. We conclude that the above collection of conditions characterizes dumbbell configurations having no $\oplus$-vertices, such that one of the $\ominus$-vertices, say, $v_0$ carries genus $0$ and has no interior zeroes (and has $p_0=d_i+2$ interior poles), while the other $\ominus$-vertex has a boundary singularity of order $n$. Proposition~\ref{prop:negligible:a} is proved.
\end{proof}

\begin{proposition}
\label{prop:negligible:b}
Assuming that all normalized volumes $W_{g'}(n'; d')$ are bounded for every boundary datum satisfying~\eqref{eq:boundary:strata:data} with $g'<g$, or with $g'=g$ and $D'<D$, the following two estimates are valid.

The contribution of the ``figure-eight'' configuration
with a zero of degree $n$ at the base
(as in the left picture of Figure~\ref{fig:Ba:and:Bb})
is of order $O_{g,d}\left(\frac{1}{n}\right)$.

The total contribution
to the sum in~\eqref{eq:EKZ:in:W:g}
of all other graphs of type ``b''
is of order $O_{g,d}\left(\frac{1}{n^2}\right)$.
\end{proposition}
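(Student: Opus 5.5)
The plan follows the proof of Proposition~\ref{prop:negligible:a}: combine the order-of-magnitude estimate~\eqref{eq:order:of:C} with the weights of Lemma~\ref{lm:finite:sum}, and count exponents of $n$.

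\textbf{Figure-eight term.} Here the estimate is already explicit. By Corollary~\ref{cor:figure:eight:general:renormalized} the contribution equals
\[
\frac{4}{\pi^2}\left(\frac{n-4}{n}\right)^{g-1}\frac{(n+1)(n-1)^2}{n^2(n+D+k-2g+1)(n+D+k-2g+2)}\,W_{g-1}(n-4;d).
\]
The rational prefactor is $O_{g,d}(1/n)$. The stratum $\cQ_{g-1}(n-4,d,\dots)$ has genus $g-1<g$, so $W_{g-1}(n-4;d)$ is bounded by hypothesis. This gives the first assertion.

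\textbf{All other graphs of type ``b''.} For graphs of type ``b'' we have $E(\GG)=1$ by~\eqref{eq:values:of:e}, $I(\GG)=1$ by~\eqref{eq:no:I:for:b:c}, and $P(\GG)=O(n)$ only when $n$ is an endpoint at a $-2.1$ vertex. There is a single $\ominus$-vertex $v_1$ of local type $-2.1$ or $-2.2$. Every $\circ$-vertex has type $\circ2.2$ and contributes $1$ to the genus, so $g_1=g-q-\sum_j g(v_j^+)$, the sum running over the $\oplus$-vertices. Each $\oplus$-vertex carries genus $\ge1$ and dimension $b_j\ge 2g(v_j^+)\ge 2$, so $a_{\mathit{rest}}=\sum_j b_j$. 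Equation~\eqref{eq:order:of:C} then bounds the weighted contribution by $O(n^{\delta})$ times a bounded $W$, where
\[
\delta = g_1-g-q-a_{\mathit{rest}}-\iota+\epsilon \le -2q-\sum_j 3g(v_j^+)-\iota+\epsilon .
\]
Here $\epsilon=1$ exactly when $n$ is an endpoint at a $-2.1$ vertex, and $\epsilon=0$ otherwise.
\begin{itemize}
\item If $\epsilon=1$ (so $\iota=0$), we get $\delta\le -2$ unless $q=1$ and there are no $\oplus$-vertices. That exceptional case is exactly the figure-eight configuration with $n$ at the base.
\item If $\epsilon=0$, we get $\delta\le -2q\le -2$. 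This covers $-2.2$ vertices such as the parallelogram, for which $\iota=0$ and $P=1$, as well as configurations with $n$ interior.
\end{itemize}
The boundary strata have $g'<g$, so boundedness of the corresponding $W$ follows from the hypothesis, via Proposition~\ref{pr:boundary:stratum:is:simpler}. Finally, for fixed $(g,d)$ the number of grouped families is bounded (Lemma~\ref{lm:finite:sum}), so the total is $O_{g,d}(n^{-2})$.

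\textbf{Main obstacle.} The delicate step is justifying the genus and dimension bookkeeping uniformly over all graphs of type ``b''. Specifically, I must check that each $\circ2.2$ cylinder on the cycle of a type ``b'' graph really lowers $g_1$ by one. I also must check that $q$ and the number of $\oplus$-vertices are bounded in terms of $g$, which holds since each contributes genus. To make both points rigorous I would argue via the genus formula for the glued surface, as in the proof of Proposition~\ref{pr:list:of:configurations:in:minimal:stratum}.
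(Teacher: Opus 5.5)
Your route is the paper's: you combine \eqref{eq:order:of:C} with $E(\GG)=I(\GG)=1$ and with $P(\GG)=O(n)$ exactly when $n$ is an endpoint at a $-2.1$ vertex, and then count the exponent. Your $\delta=g_1-g-q-a_{\mathit{rest}}-\iota+\epsilon$ is the paper's three-case formula. Reading the first assertion directly off Corollary~\ref{cor:figure:eight:general:renormalized} is a fine, more explicit alternative to identifying the figure-eight as the unique case $\delta=-1$.

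There is, however, a false step, and it is exactly the one you flagged as the main obstacle. The identity $g_1=g-q-\sum_j g(v_j^+)$ fails as soon as $q\ge 2$. On a graph of type ``b'' the whole cycle contributes exactly one handle, however many $\circ2.2$-vertices sit on it. For the cycle $\ominus-\circ-\oplus-\circ-\ominus$, gluing $S_-$, a cylinder, $S_+$ and a cylinder in a ring gives genus $g_1+g(v^+)+1$, although $q=2$.

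In general, let $N$ be the number of vertices of the cycle and $N_2\ge q\ge1$ the number of vertices whose local ribbon graph has two components. Then the number of saddle connections is $E_\gamma=N$, one has $\sum_j r_j=N+N_2$, and the number of endpoint singularities is $V_\gamma=N_2$. The relation $\chi(S)=\sum_j\chi(S_j^{comp})+V_\gamma-E_\gamma$ then gives $g=1+\sum_j g_j$. So the correct relation is $g_1=g-1-\sum_j g(v_j^+)$, and your bound weakens to
\[
\delta\le -1-q-3\sum_j g(v_j^+)-\iota+\epsilon .
\]

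This weaker bound still suffices. For $\epsilon=1$ (so $\iota=0$) it gives $\delta\le -q-3\sum_j g(v_j^+)$, which equals $-1$ only for $q=1$ with no $\oplus$-vertices, i.e.\ the figure-eight. For $\epsilon=0$ it gives $\delta\le -1-q\le -2$. This is precisely the paper's argument, which uses only $g_1\le g-1$, $q\ge 1$, and $a_{\mathit{rest}}>0$ in the presence of $\oplus$-vertices.

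For the same reason, bound $q$ by the Masur--Zorich rule that at most one $\circ$-vertex sits on each edge of the basic cycle, not by ``each cylinder contributes genus''. This gives $q\le 1+\#\{\oplus\text{-vertices}\}\le g$.
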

\begin{proof}
By combination of~\eqref{eq:weight:for:numbering:endpoint:poles}
and~\eqref{eq:values:of:e} we have $E(\GG)=1$ for graphs of type ``b''.
By~\eqref{eq:no:I:for:b:c} we have $I(\GG)=1$ for graphs of type ``b''.
Multiplying the contribution given by~\eqref{eq:order:of:C}
by the only remaining weight factor $P(\GG)$ given by~\eqref{eq:weight:for:n:prime:n:double:prime}
and using~\eqref{eq:orders:of:magnitude:of:weight} for its order of magnitude,
we get
\[
P(\GG)\cdot C_{g}^{\cC_{\mathit{fixed}}}(n;d)
=O(n^{\delta(\cC)})\,,
\]
where
\[
\delta(\cC)=
\begin{cases}
g_1-(q+1+a_\textit{rest}+g)
&\text{when $n$ is an interior singularity}\,,
\\
g_1-(q+a_\textit{rest}+g)
&\text{when $n$ is an endpoint singularity carried}
\\
 &\qquad\text{by a $\ominus$-vertex of $-2.2$-type};
\\
1+g_1-(q+a_\textit{rest}+g)
&\text{when $n$ is an endpoint singularity carried}
\\
 &\qquad\text{by a $\ominus$-vertex of $-2.1$-type}.
\end{cases}
\]
Note that in the presence of at least one $\circ$-vertex we have
$q\ge 1$ and $g_1\le g-1$. Thus $\delta(\cC)=-1$ only
if $n$ is an endpoint singularity carried by a $\ominus$-vertex of $-2.1$-type,
and when in addition to that all of the following conditions hold:
\[
\begin{aligned}
\begin{cases}
g_1&=g-1\,,
\\
q&=1\,,
\\
a_\textit{rest}&=0\,.
\end{cases}
\end{aligned}
\]
Otherwise $\delta(\cC)\le-2$.
The condition $a_\textit{rest}=0$ is equivalent to absence of $\oplus$-vertices. We proved that the maximal possible value
$\delta(\cC)=-1$ is achieved only for graphs of type ``b'' having a
single $\ominus$-vertex of type $-2.1$ carrying the zero of degree $n$
as an endpoint singularity. These are exactly the graphs corresponding to
the distinguished ``figure-eight'' configuration.
This completes the proof of Proposition~\ref{prop:negligible:b}.
\end{proof}

\begin{proposition}
\label{prop:negligible:c}
Assuming that all normalized volumes $W_{g'}(n'; d')$ are bounded for every boundary datum satisfying~\eqref{eq:boundary:strata:data} with $g'<g$, or with $g'=g$ and $D'<D$, the total contribution
to the sum in~\eqref{eq:EKZ:in:W:g}
of graphs of type ``c''
distinct from the ones considered in Corollaries~\ref{cor:pocket:renormalized}
and~\ref{cor:pocket:d:renormalized}
is of order $O_{g,d}\left(\frac{1}{n^2}\right)$.
\end{proposition}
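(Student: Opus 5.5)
I would follow the template of Propositions~\ref{prop:negligible:a} and~\ref{prop:negligible:b}. By~\eqref{eq:no:I:for:b:c} and~\eqref{eq:no:P:for:a}, $I(\GG)=P(\GG)=1$ for graphs of type ``c''. The only nontrivial weight is therefore $E(\GG)=O(n^{e})$ with $e\in\{0,1,2\}$, as given by~\eqref{eq:values:of:e}. Multiplying the estimate~\eqref{eq:order:of:C} by $E(\GG)$ gives a contribution of order $O(n^{\delta(\cC)})$, where
\[
\delta(\cC)=e+g_1-g-q-a_{\mathit{rest}}-\iota .
\]
The hypothesis on boundedness of $W_{g_1}(n-\ell;d')$ applies because of Proposition~\ref{pr:boundary:stratum:is:simpler}: every boundary datum other than the pocket at $n$ is strictly simpler. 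Since Lemma~\ref{lm:finite:sum} shows that, after grouping, only finitely many families remain (uniformly in $n$), it suffices to show that $\delta(\cC)\le -2$ for every type ``c'' graph other than the pocket at $n$ (Corollary~\ref{cor:pocket:renormalized}) and the pockets at $d_j$ (Corollary~\ref{cor:pocket:d:renormalized}).

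Step one is to relate $e$ to the structure of the graph. Endpoint poles appear in type ``c'' only through the annulus around the loop when the loop carries no vertices; there $e=2$ and the vertex of valency $3$ is the $\circ3.2$-vertex. Otherwise a pole appears only when some boundary component has a single degree-$0$ boundary singularity, by Lemma~\ref{lm:simple:pole:in:a:graph}. I would then do the following case split, using that a thick cylinder on the loop or a $\circ2.2$ cylinder on the loop contributes $1$ to the genus, while any $\oplus$-vertex carries genus $\ge1$ and contributes $b_j\ge 2$ to $a_{\mathit{rest}}$.

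(i) The vertex of valency $3$ is a $\circ$-vertex and the loop is empty, so $e=2$. If the stem carries no other vertices, this is a pocket; the pocket at $n$ gives $\delta=0$ and a pocket at $d_j$ gives $\delta=-1$, and both are excluded. Any additional $\oplus$-vertex on the stem lowers $g_1$ by $g_j\ge1$ and raises $a_{\mathit{rest}}$ by at least $2g_j$. Any additional $\circ$-vertex raises $q$. Either change forces $\delta\le -2$, which covers for instance the pocket-through-torus graph, where $\delta=2-1-1-2=-2$.

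(ii) The loop contains vertices. Then $e\le 1$, and there is either a cylinder on the loop (so $q\ge1$ and $g_1\le g-1$) or a $\oplus$-vertex.

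(iii) The vertex of valency $3$ is a $\oplus$-vertex. Then $g_1\le g-1$, $a_{\mathit{rest}}\ge 2$, and $q\ge1$.

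In cases (ii) and (iii), counting $e-q-(g-g_1)-a_{\mathit{rest}}$ should give at most $-2$ directly.

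The main obstacle is case (ii) with $e=1$, where a single endpoint pole is carried on the loop, combined with the interior/endpoint status of $n$. I would handle it by invoking the degree-counting behind~\eqref{eq:degree:of:singularity}: a loop carrying a pole as an endpoint forces an extra cylinder or a genus-carrying $\oplus$-vertex to close up the loop. The graph of Figure~\ref{fig:extra:graph:c:with:no:plus:vertices} has $q=2$ and $g_1=g-1$, hence $\delta\le 0+(-1)-2=-3$. A loop containing a single $\circ$-vertex contributes $q\ge1$ and a genus drop, so $\delta\le 1-1-1-1=-2$ once the thick stem cylinder or the $\oplus$-vertex is counted. I would verify each local type from Figure~\ref{fig:embedded:local:ribbon:graphs} ($\circ3.2$, $+3.1$, $+3.2a$, $+3.2b$, $+3.3$) one by one in this way to confirm $\delta\le-2$, and then sum over the finitely many families.
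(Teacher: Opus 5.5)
Your setup matches the paper's. You use the exponent $\delta(\cC)=e+g_1-g-q-a_{\mathit{rest}}-\iota$ from \eqref{eq:order:of:C} together with $E(\GG)=O(n^{e})$ and $I(\GG)=P(\GG)=1$. You also use the bounds $e\le 2$, $q\ge 1$, and, when a $\oplus$-vertex is present, $g_1\le g-1$ and $a_{\mathit{rest}}\ge 2$.

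There is, however, a real gap in your case (i).
\begin{itemize}
\item With your own formula the two excluded pockets have $\delta=1$ (zero of degree $n$ at the base) and $\delta=0$ (zero $d_j$ at the base), not $0$ and $-1$. This is visible from the prefactors $(n+1)$ and $(d_j+1)$ in \eqref{eq:Ca:total:renormalized:g} and \eqref{eq:Ca:total:renormalized:dj:g}.
\item From this baseline, your claim that an additional $\circ$-vertex by itself forces $\delta\le -2$ is false. Take a pocket at $n$ with one extra cylinder on the separating stem and no $\oplus$-vertex. It would have $e=2$, $g_1=g$, $q=2$, $a_{\mathit{rest}}=0$, $\iota=0$, hence $\delta=0$. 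That is an $O(1)$ contribution, which would destroy the statement.
\item What rules this graph out is the Masur--Zorich restriction that a $\circ$-vertex cannot be placed on a separating edge adjacent to a trivalent $\circ$-vertex. So an extra cylinder on the stem can only occur together with a $\oplus$-vertex, and then $\delta\le 1-1-2-1=-3$. You must invoke this restriction explicitly.
\end{itemize}

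Once this is added the argument closes, but the paper reaches the same conclusion with a simpler dichotomy.
\begin{itemize}
\item If some $\oplus$-vertex is present, the crude bounds give $\delta\le 2+(g-1)-(1+2+g)=-2$. This holds regardless of the type of the trivalent vertex, the contents of the loop, or the value of $e$. So neither your analysis of $e$ nor the check of the local types $+3.1,\dots,+3.3$ is needed.
\item If no $\oplus$-vertex is present, the classification (including the restriction above) leaves only the pockets and the two-cylinder graph of Figure~\ref{fig:extra:graph:c:with:no:plus:vertices}. For that graph $e=0$, $q=2$, $g_1=g-1$, and $\delta=-3-\iota$.
\end{itemize}

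Finally, your assertion $e\le 1$ in case (ii) is stated without proof. It is true, and in fact $e=0$. By Lemma~\ref{lm:simple:pole:in:a:graph}, an endpoint pole requires a saddle connection with the same component on both sides, i.e.\ a loop edge of $\Gamma(S,\gamma)$. No such edge remains once the loop carries a vertex. The point is genuinely needed for the two-cylinder graph, where $e\le 2$ alone would only give $\delta\le -1$.
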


\begin{figure}[htb]
  \centering
\hspace*{25pt}
\includegraphics[scale=0.5]{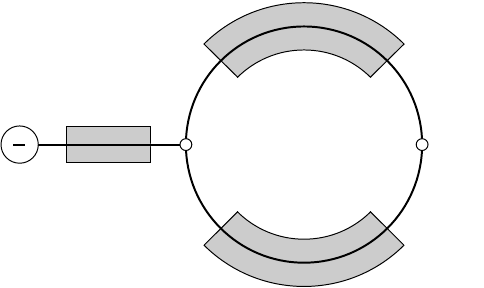}
\vspace*{10pt}
\caption{``Non-pocket'' graph of type ``c'' without $\oplus$-vertices. There are two cylinders, so $q=2$; the $\ominus$-vertex carries genus $g_1=g-1$; there are no endpoint poles, so $e=0$.}
  \label{fig:extra:graph:c:with:no:plus:vertices}
\end{figure}

\begin{proof}
By~\eqref{eq:no:I:for:b:c} and~\eqref{eq:no:P:for:a} we have
$I(\GG)=P(\GG)=1$.

Multiplying the contribution given by~\eqref{eq:order:of:C}
by the only remaining weight factor $E(\GG)$ given by combining~\eqref{eq:weight:for:numbering:endpoint:poles}
and~\eqref{eq:values:of:e} and using~\eqref{eq:orders:of:magnitude:of:weight} for its order of magnitude, we get for non-pocket configurations
\[
E(\GG)\cdot C_{g}^{\cC_{\mathit{fixed}}}(n;d)
=O(n^{\delta(\cC)})\,,
\]
where
\[
\delta(\cC)=
\begin{cases}
e(\GG)+g_1-(q+1+a_\textit{rest}+g)
&\text{when $n$ is an interior singularity}\,,
\\
e(\GG)+g_1-(q+a_\textit{rest}+g)
&\text{when $n$ is an endpoint singularity}\,.
\end{cases}
\]
Recall that by~\eqref{eq:values:of:e} we have $e(\GG)\le 2$.
If $a_\textit{rest}>0$, at least one $\oplus$-vertex is present, so $g_1\leq g-1$ and $a_\textit{rest}\ge 2$. Together with $q\geq1$ and $e\leq2$, this gives $\delta(\cC)\le -2$.

Without Abelian vertices, the graph classification leaves two cases. An ordinary pocket has
\[
e=2,\qquad q=1,\qquad g_1=g,\qquad a_\textit{rest}=0.
\]
Its exponent is $1$ when the boundary singularity of the $\ominus$-vertex
has order $n$ and $0$ when it has order $d_j$ for some $j\in\{1,\dots,k\}$. In the first case we get the volume $W_g(n-2;d)$ which is not assumed
to be bounded.

The remaining graph of type ``c'' having no $\oplus$-vertices is presented in Figure~\ref{fig:extra:graph:c:with:no:plus:vertices}. It has a $\circ$-vertex
of valency $3$ and one more $\circ$-vertex on its loop. For this graph we get
\[
e=0,\qquad q=2,\qquad g_1=g-1,\qquad a_\textit{rest}=0.
\]
Consequently $\delta(\cC)=-3$ if $n$ is a boundary singularity and $-4$ if $n$ is an interior singularity. Thus every non-pocket family of graphs of type ``c'' contributes at most $O_{g,d}(n^{-2})$.
\end{proof}

% ###############################################################
% ###############################################################
% ###############################################################
\section{Preparatory lemmas from calculus}
\label{s:Preparatory:lemmas}

In this section we establish asymptotic estimates with error $O(n^{-1})$ for two abstract recurrence relations. In the next section, we substitute the contributions computed in Sections~\ref{s:Contributions:of:graphs:a}--\ref{s:Dominant:and:negligible:contributions} into the main recurrence relation~\eqref{eq:EKZ:in:W:g} of Theorem~\ref{th:main:recurrence:relation}. After suitable rescaling, the recurrence for minimal strata takes the form considered in Lemma~\ref{lm:lemma:2:in:calculus}, while the recurrences for non-minimal strata take the form considered in Lemma~\ref{lm:lemma:in:calculus}.

Throughout this section, all sequences are real, and all asymptotic estimates are understood as $n\to+\infty$. Constants implicit in the $O$-notation may depend on the fixed parameters and initial values of the recurrence, and need not be the same at different occurrences.

\begin{lemma}
\label{lm:lemma:2:in:calculus}
Suppose that a sequence $V(n)$ satisfies
\[
V(n)=(1+\alpha_n)V(n-2)+\beta_n
\]
for all sufficiently large integers $n$, where
\[
\alpha_n=O\left(\frac1{n^2}\right),
\qquad
\beta_n=O\left(\frac1{n^2}\right).
\]
Then there exist finite constants $C^{\mathit{even}}$ and $C^{\mathit{odd}}$ such that
\begin{align*}
V(2n)&=C^{\mathit{even}}+O\left(\frac1n\right),\\
V(2n+1)&=C^{\mathit{odd}}+O\left(\frac1n\right).
\end{align*}
If, in addition, $V(n)>0$ and $\beta_n\geq0$ for all sufficiently large $n$, then both constants are strictly positive.
\end{lemma}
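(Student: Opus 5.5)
The plan is to split the recurrence by parity and treat each subsequence separately. For a fixed residue $r\in\{0,1\}$ put $u_m:=V(2m+r)$, $a_m:=\alpha_{2m+r}$, $b_m:=\beta_{2m+r}$. Then $u_m=(1+a_m)u_{m-1}+b_m$ for $m\ge m_0$, with $a_m,b_m=O(m^{-2})$. It suffices to show that $u_m$ converges to a finite limit $C$ with $u_m-C=O(1/m)$. Since $1/m$ and $1/n$ are comparable for $n=2m+r$, this gives both asymptotic formulae of the Lemma.

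The first step is boundedness. Choose $m_0$ so that $|a_m|\le 1/2$ for $m\ge m_0$. Unwinding the recursion gives
\[
|u_m|\le \prod_{j=m_0+1}^{m}(1+|a_j|)\Bigl(|u_{m_0}|+\sum_{j=m_0+1}^{m}|b_j|\Bigr).
\]
Both $\prod(1+|a_j|)\le\exp\bigl(\sum|a_j|\bigr)$ and $\sum|b_j|$ are bounded uniformly in $m$, because $\sum_j j^{-2}<\infty$. Hence $\sup_m|u_m|=:B<\infty$.

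The second step is convergence with a rate. Write $u_m-u_{m-1}=a_mu_{m-1}+b_m$, which is $O(m^{-2})$ in absolute value since $|u_{m-1}|\le B$. So $\sum_m (u_m-u_{m-1})$ converges absolutely. We set $C:=u_{m_0}+\sum_{j>m_0}(u_j-u_{j-1})$, and then
\[
|u_m-C|\le\sum_{j>m}|u_j-u_{j-1}|=O\Bigl(\sum_{j>m}j^{-2}\Bigr)=O(1/m).
\]
This yields $C^{\mathit{even}}$ and $C^{\mathit{odd}}$, with implied constants depending only on the initial values and on the constants in the $O(n^{-2})$ bounds for $\alpha_n,\beta_n$.

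The last step is positivity under the extra hypotheses $V(n)>0$ and $\beta_n\ge0$, and it is the only step needing care: a limit of positive numbers could a priori be $0$. Take $m_1\ge m_0$ large enough that $u_m>0$, $b_m\ge0$ and $1+a_m>0$ for all $m\ge m_1$. Then $u_m\ge(1+a_m)u_{m-1}$, and by induction
\[
u_m\ge u_{m_1}\prod_{j=m_1+1}^{m}(1+a_j).
\]
Since $\sum|a_j|<\infty$ and each factor is positive, the infinite product $\prod_{j>m_1}(1+a_j)$ converges to a strictly positive number $P$. Concretely, $\log(1+a_j)=a_j+O(a_j^2)$ is absolutely summable, so the partial products stay bounded below by some $P_*>0$ for all $m$. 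Therefore $C=\lim u_m\ge u_{m_1}P_*>0$, which applies to both parities.
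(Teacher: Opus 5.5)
Your proof is correct and follows essentially the paper's route: increments $|V(n)-V(n-2)|=O(n^{-2})$ give the limit with an $O(1/n)$ tail, and the lower bound $V(n)\ge V(N)\prod(1+\alpha_m)$, with the product converging to a positive number, gives positivity. The only difference is how boundedness is obtained: you use a discrete Gr\"onwall inequality, whereas the paper divides by the partial products $P_n$ to get the exact identity $V(n)/P_n=V(N)+\sum\beta_m/P_m$, which yields boundedness, existence of the limit and the positivity bound simultaneously.
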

\begin{proof}
Fix a parity class and choose an integer $N$ in this class sufficiently large that the recurrence holds and $|\alpha_n|\leq\frac12$ for all $n\geq N$. For $n\geq N$ in the chosen parity class, set
\[
P_N:=1,
\qquad
P_n:=
\prod_{\substack{N<m\leq n\\m\equiv N(\mathrm{mod}\,2)}}
(1+\alpha_m)\,.
\]
Since $\sum_m|\alpha_m|<+\infty$, the sequence $P_n$ converges to a finite strictly positive limit $P_\infty$. In particular, $P_n$ is bounded above and bounded away from zero.

Dividing the recurrence by $P_n$ and summing gives
\[
\frac{V(n)}{P_n}
=
V(N)+
\sum_{\substack{N<m\leq n\\m\equiv N\pmod2}}
\frac{\beta_m}{P_m}.
\]
The series on the right converges absolutely. Hence $V(n)/P_n$, and therefore $V(n)$, has a finite limit along the chosen parity class. Denote the latter limit by $C$.

Since $V(n)$ is bounded along this class, the recurrence implies
\[
|V(n)-V(n-2)|
=
|\alpha_nV(n-2)+\beta_n|
\leq \frac{M}{n^2}
\]
for a suitable constant $M$ and all sufficiently large $n$ in the class. Consequently,
\begin{align*}
|V(n)-C|
&\leq
\sum_{j=1}^{\infty}
|V(n+2j)-V(n+2j-2)|\\
&\leq
M\sum_{j=1}^{\infty}\frac1{(n+2j)^2}
\leq \frac{M}{2n}.
\end{align*}
Applying this argument to both parity classes proves the stated asymptotic estimates.

Under the additional positivity assumptions, choose $N$ sufficiently large that $V(N)>0$ and $\beta_m\geq0$ for $m>N$. The displayed summation formula then gives
\[
\frac{V(n)}{P_n}\geq V(N)>0.
\]
Passing to the limit yields $C\geq P_\infty\cdot V(N)>0$. This proves strict positivity in each parity class.
\end{proof}

\begin{lemma}
\label{lm:lemma:in:calculus}
Suppose that a sequence $U(n)$ satisfies
\[
U(n)=\bigl(1-a(n)\bigr)\cdot U(n-2)+a(n)\cdot r(n)
\]
for all sufficiently large integers $n$, where
\[
a(n)=\frac cn+O\left(\frac1{n^2}\right),
\qquad
r(n)=1+O\left(\frac1n\right),
\qquad c>2.
\]
Then
\[
U(n)=1+O\left(\frac1n\right).
\]
In particular, the two parity subsequences converge to the same limit $1$, with the stated error estimate.
\end{lemma}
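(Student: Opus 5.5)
Set $E(n):=U(n)-1$. Subtracting $1$ from both sides of the recurrence and using $a(n)\,r(n)=a(n)+a(n)\bigl(r(n)-1\bigr)$, I get
\[
E(n)=\bigl(1-a(n)\bigr)E(n-2)+a(n)\bigl(r(n)-1\bigr).
\]
Since $a(n)=O(1/n)$ and $r(n)-1=O(1/n)$, the inhomogeneous term satisfies $\bigl|a(n)(r(n)-1)\bigr|\le B/n^2$ for some constant $B$ and all large $n$. Since $c>2$, I choose $c'$ with $2<c'<c$. Then there is $N$ such that for all $n\ge N$ the recurrence holds and $c'/n\le a(n)\le 1$. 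Consequently $0\le 1-a(n)\le 1-c'/n$. The two parity classes do not interact, so I treat each separately, starting from the index $N$ or $N+1$ in that class.

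The main step is to show $|E(n)|\le K/n$ by induction within a parity class, using the barrier $K/n$. Suppose $|E(n-2)|\le K/(n-2)$. Then
\[
|E(n)|\le\Bigl(1-\frac{c'}{n}\Bigr)\frac{K}{n-2}+\frac{B}{n^2}.
\]
I need the right-hand side to be at most $K/n$. Multiplying through by $n^2(n-2)$ turns this into
\[
K n(n-c') + B(n-2)\le K n(n-2),
\]
that is, $B(n-2)\le K n(c'-2)$. This holds for all $n$ as soon as $K\ge B/(c'-2)$. This is exactly where the hypothesis $c>2$ is used, and I expect it to be the only delicate point. If $c\le 2$, the contraction would be too weak to absorb an $O(1/n^2)$ forcing at the scale $1/n$. (Heuristically, $E\sim K/n$ requires $K(c-2)\approx B$.)

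For the base case, I enlarge $K$ so that $K\ge N'\,|E(N')|$ for the two starting indices $N'\in\{N,N+1\}$. With this $K$, induction in steps of $2$ gives $|E(n)|\le K/n$ for all $n\ge N$. Hence $U(n)=1+O(1/n)$ along both parity classes, and in particular both subsequences converge to the common limit $1$. The constant $K$ depends on the initial values $U(N)$, $U(N+1)$, which is consistent with the convention on $O$-constants stated at the start of this section.
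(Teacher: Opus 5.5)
Your proposal is correct and follows the paper's proof essentially step by step: the same substitution $U(n)-1$, the same auxiliary constant strictly between $2$ and $c$, and the same induction within each parity class against the barrier $K/n$, with $K(c'-2)\ge B$ and $K$ large enough to cover the two initial values. The only difference is cosmetic. You verify the inductive inequality by clearing denominators, while the paper rewrites $\bigl(1-\frac{c_2}{n}\bigr)\frac{K}{n-2}=\frac{K}{n}-\frac{K(c_2-2)}{n(n-2)}$.
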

\begin{proofwithmanualqed}
Set $Z(n):=U(n)-1$. The recurrence becomes
\[
Z(n)=\bigl(1-a(n)\bigr)\cdot Z(n-2)
+a(n)\cdot\bigl(r(n)-1\bigr)\,.
\]
Choose a constant $c_2$ with $2<c_2<c$. By the assumptions, there exist an integer $N>2$ and a constant $B>0$ such that, for all $n\geq N$,
\[
\frac{c_2}{n}\leq a(n)<1,
\qquad
\left|a(n)\cdot\bigl(r(n)-1\bigr)\right|
\leq \frac{B}{n^2}.
\]
It follows that
\[
|Z(n)|
\leq
\left(1-\frac{c_2}{n}\right)|Z(n-2)|
+\frac{B}{n^2}\,.
\]

Choose $K>0$ sufficiently large so that for $N$ chosen above we have
\[
|Z(N)|\leq\frac K N,
\qquad
|Z(N+1)|\leq\frac K{N+1},
\qquad
K\cdot(c_2-2)\geq B\,.
\]
We prove by induction, separately in the two parity classes, that
\[
|Z(n)|\leq\frac K n
\qquad\text{for all }\  n\geq N\,.
\]
The two initial inequalities hold by the choice of $K$. For $n\geq N+2$, the induction hypothesis gives
\[
|Z(n)|
\leq\left(1-\frac{c_2}{n}\right)\frac K{n-2}+\frac B{n^2}
=
\frac K n-\frac{K\cdot(c_2-2)}{n(n-2)}+\frac B{n^2}\\
\leq \frac K n\,.
\]
This completes the induction and proves that
\[
U(n)-1=Z(n)=O\left(\frac1n\right)\,.\qed
\]
\end{proofwithmanualqed}

% ###############################################################
% ###############################################################
% ###############################################################
\section{Volume asymptotics}
\label{s:Volume:asymptotics}

In this section we collect formulae for contributions of individual configurations obtained in Sections~\ref{s:Contributions:of:graphs:a}--\ref{s:Contributions:of:graphs:c} to rewrite our main recurrence relation~\eqref{eq:EKZ:in:W:g} in a much more explicit form in terms of renormalized volumes $W_g(n;d)$.

For the two series of non-varying strata a proof of Theorem~\ref{th:n:1} reduces to the routine verification of the resulting exact closed recurrence relations~\eqref{eq:realation:between:volumes:minimal:g:1} and~\eqref{eq:n:1:init} below.

For other strata the recurrence relation~\eqref{eq:EKZ:in:W:g} is exact with the actual Lyapunov sum. Using Fougeron's bound~\eqref{eq:Fougeron:bound} for that sum gives an asymptotic recurrence with a controllable error term. By the nature of the minimal stratum, certain
configurations are not realizable for it, which makes the sum over configurations in~\eqref{eq:EKZ:in:W:g} very special.
We reduce the recurrence for the minimal stratum to the one described in Lemma~\ref{lm:lemma:2:in:calculus}.
We reduce the generic recurrence for other strata to the one described in Lemma~\ref{lm:lemma:in:calculus}.

%------------------------------------------------------------------------------
\subsection{%
  \texorpdfstring{%
    \for{toc}{Minimal strata $\mathcal Q_1(n,-1^{n})$ in genus $1$}%
    \except{toc}{%
      \boldmath Minimal strata $\mathcal Q_1(n,-1^{n})$ in genus $1$%
    }%
  }{Minimal strata Q1(n, with n simple poles) in genus 1}%
}
\label{ss:n:g:1}

For the sake of completeness we start by presenting the value of the area Siegel--Veech constant and the relation between the volumes
in terms of the classical Masur--Veech volumes. They might
serve for potential alternative calculations in terms of intersection numbers.

\begin{lemma}
\label{lm:SV:minimal:g:1}
The area Siegel--Veech constant of the minimal stratum
in genus $1$ with $n\ge 5$ satisfies the following relation:
\begin{multline}
\label{eq:c:area:minimal}
c_{area}(\cQ_1(n,-1^n))=\frac{1}{\Vol\cQ_1(n,-1^n)}
\Bigg(
\frac{n}{4}\cdot\Vol\cQ_1(n-2,-1^{n-2})\\
+\frac{n-2}{n}\cdot\Vol\cQ_0(n-4,-1^n)+
\frac{n - 4}{2(n-2)(n-3)}\cdot \Vol\cQ_0(n-6,-1^{n-2})\cdot\Vol\cH(0)
\Bigg)\,.
\end{multline}
\end{lemma}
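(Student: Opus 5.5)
By Corollary~\ref{cor:configurations:minimal:genus:1}, for $n\ge 5$ exactly three configurations contribute to $\carea(\cQ_1(n,-1^n))$: the graphs I, II and III of Figure~\ref{fig:minimal:genus:1}. So the plan is to write $\carea(\cQ_1(n,-1^n))$ as the sum of their three contributions. Each one is read off from a formula already proved, specialized to $g=1$ and $d=\emptyset$, so that $D=k=0$. Every specialization shares the denominator $\Vol\cQ_1(n,-1^n)$, which I factor out.

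\emph{Graph I (figure-eight).} I would apply Proposition~\ref{prop:figure:eight:general:init}. Its hypotheses $g\ge1$, $n\ge3$ and $n+D-4g+4=n\ge3$ hold. The dimension factors become $(n+D+k-2g+1)(n+D+k-2g+2)=(n-1)n$, so the prefactor is
\[
\frac{(n-2)(n-1)}{(n-1)n}=\frac{n-2}{n}.
\]
The boundary stratum is $\cQ_0(n-4,-1^n)$. This is the middle term of~\eqref{eq:c:area:minimal}.

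\emph{Graph II (pocket with the zero of degree $n$ at the base).} I would apply Proposition~\ref{prop:pocket}. Its hypotheses $n\ge1$ and $n+D-4g=n-4\ge1$ hold because $n\ge5$. The pole-count factor and the dimension factor are
\[
(n+D-4g+4)(n+D-4g+3)=n(n-1),\qquad (n+D+k-2g+2)(n+D+k-2g+1)=n(n-1).
\]
They cancel, leaving $\frac n4\cdot\Vol\cQ_1(n-2,-1^{n-2})$ over the ambient volume. This is the first term.

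\emph{Graph III (pocket-through-torus).} Formula~\eqref{eq:pocket:through:torus:n:g:1} of Proposition~\ref{prop:pocket:through:torus} already gives exactly the third term.

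The only step requiring care is checking that the general formulas really apply in this degenerate setting: the boundary strata must be nonempty and connected (genus $0$ and $1$ strata are connected), and the symmetry factors must be trivial, $|\Gamma(v_1)|=1$, which holds because the boundary strata have at least three poles. One must also confirm that the figure-eight count $\frac{n-1}{2}$ from~\eqref{eq:combinatorial:constant:figure:eight} already accounts for the symmetric partition when $n$ is even. All of this has been verified in the cited propositions under the stated bounds, so summing the three contributions yields~\eqref{eq:c:area:minimal}.
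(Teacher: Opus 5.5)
Your proposal is correct and matches the paper's proof: you sum the three contributions from Corollary~\ref{cor:configurations:minimal:genus:1}, specializing \eqref{eq:carea:figure:eight} and \eqref{eq:pocket} to $g=1$, $k=D=0$ (cancelling $(n-1)$ and $n(n-1)$ respectively), and you quote \eqref{eq:pocket:through:torus:n:g:1} directly for the third term. Your hypothesis checks, such as $n+D-4g=n-4\ge 1$ for the pocket, are accurate and make explicit what the paper leaves implicit.
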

\begin{proof}
The list of graphs realizable for the minimal stratum in genus $1$
is given by Corollary~\ref{cor:configurations:minimal:genus:1}.
It consists of the three graphs presented in Figure~\ref{fig:minimal:genus:1}.
The contribution of the left graph~\textit{I} (representing the ``figure-eight'' configuration) is computed in~\eqref{eq:carea:figure:eight},
where we have to let $g=1, k=0, D=0$. After cancelling
out the common factor $(n-1)$ in the numerator and in the denominator,
we get the second term in the sum in~\eqref{eq:c:area:minimal}.

The contribution of the middle graph~\textit{II}
in Figure~\ref{fig:minimal:genus:1}
(representing the ``pocket'' configuration)
is computed in~\eqref{eq:pocket}, where, as before, we let $g=1, k=0, D=0$. After cancelling
out the common factor $n(n-1)$ in the numerator and in the denominator,
we get the first term in the sum in~\eqref{eq:c:area:minimal}.

Finally,
the contribution of the right graph~\textit{III}
in Figure~\ref{fig:minimal:genus:1}
(representing the ``pocket-through-torus'' configuration)
is computed in~\eqref{eq:pocket:through:torus:n:g:1} and
gives the third term in the sum  in~\eqref{eq:c:area:minimal}.
\end{proof}

\begin{corollary}
For $n\ge 5$, the
Masur--Veech volumes of the minimal strata in genus $1$ satisfy the following relation:
\begin{multline}
\label{eq:realation:between:volumes:minimal:g:1}
\Vol\cQ_1(n,-1^n)=\frac{12(n+2)}{n^2+n+24}\cdot\frac{\pi^2}{3}
\cdot\Bigg(
\frac{n}{4}\cdot\Vol\cQ_1(n-2,-1^{n-2})
\\+
\frac{n-2}{n}\cdot\Vol\cQ_0(n-4,-1^n)+
\frac{n - 4}{2(n-2)(n-3)}\cdot \Vol\cQ_0(n-6,-1^{n-2})\cdot\Vol\cH(0)
\Bigg)\,.
\end{multline}
\end{corollary}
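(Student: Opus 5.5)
The relation follows by combining the Eskin--Kontsevich--Zorich formula~\eqref{eq:sum:of:lambda:plus} with the Chen--Möller value~\eqref{eq:lambda1:plus:n} of the Lyapunov exponent and the expression~\eqref{eq:c:area:minimal} for the area Siegel--Veech constant from Lemma~\ref{lm:SV:minimal:g:1}. No asymptotic analysis is needed: in genus $1$ the left-hand side of~\eqref{eq:sum:of:lambda:plus} is the single exponent $\lambda_1^+$, which is known exactly for this non-varying stratum.

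First, I apply~\eqref{eq:sum:of:lambda:plus} to $\cQ_1(n,-1^n)$, which is connected by Remark~\ref{rm:connected:strata}. The singularity data consist of one zero of order $n$ and $n$ simple poles. The zero contributes $\frac{n(n+4)}{n+2}$ and each pole contributes $-3$, so
\[
\frac{1}{24}\sum_i\frac{m_i(m_i+4)}{m_i+2}=\frac{1}{24}\Big(\frac{n(n+4)}{n+2}-3n\Big).
\]
Substituting $\lambda_1^+=\frac{2}{n+2}$ from~\eqref{eq:lambda1:plus:n}, which is valid since $n\ge5\ge2$, and solving for the Siegel--Veech constant gives
\[
\frac{\pi^2}{3}\,c_{area}(\cQ_1(n,-1^n))
=\frac{48-n(n+4)+3n(n+2)}{24(n+2)}
=\frac{n^2+n+24}{12(n+2)}.
\]
One could equally read this off from~\eqref{eq:minus:1:24:sum} with $g=1$, $D=k=0$, $\Delta=0$.

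Second, I substitute Lemma~\ref{lm:SV:minimal:g:1}. It writes $c_{area}$ as $\frac{1}{\Vol\cQ_1(n,-1^n)}$ times the bracketed sum of the three configuration contributions (pocket, figure-eight, pocket-through-torus). Equating the two expressions for $c_{area}$ and multiplying through by $\Vol\cQ_1(n,-1^n)\cdot\frac{12(n+2)}{n^2+n+24}\cdot\frac{\pi^2}{3}$ gives exactly~\eqref{eq:realation:between:volumes:minimal:g:1}.

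There is no real obstacle here. The only point needing care is the arithmetic of the quadratic $48-n^2-4n+3n^2+6n=2n^2+2n+48$, together with checking that $n\ge5$ simultaneously satisfies the hypotheses of Chen--Möller and those of Lemma~\ref{lm:SV:minimal:g:1}. All the substantive work, namely the classification in Corollary~\ref{cor:configurations:minimal:genus:1} and the individual contributions, is already contained in Lemma~\ref{lm:SV:minimal:g:1}.
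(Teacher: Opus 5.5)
Your proposal is correct and follows essentially the same route as the paper: you plug the Chen--Möller value $\lambda_1^+=\frac{2}{n+2}$ and the weighted sum of singularity orders into the Eskin--Kontsevich--Zorich formula to get $\frac{\pi^2}{3}\,c_{area}=\frac{n^2+n+24}{12(n+2)}$. You then substitute the three-term expression for $c_{area}$ from Lemma~\ref{lm:SV:minimal:g:1}. Your direct computation of the weighted sum and the paper's use of~\eqref{eq:minus:1:24:sum} give the same value, $\frac{1}{12}\bigl(n-1+\frac{26}{n+2}\bigr)=\frac{n^2+n+24}{12(n+2)}$.
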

\begin{proof}
By~\eqref{eq:lambda1:plus:n} we have
$\lambda_1^+\big(\cQ_1(n,-1^n)\big)=\frac{2}{n+2}$. For the stratum $\cQ(n,-1^n)$ we have
\[
g=1,\qquad k=D=\Delta=0
\]
(see~\eqref{eq:D:and:Delta}), so by~\eqref{eq:minus:1:24:sum}
for the stratum $\cQ_1(n,-1^n)$ we get
the following value of the weighted sum involved in~\eqref{eq:sum:of:lambda:plus}:
\[
-\frac{1}{24}\sum_{i=1}^p \frac{m_i(m_i+4)}{m_i+2}
=\frac{1}{12}\left(n-1+\frac{2}{n+2}\right)
\]
and hence
\begin{multline*}
\lambda_1^+\big(\cQ_1(n,-1^n)\big)-\frac{1}{24}\sum_{i=1}^p \frac{m_i(m_i+4)}{m_i+2}
\\=
\frac{1}{12}\left(n-1+\frac{26}{n+2}\right)
=\left(\frac{12(n+2)}{n^2+n+24}\right)^{-1}\,,
\end{multline*}
which is the inverse of the first factor  in~\eqref{eq:realation:between:volumes:minimal:g:1}.
The relation~\eqref{eq:realation:between:volumes:minimal:g:1}
now follows from~\eqref{eq:sum:of:lambda:plus}, where
$\carea(\cQ_1(n,-1^n))$ is replaced by the value~\eqref{eq:c:area:minimal}
given in Lemma~\ref{lm:SV:minimal:g:1}.
\end{proof}

We are ready to prove the first part of Theorem~\ref{th:n:1}.

\begin{proof}[Proof of Formula~\eqref{eq:volume:n}]
The stratum $\cQ_1(1,-1)$ is empty, see~\cite{Masur:Smillie}.
Exact numerical values of $\Vol\cQ_1(n,-1^n)$ for $2\le n\le 10$ are given in~\cite[arXiv:\allowbreak 1501.01611v4, Appendix~A]{Goujard:vol}.
A direct check shows that they fit Formula~\eqref{eq:volume:n}.
Let us now prove Formula~\eqref{eq:volume:n}
for arbitrary $n$.

Evaluating quantities $D,\Delta$ and $\Lambda^+_g$
defined in~\eqref{eq:D:and:Delta} and~\eqref{eq:Lambda:plus}
for the minimal stratum in genus $1$ we get
\[
D=0,\qquad k=0,\qquad \Delta=0,\qquad\Lambda^+_1=\frac{2}{n+2}\,,
\]
where the first three equalities are trivial and
where the
rightmost equality corresponds to~\eqref{eq:lambda1:plus:n}, which is a Theorem of D.~Chen and M.~Möller~\cite{Chen:Moeller}. Using the list of
configurations given in Corollary~\ref{cor:configurations:minimal:genus:1}
and illustrated in Figure~\ref{fig:minimal:genus:1}
we get the following form of Equation~\eqref{eq:EKZ:in:W:g} specified
to the stratum $\cQ_1(n,-1^n)$:
\begin{equation*}
%\label{eq:W:1:through:C}
W_1(n)
=
\frac{n+2}{n^2+n+24}
\left(
C_1^{\textit{figure-eight}}(n)
+
C_1^{\textit{pocket}}(n)
+
C_1^{\textit{pocket-through-torus}}(n)
\right)\,.
\end{equation*}
The three normalized Siegel--Veech constants present in this latter equation are evaluated in~\eqref{eq:figure:eight:g:1},
\eqref{eq:pocket:n:g:1:renormalized},
and~\eqref{eq:pocket:through:torus:n:g:1:renormalized}
respectively. Factoring out the common factor $(n+1)/n$, we obtain
\begin{equation}
\label{eq:recurrence:W:1:factored}
W_1(n)
=\frac{(n+2)(n+1)}{n(n^2+n+24)}
\left(
(n-2)W_1(n-2)
+
\frac{8(n-1)}{n^2}
+
\frac{4(n-1)}{3n(n-2)^2}
\right).
\end{equation}

The closed volume formula suggested in~\eqref{eq:volume:n} gives
\begin{equation}
\label{eq:W1:answer}
W_1(n)
=\frac{\Vol\cQ_1(n,-1^n)}{n\cdot v(n)}
=\frac{(n+2)(n+1)(n-1)}{3n^3}\,,
\end{equation}
and
\[
W_1(n-2)
=\frac{\Vol\cQ_1(n-2,-1^{n-2})}{(n-2)\cdot v(n-2)}
=
\frac{n(n-1)(n-3)}{3(n-2)^3}\,.
\]
Substituting $W_1(n)$ and $W_1(n-2)$ in~\eqref{eq:recurrence:W:1:factored}
and simplifying we obtain an identity.
This shows that
the expression~\eqref{eq:volume:n}
satisfies the recursion.
Formula~\eqref{eq:volume:n} from Theorem~\ref{th:n:1}
is proved.
\end{proof}
\begin{remark}
In the way we presented the proof, the solution~\eqref{eq:W1:answer} for the recursion comes out of nothing. What was suppressed from the proof is that after a sequence of convenient changes of variables, the recursion formula progressively becomes considerably simpler. The answer was originally derived for the simplified recursion. We suppressed these obsolete calculations to avoid overloading the paper.
\end{remark}

\begin{remark}
%\label{rm:Q:n:fits:recursion}
   %
The coefficient of $W_1(n-2)$ in~\eqref{eq:recurrence:W:1:factored}
satisfies the identity
\[
\frac{(n+2)(n+1)(n-2)}{n(n^2+n+24)}
=
1-\frac{4(7n+1)}{n(n^2+n+24)}.
\]
Hence the recurrence~\eqref{eq:recurrence:W:1:factored}
can be represented as
\[
W_1(n)
=
\left(
1-\frac{28}{n^2}+O\left(\frac1{n^3}\right)
\right)W_1(n-2)
+
\frac8{n^2}
+
O\left(\frac1{n^3}\right).
\]
as $n\to+\infty$.
In particular, there is no term of order $\frac{1}{n}$ either in the multiplicative coefficient or in the additive term. The recurrence fits the
general scheme of Lemma~\ref{lm:lemma:2:in:calculus}.
\end{remark}

%-------------------------------------------------------------------
\subsection{%
  \texorpdfstring{%
    \for{toc}{Strata $\mathcal Q_1(n,1,-1^{n+1})$}%
    \except{toc}{%
      \boldmath Strata $\mathcal Q_1(n,1,-1^{n+1})$%
    }%
  }{Strata Q1(n,1, with n+1 simple poles)}%
}
\label{ss:n:1:g:1}

We proceed in the same way as in the minimal stratum.

\begin{proposition}
\label{prop:SV:n:1}
The
Masur--Veech volumes of the strata $\cQ_1(n,1,-1^{n+1})$
in genus $1$ satisfy relation~\eqref{eq:n:1:init}
for $n\ge 5$.
The sum of seven terms in the parentheses coincides
with the product
$\Vol(\cQ_1(n,1,-1^{n+1}))\cdot\carea(\cQ_1(n,1,-1^{n+1}))$.
\begin{multline}
\label{eq:n:1:init}
\Vol\cQ(n,1,-1^{n+1})
=\frac{36 (n+2)}{3 n^2+5 n+76}\cdot \frac{\pi^2}{3}
\Bigg(
   \underbrace{\frac{n^2}{4(n+2)}\cdot\Vol\cQ(n-2,1,-1^{n-1})}_{v} % Ca
\\
+ \underbrace{\frac{n }{4 (n+2)}\cdot\Vol\cQ(n,-1^n)}_{vi} % Cb
+ \underbrace{\frac{n}{24(n+2)}\cdot \Vol\cQ(-1^4)\cdot \Vol\cQ(n-2,-1^{n-2})}_{i} % Aa
\\+
   \underbrace{\frac{(n-2)(n-1)}{(n+1)(n+2)}\cdot\Vol\cQ(n-4,1,-1^{n+1})}_{iii} % Ba
\\+
   \underbrace{\frac{2 n}{(n+2)(n+1)}\Vol\cQ(n-2,-1^{n+2})}_{iv} % Bb
\\+
   \underbrace{\frac{(n-4)}{2(n+2)(n-1)}\Vol\cH(0)\Vol\cQ(n-6,1,-1^{n-1})}_{vii} % Cc
\\+
   \underbrace{\frac{(n-4)}{12(n-3)(n-2)(n+2)}\cdot\Vol\cQ(-1^4)\cdot\Vol\cH(0)\cdot\Vol\cQ(n-6,-1^{n-2})}_{ii} % Ab
\Bigg)\,.
\end{multline}
\end{proposition}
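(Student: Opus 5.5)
The plan follows the proof of the minimal-stratum relation~\eqref{eq:realation:between:volumes:minimal:g:1}: first compute the prefactor from the Eskin--Kontsevich--Zorich formula~\eqref{eq:sum:of:lambda:plus}, then sum the contributions of the seven configurations listed in Proposition~\ref{prop:configurations:with:two:zeroes}.

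\emph{The prefactor.} For $\cQ_1(n,1,-1^{n+1})$ we have $g=1$, $k=1$, $D=1$ and $\Delta=2/3$. Formula~\eqref{eq:minus:1:24:sum} then gives
\[
-\tfrac{1}{24}\sum_i \frac{m_i(m_i+4)}{m_i+2}
=\tfrac{1}{12}\Bigl(n-1+\tfrac{2}{3}+\tfrac{2}{n+2}\Bigr).
\]
We add $\lambda_1^+=\frac{2}{n+2}$, which is the Chen--Möller value~\eqref{eq:lambda1:plus:n:1}. A short computation shows that the sum equals
\[
\frac{3n^2+5n+76}{36(n+2)}.
\]
Hence~\eqref{eq:sum:of:lambda:plus} yields
\[
\Vol\cQ_1(n,1,-1^{n+1})=\frac{36(n+2)}{3n^2+5n+76}\cdot\frac{\pi^2}{3}\cdot\Bigl(\Vol\cQ_1(n,1,-1^{n+1})\cdot\carea\bigl(\cQ_1(n,1,-1^{n+1})\bigr)\Bigr).
\]
It therefore remains to prove the second claim: the seven-term sum in~\eqref{eq:n:1:init} equals $\Vol\cdot\carea$.

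\emph{The seven contributions.} By Proposition~\ref{prop:configurations:with:two:zeroes}, for $n\ge5$ exactly the graphs $i$--$vii$ contribute, so $\carea$ is the sum of their contributions. Each one is already available in unnormalized form, and I would specialize the parameters as follows.
\begin{itemize}
\item Graph $i$: the dumbbell formula~\eqref{eq:n:1:i}.
\item Graph $ii$: the dumbbell-through-torus formula~\eqref{eq:n:1:ii}.
\item Graph $iii$: the figure-eight formula~\eqref{eq:carea:figure:eight} with $g=1$, $k=D=1$. The denominator is $(n+1)(n+2)$, and the genus-zero boundary stratum is $\cQ_0(n-4,1,-1^{n+1})$.
\item Graph $iv$: the parallelogram formula~\eqref{eq:carea:parallelogram} with $d_j=1$. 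It gives $\frac{2n}{(n+1)(n+2)}\Vol\cQ_0(n-2,-1,-1^{n+1})$, and the convention of Remark~\ref{rm:entry:minus:1} identifies this stratum with $\cQ_0(n-2,-1^{n+2})$.
\item Graph $v$: the pocket formula~\eqref{eq:pocket} at the zero of degree $n$. The factor is $\frac n4\cdot\frac{(n+1)n}{(n+3)(n+2)}$, and one must check that this matches the stated coefficient $\frac{n^2}{4(n+2)}$.
\item Graph $vi$: the pocket at the simple zero, Corollary~\ref{cor:pocket:d} with $d_j=1$, again using Remark~\ref{rm:entry:minus:1}.
\item Graph $vii$: the pocket-through-torus formula~\eqref{eq:pocket:through:torus:n:1:g:1}.
\end{itemize}

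\emph{Main obstacle.} The step that needs care is exact bookkeeping of the factorial ratios $(a_1-1)!/(a-1)!$ when the dimension is $a=n+4$ rather than $n+3$. The issue is with $a=\dim\cQ_1(n,1,-1^{n+1})$. Naively the general formula gives $n+D+k-2g+3=n+4$, but it is this value that makes the pocket coefficients come out with the stated $(n+2)$ denominators. I would recompute $a$ directly from $2g-2+p$, with $p=n+3$ singularities, and redo the ratios for graphs $v$, $vi$, $iii$ and $iv$. The cancellations between the pole-labelling binomials and the factorials must land on exactly the printed coefficients. I would verify this against Goujard's small-$n$ tables~\cite{Goujard:vol}, in the same way as for the minimal stratum.
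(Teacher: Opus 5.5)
Your plan is the paper's proof: the prefactor comes from the Chen--Möller value $\lambda_1^+=\frac{2}{n+2}$ combined with \eqref{eq:minus:1:24:sum}, and the seven contributions \eqref{eq:n:1:i}, \eqref{eq:n:1:ii}, \eqref{eq:carea:figure:eight}, \eqref{eq:carea:parallelogram}, \eqref{eq:pocket}, \eqref{eq:pocket:d}, \eqref{eq:pocket:through:torus:n:1:g:1} are specialized to $g=k=D=d_1=1$. Your ``main obstacle'' is only an arithmetic slip, since $n+D+k-2g+3=n+3=2g-2+p$: the pocket factor is $\frac n4\cdot\frac{(n+1)n}{(n+2)(n+1)}=\frac{n^2}{4(n+2)}$ directly (your $(n+3)(n+2)$ denominator would not match), so all seven coefficients come straight out of the general formulas with no re-bookkeeping.
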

\begin{proof}
By~\eqref{eq:lambda1:plus:n:1} we have
$\lambda_1^+\big(\cQ_1(n,1,-1^{n+1})\big)=\frac{2}{n+2}$.
We have
\[
g=k=D=1,\qquad \Delta=2\cdot\frac{1}{(1+2)}=\frac{2}{3}
\]
for the stratum $\cQ(n,1,-1^{n+1})$,
so by~\eqref{eq:minus:1:24:sum} we get
\[
-\frac{1}{24}\sum_{i=1}^p \frac{m_i(m_i+4)}{m_i+2}
=\frac{1}{12}\left(n-1+\frac{2}{3}+\frac{2}{n+2}\right)
\]
and hence
\[
\lambda_1^+\big(\cQ_1(n,1,-1^{n+1})\big)-\frac{1}{24}\sum_{i=1}^p \frac{m_i(m_i+4)}{m_i+2}=
\left(\frac{36 (n+2)}{3 n^2+5 n+76}\right)^{-1}\,,
\]
which is the inverse of the first rational factor $36(n+2)/(3n^2+5n+76)$ in~\eqref{eq:n:1:init}.

The list of graphs contributing to the area Siegel--Veech constant for the stratum $\cQ(n,1,-1^{n+1})$
is given by Proposition~\ref{prop:configurations:with:two:zeroes}.
It consists of the seven graphs presented in Figure~\ref{fig:graphs:n:1:genus:1}. Thus, the desired relation~\eqref{eq:n:1:init} would follow from~\eqref{eq:sum:of:lambda:plus} as soon as we prove that the expression in the brackets in~\eqref{eq:n:1:init} is
the product
$\Vol\cQ(n,1,-1^{n+1})\cdot \carea(\cQ(n,1,-1^{n+1}))$. To verify this we collect contributions of all individual graphs.

The contribution of the graph~\textit{i} (representing the ``dumbbell'' configuration) is computed in~\eqref{eq:n:1:i}.

The contribution of the graph~\textit{ii} (representing the ``dumbbell-through-torus'' configuration) is computed in~\eqref{eq:n:1:ii}.

The contribution of the graph~\textit{iii} (representing the ``figure-eight'' configuration) is computed in~\eqref{eq:carea:figure:eight},
where we let $g=k=D=d_1=1$, and $d=(1)$.

The contribution of the graph~\textit{iv} (representing the ``parallelogram'' configuration) is computed in~\eqref{eq:carea:parallelogram},
where we let $g=k=D=j=d_1=1$, and $d=(1)$.

The contribution of the graph~\textit{v} (representing the ``pocket'' configuration with the zero of degree $n$ at the base of the cylinder) is computed in~\eqref{eq:pocket},
where we let $g=k=D=d_1=1$, and $d=(1)$.

The contribution of the graph~\textit{vi} (representing the ``pocket'' configuration with the zero of degree $1$ at the base of the cylinder) is computed in~\eqref{eq:pocket:d},
where we let $g=k=D=j=d_1=1$ and $d=(1)$. The new simple pole is included in the pole list according to Remark~\ref{rm:entry:minus:1}.

Finally,
the contribution of the graph~\textit{vii}
(representing the ``pocket-through-torus'' configuration)
is computed in~\eqref{eq:pocket:through:torus:n:1:g:1}.

Proposition~\ref{prop:SV:n:1} is proved.
\end{proof}

We are ready to complete the proof of Theorem~\ref{th:n:1}.

\begin{proof}[Proof of Formula~\eqref{eq:volume:n:1}]
Numerical values of volumes $\cQ(n,1,-1^{n+1})$ for $n\le 8$ are given in~\cite[arXiv:1501.01611v4, Appendix~A]{Goujard:vol}. Direct checks show that they fit the general Formula~\eqref{eq:volume:n:1}. Let us prove now Formula~\eqref{eq:volume:n:1} for any $n$ by recursion in $n\ge 5$.

Evaluating the quantities $D,\Delta$ and $\Lambda^+_g$
defined in~\eqref{eq:D:and:Delta} and~\eqref{eq:Lambda:plus}
for the stratum $\cQ_1(n,1,-1^{n+1})$ we get
\[
D=1,\qquad k=1,\qquad \Delta=\frac{2}{1+2},\qquad\Lambda^+_1=\frac{2}{n+2}\,,
\]
so the first line of~\eqref{eq:EKZ:in:W:g} gives
\begin{multline}
\label{eq:Wn1:init}
W_1(n;1)=
\frac{3(n+2)}{3n^2+5n+76}
\\
\times\big(
C_1^{\textit{pocket,n}}(n;1)
+C_1^{\textit{pocket,1}}(n;1)
+C_1^{dumbbell,n,1}(n;1)
\\
+C_1^{\textit{figure-eight},n}(n;1)
+C_1^{\textit{parallelogram},n}(n;1)
\\
+C_1^{\textit{pocket-through-torus}}(n;1)
+C_1^{\textit{dumbbell-through-torus},n,1}(n;1)
\big)\,,
\end{multline}
where we followed the order of terms in~\eqref{eq:n:1:init}.
The values of the corresponding renormalized contributions were
computed in the following formulae:
\[
\begin{array}{lll}
C_1^{dumbbell,n,1}(n;1)&
\textit{graph i}&
\eqref{eq:a:graphs:contributions:Q:n:1}\,,
\\
C_1^{\textit{dumbbell-through-torus},n,1}(n;1)&
\textit{graph ii}&
\eqref{eq:a:graphs:contributions:Q:n:1}\,,
\\
C_1^{\textit{figure-eight},n}(n;1)&
\textit{graph iii}&
\eqref{eq:figure:eight:g:1}\,,
\\
C_1^{\textit{parallelogram},n}(n;1)&
\textit{graph iv}&
\eqref{eq:parallelogram:g:1}\,,
\\
C_1^{\textit{pocket,n}}(n;1)&
\textit{graph v}&
\eqref{eq:pocket:at:n:n:1:g:1:renormalized}\,,
\\
C_1^{\textit{pocket,1}}(n;1)&
\textit{graph vi}&
\eqref{eq:pocket:at:1:n:1:g:1:renormalized}\,,
\\
C_1^{\textit{pocket-through-torus}}(n;1)&
\textit{graph vii}&
\eqref{eq:pocket:through:torus:n:1:g:1:renormalized}\,.
\end{array}
\]
Substituting the seven normalized contributions respecting the order,
coupling the numerator $3(n+2)$ of the fraction in~\eqref{eq:Wn1:init}
with each of the seven additive terms and cancelling out common
factors in numerators and in denominators of the resulting fractions,
we obtain
\begin{multline}
\label{eq:recurrence:W:1:1:expanded}
W_1(n;1)
=
\frac{1}{3n^2+5n+76}
\\
\times\Bigg(
3(n+1)(n-2)\cdot W_1(n-2;1)
+
6n\cdot W_1(n)
+
\frac{2(n+1)(n-2)}{n}\cdot W_1(n-2)
\\
+
\frac{24(n-1)^2}{n^2}
+
\frac{96}{n}
+
\frac{4(n+1)(n-3)}{n^2(n-2)}
+
\frac{8(n+1)(n-1)}{3n^2(n-2)^2}
\Bigg)\,.
\end{multline}
Replacing $W_1(n;1)$ and $W_1(n-2;1)$ in~\eqref{eq:recurrence:W:1:1:expanded} by the explicit expression implied by Formula~\eqref{eq:volume:n:1}, replacing $W_1(n)$ and $W_1(n-2)$ in~\eqref{eq:recurrence:W:1:1:expanded} by the expression implied by Formula~\eqref{eq:volume:n}, which we already proved, and simplifying we obtain an identity. This shows that the expression~\eqref{eq:volume:n:1} satisfies the recursion and Theorem~\ref{th:n:1} is proved.
\end{proof}
\begin{remark}
Having proved Formulae~\eqref{eq:volume:n} and~\eqref{eq:volume:n:1} we see that $W_1(n)=O(1)$ and $W_1(n;1)=O(1)$ as $n\to\infty$.
Analyzing the seven additive terms in the brackets in~\eqref{eq:recurrence:W:1:1:expanded} we
conclude that the corresponding contributions (before division by $3n^2+5n+76$)
are respectively $O(n^2), O(n), O(n), O(1), O(n^{-1}), O(n^{-1}), O(n^{-2})$.
This explains the order in which we placed the additive terms in~\eqref{eq:n:1:init}.
\end{remark}

\begin{remark}
%\label{rm:Q:n:1:fits:recursion}
Let $C_1:=C_1^{\mathit{even}}=C_1^{\mathit{odd}}=1/3$. Replacing $W_1(n)$ in~\eqref{eq:recurrence:W:1:1:expanded} by the explicit expression implied by Formula~\eqref{eq:volume:n}, we get a recurrence relation for $W_1(n;1)$ in terms of $W_1(n-2;1)$ and of an explicit rational function in $n$. After passing to a normalized variable $V_1(n;1):=\frac{W_1(n;1)}{C_1}=3\cdot W_1(n;1)$, the multiplicative coefficient in front of $V_1(n-2;1)$ in this recursion becomes
\[
\frac{3(n+1)(n-2)}{3n^2+5n+76}
=
1-\frac{8n+82}{3n^2+5n+76}
=
1-\frac{8}{3n}+O\left(\frac1{n^2}\right),
\]
while the additive term becomes
\[
\frac{2(n+1)(4n^3+31n^2-43n-76)}
{n^2(n-2)(3n^2+5n+76)}
=\frac{8}{3n}+O\left(\frac1{n^2}\right).
\]
Thus
\[
V_1(n;1)
=
\left(
1-\frac{8}{3n}+O\left(\frac1{n^2}\right)
\right)V_1(n-2;1)
+
\frac{8}{3n}+O\left(\frac1{n^2}\right)
\]
and we see that the recurrence fits the
general scheme of Lemma~\ref{lm:lemma:in:calculus}.
\end{remark}

%--------------------------------------------------------------
\subsection{Minimal stratum in arbitrary genus}
\label{ss:Minimal:stratum:in:arbitrary:genus}

\begin{proposition}
\label{prop:minimal:any:g}
For any genus $g\ge 0$ there exist strictly positive constants $C_g^{\mathit{even}}$
and $C_g^{\mathit{odd}}$ such that
\begin{equation}
\label{eq:W:minimal:genus:g}
W_g(n)=
\begin{cases}
C_g^{\mathit{even}}+O_g\left(\frac{1}{n}\right)
        &\text{for even }n\,,
\\
C_g^{\mathit{odd}}+O_g\left(\frac{1}{n}\right)
        &\text{for odd }n\,.
\end{cases}
\end{equation}
\end{proposition}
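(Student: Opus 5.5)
We argue by induction on $g$. For $g=0$ the claim is~\eqref{eq:W:in:genus:0} with $C_0^{\mathit{even}}=C_0^{\mathit{odd}}=2\pi^2$, and for $g=1$ it follows from~\eqref{eq:W1:answer} with both constants equal to $\frac13$. Fix $g\ge2$ and assume that~\eqref{eq:W:minimal:genus:g} holds for every genus $g'<g$, so that all $W_{g'}(m)$ with $g'<g$ are bounded. We apply the main recurrence~\eqref{eq:EKZ:in:W:g} with $d=\emptyset$, $D=k=\Delta=0$, and take $n>4g$. Then the stratum has at least five poles and is connected, and Proposition~\ref{pr:list:of:configurations:in:minimal:stratum} restricts the configurations to types I, II and III. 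The denominator in~\eqref{eq:EKZ:in:W:g} is exactly $n+5-6g+\frac{2}{n+2}+12\Lambda^+_g(n)$. By Fougeron's bound~\eqref{eq:Fougeron:bound}, together with nonnegativity of the exponents, we get $0\le 12\Lambda_g^+\le 12(3g-1)g/(n+2)$. Hence the denominator equals $n+5-6g+O_g(1/n)$.

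Next we sort the configurations by size using~\eqref{eq:order:of:C} and Lemma~\ref{lm:finite:sum}. The type III configuration with no $\oplus$-vertices and $g_0=g$ is the ordinary pocket. By Corollary~\ref{cor:pocket:renormalized} with $D=k=0$ it contributes
\[
(n+1)\Bigl(\frac{n-2}{n}\Bigr)^g\frac{(n-4g+4)(n-4g+3)}{(n-2g+2)(n-2g+1)}\,W_g(n-2).
\]
Expanding, this prefactor is $n+1-2g-4g+O(1/n)=n+1-6g+O_g(1/n)$. Here the $g$-power gives $-2g$, and the factorial ratio gives $(1-(2g-2)/n)(1-(2g-2)/n)\approx 1-(4g-4)/n$ after combining. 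The crucial point is that this coefficient matches the denominator $n+5-6g$ up to $-4+O(1/n)$, and I expect this step to be the main obstacle. If the match is exact to order $1/n$, then the ratio is $1-\frac{4}{n}+O(1/n^2)$, which is not of the form in Lemma~\ref{lm:lemma:2:in:calculus}. I will therefore recompute carefully. The factorial ratio $\frac{(n-4g+4)(n-4g+3)}{(n-2g+2)(n-2g+1)}=1-\frac{4g-4}{n}+O(n^{-2})$, and combined with the $n+1$ and the $(1-2/n)^g$ factors this gives $n+1-2g-(4g-4)=n+5-6g$. So the leading coefficient agrees exactly with the denominator up to $O(1/n)$, and the ratio is $1+O(1/n^2)$, which is precisely what the minimal-stratum remark in genus $1$ exhibited. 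The bookkeeping must be done to second order: all terms of order $1$ must cancel so that $\alpha_n=O(n^{-2})$.

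All remaining configurations contribute $O(1/n)$ to the sum and hence $O(1/n^2)$ after division. These are the pockets through tori, type III with $\oplus$-vertices, types I and II, and the figure-eight. We verify this with the exponent count $\delta(\cC)$ of Propositions~\ref{prop:negligible:b} and~\ref{prop:negligible:c}, using boundedness of $W_{g'}$ for $g'<g$; every such configuration has $g_1\le g-1$. For example, the figure-eight is $O(1/n)\cdot W_{g-1}(n-4)$ by Corollary~\ref{cor:figure:eight:general:renormalized}, and the other $\delta$-values are at most $-2$. All these additive terms are nonnegative. We then obtain $W_g(n)=(1+\alpha_n)W_g(n-2)+\beta_n$ with $\alpha_n,\beta_n=O_g(n^{-2})$ and $\beta_n\ge0$, and $W_g(n)>0$ because the strata are nonempty. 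Lemma~\ref{lm:lemma:2:in:calculus} then yields the parity constants $C_g^{\mathit{even}}$ and $C_g^{\mathit{odd}}$, both strictly positive, with error $O_g(1/n)$. This completes the induction.
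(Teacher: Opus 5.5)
Your proposal is correct and follows essentially the same route as the paper. The ingredients coincide: induction on $g$; the ordinary pocket as the only non-negligible term, with prefactor $(n+1)\bigl(\frac{n-2}{n}\bigr)^g\frac{(n-4g+4)(n-4g+3)}{(n-2g+2)(n-2g+1)}=n+5-6g+O_g(1/n)$ cancelling the denominator to order $1/n$; the remaining configurations controlled by Propositions~\ref{prop:negligible:b} and~\ref{prop:negligible:c}; then Lemma~\ref{lm:lemma:2:in:calculus}. Delete your first expansion giving $n+1-6g$, which wrongly took the factorial ratio as $1-\frac{4g}{n}$ instead of $1-\frac{4g-4}{n}$; your corrected computation is the one the argument needs.
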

\begin{proof}
For $g=0$ the statement follows from~\eqref{eq:W:in:genus:0}.
For $g=1$ the statement follows from~\eqref{eq:volume:n}.
Assume that it is true for
all $g'<g$ and let us prove it for genus $g$.

For the minimal stratum we have $D=k=\Delta=0$, see~\eqref{eq:D:and:Delta},
so our main recurrence relation~\eqref{eq:EKZ:in:W:g} takes the form
\begin{equation}
\label{eq:EKZ:minimal:g}
W_g(n)
=\frac{1}{n+5-6g
+O_g\left(\frac{1}{n}\right)}
\sum_{\cC} C_g^{\cC}(n)\,.
\end{equation}
Decorated ribbon graphs which appear in the minimal stratum $\cQ_g(n,-1^{n-4g+4})$
are described by Proposition~\ref{pr:list:of:configurations:in:minimal:stratum}
and presented in Figure~\ref{fig:minimal:genus:g}. Only graphs of types
``b'' and ``c'' are present in the list.

By the induction assumption $W_{g-1}(n-4)=O(1)$.
The contribution of the figure-eight configuration (represented in the left picture in Figure~\ref{fig:Ba:and:Bb})
is given in~\eqref{eq:figure:eight:genus:g:renormalized}, where
for the minimal stratum
we let $d=\emptyset$
and $k=D=0$.
In renormalized variables it is negligible:
\begin{multline*}
C_g^{\textit{figure-eight},n}(n)
=
\frac{4}{\pi^2}
\left(\frac{n-4}{n}\right)^{g-1}
\frac{(n+1)(n-1)^2}
{n^2(n-2g+1)(n-2g+2)}
W_{g-1}(n-4)\,.
\\
=O(1)\cdot O(n^{-1})\cdot O(1)=O(n^{-1})\,.
\end{multline*}

A parallelogram configuration does not exist in the minimal stratum.
By Proposition~\ref{prop:negligible:b}
combined with the induction assumption, the total contribution of all other graphs of type ``b'' to the sum~\eqref{eq:EKZ:minimal:g} is of order $O_g(n^{-2})$. Thus, the total contribution of graphs of
type ``b'' is of order $O(n^{-1})$.

Among graphs of type ``c'' we have a pocket with a zero of degree $n$
at the base of the cylinder, see Figure~\ref{fig:pocket}. Its
contribution in renormalized variables is given by Formula~\eqref{eq:Ca:total:renormalized:g}, where for the minimal stratum
we let $d=\emptyset$
and $k=D=0$,
\[
C_g^{\mathit{pocket}}(n)
=(n+1)\cdot
\frac{(n-4g+4)(n-4g+3)}
{(n-2g+2)(n-2g+1)}
\left(\frac{n-2}{n}\right)^g
W_g(n-2)\,.
\]
By Proposition~\ref{prop:negligible:c} combined with the induction assumption,
the total contribution of all other graphs of type ``c'' is of order $O_g(n^{-2})$. Thus
Formula~\eqref{eq:EKZ:minimal:g} can be rewritten as
\begin{multline*}
W_g(n)=\frac{(n+1)}{\left(n+5-6g+O(n^{-1})\right)(n-2g+2)(n-2g+1)}
\\
\times\Bigg(
(n-4g+4)(n-4g+3)\cdot\left(\frac{n-2}{n}\right)^g\cdot W_g(n-2)
+O(1)\Bigg)\,.
\end{multline*}
We can write the latter recurrence in the form
\begin{equation}
\label{eq:W:g:n:recursion}
W_g(n)=A_g(n)W_g(n-2)+O\left(\frac{1}{n^2}\right)\,,
\end{equation}
where
\[A_g(n)=
\frac{(n+1)(n-4g+4)(n-4g+3)}
{(n+5-6g+O(n^{-1}))(n-2g+2)(n-2g+1)}
\cdot
\frac{(n-2)^g}{n^g}
\]

For fixed $g$, expanding the rational factor in $A_g(n)$ gives
\begin{multline*}
\frac{(n+1)(n-4g+4)(n-4g+3)}
{(n+5-6g+O_g(n^{-1}))(n-2g+2)(n-2g+1)}
\\
=
\frac{1+\frac{8-8g}{n}+O_g\left(\frac1{n^2}\right)}
{1+\frac{8-10g}{n}+O_g\left(\frac1{n^2}\right)}
=1+\frac{2g}{n}+O_g\left(\frac1{n^2}\right).
\end{multline*}
On the other hand,
\[
\frac{(n-2)^g}{n^g}
=\left(1-\frac2n\right)^g
=1-\frac{2g}{n}+O_g\left(\frac1{n^2}\right).
\]
The terms of order $1/n$ therefore cancel, and we obtain
\[
A_g(n)=1+O_g\left(\frac1{n^2}\right).
\]
Thus recurrence~\eqref{eq:W:g:n:recursion} satisfies the hypotheses of Lemma~\ref{lm:lemma:2:in:calculus}. It follows that there exist finite constants $C_g^{\mathit{even}}$ and $C_g^{\mathit{odd}}$ such that
\begin{align*}
W_g(2n)&=C_g^{\mathit{even}}+O_g\left(\frac1n\right),\\
W_g(2n+1)&=C_g^{\mathit{odd}}+O_g\left(\frac1n\right).
\end{align*}

The additive term in~\eqref{eq:W:g:n:recursion} is the sum of the contributions of all configurations other than the pocket, divided by the denominator in~\eqref{eq:EKZ:minimal:g}. These contributions are nonnegative, and the denominator is positive for all sufficiently large $n$. Hence the additive term is nonnegative. Since $W_g(n)>0$ by the geometric nature of the volume, the positivity assertion of Lemma~\ref{lm:lemma:2:in:calculus} shows that both limiting constants are strictly positive. This proves~\eqref{eq:W:minimal:genus:g}, together with the stated $O_g(n^{-1})$ error estimates, and completes the proof of Proposition~\ref{prop:minimal:any:g}.
\end{proof}

%--------------------------------------------------------------
\subsection{Non-minimal stratum in arbitrary genus}
\label{ss:Generic:stratum:in:arbitrary:genus}

\begin{proposition}
\label{prop:general:any:g}
Consider
the constants $C_g^{\mathit{even}}$ and $C_g^{\mathit{odd}}$ established in Proposition~\ref{prop:minimal:any:g}.
Consider a pair $(g, d)$ satisfying all of the following conditions
\[
g\ge 1,\qquad k\ge 1,\qquad  d_1\ge \dots \ge d_k\ge 1\,.
\]
Assume that relations~\eqref{eq:W:any:genus:g:limits} are valid for all collections $g',k',d'$ satisfying~\eqref{eq:boundary:strata:data} with $g'<g$, or $g'=g$ and $D'<D$. Then relations~\eqref{eq:W:any:genus:g:limits} are also valid for the pair $(g,d)$.
\end{proposition}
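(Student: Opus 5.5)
The plan is to plug the dominant contributions from Sections~\ref{s:Contributions:of:graphs:a}--\ref{s:Dominant:and:negligible:contributions} into the main recurrence~\eqref{eq:EKZ:in:W:g}. The goal is a recurrence of the shape of Lemma~\ref{lm:lemma:in:calculus} for the rescaled variable $U(n):=W_g(n;d)/C_g^{p(n)}$. Note that $n$ and $n-2$ have the same parity, so the rescaling constant is the same on both sides.

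\textbf{Step 1: isolate the non-negligible contributions.} By Propositions~\ref{prop:negligible:a}, \ref{prop:negligible:b}, \ref{prop:negligible:c}, which apply thanks to the induction hypothesis (it guarantees boundedness of all $W_{g'}(n';d')$ with simpler data), only a few configurations contribute beyond $O_{g,d}(n^{-1})$ to $\sum_\cC C_g^\cC(n;d)$. These are:
\begin{itemize}
\item the pocket with $n$ at the base, Corollary~\ref{cor:pocket:renormalized};
\item the pockets with $d_j$ at the base, Corollary~\ref{cor:pocket:d:renormalized}, which give $(d_j+1)W_g(n;\dots,d_j-2,\dots)(1+O(1/n))$;
\item the distinguished dumbbells, Proposition~\ref{prop:distinguished:dumbbell}, which give $\frac{2}{d_i+2}W_g(n-2;d\setminus d_i)(1+O(1/n))$.
\end{itemize}
The figure-eight contribution is $O(1/n)$. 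In the pocket-at-$d_j$ and dumbbell terms, $D'<D$, so by the induction hypothesis both equal $C_g^{p(n)}+O(1/n)$; here I use the conventions~\eqref{eq:W:of:0}, \eqref{eq:W:of:minus:1}, and for the dumbbell the fact that $n-2\equiv n$. Hence these terms sum to
\[
C_g^{p(n)}\Bigl(\sum_j (d_j+1)+\Delta\Bigr)+O(1/n)=C_g^{p(n)}(D+k+\Delta)+O(1/n).
\]

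\textbf{Step 2: expand the pocket coefficient.} The pocket coefficient is
\[
(n+1)\Bigl(\tfrac{n-2}{n}\Bigr)^g\frac{(n+D-4g+4)(n+D-4g+3)}{(n+D+k-2g+2)(n+D+k-2g+1)} = n+3-2g-4g-2k+2g+O(1/n),
\]
which I expect to equal $n+3-4g-2k+O(1/n)$. Check: $1+\frac1n$, then $-\frac{2g}{n}$, then the ratio contributes $\frac{2(D-4g+4)-2(D+k-2g+2)+\dots}{n}$, giving $\frac{-4g-2k+4}{n}$ approximately. So the total $1/n$ coefficient is $1-2g-4g-2k+4$, up to the $\pm1$ shifts that I will compute carefully. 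Dividing~\eqref{eq:EKZ:in:W:g} by the denominator $n+D-k+5-6g+\Delta+O(1/n)$ gives
\[
U(n)=\frac{n-\mu+O(1/n)}{n+D-k+5-6g+\Delta}\,U(n-2)+\frac{D+k+\Delta+O(1/n)}{n+\dots}.
\]
Setting $a(n)=1-(\text{coefficient})=\frac{c}{n}+O(n^{-2})$, the claim to verify is that $c=D+k+\Delta$ exactly. Then the additive term equals $a(n)r(n)$ with $r(n)=1+O(1/n)$. For the check, I expect $\mu=2g+2k-5+\dots$ so that $c=(D-k+5-6g+\Delta)+\mu=D+k+\Delta$. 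The genus-one remark computing $c=8/3=1+1+2/3$ for $d=(1)$ confirms this pattern.

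\textbf{Step 3: conclude.} Since $k\ge1$ and $d_j\ge1$, we get $c=D+k+\Delta\ge 2+\frac{2}{d_1+2}>2$, so Lemma~\ref{lm:lemma:in:calculus} applies. It yields $U(n)=1+O(1/n)$, i.e.\ $W_g(n;d)=C_g^{p(n)}+O_{g,d}(1/n)$, and the constants remain those of the minimal stratum. Positivity is inherited from Proposition~\ref{prop:minimal:any:g}.

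The main obstacle is the exact bookkeeping in Step~2. The $1/n$ coefficient of the pocket term must combine with the denominator to give precisely $c=D+k+\Delta$; any discrepancy would make the limit differ from $C_g^{p(n)}$. The Lyapunov term $12\Lambda_g^+=O(1/n)$ only affects the $O(n^{-2})$ level via Fougeron's bound, so it is harmless.
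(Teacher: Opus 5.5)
Your route is the paper's: the same dominant configurations (the pocket at $n$, the pockets at $d_j$, the distinguished dumbbells), and the same use of the induction hypothesis to turn the last two into $(D+k+\Delta)\,C_g^{p(n)}+O(1/n)$. You also use the same rescaling by $C_g^{p(n)}$ and the same appeal to Lemma~\ref{lm:lemma:in:calculus}.

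The gap is Step~2. It is the only step where anything has to be proved, and you leave it as ``the claim to verify'' while writing values that are wrong.
\begin{itemize}
\item Your first expansion $n+3-4g-2k+O(1/n)$ is incorrect for $g\ge 2$.
\item Your $\mu=2g+2k-5$ gives $c=s+\mu=D+k+\Delta-4g$, where $s=D-k+5-6g+\Delta$. Then the multiplicative $1/n$-term no longer matches the additive term $\frac{D+k+\Delta}{n}$, and the limit would not be $C_g^{p(n)}$.
\end{itemize}
Your ``Check'' line actually produces the correct coefficient $1-2g+(4-4g-2k)=5-6g-2k$, but you then discard it. The genus-one remark is no confirmation either. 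The expression $n+3-4g-2k$ agrees with the correct value exactly when $g=1$, and $\mu=2g+2k-5$ already fails that check: it gives $c=-\frac43$ instead of $\frac83$ for $d=(1)$.

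The missing computation is short. The ratio of the four linear factors is $1+\frac{4-4g-2k}{n}+O(n^{-2})$, so
\[
(n+1)\left(1-\frac2n\right)^{g}\frac{(n+D-4g+4)(n+D-4g+3)}{(n+D+k-2g+2)(n+D+k-2g+1)}=n+5-6g-2k+O\left(\frac1n\right).
\]
Hence $\mu=6g+2k-5$. Dividing by $n+s+O(1/n)$ gives the multiplier $1-\frac{D+k+\Delta}{n}+O(n^{-2})$. The constants $5$ and $-6g$ cancel exactly against those in the Eskin--Kontsevich--Zorich denominator, leaving $(5-6g-2k)-s=-(D+k+\Delta)$. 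This is precisely the paper's check that $(1-s)-2g+(4-4g-2k)=-(D+k+\Delta)$.

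With this in place you get $a(n)=\frac{c}{n}+O(n^{-2})$ and $r(n)=1+O(1/n)$ with $c=D+k+\Delta>2$. The rest of your argument then goes through as written.
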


By the induction assumption, we have $C_g^{p(n)}\ne 0$,
where $p(n)\in\{\mathit{even, odd}\}$ is the parity of $n$,
so we can define
\begin{equation}
\label{eq:V:and:W}
V_g(n;d_1,\dots,d_k)
:=\frac{W_g(n;d_1,\dots,d_k)}{C_g^{p(n)}}.
\end{equation}
Proposition~\ref{prop:general:any:g} follows immediately from the following one
\begin{proposition}
\label{prop:reccurence:matches:lemma:from:calculus}
Under the assumptions above the variables $V_g(n;d)$
satisfy the following relation as $n\to+\infty$.
\begin{equation}
\label{eq:relation:for:V:to:fit:lemma:from:calculus}
V_g(n;d)
=A_{g,d}(n)\cdot V_g(n-2;d)
+\frac{D+k+\Delta}{n}
+O_{g,d}\left(\frac1{n^2}\right)\,,
\end{equation}
where
\begin{equation}
\label{eq:relation:for:V:to:fit:lemma:from:calculus:lambda}
A_{g,d}(n)=
\left(1-\frac{D+k+\Delta}{n}
+O_{g,d}\left(\frac1{n^2}\right)\right)\,.
\end{equation}
\end{proposition}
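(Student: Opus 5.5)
We start from the main recurrence relation~\eqref{eq:EKZ:in:W:g} for the stratum $\cQ_g(n,d,-1^{n+D-4g+4})$ and split the sum over configurations into dominant terms and negligible remainders, using Propositions~\ref{prop:negligible:a}, \ref{prop:negligible:b} and~\ref{prop:negligible:c}. Their hypotheses hold here: every boundary datum other than that of the pocket with the zero of degree $n$ at the base has $g'<g$, or $g'=g$ and $D'<D$, by Proposition~\ref{pr:boundary:stratum:is:simpler}. For such data the induction assumption gives bounded $W_{g'}(n';d')$. The surviving terms of order $O(1)$ are the following.

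\begin{enumerate}
\item The pocket with $n$ at the base, given by Corollary~\ref{cor:pocket:renormalized}, whose contribution is of order $n$ times $W_g(n-2;d)$.
\item The $k$ pockets with $d_j$ at the base, given by Corollary~\ref{cor:pocket:d:renormalized}. These contribute $(d_j+1)\,W_g(n;d_1,\dots,d_j-2,\dots,d_k)\,(1+O(1/n))$.
\item The $k$ distinguished dumbbells of Proposition~\ref{prop:distinguished:dumbbell}. These contribute $\frac{2}{d_j+2}\,W_g(n-2;d\setminus d_j)\,(1+O(1/n))$.
\end{enumerate}

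Everything else is $O(1/n)$: the figure-eight term (Corollary~\ref{cor:figure:eight:general:renormalized} together with the induction bound on $W_{g-1}$), the parallelogram terms (which carry an explicit $1/n$ factor), and the remaining graphs. After dividing by $n+D-k+5-6g+\Delta+O(1/n)$, these become $O(1/n^2)$.

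Next we plug in the induction hypothesis for the two simpler families. Since $d_j-2$ and $d\setminus d_j$ are strictly smaller data in genus $g$, with the conventions~\eqref{eq:W:of:0} and~\eqref{eq:W:of:minus:1} for the entries $0$ and $-1$, we have
\[
W_g(n;\dots,d_j-2,\dots)=C_g^{p(n)}+O(1/n),\qquad W_g(n-2;d\setminus d_j)=C_g^{p(n)}+O(1/n),
\]
because $n-2$ has the same parity as $n$. Dividing everything by $C_g^{p(n)}$, the constant terms collected in the numerator become
\[
\sum_j (d_j+1)+\sum_j\frac{2}{d_j+2}=D+k+\Delta,
\]
up to $O(1/n)$. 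Dividing by the denominator, which is $\sim n$, produces the additive term $\frac{D+k+\Delta}{n}+O(1/n^2)$.

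It remains to compute the coefficient of $V_g(n-2;d)$. This coefficient is
\[
\frac{(n+1)\left(\frac{n-2}{n}\right)^g (n+D-4g+4)(n+D-4g+3)}{(n+D-k+5-6g+\Delta+O(1/n))\,(n+D+k-2g+2)(n+D+k-2g+1)}.
\]
Expanding to first order in $1/n$, the numerator gives $1+\frac{1-2g+2(D-4g)+7}{n}$ and the denominator gives $1+\frac{D-k+5-6g+\Delta+2(D+k-2g)+3}{n}$. The difference is
\[
\frac{-D-k-\Delta}{n},
\]
because the $g$-terms cancel ($-2g-8g$ against $-6g-4g$) and the constants cancel ($8$ against $8$). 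This gives the coefficient $A_{g,d}(n)=1-\frac{D+k+\Delta}{n}+O(1/n^2)$, which is~\eqref{eq:relation:for:V:to:fit:lemma:from:calculus:lambda}.

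The main obstacle is bookkeeping rather than ideas. One has to ensure that every configuration contributing at order $1/n$ after division has been accounted for, with the exact constant. In particular, $|\Gamma|=1$ and the absence of extra symmetry factors must be confirmed for the distinguished dumbbells and the $d_j$-pockets. One must also check that the $O(1/n)$ errors in the induction hypothesis enter only at order $1/n^2$ after division. Finally, Proposition~\ref{prop:general:any:g} follows by applying Lemma~\ref{lm:lemma:in:calculus} with $c=D+k+\Delta$. This requires $c>2$, which holds as soon as $k\ge1$ and $d_1\ge1$, since then $D+k\ge2$ and $\Delta>0$.
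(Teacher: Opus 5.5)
Your proposal is correct and follows the paper's proof: you isolate the $n$-pocket, the $k$ pockets at $d_j$ and the $k$ distinguished dumbbells via Propositions~\ref{prop:negligible:a}--\ref{prop:negligible:c}, feed the induction hypothesis into the latter two families to obtain the additive term $(D+k+\Delta)\,C_g^{p(n)}$, and expand the coefficient of $W_g(n-2;d)$ to first order in $1/n$. Your numerator-versus-denominator expansion is only a regrouping of the paper's product of three expansions and gives the same $-\frac{D+k+\Delta}{n}$; one small remark is that the parallelogram terms are in fact $O(n^{-2})$ rather than merely $O(n^{-1})$, which only helps.
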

\begin{proof}
Under the assumptions of Proposition~\ref{prop:general:any:g},
Proposition~\ref{prop:negligible:a}
identifies dumbbell configurations considered in Proposition~\ref{prop:distinguished:dumbbell}
as the only configurations of type ``a'' carrying a non-negligible contribution to the sum in~\eqref{eq:EKZ:in:W:g}. The contribution
of the dumbbell configurations given by~\eqref{eq:dumbell:genus:g}
constitutes the fourth line in the expression~\eqref{eq:alltogether:renormalized:g} below.

Under the assumptions of Proposition~\ref{prop:general:any:g},
Proposition~\ref{prop:negligible:b}
claims that the total contribution of all configurations
of type ``b'' to the sum in~\eqref{eq:EKZ:in:W:g} is negligible.

Under the assumptions of Proposition~\ref{prop:general:any:g},
Proposition~\ref{prop:negligible:c}
identifies pocket configurations considered in Corollaries~\ref{cor:pocket:renormalized}
and~\ref{cor:pocket:d:renormalized} as the only configurations of type ``c'' carrying a non-negligible contribution to the sum in~\eqref{eq:EKZ:in:W:g}. The contribution
of the pocket configurations given by~\eqref{eq:Ca:total:renormalized:g}
and~\eqref{eq:Ca:total:renormalized:dj:g}
constitutes the second and the third lines respectively in the expression~\eqref{eq:alltogether:renormalized:g} below.
Thus~\eqref{eq:EKZ:in:W:g} takes the form of the first equality
in the expression below.
\begin{multline}
\label{eq:alltogether:renormalized:g}
W_g(n;d_1,\dots,d_k)=
\frac{1}{
n+D-k+5-6g+\Delta
+O_{g,d}\left(\frac1n\right)
}
\\
\times\bigg(
(n+1)\cdot
\left(\frac{n-2}{n}\right)^g\cdot
\frac{(n+D-4g+4)(n+D-4g+3)}{(n+D+k-2g+2)(n+D+k-2g+1)}
W_g(n-2;d)
\\
+\sum_{j=1}^k (d_j+1)\cdot
W_g(n;d_1,\dots,d_{j-1},d_j-2,d_{j+1},\dots,d_k)
\cdot\left(1+O_{g,d}\left(\frac1n\right)\right)
\\
+\sum_{j=1}^k
\frac{2}{d_j+2}\cdot
W_g(n-2;d_1,\dots,\widehat{d_j},\dots,d_k)
\left(1+O_{g,d}\left(\frac1n\right)\right)
+\mathcal{E}_{g,d}(n)
\bigg)
\\
=
\frac{1}{
n+D-k+5-6g+\Delta
+O_{g,d}\left(\frac1n\right)
}
\\
\times\bigg(
(n+1)\cdot
\left(\frac{n-2}{n}\right)^g\cdot
\frac{(n+D-4g+4)(n+D-4g+3)}{(n+D+k-2g+2)(n+D+k-2g+1)}
W_g(n-2;d)
\\
+(D+k+\Delta)\cdot C_g^{p(n)}
+O_{g,d}\left(\frac{1}{n}\right)
\bigg)\,,
\end{multline}

Let us justify the second equality. Before applying the induction hypothesis, we use Remarks~\ref{rm:entry:minus:1} and~\ref{rm:entry:minus:1:in:W}, namely, entries $-1$ are absorbed into the pole list, and entries $0$ are removed from the normalized variable using~\eqref{eq:W:of:0}.

For the pocket configurations with a zero of degree $d_j$ at the base of the cylinder, the sum of the reduced positive fixed degrees is $D-1$ when $d_j=1$ and $D-2$ when $d_j\geq2$. For the distinguished dumbbell terms, this sum is $D-d_j$. In each case it is strictly smaller than $D$. Thus the induction hypothesis gives
\begin{align*}
&W_g(n;d_1,\dots,d_{j-1},d_j-2,d_{j+1},\dots,d_k)
= C_g^{p(n)}+O_{g,d}\left(\frac{1}{n}\right)\,,
\\
&W_g(n-2;d_1,\dots,\widehat{d_j},\dots,d_k)
= C_g^{p(n)}+O_{g,d}\left(\frac{1}{n}\right)\,.
\end{align*}
By Propositions~\ref{prop:negligible:a}, \ref{prop:negligible:b}, \ref{prop:negligible:c} the induction hypothesis implies that the total contribution
$\mathcal{E}_{g,d}(n)$ to the above sum of all configurations
distinct from the pocket configurations and from the distinguished
dumbbell configurations already present in the formula is of order
$\mathcal{E}_{g,d}(n)=O\left(n^{-1}\right)$. Finally, we used
the relation
\[
\sum_{j=1}^k (d_j+1)
+\sum_{j=1}^k
\frac{2}{d_j+2}
=D+k+\Delta\,,
\]
which instantly follows from Definition~\eqref{eq:D:and:Delta}
of $D$ and $\Delta$.

Let us compute the first two terms of the expansion of the coefficient in front of $W_g(n-2;d_1,\dots,d_k)$ in negative powers of $n$. Set
\[
s:=D-k+5-6g+\Delta.
\]
Then
\[
\frac{n+1}{n+s+O_{g,d}\left(\frac1n\right)}
=
1+\frac{1-s}{n}
+O_{g,d}\left(\frac1{n^2}\right).
\]
Also,
\[
\left(\frac{n-2}{n}\right)^g
=
\left(1-\frac2n\right)^g
=
1-\frac{2g}{n}
+O_g\left(\frac1{n^2}\right),
\]
and
\begin{multline*}
\frac{(n+D-4g+4)(n+D-4g+3)}
{(n+D+k-2g+2)(n+D+k-2g+1)}
\\
=
1+\frac{4-4g-2k}{n}
+O_{g,d}\left(\frac1{n^2}\right).
\end{multline*}
Multiplying the three expansions, we find that the coefficient of $\frac1n$ is
\[
(1-s)-2g+(4-4g-2k).
\]
Since
\[
s=D-k+5-6g+\Delta,
\]
we get
\[
(1-s)-2g+(4-4g-2k)
=
-D-k-\Delta.
\]
Therefore
\begin{multline*}
\frac{1}{
n+D-k+5-6g+\Delta
+O_{g,d}\left(\frac1n\right)
}
\cdot
(n+1)
\left(\frac{n-2}{n}\right)^g
\\
\times
\frac{(n+D-4g+4)(n+D-4g+3)}
{(n+D+k-2g+2)(n+D+k-2g+1)}
\\
=
1-\frac{D+k+\Delta}{n}
+O_{g,d}\left(\frac1{n^2}\right).
\end{multline*}

We can rewrite Formula~\eqref{eq:alltogether:renormalized:g} as
\begin{multline*}
W_g(n;d_1,\dots,d_k)
=
\left(1-\frac{D+k+\Delta}{n}
+O_{g,d}\left(\frac1{n^2}\right)\right)
W_g(n-2;d_1,\dots,d_k)
\\
+
\frac{D+k+\Delta}{n}\cdot C_g^{p(n)}
+
O_{g,d}\left(\frac1{n^2}\right).
\end{multline*}
Dividing by $C_g^{p(n)}$
we get~\eqref{eq:relation:for:V:to:fit:lemma:from:calculus}
and~\eqref{eq:relation:for:V:to:fit:lemma:from:calculus:lambda}.
Proposition~\ref{prop:reccurence:matches:lemma:from:calculus}
is proved.
\end{proof}

\begin{proof}[Proof of Proposition~\ref{prop:general:any:g}]
Since the ambient stratum is non-minimal, we have
\[
c_d:=D+k+\Delta\ge 1+1+2\cdot\frac{1}{(1+2)}>2
\]
Let $a_{g,d}(n)=1-A_{g,d}(n)$, where $A_{g,d}(n)$ is given by~\eqref{eq:relation:for:V:to:fit:lemma:from:calculus:lambda}. Let $r_{g,d}(n)$ be the actual additive term in~\eqref{eq:relation:for:V:to:fit:lemma:from:calculus} divided by $a_{g,d}(n)$. Then
\[
a_{g,d}(n)=\frac{c_d}{n}+O_{g,d}(n^{-2}),\qquad
r_{g,d}(n)=1+O_{g,d}(n^{-1}).
\]
Since $p(n-2)=p(n)$ in~\eqref{eq:V:and:W}, we conclude that~\eqref{eq:relation:for:V:to:fit:lemma:from:calculus}
fits the recurrence formalized in Lemma~\ref{lm:lemma:in:calculus}. That lemma gives $V_g(n;d)=1+O_{g,d}(n^{-1})$, hence $W_g(n;d)=C_g^{p(n)}+O_{g,d}(n^{-1})$.
Proposition~\ref{prop:general:any:g} is proved.
\end{proof}

%#################################################################
\appendix
\section{Encoding principal boundary by decorated ribbon graphs}
\label{a:configurations}

To make the paper self-contained we recall briefly an encoding of configurations of saddle connections (or, equivalently, of the principal boundary strata) by decorated ribbon graphs introduced in~\cite{Masur:Zorich}.

% ---------------------------------------------------------------
\subsection{Graph of connected components}
%\label{ss:Graph:of:connected:components}

A  collection   $\gamma$   of   $\hat{h}$\hspace*{-1pt}omologous  saddle  connections
$\gamma=\{\gamma_1,\ldots,\gamma_n\}$  divides  $S$ into  simpler
surfaces   $S_j$   with  boundary.  We  associate  to  any   such
decomposition a  graph  $\Gamma(S,\gamma)$.  The  vertices of the
graph correspond to the connected components $S_j$ of $S\setminus
(\gamma_1\cup\dots\cup\gamma_n)$.   We    denote   the   vertices
corresponding to cylinders (if any) by small circles ``$\circ$''.
The remaining  vertices are labelled with  a ``$+$'' sign  if the
corresponding surface $S_j$ has trivial linear  holonomy and with
a  ``$-$''  sign  if it does not.

The   union   $\gamma=\gamma_1\cup\dots\cup\gamma_n$  of   saddle
connections can be considered  as  a graph $\gamma$ embedded into
the surface  $S$.  By  definition  $\Gamma(S,\gamma)$  is a graph
dual  to  $\gamma$.

\begin{figure}[htb]
\includegraphics[scale=0.8]{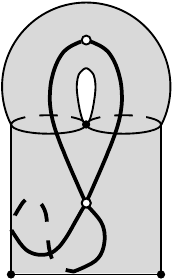}
\hspace*{80pt}
\includegraphics[scale=0.8,angle=90]{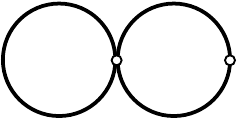}
\begin{picture}(0,0)(304,-111)%(0,0)(197,0)
\put(88,-35){$S_2$}
\put(90,-80){$S_1$}
\put(131,-14){$v_2$}
\put(138,-84){$v_1$}
\put(113,-61){$\gamma_2$}
\put(147,-61){$\gamma_3$}
\put(130,-118){$\gamma_1$}
\end{picture}
\begin{picture}(0,0)(163,-111) %(66,0)
\put(131,-16){$v_2$}
\put(131,-75){$v_1$}
\put(100,-45){$\gamma_2$}
\put(159,-45){$\gamma_3$}
\put(131,-118){$\gamma_1$}
\end{picture}
\vspace{10pt} % {120bp}
   %        %
\caption{
Example of a stable ribbon graph
as in~\cite[Fig.~2]{Masur:Zorich}
}
\label{fig:example:of:a:graph}
\end{figure}

\begin{example}
\label{ex:graph:and:dual:graph}
Consider  the  surface   $S$   and  the  collection  $\gamma$  of $\hat{h}$\hspace*{-1pt}omologous    saddle    connections   $\{\gamma_1,    \gamma_2, \gamma_3\}$  as  in   figure~\ref{fig:example:of:a:graph}.   The complement $S\setminus\gamma$ has two connected components; both are represented by cylinders.  Thus, the  graph  $\Gamma(S,\gamma)$ dual to the graph $\gamma\subset S$ contains  two  vertices, both of the ``$\circ$''-type, and  three edges, see Figure~\ref{fig:example:of:a:graph}.
\end{example}

%------------------------------------------
\subsection{List of stable graphs}
%\label{ss:List:of:stable:graphs}

The Theorem~\cite[Theorem~2]{Masur:Zorich} below classifies all possible graphs corresponding to configurations of $\hat{h}$\hspace*{-1pt}omologous saddle connections.

\begin{figure}[htb]
\centering
\includegraphics[scale=1]{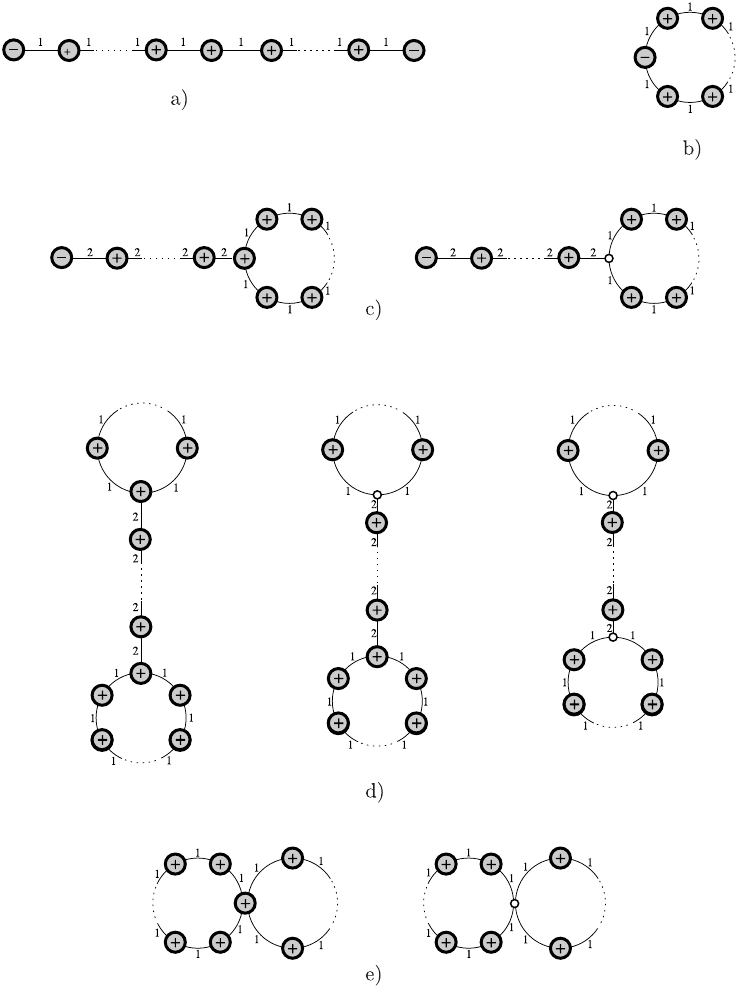}
\vspace*{-25pt}
\caption{
Classification of stable graphs
as in~\cite[Figure~3]{Masur:Zorich}.
}
\label{fig:graphs}
\end{figure}

\begin{NNTheorem}[H.~Masur, A.~Zorich]
Let  $S$  be  a  flat  surface  corresponding  to  a  meromorphic
quadratic  differential  $q$  with  at  most  simple  poles;  let
$\gamma$ be  a  collection  of  $\hat{h}$\hspace*{-1pt}omologous  saddle  connections
$\{\gamma_1, \dots,  \gamma_n\}$,  and  let $\Gamma(S,\gamma)$ be
the graph  of  connected  components  encoding  the decomposition
$S\setminus (\gamma_1\cup\dots\cup\gamma_n)$.

The graph $\Gamma(S,\gamma)$ either has  one  of  the basic types
listed below or  can  be  obtained from one of  these  graphs  by
placing  additional   \cv-vertices   of   valence   two   at  any
subcollection of edges subject to the  following restrictions. At
most one \cv-vertex may be placed at the same edge;  a \cv-vertex
cannot be  placed at an edge adjacent to  a \cv-vertex of valence
$3$ if this edge is separating.
\end{NNTheorem}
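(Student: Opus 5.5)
The plan is to reduce the quadratic situation to the Abelian one on the canonical double cover and read off the graph $\Gamma(S,\gamma)$ as a quotient of a very simple graph by an involution.

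\textbf{Passing to the double cover.} Let $p\colon\hat S\to S$ be the canonical (orientation) double cover, with deck involution $\iota$, and let $\hat\omega$ be the Abelian differential with $\hat\omega^2=p^*q$. By definition, the saddle connections $\gamma_1,\dots,\gamma_n$ are $\hat h$omologous if the cycles $[\hat\gamma_i]-\iota_*[\hat\gamma_i]$ in $H_1^-(\hat S,\hat\Sigma;\mathbb Z)$ are proportional. Here $\hat\gamma_i$ and $\iota\hat\gamma_i$ denote the lifts of $\gamma_i$.

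\textbf{Step 1: the lifted graph is a cycle.} I would first show that the lifted collection $\hat\gamma=\{\hat\gamma_i,\iota\hat\gamma_i\}$ behaves like a family of homologous saddle connections on a translation surface. The key point to establish is that cutting $\hat S$ along any two lifted curves disconnects it. This gives the classical conclusion for Abelian differentials (as in Eskin--Masur--Zorich) that the dual graph $\hat\Gamma=\Gamma(\hat S,\hat\gamma)$ is a single cycle. Its vertices are the components of $\hat S\setminus\hat\gamma$ and its edges are the lifted saddle connections. I would prove the disconnection statement by writing the relative boundary of each complementary component as a signed sum of the lifted classes and then using the proportionality in $H_1^-$. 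I expect this step to be the main obstacle. Cancellations can occur between $\hat\gamma_i$ and $\iota\hat\gamma_i$. Moreover, a single component of $S$ with nontrivial holonomy lifts to one $\iota$-invariant component, while a component with trivial holonomy lifts to two swapped components. Both situations must be controlled carefully; I would treat them separately via the holonomy of the boundary loops.

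\textbf{Step 2: folding the cycle.} The involution $\iota$ acts on the cycle $\hat\Gamma$, and $\Gamma(S,\gamma)=\hat\Gamma/\iota$. An involution of a cycle is either a rotation or a reflection with two fixed points, which may be vertices or edge midpoints. I would check the following correspondences.
\begin{itemize}
\item $\iota$-fixed vertices are exactly the components of nontrivial holonomy, that is, the $\ominus$-vertices.
\item Pairs of swapped vertices give $\oplus$-vertices or $\circ$-vertices; a cylinder is always swapped or invariant according to whether its core curve lifts to two curves.
\item An edge fixed by $\iota$ with its endpoints exchanged becomes a loop in the quotient.
\end{itemize}
Enumerating the quotients (a path with two $\ominus$ ends; a path with one $\ominus$ end and a loop; a path with loops at both ends; and so on) would reproduce the basic types ``a''--``e''. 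The trivalent and quadrivalent vertices arise where a loop is attached.

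\textbf{Step 3: the restrictions on $\circ$-vertices.} Finally I would derive the rules for inserting $\circ$-vertices of valence two. A valence-two $\circ$-vertex on an edge corresponds to a cylinder whose two boundaries are both lifted saddle connections from the family. Two such cylinders on the same edge would be adjacent along an edge, and their common boundary saddle connection would not be a singularity-to-singularity geodesic separating distinct components; this excludes placing two $\circ$-vertices on one edge. For the separating edge adjacent to a trivalent $\circ$-vertex, I would use the flat geometry of that cylinder: one boundary is folded, as in the pocket, into two segments of equal length. A further cylinder glued along the separating side would force an additional saddle connection parallel to, but not $\hat h$omologous with, the others. This contradicts maximality of the configuration.
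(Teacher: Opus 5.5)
The paper does not prove this theorem; it quotes it from Masur--Zorich. Your argument has a genuine gap: the key claim of Step~1 is false, and no bookkeeping of cancellations can rescue it. Write $\gamma_i',\gamma_i''$ for the two lifts of $\gamma_i$ and $[\hat\gamma_i]$ for the associated anti-invariant class.

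Take the pocket configuration. It consists of a cylinder $C$ of circumference $c$, bounded on one side by a closed saddle connection $\gamma_2$ and folded on the other side into a saddle connection $\gamma_1$ of length $c/2$ joining two poles. The rest $S_0$ of the surface has nontrivial holonomy. The core curve of $C$ has trivial holonomy, so $p^{-1}(C)=C'\sqcup C''$. A loop crossing $\gamma_1$ once and returning through $C$ encircles a pole, so \emph{both} lifts $\gamma_1',\gamma_1''$ join $C'$ to $C''$; their union is the waist curve of the single cylinder $C'\cup C''$. Meanwhile $\gamma_2'$ and $\gamma_2''$ join the connected surface $\hat S_0=p^{-1}(S_0)$ to $C'$ and to $C''$ respectively. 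Thus $\hat\Gamma$ is a theta graph, not a cycle. Cutting $\hat S$ along $\gamma_1'\cup\gamma_1''$, or along $\gamma_1'\cup\gamma_2'$, leaves it connected. Similarly, for the figure-eight configuration of type ``b'' the vertex $\hat S_0$ has degree $4$ in $\hat\Gamma$.

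The folding picture of Step~2 is also inconsistent with the target list. The two lifts of a saddle connection are disjoint open arcs exchanged by $\iota$, so $\iota$ acts freely on the edges of $\hat\Gamma$. There is therefore no ``edge fixed by $\iota$'' that could produce a loop. It follows that a $\oplus$- or $\circ$-vertex of $\Gamma$ has the same valency as each of its two lifts, and a $\ominus$-vertex has half the degree of its unique lift. If $\hat\Gamma$ were a cycle, every $\ominus$-vertex would be univalent and every other vertex bivalent. Then $\Gamma$ would be either a path between two $\ominus$-vertices (type ``a'') or a cycle without $\ominus$-vertices, which is not a basic type. The bivalent $\ominus$-vertex of type ``b'' and the vertices of valency $3$ and $4$ in types ``c''--``e'' could never appear; these are exactly the quadratic phenomena.

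What does carry over from the Abelian case is the disconnection statement for \emph{full preimages}. If $\gamma_i,\gamma_j$ are \^homologous, then $\hat S\setminus p^{-1}(\gamma_i\cup\gamma_j)$ is disconnected. Otherwise a closed curve crossing $\gamma_i'$ once and avoiding the other three lifts would have nonzero intersection with $[\hat\gamma_i]$ and zero intersection with $[\hat\gamma_j]$, contradicting proportionality. Equivalently, $S\setminus(\gamma_i\cup\gamma_j)$ is either disconnected or has trivial holonomy. A classification has to be built on this condition for $\Gamma$, together with its $\mathbb{Z}/2$ holonomy data and local constraints from the flat geometry, not on a cycle structure of $\hat\Gamma$.

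Finally, the exclusions in Step~3 do not come from maximality. They come from a cone-angle count: two cylinders glued along a single closed saddle connection give total angle $\pi+\pi=2\pi$ at its endpoint, which is then not a singularity.
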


% -----------------------------------------------------------------------------
\subsection{Ribbon graph}
%\label{ss:Ribbon:graph}

It  is  convenient  to  consider a closed surface  with  boundary
$S^{comp}_j$  canonically  associated  to  $S_j$  by  taking  the
natural \textit{compactification} of $S_j$. Note that $S^{comp}_j$
need  not  be  the same as the \textit{closure} of $S_j$ in $S$. For
example, if we cut a surface $S$ along a single saddle connection
$\gamma$ joining a pair of distinct  singularities  we  obtain  a
surface $S_1$ whose  compactification  is a surface with boundary
composed of a  pair of parallel  distinct geodesics of  the  same
length, while  the  closure  of  $S_1=S\setminus\gamma_1$  in $S$
coincides with $S$. The closure of $S_j$ in $S$ is  obtained from
the compactification  $S^{comp}_j$  of $S_j$ by identification of
some boundary points (if necessary), or by identification of some
boundary saddle connections (if necessary).

\begin{figure}[hbt]
% An example of a local ribbon graph
%
%\centering
\hspace*{30pt}
%\mbox{
\includegraphics[scale=0.8]{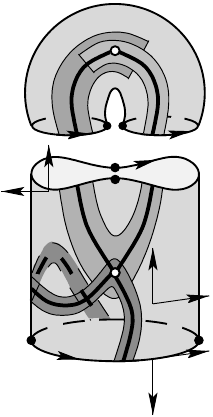}
\includegraphics[scale=0.9,angle=45]{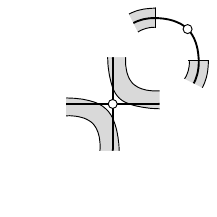}
\hspace*{-20pt}
\includegraphics[scale=0.9,angle=45]{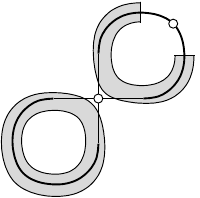}
%}
   %
\begin{picture}(0,0)(247,-167) % (250,0)
\put(80,-35){$S^{comp}_2$}
\put(80,-105){$S^{comp}_1$}
\put(133,-53){$\gamma_2$}
\put(159,-53){$\gamma_3$}
\put(138,-127){$\gamma''_1$}
\put(148,-143){$\gamma'_1$}
\put(118,-54){$1$}
\put(109,-67){$2$}
\put(166,-100){$2$}
\put(174,-107){$1$}
\put(166,-150){$1$}
\put(174,-141){$2$}
\end{picture}
\begin{picture}(0,0)(130,-165) % (130,0)
\put(133,-25){$\GG_{v_2}$}
\put(111,-58){$\gamma_2$}
\put(154,-58){$\gamma_3$}
\put(109,-110){$\gamma''_1$}
\put(154,-110){$\gamma'_1$}
\put(133,-120){$\GG_{v_1}$}
\end{picture}
\begin{picture}(0,0)(200,-155)% (200,0)
\put(303,-15){$\GG(S,\gamma)$}
\end{picture}
\vspace*{-20pt} % {150bp}
\caption{
Compactifications   $S_1^{comp},   S^{comp}_2$    of    connected
components  of  $S\setminus\gamma$,  the associated local  ribbon
graphs  $\GG_{v_1},   \GG_{v_2}$   and   the  global  ribbon  graph
$\GG(S,\gamma)$}
\label{fig:example:of:ribbon:graph}
\end{figure}

Suppose that the boundary of $S^{comp}_j$  has $r=r(j)$ connected
components (called \emph{boundary components}). Every
boundary  component  is  composed  of  a  closed chain of  saddle
connections $\gamma_{j_{i,1}}, \dots, \gamma_{j_{i,p(i)}}$, where
$1\le i \le  r$.  The case $p(i)=1$ is  not  excluded. The
canonical orientation of $S^{comp}_j$ determines the  orientation
of every boundary component $\cB_i$ of  $\partial S^{comp}_j$ and
hence determines a cyclic order
\begin{equation}
\label{eq:chain:i}
\to\gamma_{j_{i,1}}\to  \dots \to\gamma_{j_{i,p(i)}}\to
\end{equation}
on    every     such     chain;     by    convention    we    let
$j_{i,p(i)+1}:=j_{i,1}$. We get a natural decomposition of
the  set of  edges  of $\Gamma_{v_j}$ into  a  disjoint union  of
subsets, each endowed with a cyclic order.
It  is  convenient to encode such a combinatorial structure by  a
\emph{local ribbon  graph}. Local ribbon graphs near each vertex
naturally induce a global structure of a ribbon graph on the
graph $\Gamma(S,\gamma)$.

When  the   boundary   $\partial   S_j^{comp}$  contains  several
connected  components,   the   ribbon   graphs  corresponding  to
different components overlap at $v_j$ (as in the  left picture of
figure~\ref{fig:example:of:ribbon:graph}). However, it is easy to
make  them  disjoint  by  a  small  deformation,  subject  to  an
appropriate     choice      of     the     initial      embedding
$\Gamma_{v_j}\hookrightarrow S^{comp}_j$.

\begin{example}
%\label{ex:ribbon:graph:Gamma:for:fig:1}
   %
Consider once again the  surface  $S$ and the collection $\gamma$
of saddle  connections   $\{\gamma_1,  \gamma_2,
\gamma_3\}$  as  in  Example~\ref{ex:graph:and:dual:graph},
see Figure~\ref{fig:example:of:a:graph}.

The complement $S\setminus\gamma$ has  two  connected components;
their compactifications $S_1^{comp},  S_2^{comp}$ are represented
by a pair of flat cylinders. Each of the two connected components
of  the   boundary   of   $S_2^{comp}$   (the   top  cylinder  in
figure~\ref{fig:example:of:ribbon:graph}) is  formed by a  single
saddle      connection,       so      we      get       $\partial
S_2^{comp}=\{\gamma_2\}\sqcup\{\gamma_3\}$.  Each  of   the   two
connected components of the boundary of  $S_1^{comp}$ (the bottom
cylinder in figure~\ref{fig:example:of:ribbon:graph})  is  formed
by  a  pair  of  saddle  connections,   so   we   get   $\partial
S_1^{comp}=\{\gamma_2\to           \gamma_3\}\sqcup\{\gamma'_1\to
\gamma''_1\}$. The orientation of the boundary components induced
by the canonical  orientation  of $S$  is  indicated in the  left
picture.

The        picture         in        the        center         of
Figure~\ref{fig:example:of:ribbon:graph} shows the  corresponding
local ribbon graphs and the picture on the right shows the global
ribbon graph $\GG(S,\gamma)$ for this example.
\end{example}

\begin{figure}[htb]
\centering
\includegraphics[scale=1]{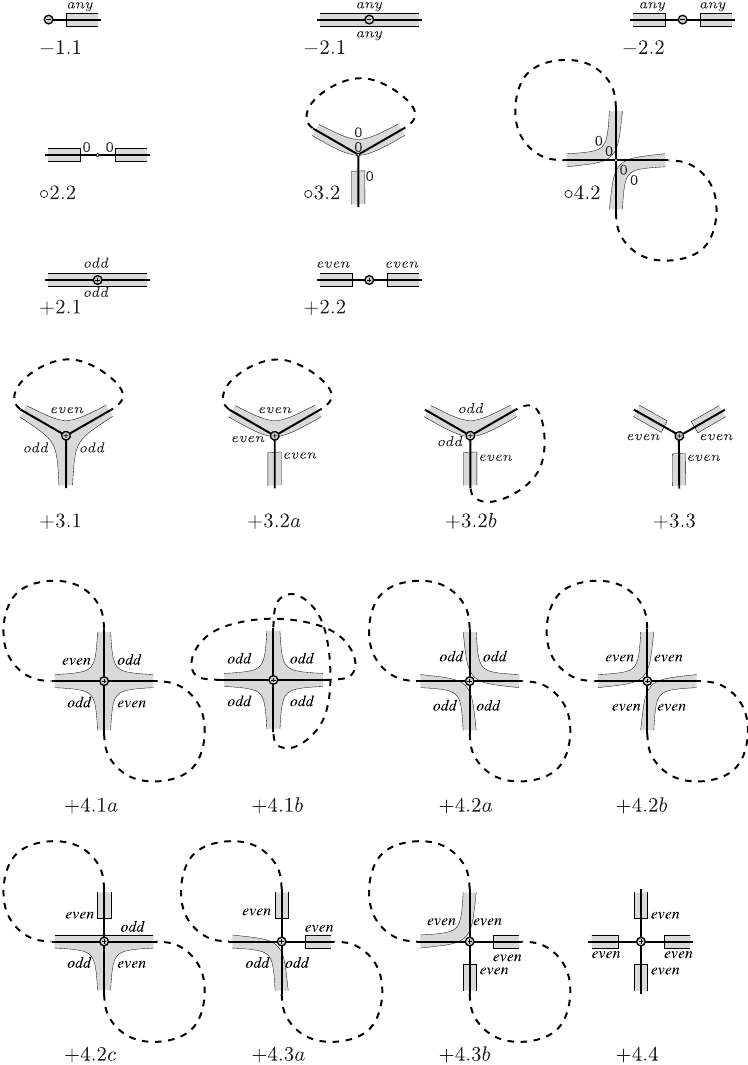}
\caption{
Classification of embedded local ribbon graphs
as in~\cite[Figure~6]{Masur:Zorich}.
}
\label{fig:embedded:local:ribbon:graphs}
\end{figure}

% -----------------------------------------------------------------------------
\subsection{Classification of local ribbon graphs}
Let  $S_j$  be   a   connected  component  of  the  decomposition
$S\setminus (\gamma_1\cup\dots\cup\gamma_n)$; let $S_j^{comp}$ be
its  compactification,  and let a connected component $\cB_i$  of
the  boundary   $\partial   S_j^{comp}$   be   represented  by  a
chain~\eqref{eq:chain:i}  of   saddle  connections.  The   common
endpoint of  $\gamma_{j_i}$  and $\gamma_{j_{i+1}}$ is called the
{\it boundary  singularity}  of  $S_j^{comp}$.  Since  all saddle
connections  $\gamma_1,  \dots,  \gamma_n$   are   parallel,  the
corresponding angle between $\gamma_{j_i}$ and $\gamma_{j_{i+1}}$
is an integer  multiple  of $\pi$. There  might also be  several
conical singularities in  the  interior of $S_j^{comp}$.
They do not belong to any saddle connection $\gamma_i$
of the configuration under consideration and are
called {\it interior singularities}.

\begin{definition}
%\label{def:order:of:boundary:singularity}
   %
If the  total angle at a  boundary singularity is  $(k+1)\pi$ the
{\it order of the boundary singularity} is defined to be $k$, and
the {\it parity of the boundary singularity} is defined to be the
parity of  $k$.

If the total angle
at an interior singularity is $(d + 2)\pi$ the order of the interior singularity is defined to be $d$.
\end{definition}

The following statement~\cite[Theorem~3]{Masur:Zorich} classifies the decorated ribbon graphs representing generic configurations of saddle connections for strata of meromorphic quadratic differentials with at most simple poles.

\begin{NNTheorem}[H.~Masur, A.~Zorich]
Consider  a  decomposition  of  a  flat   surface   $S$   as   in
Figure~\ref{fig:graphs}. Every connected component  $S_j$  of the
decomposition  has  one  of  the  boundary   types  presented  in
figure~\ref{fig:embedded:local:ribbon:graphs}  and  all indicated
boundary types are realizable.
\end{NNTheorem}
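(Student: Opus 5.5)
The plan is to treat each connected component $S_j$ of $S\setminus(\gamma_1\cup\dots\cup\gamma_n)$ separately, through its compactification $S_j^{comp}$. First I would record the elementary structure of $S_j^{comp}$. It is a flat surface with piecewise geodesic boundary. Every boundary component is a closed chain~\eqref{eq:chain:i} of saddle connections, all of them parallel. Each saddle connection $\gamma_i$ of the configuration contributes exactly two boundary sides in total, distributed among the compactifications of the (at most two) components adjacent to $\gamma_i$. Consequently the local ribbon graph $\GG_{v_j}$ is determined by three pieces of data: the valency of $v_j$ in $\Gamma(S,\gamma)$ (at most $4$ by the Masur--Zorich classification of stable graphs in Figure~\ref{fig:graphs}), a partition of its half-edges into boundary components, and a cyclic order on each part. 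Listing these combinatorial possibilities for valencies $1,2,3,4$ gives a finite list of candidates, labelled by (valency).(number of boundary components).

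Second, I would cut this list down using the holonomy constraint coming from the $\hat{h}$omologous condition. Orient all $\gamma_i$ consistently with the common direction. For a $\oplus$-component, the holonomy of $S_j^{comp}$ is trivial, so integrating the abelian differential along $\partial S_j^{comp}$ gives zero. Each boundary component then contributes a signed sum of lengths of the $\gamma_i$ traversed along it. For a $\ominus$-component I would pass to the canonical double cover and apply the same argument there. Since the $\gamma_i$ are $\hat{h}$omologous, their lengths are all equal (or in ratio $1:2$), and the vanishing condition becomes a parity and sign condition on how each edge is traversed. Combined with the Gauss--Bonnet relation on $S_j^{comp}$ (total angle at boundary singularities versus interior singularities and genus), this forces the parities of boundary singularities recorded in Figure~\ref{fig:embedded:local:ribbon:graphs}. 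It also excludes, for example, a $\oplus$-vertex of valency $1$, and fixes the types $\circ2.2$ and $\circ3.2$ for cylinders, because a cylinder has exactly two boundary components, each a straight chain with all boundary angles equal to $\pi$.

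Third, realizability. For each surviving local type I would give an explicit surgery. Starting from a surface in the appropriate boundary stratum $\cQ(\alpha'_i)$ or $\cH(\beta'_j)$, one breaks a zero and slits along the resulting short saddle connection (types $\pm1.1$, figure-eight, $-2.1$), removes a parallelogram (types $\pm2.2$), or combines several such slits on one surface for the valency $3$ and $4$ types. These are the same constructions used for the figure-eight, parallelogram and pocket configurations in the body of the paper.

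I expect the main obstacle to be the exclusion step for valencies $3$ and $4$ on $\ominus$-components. There the double-cover argument must keep track of how lifts of boundary chains split, and a naive enumeration of cyclic orders is large. I would first try to reduce to the $\oplus$ case by showing that each boundary chain of a $\ominus$-component lifts to either one or two chains, with the sign pattern determined by the monodromy along the chain. After that reduction, the enumeration can be handled the same way as in the $\oplus$ case.
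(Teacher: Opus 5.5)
The paper does not prove this statement. It is quoted in the appendix from Masur--Zorich, so your outline has to stand on its own. Its overall shape is reasonable for $\oplus$- and $\circ$-vertices. However, the step you flag as the main obstacle addresses an empty case, and the tool you propose for it gives no information.

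By hypothesis the graph is one of those in Figure~\ref{fig:graphs}. There a $\ominus$-vertex has valency $1$ (types ``a'', ``c'') or $2$ (type ``b''). The three abstract possibilities are exactly $-1.1$, $-2.1$, $-2.2$, with no parity decoration, so nothing needs to be excluded. Stokes on the double cover is also vacuous for $\ominus$-pieces. The deck involution $\iota$ of $\hat S_j^{comp}$ preserves the boundary with its induced orientation, while $\iota^*\omega=-\omega$. Hence $\int\omega$ over every $\iota$-orbit of boundary components is automatically zero, so no sign pattern can be extracted. Had $\ominus$-vertices of valency $3$ or $4$ been possible, your reduction could not have excluded them; it is the global classification of Figure~\ref{fig:graphs} that rules them out.

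The real content is at $\oplus$- and $\circ$-vertices of valency $3$, and at the valency-$4$ cylinder of type ``e'' (cf.\ Lemma~\ref{lm:thin:and:thick:cylinders}), which your list omits. Here, indexing local graphs by valency, partition and cyclic order discards exactly what Figure~\ref{fig:embedded:local:ribbon:graphs} records. These are \emph{embedded} local ribbon graphs. For instance, $+3.2a$ and $+3.2b$ are the same abstract graph (a singleton and a pair); they differ only in whether the stem half-edge $s$ is the singleton.

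Likewise, ``lengths equal or in ratio $1:2$'' is too weak to run Stokes. You need the edge data of Figure~\ref{fig:graphs}: stem saddle connections of types ``c'', ``d'' have length $2\ell$, all others have length $\ell$ (compare the pocket). Then $\int_s\omega+\int_{l_1}\omega+\int_{l_2}\omega=0$ forces $l_1,l_2$ to be codirected and opposite to $s$. A corner of order $k$ reverses the direction iff $k$ is odd. This gives:
\begin{itemize}
\item for $+3.1$, corner parities odd--even--odd;
\item for $+3.2b$, odd--odd on the pair and even on the singleton;
\item for $+3.2a$ and $+3.3$, all even.
\end{itemize}
With three equal lengths no such vertex could exist at all.

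For cylinders, it is the equality of the two boundary lengths, not straightness of the boundary, that does the work. It forces $s$ to be alone in $\circ3.2$ and rules out a $1+3$ split at the valency-$4$ cylinder.

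Gauss--Bonnet plays no role in the parities; it yields condition~(5) of Definition~\ref{def:configuration}, which is not part of this statement. The realizability constructions should be organized by these embedded types. Note also that there is no $+1.1$ to realize, as your own Stokes argument shows.
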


%-----------------------------------------------------------------------------
\subsection{Classification of configurations}

We reproduce now a formal definition of a \textit{configuration} and the classification theorem of admissible configurations; see Definition~3 and the Main Theorem
in Section~1.4 of the original paper~\cite{Masur:Zorich}.

\begin{definition}
\label{def:configuration}
The    following    combinatorial    structure   is   called    a
\emph{configuration of \^homologous saddle connections}.
\begin{enumerate}
\item
A finite  graph $\Gamma$ endowed  with a labelling of each
vertex by one  of the symbols ``$+$'', ``$-$'', or ``$\circ$'',
of one of the types  described  in  Figure~\ref{fig:graphs}.
\item
For any vertex $v$ of the graph $\Gamma$ an embedded ribbon graph $\GG_v$ (encoding the decomposition of $\Gamma_v$  into a disjoint union of subsets, called \emph{boundary components}, each endowed with a cyclic order), of one of the types described in Figure~\ref{fig:embedded:local:ribbon:graphs}.
\item
For every  ``$+$''-vertex $v$ of $\Gamma$  and for every  pair of
consecutive    elements    $\gamma_{i,l}\to\gamma_{i,l+1}$     of
$\GG_v$ (called  \emph{boundary singularities}) an  associated
parity         (even        or         odd)         as         in
figure~\ref{fig:embedded:local:ribbon:graphs}.
\item
For  every  vertex   $v$  of  $\Gamma$  and  for  every  boundary
singularity of  $\GG_v$  a nonnegative integer $k_{i,l}$ (referred
to  as  the \emph{order of the boundary singularity})  satisfying
the following conditions.  The  order of the boundary singularity
respects the \emph{parity} associated to the corresponding boundary
singularity as in (3) when $v$  is of the  $\oplus$-type; the order  of  any
boundary singularity of any vertex  of  the  ``$\circ$''-type  is
equal to zero. The sum $\delta_i+2=k_{i,1}+\dots+k_{i,p(i)}$ of orders
of boundary singularities along any boundary component $\cB_i$ of
$v$  satisfies  $ \delta_i\ge 0$ for  a  vertex of $\oplus$-type and  $
\delta_i\ge -1$ for a vertex of $\ominus$-type.
\item
For every vertex $v$ of $\Gamma$  of  $\ominus$-type  an  unordered
(possibly   empty)   collection  of   integers   $\{d_1,   \dots,
d_{s(v)}\}$, where  $d_l\in\{-1,1,2,3,\dots\}$; for every  vertex
$v$ of $\Gamma$ of $\oplus$-type  an  unordered  (possibly  empty)
collection of  positive even integers $\{d_1, \dots, d_{s(v)}\}$,
where $d_l\in\{2,4,\dots\}$.  In  both cases these collections of
integers (called \emph{orders of interior singularities}) satisfy
the following  compatibility  conditions  with  the collection of
boundary singularities of $\GG_v$:
$$
-4\ \le\ \big( \sum d_l + \sum \delta_i \big) \ \equiv\ 0\mod 4\,,
$$
where
the  first sum is  taken over all interior singularities and
the second sum is taken over all boundary components $\cB_i$
of $\GG_v$.
\item
When the vertex $v$ is of the $\ominus$-type  the couple [unordered
collection  of  interior  singularities, unordered collection  of
boundary singularities] is in  addition  not allowed to belong to
the following exceptional list:

$$
\begin{array}{|c|c|}
\hline
%-----------------------------------
-1.1&
[\emptyset,\{2\}];\qquad\quad
[\{-1\},\{3\}];\quad
[\{1\},\{1\}];\quad
[\{-1,1\},\{2\}]\\
\phantom{\quad\text{Type}\quad}
&
[\{1\},\{5\}];\quad
[\{3\},\{3\}];\quad
[\{1,3\},\{2\}];\qquad\quad
[\emptyset,\{6\}];\quad
[\{4\},\{2\}]
\\
\hline % -----------------------------------
-2.1&
[\emptyset,\{2, 0\}];\quad [\emptyset, \{1, 1\}];\qquad\quad
[\{-1\},\{0,3\}];\quad          [\{-1\},\{1,2\}]\\
   &
[\{1\},\{0,1\}];\quad
[\{1,-1\},\{0,2\}];\quad  [\{1,-1\},\{1,1\}]\\
&
[\{3,1\},\{2,0\}];\   [\{3,1\},\{1,1\}];\  [\{3\},\{3,0\}];\    [\{3\},\{2,1\}]\\
&
[\{1\},\{5,0\}];\      [\{1\},\{4,1\}];\    [\{1\},\{3,2\}];\
[\{4\},\{2,0\}];\           [\{4\},\{1,1\}]\\
&
[\emptyset,\{6,0\}];\       [\emptyset,\{5,1\}];\
[\emptyset,\{4,2\}];\       [\emptyset,\{3,3\}]\\
\hline % -----------------------------------
-2.2
&
[\emptyset,\{2,2\}];\
[\emptyset,\{1,3\}];\
[\{-1\},\{2,3\}];\
[\{1\},\{1,2\}];\
[\{-1,1\},\{2,2\}]
\\
&
[\emptyset,\{3,5\}];\quad
[\{1\},\{2,5\}];\quad
[\{3\},\{2,3\}];\quad
[\{1,3\},\{2,2\}]\\
&
[\emptyset,\{2,6\}];\quad
[\{4\},\{2,2\}]
\\
\hline
\end{array}
$$
\end{enumerate}
\end{definition}

The following statement is the Main Theorem in Section~1.4 of~\cite{Masur:Zorich}. It classifies the decorated ribbon graphs representing generic configurations of saddle connections for strata of meromorphic quadratic differentials with at most simple poles.

\begin{NNTheorem}[H.~Masur, A.~Zorich]
Any collection of \^homologous saddle connections in general position on a flat surface in a stratum of meromorphic quadratic differentials with at most simple poles satisfies the formal Definition~\ref{def:configuration} and any data described in the formal Definition~\ref{def:configuration} can be realized by an actual collection of \^homologous saddle connections on a flat surface in the corresponding stratum of quadratic differentials.
\end{NNTheorem}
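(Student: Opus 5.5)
The statement splits naturally into a \emph{necessity} part (every generic collection of \^homologous saddle connections produces data obeying Definition~\ref{def:configuration}) and a \emph{sufficiency} part (every such formal data is realized). The plan is to prove both vertex by vertex, using the graph $\Gamma(S,\gamma)$ and the ribbon structures $\GG_v$, and then to glue.

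For necessity I would start from the decomposition $S\setminus(\gamma_1\cup\dots\cup\gamma_n)$ and the compactifications $S_j^{comp}$. Items (1) and (2) are exactly the two classification theorems of Masur--Zorich already recalled above: Figure~\ref{fig:graphs} and Figure~\ref{fig:embedded:local:ribbon:graphs}. I take them as given. The remaining items are local computations.
\begin{enumerate}
\item All $\gamma_i$ are parallel, so every boundary angle is a multiple of $\pi$. This gives nonnegative orders $k_{i,l}$, and these vanish on cylinders.
\item On a $\oplus$-component the square root of $q$ is a global Abelian differential. Hence boundary angles of the same boundary component are odd multiples of $2\pi$ or even ones according to the parity pattern of $\GG_v$, which gives item (3).
\item Contract each boundary component $\cB_i$ to a point: collapse the chain of parallel saddle connections, which is possible because they are \^homologous and of equal length. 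This produces a closed flat surface with a new singularity of order $\delta_i=\sum_l k_{i,l}-2$. Gauss--Bonnet on the contracted surface gives the relation $\sum d_l+\sum\delta_i\equiv 0 \bmod 4$ together with the bound $\ge -4$, which is item (5) and the inequalities in item (4). On a $\oplus$-component the contracted surface is Abelian, which forces $\delta_i\ge 0$ and the $d_l$ to be even.
\item Item (6) follows because the contracted $\ominus$-surface must lie in a nonempty stratum of quadratic differentials that are not squares of Abelian ones. The exceptional list is precisely the set of data whose contracted stratum is empty, by Masur--Smillie and Lanneau. An alternative source is a degenerate contracted stratum such as $\cQ(1,-1)$, $\cQ(3,1)$ or $\cQ(4)$ after absorbing a $-1$, or one with forced hyperellipticity. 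I would verify the list case by case for the types $-1.1$, $-2.1$ and $-2.2$.
\end{enumerate}

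For sufficiency I would reverse these surgeries. Given the formal data, each $\ominus$-vertex first receives a surface in the nonempty stratum $\cQ(\alpha'_v)$ of~\eqref{eq:Q:boundary:stratum}, and each $\oplus$-vertex a surface in $\cH(\beta'_v)$ of~\eqref{eq:H:boundary:stratum}. At the singularity of order $\delta_i$ I then perform the local construction dictated by the type of $\GG_v$:
\begin{enumerate}
\item a slit at a zero, or a zero broken in two, for a single boundary component;
\item the figure-eight construction for the types $-2.1$ and $+2.1$;
\item the parallelogram construction for the types $-2.2$ and $+2.2$;
\item the analogous three-saddle-connection constructions for the trivalent types.
\end{enumerate}
In each case I choose the boundary singularity orders $k_{i,l}$ so that they sum to $\delta_i+2$. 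All boundary saddle connections are taken parallel and of the same length $\epsilon$. The pieces are then glued along $\Gamma$, inserting cylinders of circumference $\epsilon$ at the $\circ$-vertices. A small generic perturbation of the remaining period coordinates makes the configuration generic, so that no extra saddle connections become \^homologous to the $\gamma_i$.

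The main obstacle I expect is the local breaking step for $\ominus$-vertices. One has to produce, at a zero of order $\delta$, the prescribed split into boundary singularities of orders $k_{i,l}$ while keeping the component's holonomy nontrivial. This must hold precisely outside the exceptional list, including low-genus hyperelliptic cases where the splitting is obstructed. I would first try the Eskin--Masur--Zorich local ``breaking up a zero'' surgery transplanted to the canonical double cover, treating $\delta=-1$ (a pole) and small $\delta$ by hand. Where a prescribed split fails in a connected component, the fallback is to change the component inside the stratum, using Lanneau's classification, and to check that the only genuine failures are the listed exceptions.
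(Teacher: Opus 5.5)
The paper quotes this theorem from Masur--Zorich without proof, so there is no argument in the paper to compare with. Your outline has the natural architecture: the classification of graphs and local ribbon graphs, angle and parity bookkeeping, Gauss--Bonnet and nonemptiness of the boundary strata for necessity; breaking up zeros, figure-eight and parallelogram surgeries and gluing in cylinders for sufficiency. There is, however, a genuine gap in the necessity of item~(6), which you derive from ``contract each boundary component $\cB_i$ to a point''.

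\textbf{The contraction step.} Your justification is inaccurate to begin with: \^homologous saddle connections can have lengths in ratio $2$. In the pocket configuration, for instance, the loop at the zero of degree $n$ is twice as long as the saddle connection joining the two poles.

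More seriously, no operation defined locally on the chain of boundary saddle connections can produce the boundary surface. Take any $S\in\cQ(2,-1,-1)$. The $1$-skeleton of a triangulation by saddle connections with vertices at the three singularities is connected, so some saddle connection $\gamma$ joins the double zero to a pole. Then $S\setminus\gamma$ is a genus-one flat surface with nontrivial holonomy, one interior pole, and one boundary component of type $-2.1$ with corners of orders $3$ and $0$. It therefore carries the data $[\{-1\},\{0,3\}]$ from the exceptional list, and its ``contraction'' would be a surface in the empty stratum $\cQ(1,-1)$.

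What necessity of (6) asserts here is global: on a surface in general position such a $\gamma$ always has \^homologous partners, which in $S\setminus\gamma$ appear as interior saddle connections parallel to the boundary. To exclude the exceptional data you must use maximality and genericity of the family. Concretely, shrink its common period by a deformation inside the ambient stratum, and identify the limits of the complementary pieces as closed surfaces in $\cQ(\alpha'_v)$ and $\cH(\beta'_v)$. That degeneration argument is the substance of the necessity part, and your proposal replaces it by an assertion.

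\textbf{Further points.} The bounds $\delta_i\ge-1$ and $\delta_i\ge0$ in item~(4) do not come from Gauss--Bonnet. Gauss--Bonnet only gives $\sum d_l+\sum\delta_i=4g_j-4$, with $g_j$ the genus of $S_j^{comp}$, and it gives this directly on $S_j^{comp}$ without any contraction. The bounds come from maximality: if all corners of a boundary component of a non-cylinder piece have angle $\pi$, then a half-cylinder parallel to it lies in the piece. Its far boundary then yields further saddle connections \^homologous to the $\gamma_i$. This is why a saddle connection joining two poles always bounds a cylinder whose other side consists of \^homologous partners. You never invoke maximality.

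In the sufficiency part, taking all boundary saddle connections of the same length $\epsilon$ already fails for a pocket, whose cylinder of circumference $\epsilon$ has a saddle connection of length $\epsilon/2$ on top. You also do not cover the valence-$4$ vertices of graphs of type ``e''.

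Finally, the obstacle you single out is not one. Breaking a zero of order $m$ of a quadratic differential into singularities of orders $m_1,m_2\ge-1$ with $m_1+m_2=m$ is a surgery inside a small disc. It is available on every surface and preserves nontrivial holonomy. Since the statement concerns strata rather than connected components, neither Lanneau's classification nor hyperellipticity plays any role. The only obstruction at a $\ominus$-vertex is emptiness of $\cQ(\alpha'_v)$. An entry-by-entry check shows that the exceptional list of Definition~\ref{def:configuration} consists exactly of the data for which $\alpha'_v$, with entries $0$ omitted, is one of $\emptyset$, $\{-1,1\}$, $\{3,1\}$, $\{4\}$, the empty strata of Masur--Smillie.
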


% ----------------------------------------------------------

\end{document}